\newtheoremstyle{citing}
  {3pt}
  {3pt}
  {\itshape}
  {}
  {\bfseries}
  {.}
  {.5em}
  {\thmnote{#3}}
\theoremstyle{citing}
\newtheorem*{citing}{}
\newtheoremstyle{myparagraph}
  {3pt}
  {3pt}
  {\normalfont}
  {}
  {\bfseries}
  {.}
  {0.5em}
  {\thmnote{#3}}
\theoremstyle{myparagraph}
\theoremstyle{definition}
\theoremstyle{plain}
\newtheorem{theorem}{Theorem}[section]
\newtheorem{lemma}[theorem]{Lemma}
\newtheorem{corollary}[theorem]{Corollary}
\theoremstyle{remark}
\newtheorem{remark}[theorem]{Remark}
\newtheorem{example}[theorem]{Example}
\newtheorem*{hypotheses}{Hypotheses}
\theoremstyle{definition}
\newtheorem{definition}[theorem]{Definition}
\newtheorem{miniremark}[theorem]{}
\newcounter{counter1}
\newcounter{counter2}
\newcommand{\Var}{\mathbf{V}}     
\newcommand{\RVar}{\mathbf{RV}}   
\newcommand{\IVar}{\mathbf{IV}}   
\newcommand{\Lp}[1]{\mathbf{L}_{#1}}
\newcommand{\Sob}[3]{\mathbf{W}_{#1}^{{#2},{#3}}}
\newcommand{\ccspace}[1]{\mathscr{K}(#1)}
\newcommand{\qspace}{\mathbf{Q}}
\newcommand{\nat}{\mathscr{P}}
\newcommand{\integers}{\integers}
\newcommand{\rel}{\mathbf{R}}
\newcommand{\complex}{\mathbf{C}}
\newcommand{\grass}[2]{\mathbf{G}(#1,#2)}
\newcommand{\pp}{\mathbf{p}}
\newcommand{\qq}{\mathbf{q}}
\newcommand{\oball}[2]{\mathbf{U}(#1,#2)}
\newcommand{\cball}[2]{\mathbf{B}(#1,#2)}
\newcommand{\density}{\boldsymbol{\Theta}}
\newcommand{\unitmeasure}[1]{\boldsymbol{\alpha}(#1)}
\newcommand{\besicovitch}[1]{\boldsymbol{\beta}(#1)}
\newcommand{\isoperimetric}[1]{\boldsymbol{\gamma}(#1)}
\newcommand{\cylind}[3]{\mathbf{C} ( #1, #2, #3 )}
\newcommand{\cylinder}[4]{\mathbf{C} ( #1, #2, #3, #4 )}
\newcommand{\id}[1]{\mathbf{1}_{#1}}
\newcommand{\trans}[1]{\boldsymbol{\tau}_{#1}}
\newcommand{\weakD}{\mathbf{D}}
\newcommand{\ud}{\ensuremath{\,\mathrm{d}}}
\DeclareMathOperator{\with}{:}
\newcommand{\classification}[3]{{#1} \cap \{ {#2} \with {#3} \}}
\newcommand{\eqclassification}[3]{{(#1)} \cap \{ {#2} \with {#3} \}}
\newcommand{\bigclassification}[3]{{#1} \cap \big \{ {#2} \with {#3} \big \}}
\newcommand{\project}[1]{#1_\natural}
\newcommand{\eqproject}[1]{(#1)_\natural}
\newcommand{\perpproject}[1]{#1_\natural^\perp}
\newcommand{\pluslim}[1]{\to {#1}+}
\newcommand{\lIm}{[}
\newcommand{\rIm}{]}
\newcommand{\biglIm}{\big [}
\newcommand{\bigrIm}{\big ]}
\newcommand{\Lbrack}{[\![}
\newcommand{\Rbrack}{]\!]}
\newcommand{\vdim}{{m}}
\newcommand{\codim}{{n-m}}
\newcommand{\adim}{{n}}
\newcommand{\norm}[3]{\boldsymbol{|} #1 \boldsymbol{|}_{#2;#3}}
\newcommand{\dnorm}[3]{\boldsymbol{|} #1 \boldsymbol{|}_{-1,#2;#3}}
\newcommand{\hoelder}[2]{\mathbf{h}_{#1}(#2)}
\newcommand{\pairing}[3]{\boldsymbol{(}{#1},{#2}\boldsymbol{)}_{#3}}
\newcommand{\intertextenum}[1]{\setcounter{counter2}{\value{enumi}}\end{enumerate}#1\begin{enumerate}\setcounter{enumi}{\value{counter2}}}
\newcommand{\printRoman}[1]{\setcounter{counter1}{#1}\Roman{counter1}}
\newcommand{\leftB}{{\left < \rule{0pt}{2ex} \right .}}
\newcommand{\rightB}{{\left . \rule{0pt}{2ex} \right >}}
\DeclareMathOperator{\without}{\sim}
\newcommand{\restrict}{\mathop{\llcorner}}
\DeclareMathOperator{\aplus}{(+)}
\newcommand{\class}[1]{#1}
\newcommand{\tint}[2]{{\textstyle\int_{#1}^{#2}}}
\newcommand{\tfint}[2]{{\textstyle\fint_{#1}^{#2}}}
\newcommand{\tsum}[2]{{\textstyle\sum_{#1}^{#2}}}
\renewcommand{\min}{\inf}
\DeclareMathOperator{\card}{card}
\newcommand{\Clos}[1]{\mathop{\mathrm{Clos}}#1}
\newcommand{\measureball}[2]{{#1}\,{#2}}
\DeclareMathOperator{\Tan}{Tan}     
\DeclareMathOperator{\spt}{spt}     
\DeclareMathOperator{\im}{im}       
\DeclareMathOperator{\Int}{Int}     
\DeclareMathOperator{\diam}{diam}   
\DeclareMathOperator{\Lip}{Lip}     
\DeclareMathOperator{\dmn}{dmn}     
\DeclareMathOperator{\dist}{dist}   
\DeclareMathOperator{\Hom}{Hom}     
\DeclareMathOperator{\graph}{graph} 
\DeclareMathOperator{\ap}{ap}       
\DeclareMathOperator*{\aplim}{\mathrm{ap}\, \lim}   
\newcommand{\Lpnorm}[3]{{#1}_{({#2})}({#3})}
\newcommand{\eqLpnorm}[3]{{(#1)}_{({#2})}({#3})}
\DeclareMathOperator{\Lap}{Lap}
\title{Decay estimates for the quadratic tilt-excess \\ of integral varifolds}
\author{Ulrich Menne\thanks{The research was carried out while the author
was at the ETH Z\"urich and put in its final form
while the author was at the AEI Golm.
\textit{AEI publication number:} AEI-2009-093.}}
\begin{document}
\maketitle
\begin{abstract}
	This paper concerns integral varifolds of arbitrary dimension in an
	open subset of Euclidean space with its first variation given by
	either a Radon measure or a function in some Lebesgue space. Pointwise
	decay results for the quadratic tilt-excess are established for those
	varifolds. The results are optimal in terms of the dimension of the
	varifold and the exponent of the Lebesgue space in most cases, for
	example if the varifold is not two-dimensional.
\end{abstract}
\begin{center}
	\small{\emph{2000 Mathematics Subject Classification.} Primary 49Q15;
	Secondary 35J60}
\end{center}
\tableofcontents
\section*{Introduction}
\addcontentsline{toc}{section}{\numberline{}Introduction}
\paragraph{Overview} This paper investigates pointwise regularity properties
of integral varifolds satisfying integrability conditions on its generalised
mean curvature where pointwise regularity is measured by the decay of the
quadratic tilt-excess. As classical regularity may fail on a set of positive
measure, see Allard \cite[8.1\,(2)]{MR0307015} and Brakke
\cite[6.1]{MR485012}, the notion of tilt-excess decay serves as a weak measure
of regularity suitable for studying regularity near almost every point of a
varifold. In fact, aside from being used as an intermediate step to classical
regularity, see Allard \cite{MR0307015}, decay estimates have been employed as
a tool for both perpendicularity of mean curvature in Brakke \cite{MR485012}
and locality of mean curvature in Sch\"atzle
\cite{rsch:willmore,MR2064971,MR1906780}.

In the present paper it is established that there is a qualitative change in
the nature of the results obtainable when the Sobolev exponent corresponding
to the integrability exponent of the mean curvature drops below $2$. The core
of the proof of the pointwise results relies on the harmonic approximation
procedure introduced by de~Giorgi in \cite{MR0179651} (see also
\cite[p.~231--263]{EnnioDeGiorgi_SelectedPapers}) and Almgren in
\cite{MR0225243} and used in the present setting by Allard in \cite{MR0307015}
and Brakke in \cite{MR485012}. Additionally, to obtain the present pointwise
results, a new coercive estimate is proven, the Sobolev Poincar\'e type
estimates of \cite{snulmenn.poincare} are adapted and a new iteration
procedure is introduced. The latter may also be used in studying partial
regularity for systems of elliptic partial differential equations.
\paragraph{Known results} The notation follows Federer \cite{MR41:1976} and,
concerning varifolds, Allard \cite{MR0307015}, see Section \ref{sec:not}.
\begin{hypotheses}
	Suppose $\vdim$ and $\adim$ are positive integers, $\vdim < \adim$, $1
	\leq p \leq \infty$, $U$ is an open subset of $\rel^\adim$, $V \in
	\IVar_\vdim ( U)$, $\| \delta V \|$ is a Radon measure and, if $p >
	1$,
	\begin{gather} \label{hp}
		\begin{aligned}
			& ( \delta V ) ( g ) = - {\textstyle\int} g (z)
			\bullet \mathbf{h} (V;z) \ud \| V \| z \quad
			\text{whenever $g \in \mathscr{D} ( U, \rel^\adim )$},
			\\
			& \mathbf{h} (V;\cdot) \in \Lp{p} ( \| V \| \restrict
			K, \rel^\adim ) \quad \text{whenever $K$ is a compact
			subset of $U$}.
		\end{aligned} \tag{$H_p$}
	\end{gather}
\end{hypotheses}
The present research is motivated by the question for which $0 < \alpha \leq
1$ the given hypotheses imply
\begin{gather*}
	\limsup_{r \to 0+} r^{-\alpha-\vdim/2} \big ( \tint{\oball{a}{r}
	\times \grass{\adim}{\vdim}}{} | \project{S} - \project{T} |^2 \ud V
	(z,S) \big )^{1/2} < \infty
\end{gather*}
for $V$ almost all $(a,T) \in U \times \grass{\adim}{\vdim}$. Brakke has shown
that one can take any $0 < \alpha < 1$ in case $p = 2$ and $\alpha = 1/2$ with
``$< \infty$'' replaced by ``$=0$'' in case $p = 1$ in
\cite[5.5,\,7]{MR485012}. Sch\"atzle \cite{MR2064971} has used results on
viscosity solutions from Caffarelli \cite{MR1005611} and Trudinger
\cite{MR995142} to establish several regularity results, in particular that if
$p > \vdim$, $p \geq 2$ and $\codim = 1$ then one can take $\alpha = 1$, see
also Sch\"atzle \cite{MR1906780} for a special case. Moreover, Sch\"atzle
showed in \cite[Theorem 3.1]{rsch:willmore} that if $p = 2$ then the key to
the general case is to prove existence of an approximate second order
structure of the varifold. Namely, if $p=2$ and there exists a countable
collection $C$ of $\vdim$ dimensional submanifolds of $\rel^\adim$ of class
$\class{2}$ with $\| V \| ( U \without \bigcup C ) = 0$ then one can take
$\alpha = 1$.

Whereas consideration of varifolds associated to submanifolds of class
$\class{2}$ clearly shows that $\alpha = 1$ is the largest $\alpha$ possibly
having this property, in case $\sup \{ 2, p \} < \vdim$ and $\frac{\vdim
p}{\vdim-p} < 2$ it can be seen from the examples in
\cite[1.2]{snulmenn.isoperimetric} that one cannot take $\alpha > \frac{\vdim
p}{2(\vdim-p)}$. Comparing this to Brakke's results, little is known for the
case $1 < p < 2$ and also in case $p = 1$ and $\vdim > 2$ there is a gap
between known positive results for $\alpha \leq 1/2$ and known counterexamples
for $\alpha > \frac{\vdim}{2(\vdim-1)}$.
\paragraph{Results of the present paper}
In case $\sup \{ 2,p \} < \vdim$ and $\frac{\vdim p}{\vdim-p} < 2$ these gaps
are closed by the following corollary.
\begin{citing} [\ref{cor:some_decay_rates} Corollary]
	Suppose $\vdim$, $\adim$, $p$, $U$, and $V$ are as in the preceding
	hypotheses \eqref{hp}, and either $\vdim \in \{ 1, 2 \}$ and $0 < \tau
	< 1$ or $\sup \{ 2, p \} < \vdim$ and $\tau = \frac{\vdim p}{2(
	\vdim-p)} < 1$.

	Then
	\begin{gather*}
		\limsup_{r \to 0+} r^{-\tau-\vdim/2} \big ( \tint{\oball{a}{r}
		\times \grass{\adim}{\vdim}}{} | \project{S} - \project{T} |^2
		\ud V (z,S) \big )^{1/2} < \infty
	\end{gather*}
	for $V$ almost all $(a,T)$.
\end{citing}
From the afore-mentioned examples it follows that $\tau$ cannot be replaced by
any larger number if $\vdim > 2$, see \ref{remark:discussion1}. However, using
the present result, it will be shown in \cite{snulmenn:c2.v3} that ``$<
\infty$'' can be replaced by ``$=0$'', see \ref{remark:discussion1}. The
corollary is a direct consequence of the following pointwise result.
\begin{citing} [\ref{thm:pointwise_decay} Theorem]
	Suppose $\vdim$, $\adim$, and $p$ are as in the preceding hypotheses
	\eqref{hp}, $Q$ is a positive integer,
	$0 < \delta \leq 1$, $0 < \alpha \leq 1$, $0 < \tau \leq 1$, and
	\begin{enumerate}
		\item if $\vdim = 1$ then $p=1$ and $\tau = 1$,
		\item if $\vdim = 2$ then $1 \leq p < \vdim$ and $p/2 \leq
		\tau < \frac{\vdim p}{2(\vdim-p)}$,
		\item if $\vdim > 2$ then $1 \leq p < \vdim$ and $\tau =
		\frac{\vdim p}{2 ( \vdim-p)}$.
	\end{enumerate}

	Then there exist positive, finite numbers $\varepsilon$ and $\Gamma$
	with the following property.

	If $a \in \rel^\adim$, $0 < r < \infty$, $V \in \IVar_\vdim (
	\oball{a}{r} )$, $V$ is related to $p$ as in the preceding hypotheses
	\eqref{hp}, $\psi$ is the measure defined by
	\begin{gather*}
		\psi = \| \delta V \| \quad \text{if $p = 1$} \qquad
		\text{and} \qquad \psi = | \mathbf{h} ( V ; \cdot ) |^p \| V
		\| \quad \text{if $p > 1$},
	\end{gather*}
	$T \in \grass{\adim}{\vdim}$, $\omega : \classification{\rel}{t}{0 < t
	\leq 1} \to \rel$ with
	\begin{gather*}
		\omega (t) = t^{\alpha \tau} \quad \text{if $\alpha \tau < 1$}
		\qquad \text{and} \qquad \omega (t) = t ( 1 + \log (1/t))
		\quad \text{if $\alpha \tau = 1$}
	\end{gather*}
	whenever $0 < t \leq 1$, and $0 < \gamma \leq \varepsilon$,
	\begin{gather*}
		\density^{\ast \vdim} ( \| V \|, a ) \geq Q-1+\delta, \quad
		\measureball{\| V \|}{\oball{a}{r}} \leq ( Q + 1 - \delta )
		\unitmeasure{\vdim} r^\vdim, \\
		\big ( r^{-\vdim} \tint{}{}| \project{S} - \project{T} |^2 \ud
		V (z,S) \big )^{1/2} \leq \gamma, \\
		\| V \| (
		\classification{\cball{a}{\varrho}}{z}{\density^\vdim ( \| V
		\|, z ) \leq Q-1} ) \leq \varepsilon \unitmeasure{\vdim}
		\varrho^\vdim \quad \text{for $0 < \varrho < r$}, \\
		\varrho^{1-\vdim/p} \psi ( \cball{a}{\varrho} )^{1/p} \leq
		\gamma^{1/\tau} ( \varrho/r )^\alpha \quad \text{for $0 <
		\varrho < r$},
	\end{gather*}
	then $\density^\vdim ( \| V \|, a ) = Q$, $R = \Tan^\vdim ( \| V \|, a
	) \in \grass{\adim}{\vdim}$ and
	\begin{gather*}
		\big ( \varrho^{-\vdim} \tint{\oball{a}{\varrho} \times
		\grass{\adim}{\vdim}}{} | \project{S} - \project{R} |^2 \ud V
		(z,S) \big )^{1/2} \leq \Gamma \gamma \omega ( \varrho/r )
		\quad \text{whenever $0 < \varrho \leq r$}.
	\end{gather*}
\end{citing}
In order to comment on this theorem, assume $\vdim > 2$.

In case $\frac{\vdim p}{\vdim-p} = 2$, the theorem states that if the first
variation, i.e.~the mean curvature if $p >1$, expressed in terms of $\psi$
decays with power $\alpha < 1$ so does the tilt-excess of the varifold
provided essentially that the tilt-excess is initially small and the density,
restricted to the complement of a set with small density at $a$, is lower
semicontinuous at $a$.  If $\alpha = 1$, the modulus of continuity $\omega$
obtained is optimal as demonstrated by an example in
\ref{remark:pointwise_decay}, in particular one cannot take $\omega (t) = t$.
Moreover, this sharp result seems not to be obtainable using classical excess
decay methods as will be explained below.

In the case $\frac{\vdim p}{\vdim-p} < 2$, the situation is different. Decay
of the mean curvature with power $\alpha$ implies, under the same assumptions
as before, decay of the tilt-excess with some smaller power $\alpha \tau$ with
$\tau = \frac{\vdim p}{2(\vdim-p)}$. This number $\tau$ cannot be replaced by
any larger number, see \ref{remark:pointwise_decay2}.

For comparison one may consider the analogous question replacing integral
varifolds by weakly differentiable functions and variation of mass by
variation of the Dirichlet integral. Therefore suppose $u : \rel^\vdim \to
\rel^\codim$ is weakly differentiable, $T$ is the distributional Laplacian of
$u$, i.e. $T \in \mathscr{D}' ( \rel^\vdim, \rel^\codim )$ is given by
\begin{gather*}
	T ( \theta) = - \tint{}{} D \theta (x) \bullet \weakD u (x) \ud
	\mathscr{L}^\vdim x \quad \text{for $\theta \in \mathscr{D} (
	\rel^\vdim, \rel^\codim )$},
\end{gather*}
$T$ is representable by integration and, if $p > 1$, $T$ corresponds to a
locally $p$-th power summable function. Then one may investigate which decay
properties of
\begin{gather*}
	\big ( \tfint{\oball{c}{\varrho}}{} | \weakD u (x) - \tau |^2 \ud
	\mathscr{L}^\vdim x \big )^{1/2}
\end{gather*}
as $\varrho \to 0+$, where $(c,\tau) \in \rel^\vdim \times \Hom ( \rel^\vdim,
\rel^\codim )$, are implied by decay hypotheses on
\begin{gather*}
	\varrho^{1-\vdim} \measureball{\| T \|}{\oball{c}{\varrho}} \quad
	\text{if $p=1$}, \qquad \varrho^{1-\vdim/p} \norm{f}{p}{c,\varrho}
	\quad \text{if $p > 1$}.
\end{gather*}
Clearly, the varifold problem behaves less regular than the problem for weakly
differentiable functions as known examples show that a decay hypothesis on
$\psi$ alone is not sufficient to infer decay of the tilt-excess, see
\ref{remark:classical_examples}. However, apart from this the varifold problem
behaves equally regular if $\frac{\vdim p}{\vdim-p} = 2$ as the same decay
implications hold and it even behaves more regular if $\frac{\vdim p}{\vdim-p}
< 2$ since in this case decay results are only valid in the varifold case (as
$\weakD u$ may not be locally square summable). In case $p=1$ this latter
phenomenon was already apparent from the results of Brakke.

Summarising, the pointwise implications of Theorem \ref{thm:pointwise_decay}
are essentially optimal and determine the optimal $\alpha$ for which the
answer to the initial question is in the affirmative if $\vdim > 2$ and $p <
2\vdim/(\vdim+2)$. Using the estimate \ref{lemma:aux_c2rekt} of the present
paper, the optimal $\alpha$ is determined in case $\vdim=1$ or $\vdim = 2$ and
$p > 1$ or $\vdim > 2$ and $p \geq 2 \vdim / ( \vdim+2 )$ in
\cite{snulmenn:c2.v3}, see \ref{remark:discussion2}. This then covers all
cases except $(\vdim,p) = (2,1)$ where Corollary \ref{cor:some_decay_rates}
solves the subcase $\alpha < 1$.
\paragraph{Overview of proof}
As indicated above the main tool in the pointwise regularity proof is the
harmonic approximation procedure introduced by de~Giorgi and Almgren, see
\cite{MR0179651,EnnioDeGiorgi_SelectedPapers,MR0225243}. It requires the
varifold to be weakly close to a plane with density $Q$ and strongly close to
a varifold with density at least $Q$. Initially, the latter condition was
phrased as $\density^\vdim ( \| V \|, z ) \geq Q$ for $\| V \|$ almost all $z
\in \oball{a}{r}$ in Allard \cite[\S 8]{MR0307015}, however the set of points
$a$ not satisfying this condition for suitable $Q$ and $r$ may have positive
$\| V \|$ measure even if the hypotheses are satisfied with $p = \infty$, see
Allard \cite[8.1\,(2)]{MR0307015} and Brakke \cite[6.1]{MR485012}. Replacing
the condition by the requirement on $\density^\vdim ( \| V \|, \cdot )$ to be
$\| V \|$ approximately (lower semi) continuous, Brakke was able to treat
almost all points with $p = 2$ using an approximation by Almgren's
``$Q$-valued'' functions, i.e.~functions with values in $\qspace_Q (
\rel^\codim )$, see below. Additionally, Brakke established a coercive
estimate which allowed him to obtain partial results also for the case $p=1$.

Taking this as a starting point, it will be described, firstly, the new
ingredient needed to obtain the optimal modulus of continuity for the case
$p=2$, secondly, the new ingredient needed to obtain optimal results in case
$p < 2$ and, thirdly, how these new ingredients can be implemented within the
known framework of a (partial or pointwise) regularity proof.
\subparagraph{Obtaining the optimal modulus of continuity for $p=2$} For this
purpose a new iteration procedure is introduced which is now presented in the
simple case of the Laplace operator. Additionally, in Section
\ref{sec:model_case}, it is shown how to implement this method in a model case
from partial regularity theory for second order elliptic systems in divergence
form.  Suppose $c \in \rel^\vdim$, $u \in \Sob{}{1}{2} ( \oball{c}{1},
\rel^\codim)$, $T \in \mathscr{D}' ( \oball{c}{1}, \rel^\codim )$ is the
distributional Laplacian of $u$, and assume for some $0 \leq \gamma < \infty$
and $0 < \alpha \leq 1$ that
\begin{gather*}
	\varrho^{-\vdim/2} | T ( \theta ) | \leq \gamma \varrho^\alpha
	\norm{D\theta}{2}{c,\varrho}
\end{gather*}
whenever $\theta \in \mathscr{D} ( \oball{c}{1}, \rel^\codim )$ with $\spt
\theta \subset \oball{c}{\varrho}$ and $0 < \varrho \leq 1$, where
$\norm{f}{p}{c,\varrho}$ denotes the seminorm of $|f| \in \Lp{p} (
\mathscr{L}^\vdim \restrict \oball{c}{\varrho} )$. Define $J =
\classification{\rel}{r}{0 < \varrho \leq 1}$, for each $\varrho \in J$ choose
$u_\varrho : \oball{c}{\varrho} \to \rel^\codim$ harmonic with boundary values
given by $u$, i.e.
\begin{gather*}
	u_\varrho \in \mathscr{E} ( \oball{c}{\varrho}, \rel^\codim ) \quad
	\text{with} \quad \Lap u_\varrho = 0, \\
	u-u_\varrho \in \Sob{0}{1}{2} ( \oball{c}{\varrho}, \rel^\codim),
\end{gather*}
define $\phi_1 : J \to \rel$ and $\phi_2 : J \times \Hom (\rel^\vdim,
\rel^\codim) \to \rel$ by
\begin{gather*}
	\phi_1 (\varrho) = \norm{D^2 u_\varrho}{\infty}{c,\varrho/2}, \quad
	\phi_2 (\varrho,\sigma) = \varrho^{-\vdim/2} \norm{\weakD
	(u-\sigma)}{2}{c,\varrho}
\end{gather*}
for $(\varrho,\sigma) \in J \times \Hom ( \rel^\vdim, \rel^\codim )$ and
choose $\sigma_\varrho \in \Hom ( \rel^\vdim, \rel^\codim )$ such that
\begin{gather*}
	\phi_2 ( \varrho, \sigma_\varrho ) \leq \phi_2 (\varrho,\sigma) \quad
	\text{whenever $\sigma \in \Hom ( \rel^\vdim, \rel^\codim )$, $\varrho
	\in J$}.
\end{gather*}
Using a priori estimates, see \cite[Theorems 7.26\,(ii), 8.10,
9.11]{MR1814364}, one estimates
\begin{gather*}
	\begin{aligned}
		& \phi_1 (\varrho/4) - \phi_1 (\varrho) \leq \norm{D^2 (
		u_\varrho - u_{\varrho/4})}{\infty}{c,\varrho/8} \leq \Delta
		\varrho^{-1-\vdim/2} \norm{D ( u_\varrho -
		u_{\varrho/4})}{2}{c,\varrho/4}  \\
		& \qquad \leq \Delta \varrho^{-1-\vdim/2} \big ( \norm{\weakD
		(u-u_{\varrho/4})}{2}{c,\varrho/4} + \norm{\weakD
		(u-u_\varrho)}{2}{c,\varrho} \big ) \leq 2 \Delta \gamma
		\varrho^{\alpha-1}
	\end{aligned}
\end{gather*}
for some positive, finite number $\Delta$ depending only on $\adim$ and
\begin{gather*}
	\begin{aligned}
		\phi_2 ( \varrho, \sigma_\varrho ) & \leq \varrho^{-\vdim/2}
		\big ( \norm{\weakD(u-u_\varrho)}{2}{c,\varrho} + \norm{D
		(u_\varrho-Du_\varrho(c))}{2}{c,\varrho} \big ) \\
		& \leq \gamma \varrho^\alpha + \unitmeasure{\vdim}^{1/2}
		\varrho \phi_1 (\varrho),
	\end{aligned}
\end{gather*}
hence obtains the two \emph{iteration inequalities}
\begin{gather*}
	\phi_1 (\varrho/4) \leq \phi_1 (\varrho) + \Gamma \gamma
	\varrho^{\alpha-1}, \quad \phi_2 (\varrho,\sigma_\varrho) \leq \Gamma
	\big ( \varrho \phi_1 (\varrho) + \gamma \varrho^\alpha \big )
\end{gather*}
for $\varrho \in J$ where $\Gamma = \sup \{ 2\Delta, 1,
\unitmeasure{\vdim}^{1/2} \}$.

Now, if $0 \leq \gamma_1 < \infty$, $\phi_1 (\varrho) \leq \gamma_1
\varrho^{\alpha-1}$ and $\alpha < 1$ then
\begin{gather*}
	\phi_1 (\varrho/4) \leq (\varrho/4)^{\alpha-1} \big ( 4^{\alpha-1}
	\gamma_1 + \Gamma \gamma \big ) \leq \gamma_1 (\varrho/4)^{\alpha-1}
\end{gather*}
provided $\gamma_1 \geq (1-4^{\alpha-1})^{-1} \Gamma \gamma$, noting
$4^{\alpha-1} < 1$. Similarly, if $0 \leq \gamma_1 < \infty$, $\phi_1 (
\varrho ) \leq \gamma_1 ( 1 + \log (1/\varrho))$ and $\alpha = 1$ then
\begin{gather*}
	\phi_1 (\varrho/4) \leq \gamma_1 ( 1 + \log (4/\varrho) ) - ( \log 4 )
	\gamma_1 + \Gamma \gamma \leq \gamma_1 ( 1 + \log (4/\varrho) )
\end{gather*}
provided $\gamma_1 \geq \Gamma \gamma ( \log 4 )^{-1}$. In both cases it has
been used crucially that the factor in front of $\phi_1(\varrho)$ in the first
iteration inequality is $1$. This is the reason for choosing $\phi_1$ rather
than $\phi_2$ as leading iteration quantity. The decay of $\phi_2
(\varrho,\sigma_\varrho)$ in terms of $\varrho$ then follows.

Classically, an excess decay inequality of type
\begin{gather*}
	\phi_2 ( \lambda \varrho , \sigma_{\lambda \varrho} ) \leq \Gamma_1
	\lambda \phi_2 (\varrho,\sigma_\varrho) + \Gamma_2 \gamma
	\varrho^\alpha \quad \text{for $0 < \lambda \leq 1/2$, $0 < \varrho
	\leq 1$}
\end{gather*}
where $1 \leq \Gamma_i < \infty$ for $i \in \{1,2\}$ is used, see
e.g.~\cite[5.3.13]{MR41:1976} or Duzaar and Steffen \cite[(5.14)]{MR1900994}.
Sometimes, $\Gamma_2$ additionally depends on $\lambda$. However, concerning
the case $\alpha=1$, the optimal modulus of continuity cannot be deduced from
such an inequality since if $1 < \Gamma_1 < \infty$ and $1/e< \Gamma_2 <
\infty$ then it does not exclude that $\phi_2 (\varrho,\sigma_\varrho)$ may
equal $\gamma \varrho ( 1 + \log (1/\varrho) )^s$ for some $s > 1$ with
$2^{s-1} \leq \Gamma_1$ and $(2s/e)^s \leq 2 \Gamma_2$.
\subparagraph{Treating the case $p < 2$} The second new ingredient in the
regularity proof will be described focusing on the case $\vdim > 2$. In doing
so, a quantity of type
\begin{gather*}
	\varrho^{-1-\vdim/q} \big ( \tint{\cball{a}{\varrho}}{} \dist
	(z-a,T)^q \ud \| V \| z \big )^{1/q}
\end{gather*}
for $U$ and $V$ as in the hypotheses with $a \in \rel^\adim$, $0 <\varrho <
\infty$, $\cball{a}{\varrho} \subset U$, $T \in \grass{\adim}{\vdim}$ and $1
\leq q < \infty$ will be referred to as $q$-height. To derive sharp results
with respect to the integrability of the mean curvature two observations will
be essential.  Firstly, the dependence on the mean curvature in Brakke's
coercive estimate, see \cite[5.5]{MR485012}, can be improved at the price of
using the $q$-height with $q=\frac{2\vdim}{\vdim-2}$ instead of the
$2$-height, see \ref{lemma:coercive_estimate}. Secondly, in order to control
the $q$-height, the Sobolev Poincar\'e type estimates of
\cite{snulmenn.poincare} are adapted. However, a subtlety arises. The
mentioned estimates are in full strength only available for the $q$-height on
the set $H$ of points satisfying a smallness condition on the mean curvature,
see also the discussion in \cite[4.6]{snulmenn.poincare}. As estimating the
$q$-height on the complement of $H$ by mean curvature would be insufficient
for the present purpose, the coercive estimate of Brakke has to be improved a
second time by showing the $q$-height on $H$, mean curvature and $2$-height
are actually sufficient to control the tilt-excess, see
\ref{lemma:coercive_estimate_rect}.  This is accomplished by constructing a
possibly noncontinuous cut-off function with properties reminiscent of a
weakly differentiable function, including a partial integration formula,
Sobolev embedding and approximate differentiability, see \ref{lemma:capacity}
and \ref{remark:capacity}.  These properties are deduced directly from the
construction rather than from a general theory.
\subparagraph{Implementation of proof} Finally, it will be indicated briefly
how the previously described pieces fit into the well known pattern of a
partial regularity proof. As usual, one assumes the varifold to be close to
$Q$ parallel planes with respect to mass, tilt-excess and first variation.
Fixing a suitable orthogonal coordinate system, one approximates the varifold
by a Lipschitzian $\qspace_Q ( \rel^\codim )$ valued function $f$. Recall that
$\qspace_Q ( \rel^\codim )$ may be described as the $Q$ fold product of
$\rel^\codim$ divided by the action of the group of permutations of $\{ 1,
\ldots, Q \}$. The accuracy of this approximation is controlled by tilt-excess
and mean curvature. To obtain the comparison functions $u_\varrho$, one
considers the Dirichlet problem with the linear elliptic system with constant
coefficients given by a suitable linearisation of the nonparametric area
integrand and boundary values given by the ``average'' $g$ of $f$. This is
somewhat different from the usual procedure where the comparison functions are
often constructed either within contradiction arguments (see e.g.~Allard
\cite[8.16]{MR0307015} or Brakke \cite[5.6]{MR485012}) or by an ``$A$-harmonic
approximation lemma'' which confines the contradiction argument to the
situation of linear systems with constant coefficients (see e.g.~Simon
\cite[21.1]{MR87a:49001} or Duzaar and Steffen \cite[3.3]{MR1900994}); however
see also Schoen and Simon \cite{MR652826} for a different approach.  The
distributional right hand side for $g-u_\varrho$ can be estimated by mean
curvature and a small multiple of the tilt-excess provided a suitable weak
norm is employed, namely a norm dual to the norm mapping a smooth function
with compact support to the $\Lp{\infty} ( \mathscr{L}^\vdim , \Hom (
\rel^\vdim, \rel^\codim ) )$ norm of its derivatives. This only yields
smallness of $g-u_\varrho$ in Lebesgue spaces with exponent below
$\frac{\vdim}{\vdim-1}$ if $\vdim > 1$, e.g.~in $\Lp{1} ( \mathscr{L}^\vdim
\restrict \oball{c}{\varrho} , \rel^\codim )$, here $c \in \rel^\vdim$
corresponds to $a \in \rel^\adim$, see
\ref{lemma:iteration}\,\eqref{item:iteration:l1_estimate}.  However, assuming
that the set of points with density strictly below $Q$ is small with respect
to $\| V \|$, the graph of $g$ coincides with the varifold on a large set,
hence using interpolation (Section \ref{sec:interpolation}) and estimates for
the approximation by $f$ (see Section \ref{sec:approx}), one can ultimately
convert $\Lp{1} ( \mathscr{L}^\vdim \restrict \oball{c}{\varrho} ,
\rel^\codim)$ closeness of $g$ to an affine function via the coercive estimate
to control of the tilt-excess of the varifold with respect to the
corresponding plane. From these estimates one readily obtains modified
versions of the iteration inequalities which -- upon simultaneous iteration --
yield the result.
\paragraph{Acknowledgement} The author offers his thanks to Prof. Dr. Reiner
Sch\"atz\-le from whose education the author benefited greatly in writing the
present paper. Additionally, the author would like to thank Prof. Dr. Tom
Ilmanen for several related discussions.
\section{Notation} \label{sec:not}
\paragraph{General} The notation follows \cite{MR41:1976}, see the list of
symbols on pp.~669--671 therein. In  particular, recall the following maybe
less common symbols: $\nat$ denoting the positive integers, $\oball{a}{r}$ and
$\cball{a}{r}$ denoting respectively the open and closed ball with centre $a$
and radius $r$, $\bigodot^i ( V,W )$ and $\bigodot^i V$ denoting the vector
space of all $i$ linear symmetric functions (forms) mapping $V^i$ into $W$ and
$\rel$ respectively, and the seminorms $\phi_{(p)}$ for $1 \leq p \leq \infty$
corresponding to the Lebesgue spaces
\begin{align*}
	\Lpnorm{\phi}{p}{f} & = \big( \tint{}{} |f|^p \ud \phi \big)^{1/p}
	\quad \text{in case $1 \leq p < \infty$}, \\
	\Lpnorm{\phi}{\infty}{f} & = \inf ( \classification{\rel}{t}{\phi (
	\classification{X}{x}{|f(x)|>t} ) = 0} )
\end{align*}
whenever $\phi$ measures $X$, $Y$ is a Banach space, and $f : X \to Y$ is
$\phi$ measurable, see \cite[2.2.6, 2.8.1, 1.10.1, 2.4.12]{MR41:1976}. The
notation for the Lebesgue seminorms is particularly convenient when longer
expressions replace the measure $\phi$ as will repeatedly be the case in
\ref{lemma:lipschitz_approximation}\,\eqref{item:lipschitz_approximation:poincare}.

Moreover, the following slight modifications and additions apply. (For the
convenience of the reader in this section for nearly every symbol the
appropriate reference to its definition in \cite{MR41:1976} is given at its
first occurrence.)

One defines $f \lIm A \rIm = \{ y \with \text{$(x,y) \in f$ for some $x \in
A$} \}$ whenever $f$ is a relation and $A$ is a set, see
\cite[p.\,8]{MR16:1136c}.

If $\vdim, \adim \in \nat$, $\vdim \leq \adim$, $T \in \grass{\adim}{\vdim}$
then $\project{T}$ is characterised by, see \cite[2.2.6, 1.6.2,
1.7.4]{MR41:1976},
\begin{gather*}
	\project{T} \in \Hom ( \rel^\adim, \rel^\adim ), \quad \project{T} =
	\project{T}^\ast, \quad \project{T} \circ \project{T} = \project{T},
	\quad \im \project{T} = T
\end{gather*}
and $T^\perp = \ker \project{T}$, see Almgren \cite[T.1\,(9)]{MR1777737} and
Allard \cite[2.3]{MR0307015}.

Similar to Allard's definition in \cite[8.10]{MR0307015}, the \emph{closed
cuboid} $\cylinder{T}{a}{r}{h}$ is defined by
\begin{gather*}
	\cylinder{T}{a}{r}{h} =
	\classification{\rel^\adim}{z}{\text{$|\project{T}(z-a)|\leq r$ and
	$|\project{T}^\perp(z-a)| \leq h$}}
\end{gather*}
whenever $\vdim, \adim \in \nat$, $\vdim < \adim$, $T \in
\grass{\adim}{\vdim}$, $a \in \rel^\adim$, $0 < r < \infty$, and $0 < h \leq
\infty$. One abbreviates $\cylinder{T}{a}{r}{\infty} = \cylind{T}{a}{r}$. (The
symbol $\cylind{T}{a}{r}$ is used by Allard in \cite[8.10]{MR0307015} to
denote $\classification{\rel^\adim}{z}{\text{$| \project{T} (z-a) | < r$}}$.)

Whenever $\phi$ measures $X$, $0 < \phi ( A ) < \infty$, $Y$ is a Banach
space, and $f \in \Lp{1} ( \phi \restrict A, Y )$ the symbol $\fint_A f \ud
\phi$ denotes $\phi(A)^{-1} \int_A f \ud \phi$, see \cite[2.4.12]{MR41:1976}.

Following Almgren \cite[p.~464]{MR855173}, whenever $\adim \in \nat$ the
number $\besicovitch{\adim}$ denotes the least positive integer with the
following property, see \cite[2.8.14]{MR41:1976}: If $F$ is a family of closed
balls in $\rel^\adim$ with $\sup \{ \diam S \with S \in F \} < \infty$ then
there exist disjointed subfamilies $F_1, \ldots, F_{\besicovitch{\adim}}$ of
$F$ such that, see \cite[2.8.8, 2.8.1]{MR41:1976},
\begin{gather*}
	\{ z \with \text{$\cball{z}{r} \in F$ for some $0 < r < \infty$} \}
	\subset {\textstyle\bigcup\bigcup} \{ F_i \with i = 1, \ldots,
	\besicovitch{\adim} \}.
\end{gather*}

\paragraph{Varifolds} The meaning of the symbols $\Var_\vdim$, $\RVar_\vdim$,
$\IVar_\vdim$, $\| V \|$, $\delta V$, and $\| \delta V \|$ will be introduced
in accordance with Allard \cite[3.1, 3.5, 4.2]{MR0307015}.

Suppose $U$ is an open subset of $\rel^\adim$ and the Grassmann manifold
$\grass{\adim}{\vdim}$ of all $\vdim$ dimensional subspaces is equipped with
the usual topology, see \cite[3.2.28\,(4)]{MR41:1976}. An \emph{$\vdim$
dimensional varifold} $V$ in $U$ is a Radon measure on $U \times
\grass{\adim}{\vdim}$. The \emph{weight} $\| V \|$ of $V$ is given by $\| V \|
(A) = V ( A \times \grass{\adim}{\vdim} )$ for $A \subset U$. The
distributional \emph{first variation} with respect to area of a varifold $V$
is given by
\begin{gather*}
	\delta V ( \theta ) = \tint{}{} D\theta(z) \bullet \project{S} \ud V
	(z,S) \quad \text{whenever $\theta \in \mathscr{D} ( U, \rel^\adim)$}
\end{gather*}
with associated Borel regular measure $\| \delta V \|$ characterised by
\begin{gather*}
	\| \delta V \| ( Z ) = \sup \{ \delta V ( \theta ) \with \text{$\theta
	\in \mathscr{D} ( U, \rel^\adim )$ with $\spt \theta \subset Z$ and
	$|g(z)| \leq 1$ for $z \in U$} \}
\end{gather*}
whenever $Z$ is an open subset of $U$, see \cite[4.1.1, 2.2.3]{MR41:1976}. If
$V$ is an $\vdim$ dimensional varifold in $U$ and $\| \delta V \|$ is a Radon
measure, the \emph{generalised mean curvature vector of $V$ at $z$} is the
unique $\mathbf{h} (V;z) \in \rel^\adim$ such that
\begin{gather*}
	\mathbf{h} (V;z) \bullet v = - \lim_{r \pluslim{0}} \frac{(\delta V) (
	b_{z,r} \cdot v )}{\measureball{\| V \|}{\cball{z}{r}}} \quad
	\text{for $v \in \rel^\adim$}
\end{gather*}
where $b_{z,r}$ is the characteristic function of $\cball{z}{r}$; hence $z \in
\dmn \mathbf{h} (V;\cdot)$ if and only if the above limit exists for every $v
\in \rel^\adim$. This modifies Allard's definition \cite[4.3]{MR0307015} in
the spirit of \cite[4.1.7]{MR41:1976}.

An $\vdim$ dimensional varifold $V$ in $U$ is \emph{rectifiable} if and only
if there exist sequences $c_i$, $A_i$ and $M_i$ such that $0 < c_i < \infty$,
$M_i$ are $\vdim$ dimensional submanifolds of class $\class{1}$, $A_i$ are
$\mathscr{H}^\vdim$ measurable subsets of $M_i$ and
\begin{gather*}
	V(f) = \tsum{i=1}{\infty} c_i \tint{A_i}{} f ( z, \Tan ( M_i, z ) )
	\ud \mathscr{H}^\vdim z \quad \text{for $f \in \mathscr{K} ( U \times
	\grass{\adim}{\vdim} )$},
\end{gather*}
see \cite[3.1.21, 2.5.14, 2.10.2]{MR41:1976}. In this case $0 < \density^\vdim
( \| V \|, z ) < \infty$ and $\Tan^\vdim ( \| V \|, z ) \in
\grass{\adim}{\vdim}$ for $\| V \|$ almost all $z$ and
\begin{gather*}
	V(f) = \tint{}{} f (z,\Tan^\vdim ( \| V \|, z ) ) \density^\vdim ( \|
	V \|, z ) \ud \mathscr{H}^\vdim z \quad \text{for $f \in \mathscr{K} (
	U \times \grass{\adim}{\vdim})$},
\end{gather*}
see \cite[2.10.19, 3.2.16]{MR41:1976}. A rectifiable varifold is called
\emph{integral} if and only if $\density^\vdim ( \| V \|, z)$ is a positive
integer for $\| V \|$ almost all $z$. The set of all rectifiable [integral]
$\vdim$ dimensional varifolds in $U$ is denoted by $\RVar_\vdim (U )$
[$\IVar_\vdim ( U)$].

As in \cite[2.2--2.4]{snulmenn.isoperimetric} whenever $\vdim \in \nat$ the
smallest number with the following property will be denoted by
$\isoperimetric{\vdim}$: If $\adim \in \nat$, $\vdim \leq \adim$, $V \in
\RVar_\vdim ( \rel^\adim )$, $\| V \| ( \rel^\adim ) < \infty$, and $\| \delta
V \| ( \rel^\adim ) < \infty$, then
\begin{gather*}
	\| V \| ( \classification{\rel^\adim}{z}{\density^\vdim ( \| V \|, z )
	\geq 1 )} ) \leq \isoperimetric{\vdim} \| V \| ( \rel^\adim
	)^{1/\vdim} \| \delta V \| ( \rel^\adim ).
\end{gather*}
Note $\vdim^{-1} \unitmeasure{\vdim}^{-1/\vdim} \leq \isoperimetric{\vdim} <
\infty$.
\paragraph{Weakly differentiable functions and distributions} Suppose $\vdim
\in \nat$, $U$ is an open subset of $\rel^\vdim$, $e_1, \ldots, e_\vdim$
denote the standard base of $\rel^\vdim$, $Y$ is a finite dimensional Hilbert
space, $k$ is a nonnegative integer, and $u$ is an $\mathscr{L}^\vdim
\restrict U$ measurable function with values in $Y$.  Then $u$ is called
\emph{$k$ times weakly differentiable in $U$} if and only if
\begin{enumerate}
	\item $u \in \Lp{1} ( \mathscr{L}^\vdim \restrict K, Y )$ for every
	compact subset $K$ of $U$,
	\item defining $T \in \mathscr{D}' ( U, Y )$ by $T(\theta) = \int_U
	\theta \bullet u \ud \mathscr{L}^\vdim$ for $\theta \in \mathscr{D} (
	U, Y )$, the distributions $D^\alpha T$ corresponding to all $\alpha
	\in \Xi ( \vdim, i )$ and $i = 0, \ldots, k$ are representable by
	integration and the measures $\| D^\alpha T \|$ are absolutely
	continuous with respect to $\mathscr{L}^\vdim \restrict U$, see
	\cite[1.9.2, 1.10.1, 2.9.2, 4.1.1, 4.1.5]{MR41:1976}, ($\alpha$ is
	sometimes called ``multi-index of length $i$'').  \intertextenum{In
	this case for $i = 0, \ldots, k$ the $\mathscr{L}^\vdim \restrict U$
	measurable functions $\weakD^i u$ with values in $\bigodot^i (
	\rel^\vdim, Y )$ are characterised by the following two conditions
	(here and in the following $\bigodot^i ( \rel^\vdim, Y )$ is equipped
	with an inner product as in \cite[1.10.6]{MR41:1976}):}
	\item $D^\alpha T ( \theta ) = \int_U \theta (x) \bullet \left <
	e^\alpha, \weakD^i u (x) \right > \ud \mathscr{L}^\vdim x$ whenever
	$\theta \in \mathscr{D} (U,Y )$ and $\alpha \in \Xi ( \vdim, i )$
	where $e^\alpha = (e_1)^{\alpha_1} \odot \cdots \odot
	(e_\vdim)^{\alpha_\vdim}$ is constructed from the standard base $e_1,
	\ldots, e_\vdim$ of $\rel^\vdim$, see \cite[1.9.2, 1.10.1]{MR41:1976};
	in particular $\weakD^i u$ is $0$ times weakly differentiable in $U$.
	\item \label{item:representative} $\weakD^i u (a) = \lim_{r
	\pluslim{0}} \fint_{\cball{a}{r}} \weakD^i u \ud \mathscr{L}^\vdim$
	whenever $a \in U$; hence $a \in \dmn \weakD^i u$ if and only if the
	preceding limit exists.
\end{enumerate}
Also, $1$ times weakly differentiable in $U$ is abbreviated to \emph{weakly
differentiable in $U$} and $\weakD^1 u$ to $\weakD u$. In particular, the
symbols $\weakD^i$, $\weakD$ will not be used in the sense of \cite[1.5.2,
2.9.1, 4.1.6]{MR41:1976}. $\Sob{}{k}{p} ( U, Y)$ denotes the \emph{Sobolev
space} of all $k$ times weakly differentiable functions in $U$ with values in
$Y$ such that $\weakD^i u \in \Lp{p} \big ( \mathscr{L}^\vdim \restrict U,
\bigodot^i ( \rel^\vdim, Y) \big )$ whenever $i = 0, \ldots, k$; the
corresponding seminorm of $u$ is given by $\sum_{i=0}^k ( \mathscr{L}^\vdim
\restrict U)_{(p)} ( \weakD^i u )$, see \cite[2.4.12]{MR41:1976}.
$\Sob{0}{k}{p} ( U, Y)$ denotes the closure of $\mathscr{D} ( U, Y )$ in
$\Sob{}{k}{p} (U,Y)$. Note that in these definitions neither in the Sobolev
spaces nor in the Lebesgue spaces functions agreeing $\mathscr{L}^\vdim
\restrict U$ almost everywhere are treated as single elements; instead
condition \eqref{item:representative} is employed.

If $\vdim \in \nat$, $U$ is an open subset $\rel^\vdim$, $Y$ is a separable
Hilbert space, $1 \leq p \leq \infty$, $A$ is an $\mathscr{L}^\vdim \restrict
U$ measurable set, and $u$ and $v$ are $\mathscr{L}^\vdim \restrict U$
measurable functions with values in $Y$ then $\norm{u}{p}{A} = (
\mathscr{L}^\vdim \restrict A )_{(p)} ( u)$ and, provided $\int_A |u(x)
\bullet v(x)| \ud \mathscr{L}^\vdim x < \infty$, $\pairing{u}{v}{A} = \int_A
u(x) \bullet v(x) \ud \mathscr{L}^\vdim x$. Moreover, $\norm{u}{p}{a,r} =
\norm{u}{p}{\oball{a}{r}}$ and $\pairing{u}{v}{a,r} =
\pairing{u}{v}{\oball{a}{r}}$ whenever $a \in \rel^\vdim$, $0 < r < \infty$
with $\oball{a}{r} \subset U$, see \cite[2.8.1]{MR41:1976}.  These notions
extend \cite[5.2.1]{MR41:1976}.  If additionally, $i$ is a nonpositive
integer, $1 \leq p \leq \infty$, $1 \leq q \leq \infty$, $1/p+1/q=1$, $T$ is a
real valued linear functional on $\mathscr{D} ( U, Y )$, and $V$ is an open
subset of $U$, then
\begin{gather*}
	\boldsymbol{|} T \boldsymbol{|}_{i,p;V} = \sup T \biglIm
	\classification{\mathscr{D}( U, Y)}{\theta}{\text{$\norm{D^{-i}
	\theta}{q}{U} \leq 1$ and $\spt \theta \subset V$}} \bigrIm
\end{gather*}
and $\boldsymbol{|} T \boldsymbol{|}_{i,p;a,r} = \boldsymbol{|} T
\boldsymbol{|}_{i,p;\oball{a}{r}}$ whenever $a \in \rel^\vdim$, $0 < r <
\infty$ with $\oball{a}{r} \subset U$.
\paragraph{Almgren's multiple valued functions} The notation for functions
with values in $\qspace_Q ( \rel^\codim )$ for $\vdim, \adim, Q \in \nat$ with
$\vdim < \adim$ which originate from Almgren's work in \cite{MR1777737} will
be introduced in Section \ref{sec:almgren} together with basic properties.
\paragraph{A convention} Finally, each statement asserting the existence of a
positive, finite number, small ($\varepsilon$) or large ($\Gamma$), will give
rise to a function depending on the listed parameters whose ``name'' is
$\varepsilon_{\mathrm{x.y}}$ or $\Gamma_{\mathrm{x.y}}$ where $\mathrm{x.y}$
denotes the number of the statement. Occasionally, also
$\lambda_{\mathrm{x.y}}$ is used similarly.
\section{Basic facts for $\qspace_Q (V)$ valued functions} \label{sec:almgren}
This section provides some basic definitions for $\qspace_Q (V)$ valued
functions mainly taken from Almgren \cite{MR1777737} in
\ref{miniremark:almgren1}, \ref{miniremark:almgren2} and
\ref{definition:norms} and a proposition from \cite{snulmenn.poincare} in
\ref{miniremark:f_i}. Finally, the first variation for the varifold associated
to the ``graph'' of a $\qspace_Q ( \rel^\codim )$ valued functions is given in
\ref{miniremark:extend_distrib} and \ref{miniremark:first_variation}.
\begin{miniremark} [cf. \protect{\cite[1.1\,(1)\,(3), 2.3\,(2)]{MR1777737}}] \label{miniremark:almgren1}
	Suppose $Q \in \nat$ and $V$ is a finite dimensional Euclidean
	vector space.

	$\qspace_Q ( V )$ is defined to be the set of all $0$ dimensional
	integral currents $R$ such that $R = \sum_{i=1}^Q \Lbrack x_i \Rbrack$
	for some $x_1, \ldots, x_Q \in V$. A metric $\mathscr{G}$ on
	$\qspace_Q (V)$ is defined such that
	\begin{gather*}
		\mathscr{G} \big ( {\textstyle\sum_{i=1}^Q} \Lbrack
		x_i \Rbrack , {\textstyle\sum_{i=1}^Q}
		\Lbrack y_i \Rbrack \big ) = \min \Big \{
		\big ( {\textstyle\sum_{i=1}^Q} | x_i - y_{\pi(i)} |^2 \big
		)^{1/2} \with \pi \in P(Q) \Big \}
	\end{gather*}
	whenever $x_1,\ldots,x_Q, y_1,\ldots,y_Q \in V$ where $P(Q)$ denotes
	the set of permutations of $\{ 1, \ldots, Q \}$. The function
	$\boldsymbol{\eta}_Q : \qspace_Q ( V ) \to
	V$\index{$\boldsymbol{\eta}_Q$} is defined by
	\begin{gather*}
		\boldsymbol{\eta}_Q ( R ) = Q^{-1} {\textstyle\int} x \ud \|
		R \| (x) \quad \text{whenever $R \in \qspace_Q (
		V )$}.
	\end{gather*}
	If $R = \sum_{i=1}^Q \Lbrack x_i \Rbrack$ for some
	$x_1, \ldots, x_Q \in V$, then $\boldsymbol{\eta}_Q ( R ) =
	\frac{1}{Q} \sum_{i=1}^Q x_i$. $\Lip \boldsymbol{\eta}_Q = Q^{-1/2}$.
	
	Whenever $f : X \to \qspace_Q (V)$ one defines
	\begin{align*}
		\graph_Q f & = \eqclassification{X \times V}{(x,v)}{\text{$v
		\in \spt f(x)$}}
	\end{align*}
	and with $g : X \to V$ also $f \aplus g : X \to \qspace_Q (V)$ by
	\begin{gather*}
		( f \aplus g ) ( x ) = (\boldsymbol{\tau}_{g (x)})_\# ( f (x)
		) \quad \text{whenever $x \in X$}.
	\end{gather*}
\end{miniremark}
\begin{miniremark} [cf. \protect{\cite[1.1\,(9)\,(10)]{MR1777737}}]
\label{miniremark:almgren2}
	Suppose $\vdim,\adim,Q \in \nat$ and $\vdim < \adim$.

	A function $f : \rel^\vdim \to \qspace_Q(\rel^\codim)$ is
	called \emph{affine} if and only if there exist affine functions $f_i
	: \rel^\vdim \to \rel^\codim$, $i = 1,\ldots,Q$ such that
	\begin{gather*}
		f (x) = {\textstyle\sum_{i=1}^Q} \Lbrack f_i (x)
		\Rbrack \quad \text{whenever $x \in \rel^\vdim$}.
	\end{gather*}
	$f_1,\ldots,f_Q$ are uniquely determined up to order. Moreover, one
	defines
	\begin{gather*}
		| f | = \big ( {\textstyle\sum_{i=1}^Q} | Df_i(0) |^2 \big
		)^{1/2}.
	\end{gather*}

	Let $a \in A \subset \rel^\vdim$ and $f : A \to \qspace_Q (
	\rel^\codim )$. $f$ is called \emph{affinely approximable at $a$} if
	and only if $a \in \Int A$ and there exists an affine function $g :
	\rel^\vdim \to \qspace_Q ( \rel^\codim )$ such that
	\begin{gather*}
		\lim_{x \to a} \mathscr{G} ( f(x),g(x) ) /|x-a| = 0.
	\end{gather*}
	The function $g$ is unique and denoted by $Af(a)$. $f$ is called
	\emph{strongly affinely approximable at $a$} if and only
	if $Af(a)$ has the following property: If $Af(a)(x) =
	\sum_{i=1}^Q \Lbrack g_i (x) \Rbrack$ for some
	affine functions $g_i : \rel^\vdim \to \rel^\codim$ and $g_i
	(a) = g_j (a)$ for some $i$ and $j$, then $Dg_i(a)=Dg_j(a)$. The
	concepts of \emph{approximate affine approximability} and
	\emph{approximate strong affine approximability} are obtained through
	omission of the condition $a \in \Int A$ and replacement of $\lim$ by
	$\aplim$. The corresponding affine function is denoted by $\ap Af(a)$.
\end{miniremark}
\begin{miniremark} \label{miniremark:f_i}
	The following proposition, see \cite[2.5,\,8]{snulmenn.poincare},
	will be used for calculations involving Lipschitzian $\qspace_Q (
	\rel^\codim )$ valued functions.

	\emph{If $\vdim,\adim,Q \in \nat$, $\vdim < \adim$, $A$ is
	$\mathscr{L}^\vdim$ measurable, $f : A \to \qspace_Q ( \rel^\codim)$
	is Lipschitzian, $I$ is countable, and to each $i \in I$ there
	corresponds a function $f_i \subset \graph_Q f$ with
	$\mathscr{L}^\vdim$ measurable domain and $\Lip f_i \leq \Lip f$ such
	that $$\card \{ i \with f_i(x)=y \} = \density^0 ( \| f (x) \|, y )
	\quad \text{whenever $(x,y) \in A \times \rel^\codim$},$$ then $f$ is
	approximately strongly affinely approximable with $$\ap Af(a)(v) =
	{\textstyle\sum_{i \in I(a)}} \Lbrack f_i (x) + \left < v, \ap Df_i
	(x) \right > \Rbrack \quad \text{whenever $v \in \rel^\vdim$}$$ at
	$\mathscr{L}^\vdim$ almost all $a \in A$ where $I(a) =
	\classification{I}{i}{a \in \dmn \ap D f_i }$. Moreover, such
	functions $f_i$ do exist whenever $\vdim$, $\adim$, $Q$, $A$, and $f$
	are as above, in particular $\graph_Q f$ is countably $\vdim$
	rectifiable. If $A$ is open, then $\ap Af$ may be replaced by $Af$.}
\end{miniremark}
\begin{definition} \label{definition:norms}
	Suppose $\vdim, \adim, Q \in \nat$, $\vdim < \adim$, $A
	\subset B \subset \rel^\vdim$, $A$ is $\mathscr{L}^\vdim$ measurable
	and $f : B \to \qspace_Q ( \rel^\codim )$ is Lipschitzian, $C_1 = \dmn
	\ap Af$, $C_2 = \dmn A f$, and $g : B \to \rel$ and $h_i : C_i \to
	\rel$ for $i \in \{1,2\}$ are defined by
	\begin{gather*}
		g ( x ) = \mathscr{G} ( f(x), Q \Lbrack 0 \Rbrack ) \quad
		\text{for $x \in B$}, \\
		h_1 (x) = | \ap A f (x) | \quad \text{for $x \in C_1$}, \quad
		h_2 (x) = | A f(x) | \quad \text{for $x \in C_2$}.
	\end{gather*}
	
	Then one defines for $1 \leq p \leq \infty$, noting
	\ref{miniremark:f_i},
	\begin{gather*}
		\norm{f}{p}{A} = \norm{g}{p}{A}, \quad
		\norm{\ap Af}{p}{A} = \norm{h_1}{p}{A}, \\
		\norm{Af}{p}{A}
		= \norm{h_2}{p}{A} \quad \text{if $A$ is open}.
	\end{gather*}
	Moreover, if $\oball{a}{r} \subset B$ for some $a \in \rel^\vdim$, $0
	< r < \infty$, then
	\begin{gather*}
		\norm{f}{p}{a,r} = \norm{f}{p}{\oball{a}{r}}, \quad
		\norm{\ap Af}{p}{a,r} = \norm{\ap Af}{p}{\oball{a}{r}}, \\
		\norm{Af}{p}{a,r} = \norm{Af}{p}{\oball{a}{r}}.
	\end{gather*}
\end{definition}
\begin{miniremark} \label{miniremark:extend_distrib}
	Suppose $U$ is an open subset of $\rel^\vdim$, $Y$ is a Banach space
	and $T \in \mathscr{D}' ( U, Y )$. Then $T$ has a unique extension $S$
	to $\classification{\mathscr{E} ( U, Y )}{\theta}{\text{$\spt \theta
	\cap \spt T$ is compact}}$ characterised by the requirement
	\begin{gather*}
		S ( \theta ) = S ( \eta ) \quad \text{whenever $\spt T \subset
		\Int \{ x \with \theta (x) = \eta (x) \}$}.
	\end{gather*}
	The extension will usually be denoted by the same symbol $T$.
\end{miniremark}
\begin{miniremark} \label{miniremark:first_variation}
	Suppose $\vdim, \adim, Q \in \nat$ with $\vdim < \adim$.

	Following \cite[5.1.9]{MR41:1976}, the projections $\pp \in
	\mathbf{O}^\ast (\adim, \vdim)$, $\qq \in \mathbf{O}^\ast ( \adim,
	\codim )$ are defined by
	\begin{gather*}
		\pp ( z ) = ( z_1, \ldots, z_\vdim ), \quad \qq ( z ) = (
		z_{\vdim + 1}, \ldots, z_\adim )
	\end{gather*}
	whenever $z = (z_1, \ldots, z_\adim ) \in \rel^\adim$. In case
	\begin{gather*}
		z = \pp^\ast (x) + \qq^\ast (y) = (x_1, \ldots, x_\vdim, y_1,
		\ldots, y_\codim ) \quad \text{for $x \in \rel^\vdim$, $y \in
		\rel^\codim$}
	\end{gather*}
	sometimes $(x,y)$ will be written instead of $z$, $f(x,y)$ instead of
	$f(z)$ for functions $f$ with $\dmn f \subset \rel^\adim$ and
	$\grass{\adim}{\vdim}$ instead of $\grass{\rel^\vdim \times
	\rel^\codim}{\vdim}$.

	If $U$ is an open subset of $\rel^\vdim$, $A$ is an
	$\mathscr{L}^\vdim$ measurable subset of $U$, $f : A \to \qspace_Q (
	\rel^\codim)$ is Lipschitzian, and $f_i$ for $i \in I$ are as in
	\ref{miniremark:f_i}, then defining $V \in \IVar_\vdim ( \pp^{-1} \lIm
	U \rIm )$ by the requirement
	\begin{gather*}
		\| V \| (Z) = {\textstyle\int_{Z\cap\pp^{-1}\lIm A \rIm}}
		\density^0 ( \| f ( \pp (z) ) \|, \qq (z) ) \ud
		\mathscr{H}^\vdim z
	\end{gather*}
	for every Borel subset $Z$ of $\pp^{-1}\lIm U \rIm$, a simple
	calculation shows
	\begin{gather*}
		( \delta V ) ( \qq^\ast \circ \theta \circ \pp ) =
		{\textstyle\sum_{i \in I} \int_{\dmn f_i}} \big < D \theta
		(x), D\Psi_0^\S ( \ap Df_i (x) ) \big > \ud \mathscr{L}^\vdim
		x
	\end{gather*}
	whenever $\theta \in \mathscr{D} ( U, \rel^\codim )$; here $\Psi_0^\S$
	denotes the nonparametric integrand at $0$ associated with the area
	integrand $\Psi$, i.e.~$\Psi_0^\S : \Hom ( \rel^\vdim, \rel^\codim )
	\to \rel$ with
	\begin{gather*}
		\Psi_0^\S ( \sigma ) = {\textstyle \left ( \sum_{i=0}^{\vdim}
		| \bigwedge_i \sigma |^2 \right )^{1/2}} \quad \text{for
		$\sigma \in \Hom ( \rel^\vdim, \rel^\codim )$},
	\end{gather*}
	see \cite[5.1.9]{MR41:1976}, and the convention
	\ref{miniremark:extend_distrib} is used.
\end{miniremark}
\section{Some preliminaries} \label{sec:prerequisites}
The purpose of this section is to list several known statements for convenient
reference. This includes, in \ref{app:thm:bilip_embedding}, some of Almgren's
results on $\qspace_Q ( \rel^l )$ valued functions obtained in \cite[\S
1]{MR1777737}, and, in
\ref{app:lemma:mydiss}--\ref{app:lemma:lipschitz_approximation}, adaptions of
the approximation techniques of integral varifolds by such functions
originating from Almgren \cite[\S 3]{MR1777737} and Brakke \cite[\S
5]{MR485012} carried out by the author in
\cite{mydiss,snulmenn.isoperimetric,snulmenn.poincare}.
\begin{theorem} [cf. Almgren \protect{\cite[1.1\,(6), 1.2\,(3), 1.3\,(1)\,(2),
1.4\,(3)]{MR1777737}}] \label{app:thm:bilip_embedding}
	Suppose $Q, l \in \nat$.

	Then there exist $P \in \nat$ and maps $\xi :
	\qspace_Q ( \rel^l ) \to \rel^{PQ}$ and $\varrho : \rel^{PQ} \to
	\rel^{PQ}$ such that
	\begin{gather*}
 		\xi ( Q \Lbrack 0 \Rbrack ) = 0, \quad \Lip \xi < \infty,
		\quad \text{$\xi$ is univalent}, \quad \Lip \xi^{-1} < \infty,
		\\
		\Lip \varrho < \infty, \quad
		\varrho \circ \varrho = \varrho, \quad \im \varrho = \im \xi,
		\\
		| D ( \xi \circ f ) (x) | \leq ( \Lip \xi ) | Af (x) | \quad
		\text{for $x \in \dmn D ( \xi \circ f )$}
	\end{gather*}
	whenever $f$ maps an open subset of $\rel^\vdim$ into $\qspace_Q (
	\rel^l )$. In particular, a function $f$ mapping a subset of
	$\rel^\vdim$ into $\qspace_Q ( \rel^l )$ admits an extension $F :
	\rel^\vdim \to \qspace_Q ( \rel^l)$ such that $\Lip F \leq \Gamma \Lip
	f$ with $\Gamma = \Lip \xi \Lip \varrho \Lip \xi^{-1}$.
\end{theorem}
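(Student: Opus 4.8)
The plan is to reconstruct Almgren's bi-Lipschitz embedding of the metric space $\qspace_Q(\rel^l)$ into a Euclidean space, together with a Lipschitz retraction onto its image, following \cite[\S 1]{MR1777737}. First I would fix a finite family of linear forms $\pi_1, \dots, \pi_N : \rel^l \to \rel$ and define $\xi$ by sending $\sum_{i=1}^Q \Lbrack x_i \Rbrack$ to the concatenation over $j \in \{1,\dots,N\}$ of the $Q$-tuple obtained by listing the numbers $\pi_j(x_1), \dots, \pi_j(x_Q)$ in non-decreasing order; after the evident identification this is a map into $\rel^{PQ}$ with $P = N$, and $\xi(Q\Lbrack 0 \Rbrack) = 0$ is immediate since $\pi_j(0) = 0$.

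Next I would verify the upper Lipschitz bound and the derivative estimate. The bound $\Lip\xi < \infty$ rests on two elementary facts: if $\sigma \in P(Q)$ realises the minimum defining $\mathscr{G}(\sum \Lbrack x_i\Rbrack, \sum\Lbrack y_i\Rbrack)$ then $\sum_i |\pi_j(x_i) - \pi_j(y_{\sigma(i)})|^2 \leq |\pi_j|^2 \, \mathscr{G}(\sum\Lbrack x_i\Rbrack,\sum\Lbrack y_i\Rbrack)^2$, and the passage from a $Q$-tuple of reals to its non-decreasing rearrangement is $1$-Lipschitz on $\rel^Q$ (similarly ordered tuples maximise the inner product); summing over $j$ gives $\Lip\xi \leq (\sum_j |\pi_j|^2)^{1/2}$. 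The estimate $|D(\xi\circ f)(x)| \leq (\Lip\xi)|Af(x)|$ I would then obtain at every point $x$ where $\xi\circ f$ is differentiable by passing to the infinitesimal level: the metric speed of a $\qspace_Q(\rel^l)$-valued affine map $g = \sum_i \Lbrack g_i\Rbrack$ in any unit direction is at most $|g| = (\sum_i|Dg_i|^2)^{1/2}$, the first-order behaviour of $\xi\circ f$ at $x$ is that of $\xi\circ(Af(x))$, and $\xi$ contracts $\mathscr{G}$ by the factor $\Lip\xi$.

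The crux, and the step I expect to be the main obstacle, is to choose $\pi_1,\dots,\pi_N$ so that $\xi$ is univalent with $\Lip\xi^{-1} < \infty$, that is $\mathscr{G}(S,S') \leq \Gamma_0 |\xi(S) - \xi(S')|$ for some finite $\Gamma_0$; this also fixes the value of $P$. A point of $\qspace_Q(\rel^l)$ is distinguished from every other point already by finitely many of its sorted linear images, so injectivity holds once the family is rich enough, and the quantitative reverse inequality I would establish either by Almgren's explicit combinatorial estimate or by a homogeneity-and-compactness contradiction argument applied to pairs $S,S'$ with $\mathscr{G}(S,S') = 1$ and $|\xi(S)-\xi(S')|$ small. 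Granting this, $\xi$ is a bi-Lipschitz embedding of the complete space $\qspace_Q(\rel^l)$, so $\im\xi$ is closed in $\rel^{PQ}$, and the sorting construction presents $\im\xi$ as a finite union of convex polyhedral cones; such a set is a Lipschitz retract of $\rel^{PQ}$ — one assembles the nearest-point projections onto the individual cones into a single Lipschitz map — which yields $\varrho$ with $\varrho\circ\varrho = \varrho$, $\im\varrho = \im\xi$ and $\Lip\varrho < \infty$ (this is \cite[1.3]{MR1777737}).

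Finally, the extension statement would follow formally. Given $f$ mapping a subset $S$ of $\rel^\vdim$ into $\qspace_Q(\rel^l)$ with $\Lip f < \infty$, the composite $\xi\circ f : S \to \rel^{PQ}$ satisfies $\Lip(\xi\circ f) \leq \Lip\xi \cdot \Lip f$ and, by Kirszbraun's theorem, extends to $h : \rel^\vdim \to \rel^{PQ}$ with the same Lipschitz constant; setting $F = \xi^{-1}\circ\varrho\circ h$ is legitimate because $\varrho\circ h$ takes values in $\im\varrho = \im\xi$, on which $\xi^{-1}$ is defined and Lipschitz, and $F$ restricts to $f$ on $S$ since $\varrho$ fixes $\im\xi$ pointwise while $\varrho\circ h = \varrho\circ\xi\circ f = \xi\circ f$ there, so that $\Lip F \leq \Lip\xi^{-1}\,\Lip\varrho\,\Lip h \leq \Lip\xi^{-1}\,\Lip\varrho\,\Lip\xi\,\Lip f = \Gamma\,\Lip f$.
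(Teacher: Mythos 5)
The paper offers no proof of this statement: it is imported verbatim from Almgren \cite[1.1\,(6), 1.2\,(3), 1.3\,(1)\,(2), 1.4\,(3)]{MR1777737}, so there is no internal argument to compare against. Your reconstruction does follow Almgren's actual construction — $\xi$ as sorted lists of the values of finitely many linear forms, the $1$-Lipschitz character of monotone rearrangement, Kirszbraun plus $\varrho$ plus $\xi^{-1}$ for the extension — and the final extension step, including the identity $\varrho|_{\im\xi}=\mathrm{id}$ deduced from $\varrho\circ\varrho=\varrho$ and $\im\varrho=\im\xi$ and the constant $\Gamma=\Lip\xi\,\Lip\varrho\,\Lip\xi^{-1}$, is correct.

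Two of the justifications you substitute for the citation have genuine gaps. First, the retraction: it is false in general that a finite union of closed convex sets is a Lipschitz retract of the ambient space (the boundary of a square is a union of four segments and is not even a topological retract of $\rel^2$), and "assembling the nearest-point projections onto the individual cones" is not a construction — nearest-point projection onto a non-convex union is neither single-valued nor continuous across the interfaces, and the convex pieces' projections do not agree on overlaps. One must either reproduce Almgren's explicit polyhedral retraction, or argue that $\im\xi$ is a contractible finite polyhedral cone, hence a Lipschitz neighbourhood retract whose local retraction can be propagated globally by homogeneity; alternatively one can prove the Lipschitz extension theorem for $\qspace_Q(\rel^l)$-valued maps directly (by a Whitney-type clustering argument) and then obtain $\varrho$ as $\xi$ composed with an extension of $\xi^{-1}$ — but note this reverses your logical order, so you must avoid circularity. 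Second, the lower Lipschitz bound for $\xi$: the compactness argument you offer as an alternative does not close, because the set of pairs $S=\sum\Lbrack x_i\Rbrack$, $S'=\sum\Lbrack y_i\Rbrack$ with $\mathscr{G}(S,S')=1$ is not compact even after normalising by translation and dilation (the points of $\spt\|S\|$ can separate into clusters drifting arbitrarily far apart while $\mathscr{G}(S,S')$ stays bounded), so one needs the combinatorial separation estimate, typically run by induction on $Q$ after splitting into clusters. As it stands your proposal proves the parts that are formal consequences and defers exactly the two substantive inputs to Almgren — which is consistent with what the paper itself does, but means the sketched replacements should not be presented as proofs.
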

\begin{lemma} [cf. \protect{\cite[A.7]{mydiss}}] \label{app:lemma:mydiss}
	Suppose $\vdim, \adim \in \nat$, $\vdim < \adim$, $a \in \rel^\adim$,
	$0 < r < \infty$, $V \in \RVar_\vdim (\oball{a}{r})$, $\| \delta V\|$
	is a Radon measure, $\density^\vdim ( \| V \| , z ) \geq 1$ for $\| V
	\|$ almost all $z$, $a \in \spt \| V \|$, and $\alpha : \{ s \with 0 <
	s < r \} \to \rel$ satisfies
	\begin{gather*}
		\alpha(s) = \measureball{\| V \|}{\cball{a}{s}} \quad
		\text{whenever $0 < s < r$}.
	\end{gather*}

	Then
	\begin{gather*}
		\isoperimetric{\vdim}^{-1} \leq \alpha(s)^{1/\vdim-1} (
		\measureball{\| \delta V \|}{\cball{a}{s}} + \alpha'(s))
	\end{gather*}
	for $\mathscr{L}^1$ almost all $0 < s < r$.
\end{lemma}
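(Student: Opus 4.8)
The plan is to apply the isoperimetric inequality defining $\isoperimetric{\vdim}$ to the rectifiable varifolds obtained by restricting $V$ to the concentric closed balls $\cball{a}{s}$; the first variation of such a restriction will be estimated by $\measureball{\| \delta V \|}{\cball{a}{s}}$ plus a ``boundary'' term which, upon differentiating, equals $\alpha'(s)$.

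Since $\alpha$ is nondecreasing it is differentiable at $\mathscr{L}^1$ almost all $s$ with $0 < s < r$ and has at most countably many discontinuities; I fix such an $s$ which moreover is not a discontinuity of $\alpha$, so that $\| V \| ( \cball{a}{s} \without \oball{a}{s} ) = 0$. Put $W = V \restrict ( \cball{a}{s} \times \grass{\adim}{\vdim} )$. Then $\spt \| W \|$ is a compact subset of $\oball{a}{r}$, hence $W \in \RVar_\vdim ( \rel^\adim )$, $\| W \| = \| V \| \restrict \cball{a}{s}$ and $\| W \| ( \rel^\adim ) = \alpha(s) < \infty$. As $\| W \|$ agrees with $\| V \|$ on the open set $\oball{a}{s}$ one has $\density^\vdim ( \| W \|, z ) = \density^\vdim ( \| V \|, z ) \geq 1$ for $\| V \|$ almost all $z \in \oball{a}{s}$, whence, using $\| W \| ( \cball{a}{s} \without \oball{a}{s} ) = 0$,
\begin{gather*}
	\| W \| ( \classification{\rel^\adim}{z}{\density^\vdim ( \| W \|, z ) \geq 1} ) = \| W \| ( \rel^\adim ) = \alpha(s).
\end{gather*}

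The key step is to show $\| \delta W \| ( \rel^\adim ) \leq \measureball{\| \delta V \|}{\cball{a}{s}} + \alpha'(s)$. Since $\| \delta V \|$ is a Radon measure, $\delta V$ is representable by integration; write $\delta V ( g ) = - \int g \bullet \eta \ud \| \delta V \|$ for $g \in \mathscr{D} ( \oball{a}{r}, \rel^\adim )$, where $\eta$ is $\| \delta V \|$ measurable with $| \eta | \leq 1$, and, by rectifiability of $V$, $\delta V ( \theta ) = \int D\theta(z) \bullet \project{S} \ud \| V \| z$ for $\theta \in \mathscr{D} ( \oball{a}{r}, \rel^\adim )$, where here and below $S = \Tan^\vdim ( \| V \|, z )$. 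Let $g \in \mathscr{D} ( \rel^\adim, \rel^\adim )$ with $| g | \leq 1$ and let $\lambda > 1$. For $0 < \varepsilon < r-s$ I pick a smooth radially symmetric $\phi$ with $0 \leq \phi \leq 1$, $\phi = 1$ on $\cball{a}{s}$, $\spt \phi \subset \cball{a}{s+\varepsilon}$, and $| \nabla \phi | \leq \lambda / \varepsilon$; then $\phi g \in \mathscr{D} ( \oball{a}{r}, \rel^\adim )$ and from the product rule $D ( \phi g )(z) \bullet \project{S} = \phi(z) \big ( Dg(z) \bullet \project{S} \big ) + g(z) \bullet \project{S} \big ( \nabla \phi(z) \big )$ one infers
\begin{gather*}
	\int \phi(z) \big ( Dg(z) \bullet \project{S} \big ) \ud \| V \| z = \delta V ( \phi g ) - \int g(z) \bullet \project{S} \big ( \nabla \phi(z) \big ) \ud \| V \| z,
\end{gather*}
hence, since $| \phi g | \leq 1$, $| \project{S} ( \nabla \phi ) | \leq | \nabla \phi | \leq \lambda / \varepsilon$, $\spt \phi \subset \cball{a}{s+\varepsilon}$, $\nabla \phi = 0$ on $\oball{a}{s}$, and $\| V \| ( \cball{a}{s+\varepsilon} \without \oball{a}{s} ) = \alpha(s+\varepsilon) - \alpha(s)$ by the choice of $s$,
\begin{gather*}
	\Big | \int \phi(z) \big ( Dg(z) \bullet \project{S} \big ) \ud \| V \| z \Big | \leq \| \delta V \| ( \cball{a}{s+\varepsilon} ) + ( \lambda / \varepsilon ) \big ( \alpha(s+\varepsilon) - \alpha(s) \big ).
\end{gather*}
Letting $\varepsilon \to 0+$ — using continuity from above of $\| \delta V \|$, differentiability of $\alpha$ at $s$, and dominated convergence to identify the limit of the left-hand side with $\delta W ( g ) = \int_{\cball{a}{s}} Dg(z) \bullet \project{S} \ud \| V \| z$ — and then $\lambda \to 1+$ yields $| \delta W ( g ) | \leq \| \delta V \| ( \cball{a}{s} ) + \alpha'(s)$; taking the supremum over admissible $g$ proves the estimate, in particular $\| \delta W \| ( \rel^\adim ) < \infty$.

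Finally, applying the defining property of $\isoperimetric{\vdim}$ to $W \in \RVar_\vdim ( \rel^\adim )$ and inserting the identities just obtained,
\begin{gather*}
	\alpha(s) = \| W \| ( \classification{\rel^\adim}{z}{\density^\vdim ( \| W \|, z ) \geq 1} ) \leq \isoperimetric{\vdim} \, \alpha(s)^{1/\vdim} \big ( \| \delta V \| ( \cball{a}{s} ) + \alpha'(s) \big ).
\end{gather*}
Since $a \in \spt \| V \|$ one has $\alpha(s) = \| V \| ( \cball{a}{s} ) > 0$, so dividing by $\isoperimetric{\vdim} \, \alpha(s)^{1/\vdim}$ gives $\isoperimetric{\vdim}^{-1} \leq \alpha(s)^{1/\vdim - 1} \big ( \measureball{\| \delta V \|}{\cball{a}{s}} + \alpha'(s) \big )$ for the chosen $s$, i.e.~for $\mathscr{L}^1$ almost all $0 < s < r$. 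I expect the main obstacle to be the first-variation estimate of the previous paragraph: obtaining the sharp coefficient $1$ in front of $\alpha'(s)$ forces one to use smooth cut-off functions whose gradient bound is arbitrarily close to $1/\varepsilon$ together with the $\mathscr{L}^1$ almost everywhere differentiability of the monotone function $\alpha$, whereas a cruder bound $c/\varepsilon$ with $c > 1$ fixed would only deliver $c \, \alpha'(s)$ and would not suffice.
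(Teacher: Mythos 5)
The paper does not actually prove this lemma; it is quoted from \cite[A.7]{mydiss} and described in the subsequent remark as a variant of Allard \cite[8.3]{MR0307015}. Your argument — restricting $V$ to $\cball{a}{s}$ at a point of differentiability and continuity of $\alpha$, bounding the first variation of the restriction by $\measureball{\| \delta V \|}{\cball{a}{s}} + \alpha'(s)$ via near-optimal radial cut-offs, and then invoking the defining property of $\isoperimetric{\vdim}$ — is precisely the standard proof underlying those references, and it is correct.
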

\begin{remark}
	A similar statement can be found in Leonardi and Masnou
	\cite[Proposition 3.1]{leomas08}.
\end{remark}
\begin{lemma} [cf. \protect{\cite[2.5]{snulmenn.isoperimetric}}]
\label{app:lemma:good_point} 
	Suppose $\vdim, \adim \in \nat$, $\vdim < \adim$, $a \in \rel^\adim$,
	$0 < r < \infty$, $V \in \RVar_\vdim (\oball{a}{r})$, $\| \delta V\|$
	is a Radon measure, $\density^\vdim ( \| V \| , z ) \geq 1$ for $\| V
	\|$ almost all $z$, $a \in \spt \| V \|$, and
	\begin{gather*}
		\measureball{\| \delta V \|}{\cball{a}{s}} \leq (2
		\isoperimetric{\vdim} )^{-1} \| V \| (
		\cball{a}{s})^{1-1/\vdim} \quad \text{whenever $0 < s < r$}.
	\end{gather*}

	Then
	\begin{gather*}
		\measureball{\| V \|}{\cball{a}{s}} \geq ( 2 \vdim
		\isoperimetric{\vdim} )^{-\vdim} s^\vdim \quad \text{whenever
		$0 < s < r$}.
	\end{gather*}
\end{lemma}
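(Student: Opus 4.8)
The plan is to combine the differential inequality of \ref{app:lemma:mydiss} with the curvature hypothesis so as to produce a clean lower bound for the derivative of $s \mapsto \alpha(s)^{1/\vdim}$ and then to integrate. First I would note that $\vdim$, $\adim$, $a$, $r$, $V$ and $\alpha$ satisfy all hypotheses of \ref{app:lemma:mydiss}; that lemma therefore yields
\begin{gather*}
	\isoperimetric{\vdim}^{-1} \leq \alpha(s)^{1/\vdim-1} \big ( \measureball{\| \delta V \|}{\cball{a}{s}} + \alpha'(s) \big ) \quad \text{for $\mathscr{L}^1$ almost all $0 < s < r$}.
\end{gather*}
Since $a \in \spt \| V \|$ one has $\alpha(s) > 0$, so $\alpha(s)^{1/\vdim-1}$ is a positive real number; hence, inserting the hypothesis $\measureball{\| \delta V \|}{\cball{a}{s}} \leq ( 2 \isoperimetric{\vdim} )^{-1} \alpha(s)^{1-1/\vdim}$ and using $\isoperimetric{\vdim}^{-1} - ( 2 \isoperimetric{\vdim} )^{-1} = ( 2 \isoperimetric{\vdim} )^{-1}$, one obtains after rearranging
\begin{gather*}
	( 2 \vdim \isoperimetric{\vdim} )^{-1} \leq \vdim^{-1} \alpha(s)^{1/\vdim-1} \alpha'(s) \quad \text{for $\mathscr{L}^1$ almost all $0 < s < r$}.
\end{gather*}

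Next I would pass to the auxiliary function $\beta$ given by $\beta(s) = \alpha(s)^{1/\vdim}$ for $0 < s < r$, which inherits from $\alpha$ the properties of being nonnegative, nondecreasing, and --- as $\| V \|$ is a Radon measure and $\cball{a}{s}$ is a compact subset of $\oball{a}{r}$ --- finite. For $\mathscr{L}^1$ almost all $0 < s < r$ the function $\alpha$ is differentiable and the inequality just displayed holds, and since $\alpha(s) > 0$ the chain rule gives at such $s$ that $\beta'(s) = \vdim^{-1} \alpha(s)^{1/\vdim-1} \alpha'(s) \geq ( 2 \vdim \isoperimetric{\vdim} )^{-1}$. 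Using that a nondecreasing function satisfies $\tint{t}{s} \beta'(\sigma) \ud \mathscr{L}^1 \sigma \leq \beta(s) - \beta(t)$ for $0 < t < s < r$, together with $\beta \geq 0$ and the limit $t \to 0+$ (monotone convergence on the right), I would conclude
\begin{gather*}
	\alpha(s)^{1/\vdim} = \beta(s) \geq \tint{0}{s} \beta'(\sigma) \ud \mathscr{L}^1 \sigma \geq ( 2 \vdim \isoperimetric{\vdim} )^{-1} s \quad \text{whenever $0 < s < r$},
\end{gather*}
and raising this to the power $\vdim$ gives $\measureball{\| V \|}{\cball{a}{s}} \geq ( 2 \vdim \isoperimetric{\vdim} )^{-\vdim} s^\vdim$ for $0 < s < r$, as asserted.

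The computation above is routine, and essentially the only step calling for a little care is this last passage: one must invoke the \emph{inequality} $\tint{t}{s} \beta'(\sigma) \ud \mathscr{L}^1 \sigma \leq \beta(s) - \beta(t)$ valid for monotone $\beta$, rather than the fundamental theorem of calculus (which would require absolute continuity of $\beta$), make sure that the exceptional $\mathscr{L}^1$ null sets --- where \ref{app:lemma:mydiss} fails and where $\alpha$, hence $\beta$, fails to be differentiable --- are discarded simultaneously, and finally pass to the limit $t \to 0+$.
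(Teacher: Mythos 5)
Your argument is correct, and it follows the standard route that the paper itself employs elsewhere when applying \ref{app:lemma:mydiss}: see the proof of \ref{lemma:capacity}, where the same passage from the differential inequality to the integral inequality for the monotone function $\alpha^{1/\vdim}$ is carried out via \cite[2.9.19]{MR41:1976}. You also correctly isolate the only delicate step, namely that since $\alpha^{1/\vdim}$ is merely nondecreasing one must use the one-sided inequality $\tint{t}{s} (\alpha^{1/\vdim})' \ud \mathscr{L}^1 \leq \alpha(s)^{1/\vdim} - \alpha(t)^{1/\vdim}$ rather than the fundamental theorem of calculus, and discard the two exceptional $\mathscr{L}^1$ null sets simultaneously.
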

\begin{remark}
	Both \ref{app:lemma:mydiss} and \ref{app:lemma:good_point} are
	variants of Allard \cite[8.3]{MR0307015}. Moreover, in view of Allard
	\cite[5.5]{MR0307015} one could replace $\RVar_\vdim$ by $\Var_\vdim$
	in \ref{app:lemma:mydiss} and \ref{app:lemma:good_point}.
\end{remark}
\begin{lemma} [cf. \protect{\cite[3.1]{snulmenn.poincare}}] \label{app:lemma:planes}
	Suppose $\vdim, \adim \in \nat$, $\vdim < \adim$, $a \in \rel^\adim$,
	$0 < r < \infty$, $T \in \grass{\adim }{\vdim }$, $V \in \IVar_\vdim (
	\oball{a}{r} )$, $\delta V = 0$, $S = T$ for $V$ almost all $(z,S)$,
	and $R(z) = \classification{\oball{a}{r}}{\xi}{\xi-z \in T}$ for $z
	\in \rel^\adim$.
	
	Then $\perpproject{T} \lIm \spt \| V \| \rIm$ is discrete and closed
	relative to $\perpproject{T} \lIm \oball{a}{r} \rIm$ and
	\begin{gather*}
		\density^\vdim ( \| V \|, z ) \in \nat \quad \text{and} \quad
		\| V \| \restrict R(z) = \density^\vdim ( \| V \|, z )
		\mathcal{H}^\vdim \restrict R(z)
	\end{gather*}
	whenever $z \in \spt \| V \|$.
\end{lemma}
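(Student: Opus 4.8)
The plan is to exploit the constancy theorem for stationary integral varifolds, after reducing to the case where the support lies in a union of parallel planes. Since $\delta V = 0$ and $S = T$ for $V$ almost all $(z,S)$, the varifold $V$ is (up to the multiplicity function) carried by the countably $\vdim$ rectifiable set $W = \spt \| V \|$, and for $\| V \|$ almost all $z$ one has $\Tan^\vdim(\| V\|, z) = T$. First I would show that $\perpproject{T} \lIm \spt \| V \| \rIm$ is discrete and closed relative to $\perpproject{T} \lIm \oball{a}{r} \rIm$. To this end, fix $y$ in that set and a point $z \in \spt \| V\|$ with $\perpproject{T}(z) = y$. Using monotonicity of the mass ratio (valid here because $\delta V = 0$, see Allard \cite[5.1]{MR0307015}) together with the fact that $\Tan^\vdim(\|V\|,\cdot) = T$ almost everywhere, one deduces that on a small ball $\oball{z}{s}$ the varifold looks, in the sense of varifold convergence of blow-ups, like an integral varifold supported in the single plane $z + T$; in particular no other point of $\spt\| V\|$ with the same $\perpproject{T}$-image can accumulate at $z$, and the set of sheets is locally finite. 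The key input is that a tangent varifold to a stationary integral varifold is a stationary integral cone, and if all approximate tangent planes equal $T$ then that cone is supported in $T$ itself, forcing local decomposition into finitely many parallel copies of $T$.

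Second, once it is known that near each $z \in \spt \| V \|$ the set $\spt \| V \| \cap \oball{z}{s}$ is a finite disjoint union of pieces, each of which projects injectively under $\perpproject{T}$ and whose approximate tangent plane is constantly $T$, I would argue that each such piece is (relatively) open and closed in $R(z) \cap \oball{z}{s}$, hence equals $R(z) \cap \oball{z}{s}$ by connectedness of the affine $\vdim$-plane slice. Therefore $\| V \| \restrict R(z)$ is of the form $\Theta\, \mathcal{H}^\vdim \restrict R(z)$ with $\Theta$ locally constant along $R(z)$; but $R(z)$ is connected, so $\Theta$ is a single constant, necessarily equal to $\density^\vdim(\| V\|, z)$, and it is a positive integer because $V \in \IVar_\vdim$. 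The statement that $\| V \| \restrict R(z) = \density^\vdim(\| V\|,z)\, \mathcal{H}^\vdim \restrict R(z)$ for all $z \in \spt \|V\|$ then follows, and the discreteness of $\perpproject{T}\lIm \spt \| V \| \rIm$ is exactly the statement that distinct sheets stay separated.

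The cleanest route to the local decomposition is the constancy theorem: if $N$ is a connected $\vdim$-dimensional $C^1$ submanifold and $W$ is an $\vdim$-dimensional varifold with $\spt \| W\| \subset N$, $\delta W \restrict \{\text{interior points}\} = 0$, and $\Tan^\vdim(\| W\|, z) = \Tan(N,z)$ for $\| W\|$ almost all $z$, then $\| W\|$ is a constant multiple of $\mathcal{H}^\vdim \restrict N$ on each connected component. Here, after the blow-up/monotonicity argument confines $\spt \| V\|$ locally to a finite union of parallel planes $z_1 + T, \dots, z_k + T$, one applies this component by component. I expect the main obstacle to be the passage from ``$\Tan^\vdim(\| V\|,\cdot) = T$ almost everywhere'' and ``$\delta V = 0$'' to the genuinely local statement that $\spt \| V\|$ actually lies in a \emph{discrete} union of planes near each of its points — that is, ruling out an a priori possible accumulation of infinitely many sheets. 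This is where stationarity is essential: the monotonicity formula gives a uniform lower mass bound on each sheet through any fixed point, which combined with the upper mass bound from the hypotheses (or from local finiteness of $\| V\|$) caps the number of sheets in a fixed ball, yielding discreteness and closedness relative to $\perpproject{T}\lIm \oball{a}{r}\rIm$.
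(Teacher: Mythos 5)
There is a genuine gap, and it sits exactly where you suspected, but it is more serious than ``ruling out an accumulation of infinitely many sheets'': your argument never establishes that $\spt \| V \|$ decomposes into sheets (planes parallel to $T$) at any positive scale in the first place. The tangent-cone step only yields infinitesimal information: knowing that every tangent varifold at $z$ equals $Q \,\mathcal{H}^\vdim \restrict T$ for some $Q \in \nat$ says nothing about $\spt \| V \| \cap \oball{z}{s}$ for fixed $s > 0$ (Allard's theorem would upgrade this to local structure only when $Q = 1$ and the density ratio is close to $1$). Meanwhile, the constancy theorem in the form you quote requires as a hypothesis that $\spt \| W \|$ is contained in a connected $\class{1}$ submanifold $N$ --- which for you would be one of the planes $z_i + T$ --- so you can only invoke it \emph{after} the planar decomposition is known. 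The hypotheses ``$\Tan^\vdim ( \| V \|, \cdot ) = T$ for $\| V \|$ almost all points'' and rectifiability alone do not confine the support to countably many parallel planes (compare the graph of the Cantor function, a $1$ rectifiable set with horizontal approximate tangent line $\mathcal{H}^1$ almost everywhere that lies in no countable union of horizontal lines); stationarity must enter at positive scale, not through blow-ups.

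The missing idea is to run the constancy argument directly on $V$ rather than on a hypothesised planar piece: insert test fields of the form $\theta (z) = \phi ( \project{T} (z) ) \psi ( \perpproject{T} (z) ) v$ with $v \in T$ into $\delta V ( \theta ) = \tint{}{} D\theta (z) \bullet \project{T} \ud \| V \| z = 0$. Since $D \theta (z) \bullet \project{T}$ reduces to $\psi ( \perpproject{T} (z) ) \left < v, D \phi ( \project{T} (z) ) \right >$, one concludes that $\project{T}{}_\# \big ( ( \psi \circ \perpproject{T} ) \| V \| \big )$ has vanishing distributional gradient, hence $\| V \|$ is locally a product $\mathscr{L}^\vdim \times \nu$ for a Radon measure $\nu$ on $T^\perp$. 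A direct computation then gives $\density^\vdim ( \| V \|, (x,y) ) = \nu ( \{ y \} )$, so integrality of $V$ forces $\nu$ to be a locally finite sum of positive integer point masses; discreteness, relative closedness of $\perpproject{T} \lIm \spt \| V \| \rIm$, and the identity $\| V \| \restrict R(z) = \density^\vdim ( \| V \|, z ) \mathcal{H}^\vdim \restrict R(z)$ all follow at once. (A small wording slip: your ``pieces projecting injectively under $\perpproject{T}$'' should read $\project{T}$, since $\perpproject{T}$ collapses each plane parallel to $T$ to a point.) The paper itself does not prove the lemma but imports it from \cite[3.1]{snulmenn.poincare}, noting it is a variant of Almgren's argument, which proceeds along the product-structure lines just described.
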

\begin{remark}
	This is a variant of Almgren \cite[3.6]{MR1777737}.
\end{remark}
\begin{lemma} [cf. \protect{\cite[3.2]{snulmenn.poincare}}] \label{app:lemma:little_helper}
	Suppose $1 < \adim \in \nat$, $0 < \delta \leq 1$, $0 \leq \lambda <
	1$, and $0 \leq M < \infty$.

	Then there exists a positive, finite number $\varepsilon$ with the
	following property.

	If $\adim > \vdim \in \nat$, $a \in \rel^\adim$, $0 < r < \infty$, $T
	\in \grass{\adim}{\vdim}$, $V \in \IVar_\vdim  ( \oball{a}{r} )$ and
	\begin{gather*}
		\measureball{\| V \|}{\oball{a}{r}} \leq M \unitmeasure{\vdim}
		r^\vdim, \quad \measureball{\| \delta V \|}{\oball{a}{r}} \leq
		\varepsilon \| V \| ( \oball{a}{r} )^{1-1/\vdim}, \\
		\tint{}{} | \project{S} - \project{T} | \ud V (z,S) \leq
		\varepsilon \measureball{\| V \|}{\oball{a}{r}}, \\
		\measureball{\| V \|}{\cball{a}{\varrho}} \geq \delta
		\unitmeasure{\vdim} \varrho^\vdim \quad \text{for $0 < \varrho
		< r$},
	\end{gather*}
	then
	\begin{gather*}
		\| V \| ( \classification{\oball{a}{r}}{z}{ | \project{T}
		(z-a) | > \lambda | z-a |} ) \geq ( 1- \delta )
		\unitmeasure{\vdim} r^\vdim.
	\end{gather*}
\end{lemma}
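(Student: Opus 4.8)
The plan is to argue by contradiction, producing a limit integral varifold via Allard's compactness theorem and then analysing it with Lemma~\ref{app:lemma:planes}. Suppose no such $\varepsilon$ existed. Taking $\varepsilon = 1/j$ for $j \in \nat$, one would obtain positive integers $\vdim_j < \adim$, points $a_j \in \rel^\adim$, radii $0 < r_j < \infty$, planes $T_j \in \grass{\adim}{\vdim_j}$ and varifolds $V_j \in \IVar_{\vdim_j} ( \oball{a_j}{r_j} )$ satisfying the four displayed hypotheses with $1/j$ in place of $\varepsilon$ but violating the conclusion. Since $\vdim_j$ ranges over the finite set $\{1, \ldots, \adim-1\}$, one may pass to a subsequence along which $\vdim_j$ equals a fixed $\vdim$; and, replacing each $V_j$ by its image under the homothety $z \mapsto ( z - a_j )/r_j$ composed with an orthogonal map carrying $T_j$ onto a fixed $T \in \grass{\adim}{\vdim}$ --- operations under which all four hypotheses and the failure of the conclusion persist, the relevant constant still tending to $0$ --- one may moreover assume $a_j = 0$, $r_j = 1$ and $T_j = T$.

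First I would record that the resulting sequence satisfies $\| V_j \| ( \oball{0}{1} ) \leq M \unitmeasure{\vdim}$, $\| \delta V_j \| ( \oball{0}{1} ) \leq j^{-1} ( M \unitmeasure{\vdim} )^{1 - 1/\vdim} \to 0$, $\tint{}{} | \project{S} - \project{T} | \ud V_j (z,S) \leq j^{-1} M \unitmeasure{\vdim} \to 0$, $\| V_j \| ( \cball{0}{\varrho} ) \geq \delta \unitmeasure{\vdim} \varrho^\vdim$ for $0 < \varrho < 1$, and $\| V_j \| ( G ) < ( 1 - \delta ) \unitmeasure{\vdim}$, where $G = \classification{\oball{0}{1}}{z}{| \project{T} (z) | > \lambda |z|}$. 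The first two bounds supply the uniform local mass and first variation bounds needed for the compactness theorem for integral varifolds, see Allard \cite[6.4]{MR0307015}; passing to a further subsequence yields $V_j \to V_\infty$ for some $V_\infty \in \IVar_\vdim ( \oball{0}{1} )$. Since $\| \delta V_j \| ( \oball{0}{1} ) \to 0$ one gets $\delta V_\infty = 0$; since $(z,S) \mapsto | \project{S} - \project{T} |$ is nonnegative and continuous, weak convergence gives $\tint{}{} | \project{S} - \project{T} | \ud V_\infty (z,S) \leq \liminf_j \tint{}{} | \project{S} - \project{T} | \ud V_j (z,S) = 0$, so $S = T$ for $V_\infty$ almost all $(z,S)$; and testing the lower mass bound against the compact balls $\cball{0}{\varrho}$ with $0 < \varrho < 1$ yields $\| V_\infty \| ( \cball{0}{\varrho} ) \geq \delta \unitmeasure{\vdim} \varrho^\vdim$, whence $0 \in \spt \| V_\infty \|$.

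Next I would apply Lemma~\ref{app:lemma:planes} to $V_\infty$ with $a = 0$ and $r = 1$. As $0 \in \spt \| V_\infty \|$, it gives that $\density^\vdim ( \| V_\infty \|, 0 )$ is a positive integer and that $\| V_\infty \| \restrict R(0) = \density^\vdim ( \| V_\infty \|, 0 ) \, \mathscr{H}^\vdim \restrict R(0)$ with $R(0) = \classification{\oball{0}{1}}{\xi}{\xi \in T} = T \cap \oball{0}{1}$; hence $\| V_\infty \| ( T \cap \oball{0}{1} ) = \density^\vdim ( \| V_\infty \|, 0 ) \, \unitmeasure{\vdim} \geq \unitmeasure{\vdim}$. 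Because $\lambda < 1$, one has $| \project{T} (z) | = |z| > \lambda |z|$ for every $z \in T \without \{ 0 \}$, so $T \cap \oball{0}{1} \without \{ 0 \} \subset G$, while $\| V_\infty \| ( \{ 0 \} ) = 0$ since $V_\infty$ is rectifiable; therefore $\| V_\infty \| ( G ) \geq \unitmeasure{\vdim}$. On the other hand $G$ is open, so weak convergence of $V_j$ to $V_\infty$ gives $\| V_\infty \| ( G ) \leq \liminf_j \| V_j \| ( G ) \leq ( 1 - \delta ) \unitmeasure{\vdim}$. Since $\delta > 0$, these two estimates are incompatible, and this contradiction proves the lemma.

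The only genuinely delicate point is the passage to the limit: one has to check that the single constant $M$ furnishes the uniform local bounds required by Allard's compactness theorem, that the vanishing of the first variation and of the tilt-excess makes $V_\infty$ a stationary integral varifold with tangent planes almost everywhere equal to $T$ --- so that Lemma~\ref{app:lemma:planes} applies --- and that the lower mass bound is inherited by $\| V_\infty \|$ through the closed balls, which is precisely what keeps $0$ in $\spt \| V_\infty \|$ and so makes the limit nontrivial; one also uses that $G$ is open, so that weak convergence is applied in the direction bounding $\| V_\infty \| ( G )$ from above. Everything else --- the scaling of the hypotheses under homothety and rotation, and the elementary inclusion $T \cap \oball{0}{1} \without \{ 0 \} \subset G$ --- is routine bookkeeping.
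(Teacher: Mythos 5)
Your proof is correct. The paper's own ``proof'' of this lemma is just a parameter substitution into an external result (\cite[3.2]{snulmenn.poincare}), but the accompanying remark states that the lemma is a simple consequence of Allard's compactness theorem, and your blow-up/compactness argument --- rescaling, passing to a stationary integral limit with $S=T$ almost everywhere, invoking \ref{app:lemma:planes} at $0\in\spt\|V_\infty\|$, and playing lower semicontinuity on the open set $G$ against the assumed failure of the conclusion --- is exactly that argument, carried out correctly.
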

\begin{proof}
	Assume $M \geq 1$ and take $s=\lambda$, $d=0$, $t=r$, and $\zeta=0$ in
	\cite[3.2]{snulmenn.poincare}.
\end{proof}
\begin{remark}
	This is a simple consequence of Allard's compactness theorem for
	integral varifolds, see e.g.~\cite[6.4]{MR0307015} or
	\cite[42.8]{MR87a:49001}.
\end{remark}
\begin{lemma} [Multilayer monotonicity with variable offset, cf.
\protect{\cite[3.11]{snulmenn.poincare}}] \label{app:lemma:multilayer_monotonicity_offset} 
	Suppose $\adim,Q \in \nat$, $0 \leq M < \infty$, $\delta > 0$, and $0
	\leq s < 1$.
	
	Then there exists a positive, finite number $\varepsilon$ with the
	following property.
	
	If $\adim > \vdim \in \nat$, $Z \subset \rel^\adim$, $T \in
	\grass{\adim}{\vdim}$, $0 \leq d < \infty$, $0 < r < \infty$, $0 < t <
	\infty$, $f : Z \to \rel^\adim$,
	\begin{gather*}
		| \project{T} ( z_1-z_2 ) | \leq s | z_1-z_2 |, \quad |
		\project{T} ( f(z_1) - f(z_2) ) | \leq s | f(z_1)-f(z_2) |, \\
		f(z) - z \in T \cap \cball{0}{d}, \quad d \leq M t,
		\quad d + t \leq r
	\end{gather*}
	for $z,z_1,z_2 \in Z$, $V \in \IVar_\vdim \big (\bigcup \{
	\oball{z}{r} \with z \in Z \} \big )$, $\| \delta V \|$ is a Radon
	measure,
	\begin{gather*}
		{\textstyle\sum_{z\in Z}} \density^\vdim_\ast ( \|V\|, z)
		\geq Q-1+\delta, \quad \measureball{\| V \|}{\oball{z}{r}}
		\leq M \unitmeasure{\vdim} r^\vdim
	\end{gather*}
	whenever $z \in Z \cap \spt \|V\|$, and
	\begin{gather*}
		\measureball{\| \delta V \|}{\cball{z}{\varrho}} \leq
		\varepsilon \, \|V\| ( \cball{z}{\varrho} )^{1-1/\vdim},
		\\
		{\textstyle\int_{\cball{z}{\varrho} \times
		\grass{\adim}{\vdim}}} | \project{S} - \project{T} | \ud
		V(\xi,S) \leq \varepsilon \, \measureball{\| V
		\|}{\cball{z}{\varrho}},
	\end{gather*}
	whenever $0 < \varrho < r$, $z \in Z \cap \spt \|V\|$, then
	\begin{gather*}
		\|V\| \big ( {\textstyle\bigcup} \big \{ \oball{f(z)}{t}
		\cap \{ \xi \with | \project{T} (\xi-z) | > s |\xi-z| \} \with
		z \in Z \big \} \big ) \geq (Q-\delta) \unitmeasure{\vdim}
		t^\vdim.
	\end{gather*}
\end{lemma}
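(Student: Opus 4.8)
The plan is to recognise the assertion as essentially a restatement of the multilayer monotonicity lemma \cite[3.11]{snulmenn.poincare}: one checks that the present data $\vdim$, $Z$, $T$, $d$, $r$, $t$, $f$, $V$ satisfy the hypotheses of that lemma with the parameters $\adim$, $Q$, $M$, $\delta$, $s$, after possibly enlarging $M$ so that $M \geq 1$ and $M$ exceeds whatever density bound is needed there --- harmless, as both the hypotheses and the conclusion are monotone in $M$ --- and reads off the conclusion. The mathematical content therefore lies in the monotonicity argument behind that lemma, which I now sketch. One first reduces to the case that $Z$ is finite, $Z \subset \spt \| V \|$, and $\textstyle\sum_{z \in Z} \density^\vdim_\ast ( \| V \|, z) \geq Q - 1 + \delta$: the measure on the left of the conclusion only grows as $Z$ is enlarged, and a series of non-negative terms is approximated by its finite partial sums. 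Then I would fix $\varepsilon$ small, depending only on $\adim$, $Q$, $M$, $\delta$, $s$, so that all error terms produced below by the first-variation and tilt hypotheses are dominated by a prescribed fraction of $\delta \unitmeasure{\vdim} t^\vdim$.

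The core is a one-point estimate. For each $z \in Z$ the bound $\measureball{\| \delta V \|}{\cball{z}{\varrho}} \leq \varepsilon \, \| V \| ( \cball{z}{\varrho} )^{1-1/\vdim}$, valid for $0 < \varrho < r$, is exactly the input for Allard's monotonicity identity (see Allard \cite[\S 5]{MR0307015} or Simon \cite[\S 17]{MR87a:49001}): $\varrho \mapsto \measureball{\| V \|}{\cball{z}{\varrho}} / ( \unitmeasure{\vdim} \varrho^\vdim)$ is, up to a factor $1 + c \varepsilon$, nondecreasing with limit at least $\density^\vdim_\ast ( \| V \|, z)$ as $\varrho \to 0+$, whence $\measureball{\| V \|}{\cball{z}{\varrho}} \geq ( 1 - c \varepsilon ) \density^\vdim_\ast ( \| V \|, z ) \unitmeasure{\vdim} \varrho^\vdim$ for $0 < \varrho < r$. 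Combining this with the tilt bound and an argument of the type of \ref{app:lemma:little_helper} (ultimately a consequence of Allard's compactness theorem) shows that almost all of this mass lies in the horizontal cone $\{ \xi \with | \project{T} ( \xi - z) | > s | \xi - z | \}$; and since translating $z + T$ within $T$ by $f(z) - z$, of length at most $d \leq M t$ with $d + t \leq r$, leaves the varifold nearly tangent to the plane throughout $\cball{z}{d+t}$, the ball $\oball{f(z)}{t}$ captures essentially the full contribution of each layer through $z$. One thus arrives at
\begin{gather*}
	\| V \| \big ( \oball{f(z)}{t} \cap \{ \xi \with | \project{T} ( \xi - z ) | > s | \xi - z | \} \big ) \geq \big ( \density^\vdim_\ast ( \| V \|, z ) - \eta \big ) \unitmeasure{\vdim} t^\vdim
\end{gather*}
for a small $\eta = \eta ( \varepsilon )$ and every $z \in Z \cap \spt \| V \|$.

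To pass from these one-point bounds to the estimate for the union I would sort the points of $Z$ by their position in $T^\perp$: if $z_1, z_2 \in Z$ satisfy $| z_1 - z_2 | \geq 2 t ( 1 - s^2 )^{-1/2}$ then $| \perpproject{T} ( z_1 - z_2 ) | \geq 2 t$ by the hypothesis $| \project{T} ( z_1 - z_2 ) | \leq s | z_1 - z_2 |$, whereas every $\xi$ in the cone region about $z_i$ that lies in $\oball{f(z_i)}{t}$ satisfies $| \perpproject{T} ( \xi - z_i ) | = | \perpproject{T} ( \xi - f(z_i) ) | < t$ because $f(z_i) - z_i \in T$; hence the two cone regions are disjoint and their masses add. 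Clusters of mutually $T^\perp$-close points of $Z$ are handled by a Besicovitch-type selection combined with the monotonicity inequality above, which keeps overlapping layers from being overcounted. Summing the one-point estimates over such a selection and invoking $\textstyle\sum_{z \in Z} \density^\vdim_\ast ( \| V \|, z) \geq Q - 1 + \delta$ then yields the asserted lower bound $( Q - \delta ) \unitmeasure{\vdim} t^\vdim$. I expect the main obstacle to be precisely this last step: the variable offset $f$ destroys the naive identification of the relevant balls with balls concentric at the points of $Z$, so the separation-and-covering bookkeeping must be arranged so that no mass is lost through the cone cut near the tips and none is double counted within a cluster --- which is exactly what the formulation and proof of \cite[3.11]{snulmenn.poincare} are designed to do, so in practice I would simply cite that result with the parameter substitution indicated above.
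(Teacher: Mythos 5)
Your proposal matches the paper's treatment: Lemma \ref{app:lemma:multilayer_monotonicity_offset} is listed in the preliminaries as a known statement and is justified solely by the citation of \cite[3.11]{snulmenn.poincare}, with no proof given beyond the remark that it extends Brakke \cite[5.3]{MR485012} --- exactly the "simply cite that result" route you settle on. Your additional sketch of the underlying monotonicity argument is not required (and its cluster-handling step is the genuinely delicate part of the cited proof), but since you correctly defer to the external reference for that, the proposal is sound.
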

\begin{remark}
	This is an extension of Brakke \cite[5.3]{MR485012}.
\end{remark}
\begin{lemma} [cf. \protect{\cite[3.12]{snulmenn.poincare}}] \label{app:lemma:inverse_multilayer_monotonicity}
	Suppose $\vdim, \adim, Q \in \nat$, $\vdim < \adim$, $0 < \delta_1
	\leq 1$, $0 < \delta_2 \leq 1$, $0 \leq s < 1$, $0 \leq s_0 < 1$, $0
	\leq M < \infty$, and $0 < \lambda < 1$ is uniquely defined by the
	requirement
	\begin{gather*}
		( 1 - \lambda^2 )^{\vdim /2} = ( 1 - \delta_2 ) + \Big (
		\frac{(s_0)^2}{1-(s_0)^2} \Big)^{\vdim /2} \lambda^\vdim .
	\end{gather*}
	
	Then there exists a positive, finite number $\varepsilon$ with the
	following property.
	
	If $Z \subset \rel^\adim $, $T \in \grass{\adim }{\vdim }$, $0 \leq d
	< \infty$, $0 < r < \infty$, $0 < t < \infty$, $\zeta \in \rel^\adim
	$,
	\begin{gather*}
		\card \project{T} \lIm Z \rIm = 1, \quad \zeta \in T \cap
		\cball{0}{d}, \quad d \leq M t, \quad d + t \leq r,
	\end{gather*}
	$V \in \IVar_\vdim ( \bigcup \{ \oball{z}{r} \with z \in Z\} ) $, $\|
	\delta V\|$ is a Radon measure,
	\begin{gather*}
		\density^\vdim  ( \| V \|, z ) \in \nat \quad \text{for $z \in
		Z$}, \\
		{\textstyle\sum_{z \in Z}} \density^\vdim  ( \| V \|, z) = Q,
		\qquad \measureball{\| V \|}{\oball{z}{r}} \leq M
		\unitmeasure{\vdim } r^\vdim  \quad \text{for $z \in Z$},
	\end{gather*}
	and whenever $0 < \varrho < r$, $z \in Z$
	\begin{gather*}
		\measureball{\| \delta V \|}{\cball{z}{\varrho}} \leq
		\varepsilon \, \| V \| ( \cball{z}{\varrho} )^{1-1/\vdim },
		\\
		{\textstyle\int_{\cball{z}{\varrho} \times
		\grass{\adim}{\vdim}}} | \project{S} - \project{T} | \ud V
		(\xi,S) \leq \varepsilon \, \measureball{\| V
		\|}{\cball{z}{\varrho}}
	\end{gather*}
	satisfying
	\begin{multline*}
		\| V \| \big ( {\textstyle\bigcup} \{ \xi \in \oball{z +
		\zeta}{t} \with | \project{T} ( \xi - z ) | > s_0 | \xi - z | \}
		\with z \in Z \} \big ) \\
		\leq ( Q + 1 - \delta_2 ) \unitmeasure{\vdim } t^\vdim ,
	\end{multline*}
	then the following two statements hold:
	\begin{enumerate}
		\item \label{app:item:inverse_multilayer_monotonicity:upper_bound}
		If $0 < \tau \leq \lambda t$, then
		\begin{gather*}
			\| V \| \big ( {\textstyle\bigcup} \{ \cball{z}{\tau}
			\with z \in Z \} \big ) \leq ( Q + \delta_1 )
			\unitmeasure{\vdim } \tau^\vdim .
		\end{gather*}
		\item \label{app:item:inverse_multilayer_monotonicity:lip_related}
		If $\xi \in \rel^\adim$ with $\dist (\xi, Z) \leq \lambda t
		/2$ and
		\begin{gather*}
			\measureball{\| V \|}{\cball{\xi}{\varrho}} \geq
			\delta_1 \unitmeasure{\vdim} \varrho^\vdim \quad
			\text{for $0 < \varrho < \delta_1 \dist ( \xi, Z )$},
		\end{gather*}
		then for some $z \in Z$
		\begin{gather*}
			| \project{T} ( \xi - z ) | \geq s | \xi - z |.
		\end{gather*}
	\end{enumerate}
\end{lemma}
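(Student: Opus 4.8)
The plan is to argue by contradiction with the help of Allard's compactness theorem, reducing to the situation in which the limiting varifold is a locally finite sum of parallel affine $\vdim$-planes; on such a varifold everything above is computed by an elementary area formula, and the defining relation for $\lambda$ turns out to be exactly the equality case of the resulting estimate. So suppose the assertion fails for some $\vdim, \adim, Q, \delta_1, \delta_2, s, s_0, M$, hence for the associated $\lambda$. Then there are data $Z_i, T_i, d_i, r_i, t_i, \zeta_i, V_i$ satisfying all the hypotheses with $\varepsilon$ replaced by some $\varepsilon_i \to 0+$, yet with \ref{app:item:inverse_multilayer_monotonicity:upper_bound} failing at some $0 < \tau_i \leq \lambda t_i$ or \ref{app:item:inverse_multilayer_monotonicity:lip_related} failing at some $\xi_i$ with $\dist(\xi_i, Z_i) \leq \lambda t_i/2$. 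Applying a homothety one may take $t_i = 1$, and since each point of $Z_i$ has $\density^\vdim(\|V_i\|, \cdot) \geq 1$ while these densities add up to $Q$, one has $\card Z_i \leq Q$; passing to a subsequence one may assume $\card Z_i$ is a fixed $k$, enumerate $Z_i = \{z_{i,1}, \ldots, z_{i,k}\}$, translate so that a point witnessing the failure becomes the origin (a point of $Z_i$ at which the defect in \ref{app:item:inverse_multilayer_monotonicity:upper_bound} is concentrated — such a point exists by averaging — respectively a point of $Z_i$ nearest to $\xi_i$), and arrange that $T_i \to T$ in $\grass{\adim}{\vdim}$, $\zeta_i \to \zeta$, $d_i \to d$, each $\density^\vdim(\|V_i\|, z_{i,j})$ is eventually a constant $q_j$, and each sequence $(z_{i,j})_i$ is either convergent or divergent to infinity; write $Z$ for the set of limit points of the convergent ones.

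The crucial preparatory work is to establish, for $\varepsilon_i$ small, uniform bounds on the masses $\measureball{\|V_i\|}{\cball{z}{\varrho}}$ for $z$ near $Z$ and $\varrho$ fixed, so that Allard's compactness theorem applies and $V_i \to V$ as varifolds on a neighbourhood of $Z$ with $V \in \IVar_\vdim$; these bounds come from the curvature smallness hypothesis via \ref{app:lemma:mydiss} and \ref{app:lemma:good_point}, and one must also account for the possibility $r_i \to \infty$. Since $\|\delta V_i\| \to 0$ locally and the tilt hypothesis forces $S = T$ for $V$ almost all $(z,S)$, one gets $\delta V = 0$, so by \ref{app:lemma:planes} $V = \sum_l \nu_l \mathscr{H}^\vdim \restrict (p_l + T)$ for positive integers $\nu_l$ with the heights $\perpproject{T}(p_l)$ pairwise distinct and discrete. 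Moreover, applying \ref{app:lemma:multilayer_monotonicity_offset} to the $V_i$ with $Z_i$, $T_i$, offset $z \mapsto z + \zeta_i$ and parameter $s_0$ — and separately to any points of $Z_i$ escaping to infinity — shows that in the limit $\|V\|(B) \leq (Q_1 + 1 - \delta_2)\unitmeasure{\vdim}$, where $B = \bigcup\{\oball{z+\zeta}{1} \cap \{\eta \with |\project{T}(\eta - z)| > s_0|\eta - z|\} \with z \in Z\}$ and $Q_1$ denotes the total multiplicity of the sheets of $V$ through $Z$ (so $Q_1 \leq Q$, the deficit being absorbed by the escaping points).

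Passing the failed conclusion to the limit — using that closed-ball masses are upper semicontinuous in the first case and that the density lower bound persists in the second, together with the planar structure of $V$ — produces a sheet $p_\ast + T$ of $V$, of multiplicity $\nu_\ast \geq 1$, whose height is not the height of any sheet through $Z$ but satisfies $a_\ast := \dist(\perpproject{T}(p_\ast), \{\perpproject{T}(z) \with z \in Z\}) < \lambda$ (here a mild case analysis, using that densities at points of $Z$ are integers, rules out a degenerate alternative in which $Q_1$ itself already exceeds the displayed bound). The contradiction is then a computation: for the planar varifold $V$ the area formula gives $\|V\|(B) \geq \unitmeasure{\vdim} \sum_l \nu_l \, \phi(\dist(\perpproject{T}(p_l), \{\perpproject{T}(z) \with z \in Z\}))$, where $\phi(a) = \sup\{0, (1-a^2)^{\vdim/2} - (s_0^2/(1-s_0^2))^{\vdim/2} a^\vdim\}$ for $0 \leq a < 1$; here on each sheet the portion cut out by $B$ about a given $z$ is a disk with a smaller disk deleted, and over $z \in Z$ it suffices to retain the term belonging to the nearest point of $Z$ (a nonzero offset $\zeta$ only complicates the description of these disks, not the estimate). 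Since $\phi$ is decreasing, strictly so on $(0, (1-s_0^2)^{1/2})$, with $\phi(0) = 1$, and the defining relation for $\lambda$ reads precisely $\phi(\lambda) = 1 - \delta_2$ — which also forces $\lambda < (1-s_0^2)^{1/2}$ — the sheets through $Z$ contribute $Q_1 \unitmeasure{\vdim}$ while the extra sheet contributes strictly more than $\phi(\lambda) \unitmeasure{\vdim} = (1-\delta_2)\unitmeasure{\vdim}$, whence $\|V\|(B) > (Q_1 + 1 - \delta_2)\unitmeasure{\vdim}$, contradicting the bound above.

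I expect the concluding computation to be routine and the genuine difficulty to lie in the compactness step — the uniform local mass bounds and the book-keeping for points of $Z_i$ tending to infinity and for the offset $\zeta$ — which relies on the isoperimetric estimates \ref{app:lemma:mydiss}, \ref{app:lemma:good_point} and on repeated use of \ref{app:lemma:multilayer_monotonicity_offset}.
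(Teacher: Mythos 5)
First, note that the paper does not prove this lemma at all: it is imported verbatim as a preliminary, with the proof residing in the cited reference \cite[3.12]{snulmenn.poincare}, so there is no in-paper argument to compare against. Your strategy --- contradiction via Allard's compactness theorem, reduction to a stationary limit which by \ref{app:lemma:planes} is a locally finite stack of planes parallel to $T$, and then an explicit area computation on the cones --- is the natural one and surely mirrors the cited source. Your terminal computation is correct and is the heart of the matter: the slice of the cone about $z$ by a sheet at normalized height $a$ is a disk minus a concentric (or offset, harmlessly) disk of measure at least $\unitmeasure{\vdim}\big((1-a^2)^{\vdim/2}-(s_0^2/(1-s_0^2))^{\vdim/2}a^\vdim\big)$, and the defining relation for $\lambda$ is exactly the statement that this equals $(1-\delta_2)\unitmeasure{\vdim}$ at $a=\lambda$; likewise your remark that one must separately rule out $Q_1$ exceeding $\sum q_z$ (sheets merging in the limit) is an essential and correctly identified point.

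The genuine gap is in how you pass the \emph{negation} of the conclusions to the limit. Conclusion \eqref{app:item:inverse_multilayer_monotonicity:upper_bound} is quantified over all $0<\tau\leq\lambda t$, so the contradiction sequence may have $\tau_i\to 0$ after the normalization $t_i=1$; a single varifold limit at unit scale together with upper semicontinuity of $\|V_i\|$ on compacta then gives nothing, and the two repairs implicit in your sketch both fail. Upper semicontinuity of closed-ball masses controls $\|V_i\|(K)$ only for a \emph{fixed} compactum, not uniformly over the shrinking family $\bigcup_{z\in Z_i}\cball{z}{\tau_i}$; and the ``averaging'' reduction to a single ball $\cball{z^*}{\tau_i}$ followed by almost-monotonicity up to scale $\lambda$ breaks down whenever two points of $Z_i$ lie within distance $\lambda$ of each other, since then $\lambda^{-\vdim}\measureball{\|V_i\|}{\cball{z^*}{\lambda}}$ legitimately exceeds $(q_{z^*}+\delta_1)\unitmeasure{\vdim}$ (it sees the sheets through the neighbouring points of $Z$), so no contradiction survives the monotonicity step. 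The same degeneration occurs in \eqref{app:item:inverse_multilayer_monotonicity:lip_related} when $\dist(\xi_i,Z_i)\to 0$: the density hypothesis there lives only at scales below $\delta_1\dist(\xi_i,Z_i)$ and is invisible to the unit-scale limit. To close this you need a second, interior blow-up: e.g.\ use $\card Z_i\leq Q$ to select (by pigeonhole) an intermediate scale $\sigma_i\in[\tau_i,C(Q)\tau_i]$ at which $Z_i$ splits into well-separated clusters, rescale each cluster by $1/\sigma_i$, and rerun the compactness argument there, observing that an extra sheet appearing at rescaled height $\leq 1$ had original height $O(\sigma_i)\to 0$ and hence would have contributed nearly a full $\unitmeasure{\vdim}t^\vdim$ to the cone mass, which the hypothesis forbids. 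Without some such multi-scale step the proof as written establishes the conclusions only for $\tau$ (respectively $\dist(\xi,Z)$) bounded below by a fixed fraction of $t$.
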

\begin{miniremark} [cf. \protect{\cite[3.13]{snulmenn.poincare}}] \label{app:miniremark:tilt}
	\emph{If $\vdim, \adim \in \nat$, $\vdim < \adim$, and $S, T \in
	\grass{\adim}{\vdim}$, then
	\begin{gather*}
		{\textstyle 1 - \left \| \bigwedge_\vdim ( \project{T} | S )
		\right \|^2 \leq \vdim \| \project{T} - \project{S} \|^2}.
	\end{gather*}}
\end{miniremark}
\begin{lemma} [Approximation by $\qspace_Q ( \rel^\codim )$ valued functions,
cf. \protect{\cite[3.15]{snulmenn.poincare}}]
\label{app:lemma:lipschitz_approximation}
	Suppose $\vdim, \adim, Q \in \nat$, $\vdim < \adim$, $0 < L < \infty$,
	$1 \leq M < \infty$, and $0 < \delta_i \leq 1$ for $i \in
	\{1,2,3,4,5\}$ with $\delta_5 \leq ( 2 \isoperimetric{\vdim} \vdim
	)^{-\vdim} / \unitmeasure{\vdim}$.
	
	Then there exists a positive, finite number $\varepsilon$ with the
	following property.
	
	If $0 < r < \infty$, $0 < h \leq \infty$, $h > 2 \delta_4 r$, $T = \im
	\pp^\ast$,
	\begin{gather*}
		U = \eqclassification{\rel^\vdim \times
		\rel^\codim}{(x,y)}{\dist ((x,y), \cylinder{0}{r}{h}{T})
		< 2r },
	\end{gather*}
	$V \in \IVar_\vdim ( U )$, $\| \delta V \|$ is a Radon measure,
	\begin{gather*}
		( Q - 1 + \delta_1 ) \unitmeasure{\vdim } r^\vdim  \leq \| V
		\| ( \cylinder{0}{r}{h}{T} ) \leq ( Q + 1 - \delta_2 )
		\unitmeasure{\vdim } r^\vdim , \\
		\| V \| ( \cylinder{0}{r}{h+\delta_4 r}{T} \without \cylinder
		{0}{r}{h-2\delta_4 r}{T}) \leq ( 1 - \delta_3 )
		\unitmeasure{\vdim } r^\vdim , \\
		\| V \| ( U ) \leq M \unitmeasure{\vdim } r^\vdim ,
	\end{gather*}
	$0 < \delta \leq \varepsilon$, $B$ denotes the set of all $z \in
	\cylinder{0}{r}{h}{T}$ with $\density^{\ast \vdim } ( \| V \|, z)
	> 0$ such that
	\begin{gather*}
		\text{either} \quad \measureball{\| \delta V
		\|}{\cball{z}{\varrho}} > \delta \, \| V \| (
		\cball{z}{\varrho} )^{1-1/\vdim } \quad \text{for some $0 <
		\varrho < 2 r$}, \\
		\text{or} \quad {\textstyle\int_{\cball{z}{\varrho} \times
		\grass{\adim}{\vdim}}} |
		\project{S} - \project{T} | \ud V (\xi,S) >
		\delta \, \measureball{\| V \|}{\cball{z}{\varrho}} \quad
		\text{for some $0 < \varrho < 2 r$},
	\end{gather*}
	$A = \cylinder{T}{0}{r}{h} \without B$, $A(x) =
	\classification{A}{z}{\pp (z) = x}$ for $x \in \rel^\vdim$, $X_1$ is
	the set of all $x \in \rel^\vdim \cap \cball{0}{r}$ such that
	\begin{gather*}
		{\textstyle\sum_{z \in A(x)}} \density^\vdim ( \| V \|, z )
		= Q \quad \text{and} \quad \text{$\density^\vdim ( \| V \|,
		z ) \in \nat \cup \{0\}$ for $z \in A(x)$},
	\end{gather*}
	$X_2$ is the set of all $x \in \rel^\vdim \cap \cball{0}{r}$ such
	that
	\begin{gather*}
		{\textstyle\sum_{z \in A(x)}} \density^\vdim ( \| V \|, z )
		\leq Q - 1 \quad \text{and} \quad \text{$\density^\vdim
		( \| V \|, z ) \in \nat \cup \{ 0 \}$ for $z \in A(x)$},
	\end{gather*}
	$N = \rel^\vdim \cap \cball{0}{r} \without ( X_1 \cup X_2 )$, $f :
	X_1 \to \qspace_Q ( \rel^{\codim} )$ is characterised by the
	requirement
	\begin{gather*}
		\density^\vdim ( \| V \|, z) = \density^0 ( \| f (x) \|, \qq
		(z) ) \quad \text{whenever $x \in X_1$ and $z \in A(x)$},
	\end{gather*}
	and $H$ denotes the set of all $z \in \cylinder{0}{r}{h}{T}$ such that
	\begin{gather*}
		\measureball{\| \delta V \|}{\oball{z}{2r}} \leq
		\varepsilon \, \| V \| ( \oball{z}{2r} )^{1-1/\vdim }, \\
		{\textstyle\int_{\oball{z}{2r} \times \grass{\adim}{\vdim}}} |
		\project{S} - \project{T} | \ud V ( \xi,S ) \leq \varepsilon
		\, \measureball{\| V \|}{\oball{z}{2r}}, \\
		\measureball{\| V \|}{\cball{z}{\varrho}} \geq \delta_5
		\unitmeasure{\vdim} \varrho^\vdim \quad \text{for $0 < \varrho
		< 2r $},
	\end{gather*}
	then the following six statements hold:
	\begin{enumerate}
		\item \label{app:item:lipschitz_approximation:N}
		$\mathscr{L}^\vdim ( N ) = 0$.
		\item \label{app:item:lipschitz_approximation_2:def} $A$
		and $B$ are Borel sets and
		\begin{gather*}
			\qq \lIm A \cap \spt \| V \| \rIm \subset
			\cball{0}{h-\delta_4r}.
		\end{gather*}
		\item \label{app:item:lipschitz_approximation_2:lip} The
		function $f$ is Lipschitzian with $\Lip f \leq L$.
		\item \label{app:item:lipschitz_approximation_2:misc} For
		$\mathcal{L}^\vdim $ almost all $x \in X_1$ the following is
		true:
		\begin{enumerate}
			\item
			\label{app:item:item:lipschitz_approximation:misc:a} The
			function $f$
			is approximately strongly affinely approximable at
			$x$.
			\item
			\label{app:item:item:lipschitz_approximation:misc:apf0}
			If $(x,y) \in \graph_Q f$ then
			\begin{gather*}
				\Tan^\vdim ( \| V \|, (x,y) ) = \Tan \big (
				\graph_Q \ap Af(x),(x,y) \big ) \in
				\grass{\adim}{\vdim}.
			\end{gather*}
		\end{enumerate}
		\item \label{app:item:lipschitz_approximation_2:lip_related} If
		$z \in H$, then $| \qq ( z ) | \leq h - \delta_4 r$ and for $x
		\in X_1 \cap
		\cball{\pp(z)}{\lambda_{\eqref{app:item:lipschitz_approximation_2:lip_related}}r}$
		there exists $\xi \in A (x)$ satisfying
		\begin{gather*}
			\density^\vdim ( \| V \|, \xi) \in \nat \quad
			\text{and} \quad \big | \perpproject{T} ( \xi - z )
			\big | \leq L \, | \project{T} ( \xi - z ) |,
		\end{gather*}
		where $0 <
		\lambda_{\eqref{app:item:lipschitz_approximation_2:lip_related}} <
		1$ depends only on $\vdim $, $\delta_2$, and $\delta_4$.
		Moreover,
		\begin{gather*}
			A \cap \spt \| V \| \subset H \quad \text{and} \quad H
			\cap \pp^{-1} \lIm X_1 \rIm = \graph_Q f.
		\end{gather*}
		\item \label{app:item:lipschitz_approximation_2:boundary}
		$\left ( \mathcal{L}^\vdim + \pp_\# ( \| V \| \restrict H)
		\right ) ((\Clos X_1) \without X_1)=0$.
	\end{enumerate}
\end{lemma}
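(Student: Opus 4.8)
The statement follows the pattern of \cite[3.15]{snulmenn.poincare}, from which everything not involving the set $H$ may essentially be quoted; here I indicate how the two main quantitative tools --- the multilayer monotonicity estimate \ref{app:lemma:multilayer_monotonicity_offset} and its ``inverse'' \ref{app:lemma:inverse_multilayer_monotonicity} --- are to be combined, and how the assertions concerning $H$ are obtained. The parameters $\delta_1,\dots,\delta_4$ and the auxiliary quantities $s$, $s_0$, $M$ occurring in those two lemmas are to be fixed in the course of the argument, while $\delta_5$ is retained only as small as the displayed hypothesis requires, so that \ref{app:lemma:good_point} applies to balls centred at points of $H$.

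The key observation is that, for $\delta \le \varepsilon$ with $\varepsilon$ chosen small, every point of $A \cap \spt\|V\|$ of positive upper density satisfies, for all $0 < \varrho < 2r$, precisely the smallness conditions on $\measureball{\|\delta V\|}{\cball{z}{\varrho}}$ and on $\tint{\cball{z}{\varrho}\times\grass{\adim}{\vdim}}{}|\project{S}-\project{T}|\ud V(\xi,S)$ demanded of ball centres by \ref{app:lemma:multilayer_monotonicity_offset} and \ref{app:lemma:inverse_multilayer_monotonicity} relative to the given plane $T$. Borel measurability of $A$ and $B$ in \eqref{app:item:lipschitz_approximation_2:def} then follows from the definitions, since the relevant set functions of $\varrho$ are right continuous and hence the two defining conditions need only be tested over a countable dense set of radii; and $\qq\lIm A \cap \spt\|V\|\rIm \subset \cball{0}{h-\delta_4 r}$ follows because a point of $A \cap \spt\|V\|$ in the collar region would, its sheet being nearly horizontal by the tilt smallness, force into the collar more $\|V\|$ measure than permitted by the hypothesis $(1-\delta_3)\unitmeasure{\vdim}r^\vdim$ on it.

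For the sheet structure I would proceed in two directions. By a covering and Fubini argument the lower cylinder mass bound $(Q-1+\delta_1)\unitmeasure{\vdim}r^\vdim$ provides, for $\mathscr{L}^\vdim$ almost all $x$ in a suitable interior disc, enough mass over $x$ to apply \ref{app:lemma:multilayer_monotonicity_offset} with $Z = A(x) \cap \spt\|V\|$ and conclude that about the points of $A(x)$ there sit roughly $Q$ copies of an $\vdim$-plane; conversely the upper cylinder mass bound $(Q+1-\delta_2)\unitmeasure{\vdim}r^\vdim$ is exactly the half-ball mass hypothesis required in order to apply \ref{app:lemma:inverse_multilayer_monotonicity} with that same $Z$. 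Part \eqref{app:item:inverse_multilayer_monotonicity:upper_bound} of the latter then gives $\sum_{z \in A(x)}\density^\vdim(\|V\|,z) \le Q$ for $\mathscr{L}^\vdim$ almost every $x \in \cball{0}{r}$; since these densities are nonnegative integers, and since a density sum $\le Q-1$ on a set of positive $\mathscr{L}^\vdim$ measure would, by Fubini along $\pp$ and \eqref{app:item:lipschitz_approximation_2:def}, contradict the lower cylinder bound, one obtains $\mathscr{L}^\vdim(N) = 0$, which is \eqref{app:item:lipschitz_approximation:N}. Part \eqref{app:item:inverse_multilayer_monotonicity:lip_related}, applied with the same $Z$, shows that a point $\xi \in \spt\|V\|$ lying over a sufficiently near good line and carrying the lower density bound there required --- available with constant one because $\xi \in \spt\|V\|$ of an integral varifold, or else via \ref{app:lemma:good_point} --- must satisfy $|\perpproject{T}(\xi-z)| \le L\,|\project{T}(\xi-z)|$ for some $z \in Z$. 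Taking $\xi \in A(x_2)$ for a second good line, this cone condition is precisely the Lipschitz bound $\Lip f \le L$ of \eqref{app:item:lipschitz_approximation_2:lip}, and it likewise yields the first clause of \eqref{app:item:lipschitz_approximation_2:lip_related}, the constant $\lambda_{\eqref{app:item:lipschitz_approximation_2:lip_related}}$ being the geometric constant so produced and the bound $|\qq(z)| \le h-\delta_4 r$ following from \eqref{app:item:lipschitz_approximation_2:def} at a nearby good line. The identities $A \cap \spt\|V\| \subset H$ and $H \cap \pp^{-1}\lIm X_1 \rIm = \graph_Q f$ then follow on passing to the limit $\varrho \to 2r-$ in the definitions of $A$ and $H$ and invoking the cone condition; \eqref{app:item:lipschitz_approximation_2:boundary} follows from \eqref{app:item:lipschitz_approximation:N} together with upper semicontinuity of density; and \eqref{app:item:lipschitz_approximation_2:misc} is read off from \ref{miniremark:f_i} and rectifiability, since on $\graph_Q f$ one has $\density^\vdim(\|V\|,\cdot) = \density^0(\|f(\pp(\cdot))\|,\qq(\cdot))$, so that $\|V\|\restrict\pp^{-1}\lIm X_1\rIm$ coincides with the graph varifold of $f$ in the sense of \ref{miniremark:first_variation} and the respective tangent planes agree $\mathscr{L}^\vdim$ almost everywhere.

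The main obstacle is the simultaneous use of \ref{app:lemma:multilayer_monotonicity_offset} and \ref{app:lemma:inverse_multilayer_monotonicity}: the constants $\delta_1$, $\delta_2$, $s$, $s_0$, $M$ entering them must be chosen so that \emph{both} sets of hypotheses hold uniformly over all admissible good lines and all pertinent points $\xi$, while the several concentric shrinkings --- $h$ against $h \pm \delta_4 r$, and $r$ against $\lambda_{\eqref{app:item:lipschitz_approximation_2:lip_related}} r$ --- must be kept mutually consistent in order to neutralise boundary effects. Verifying the half-ball mass hypothesis of \ref{app:lemma:inverse_multilayer_monotonicity} is where the precise value $Q+1-\delta_2$ in the upper cylinder bound enters essentially, and tracing this requirement back through both lemmas is what ultimately dictates how small $\varepsilon$ must be taken.
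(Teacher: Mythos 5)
The paper's proof of this lemma is a near-verbatim citation: it simply observes that the sets $Y$ and $Z$ introduced in the proof of \cite[3.15\,(1)\,(2)]{snulmenn.poincare} coincide with $X_1$ and $X_2$ here, and then lists the correspondence between the six conclusions and parts (2), (1), (7a), (7b), (4), (5) of that reference. You, by contrast, attempt to reconstruct the substantive argument from the multilayer monotonicity lemmas, which is a fair thing to do since that is indeed how \cite[3.15]{snulmenn.poincare} itself is proved; but your reconstruction contains a genuine error at a key step.

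The error is in your argument for \eqref{app:item:lipschitz_approximation:N}. You write that ``a density sum $\leq Q-1$ on a set of positive $\mathscr{L}^\vdim$ measure would, by Fubini along $\pp$ and \eqref{app:item:lipschitz_approximation_2:def}, contradict the lower cylinder bound, one obtains $\mathscr{L}^\vdim(N)=0$.'' This is backwards: the set of $x$ for which the density sum along $A(x)$ is $\leq Q-1$ (and densities are nonnegative integers) is exactly $X_2$, which is \emph{not} asserted to be null and in general has positive $\mathscr{L}^\vdim$ measure --- that is the whole reason the lemma carries both $X_1$ and $X_2$. What \eqref{app:item:lipschitz_approximation:N} asserts is that $N = \cball{0}{r}\without(X_1\cup X_2)$ is null, i.e.\ that for a.e.\ $x$ the densities at all of $A(x)$ are nonnegative integers \emph{and} their sum is $\leq Q$. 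Integrality at every point of $A(x)$ (rather than merely $\|V\|$ a.e.) is not automatic and requires a separate coarea/Fubini argument, and the upper bound $\leq Q$ is supplied by \ref{app:lemma:inverse_multilayer_monotonicity}\eqref{app:item:inverse_multilayer_monotonicity:upper_bound} together with the upper cylinder bound, not the lower one. The lower cylinder bound enters elsewhere (to guarantee that $X_1$ occupies most of the disc), not to exclude $X_2$.

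A second, smaller gap is in your treatment of \eqref{app:item:lipschitz_approximation_2:boundary}: you say it ``follows from \eqref{app:item:lipschitz_approximation:N} together with upper semicontinuity of density,'' but $(\Clos X_1)\without X_1$ may well meet $X_2$ in positive measure, so nullity of $N$ alone does not give the $\mathscr{L}^\vdim$ part; one needs an openness-in-measure statement for $X_1$, which in turn rests on the cone condition and \ref{app:lemma:inverse_multilayer_monotonicity}\eqref{app:item:inverse_multilayer_monotonicity:lip_related}.
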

\begin{proof}
	Assume $r=1$. First, note that the sets $Y$ and $Z$ defined in the
	last paragraph of the proof of
	\cite[3.15\,(1)\,(2)]{snulmenn.poincare} equal $X_1$ and $X_2$ and are
	shown there to satisfy $\mathscr{L}^\vdim ( \cball{0}{1} \without ( X
	\cup Y )) = 0$. Hence part \eqref{app:item:lipschitz_approximation:N}
	is evident and the parts
	\eqref{app:item:lipschitz_approximation_2:def},
	\eqref{app:item:lipschitz_approximation_2:lip},
	\eqref{app:item:item:lipschitz_approximation:misc:a},
	\eqref{app:item:lipschitz_approximation_2:lip_related}, and
	\eqref{app:item:lipschitz_approximation_2:boundary} correspond to
	parts (2), (1), (7a), (4), and (5) of \cite[3.15]{snulmenn.poincare}
	respectively. Finally, part
	\eqref{app:item:item:lipschitz_approximation:misc:apf0} is implied by
	\cite[3.15\,(7b)]{snulmenn.poincare} in conjunction with the last
	statement of \cite[3.15\,(4)]{snulmenn.poincare}.
\end{proof}
\begin{lemma} \label{lemma:simple_interpolation}
	Suppose $k, \vdim, \adim \in \nat$, $\vdim < \adim$, $a \in
	\rel^\vdim$, $0 < r < \infty$, and $u : \oball{a}{r} \to \rel^\codim$
	is of class $k$.

	Then
	\begin{gather*}
		\tsum{i=0}{k} r^i \norm{D^iu}{\infty}{a,r} \leq \Gamma \big (
		r^k \norm{D^ku}{\infty}{a,r} + r^{-\vdim} \norm{u}{1}{a,r}
		\big )
	\end{gather*}
	where $\Gamma$ is a positive, finite number depending only on $k$ and
	$\adim$.
\end{lemma}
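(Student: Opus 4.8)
The plan is to reduce to the unit ball by an affine change of variables and then run a compactness argument. First I would replace $u$ by $v(x) = u(a+rx)$ for $x \in \oball{0}{1}$; the chain rule gives $\norm{D^i v}{\infty}{0,1} = r^i \norm{D^i u}{\infty}{a,r}$ and the change of variables formula gives $\norm{v}{1}{0,1} = r^{-\vdim}\norm{u}{1}{a,r}$, so it suffices to produce, for each $k$ and $\vdim$, a positive finite $\Gamma$ such that $\sum_{i=0}^k \norm{D^i u}{\infty}{0,1} \le \Gamma\big(\norm{D^k u}{\infty}{0,1} + \norm{u}{1}{0,1}\big)$ whenever $u : \oball{0}{1} \to \rel^\codim$ is of class $k$; since in the lemma $1 \le \vdim \le \adim - 1$, taking the largest of these finitely many constants furnishes a $\Gamma$ depending only on $k$ and $\adim$ as asserted. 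One may moreover assume $\norm{D^k u}{\infty}{0,1} < \infty$, as otherwise there is nothing to prove, and this already forces $D^{k-1}u$ to be Lipschitzian and hence $u, Du, \ldots, D^{k-1}u$, and thereby $\norm{u}{1}{0,1}$, to be finite as well.

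For the unit ball estimate I would argue by contradiction. If no such $\Gamma$ existed, there would be maps $u_j : \oball{0}{1} \to \rel^\codim$ of class $k$, not identically zero, which after multiplication by a positive scalar satisfy $\sum_{i=0}^k \norm{D^i u_j}{\infty}{0,1} = 1$ while $\norm{D^k u_j}{\infty}{0,1} + \norm{u_j}{1}{0,1} < 1/j$. For $0 \le i \le k-1$ the functions $D^i u_j$ are then bounded by $1$ and Lipschitzian with constant $\le \norm{D^{i+1}u_j}{\infty}{0,1} \le 1$, hence extend continuously to $\overline{\oball{0}{1}}$; applying the Arzel\`a--Ascoli theorem successively to $u_j, Du_j, \ldots, D^{k-1}u_j$ and passing to a diagonal subsequence, one obtains a function $u$ of class $k-1$ on $\oball{0}{1}$ with $D^i u_j \to D^i u$ uniformly for $i = 0, \ldots, k-1$. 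Since $\norm{u_j}{1}{0,1} \to 0$ and $u_j \to u$ uniformly, $u$ vanishes identically, so $D^i u \equiv 0$ and $\norm{D^i u_j}{\infty}{0,1} \to 0$ for $i \le k-1$; together with $\norm{D^k u_j}{\infty}{0,1} < 1/j \to 0$ this contradicts $\sum_{i=0}^k \norm{D^i u_j}{\infty}{0,1} = 1$, completing the argument.

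The only point needing care is that all norms are taken over the open ball $\oball{0}{1}$ rather than over a compact set: this is exactly why one first upgrades the uniform bound on $D^{i+1}u_j$ to uniform equicontinuity and a continuous extension of $D^i u_j$ to $\overline{\oball{0}{1}}$ before invoking Arzel\`a--Ascoli, and why the conclusion $\norm{D^i u_j}{\infty}{0,1} \to 0$ is read off from uniform convergence rather than from convergence on a smaller ball. If an explicit constant were wanted instead, the same conclusion would follow by comparing $u$ with its averaged Taylor polynomial $Q$ of degree $k-1$ formed against a fixed bump function supported in $\oball{0}{1/2}$: integration by parts bounds the coefficients of $Q$ by a constant multiple of $\norm{u}{1}{0,1}$, while the integral form of the Taylor remainder -- all segments involved remaining inside $\oball{0}{1}$ -- gives $\norm{D^i(u - Q)}{\infty}{0,1} \le \Gamma\norm{D^k u}{\infty}{0,1}$ for $i \le k-1$, and summing the two estimates yields the bound for every intermediate derivative. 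I would present the compactness version for brevity.
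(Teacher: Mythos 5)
Your argument is correct and is essentially the paper's own proof: the paper reduces to $r=1$ by scaling and then invokes Ehrling's lemma together with the Arzel\`a--Ascoli theorem, and your contradiction-and-compactness argument is precisely the standard unwinding of that combination, with the Lipschitz extension to the closed ball correctly handling the fact that the norms are taken over the open ball.
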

\begin{proof}
	Assuming $r=1$, this is a consequence of Ehring's lemma, see
	e.g.~\cite[Theorem\,\printRoman{1}.7.3]{MR895589}, and Arzel\`a's and
	Ascoli's theorem.
\end{proof}
\begin{lemma} \label{lemma:poincare}
	Suppose $\vdim, \adim \in \nat$, $\vdim < \adim$, $a \in \rel^\vdim$,
	$0 < r < \infty$, and $u \in \Sob{}{1}{2} ( \oball{a}{r}, \rel^\codim
	)$.

	Then there exists $h \in \rel^\codim$ with
	\begin{gather*}
		\norm{u-h}{2}{a,r} \leq \Gamma r \norm{\weakD u}{2}{a,r}
	\end{gather*}
	where $\Gamma$ is a positive, finite number depending only on $\adim$.
\end{lemma}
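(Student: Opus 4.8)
The plan is to reduce to the unit ball by homogeneity and then to invoke the standard Poincar\'e inequality on $\oball{0}{1}$, whose proof by compactness I sketch for completeness.

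First I would normalise. Given $u$, consider $v : \oball{0}{1} \to \rel^\codim$ with $v(x) = u(a+rx)$; then $v \in \Sob{}{1}{2}(\oball{0}{1},\rel^\codim)$ with $\weakD v (x) = r\, \weakD u (a+rx)$, and the substitution $y = a+rx$ yields $\norm{v-h}{2}{0,1} = r^{-\vdim/2} \norm{u-h}{2}{a,r}$ and $\norm{\weakD v}{2}{0,1} = r^{1-\vdim/2} \norm{\weakD u}{2}{a,r}$ for every $h \in \rel^\codim$. Hence an estimate $\norm{v-h}{2}{0,1} \leq \Gamma_0 \norm{\weakD v}{2}{0,1}$ on the unit ball is equivalent to the asserted estimate for $u$ with the same $h$ and the same constant. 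It therefore suffices to prove the case $a = 0$, $r = 1$ with a constant $\Gamma_0$ depending only on $\vdim$; since $\vdim$ ranges over the finitely many values $1, \ldots, \adim-1$, one then sets $\Gamma = \sup\{\Gamma_0 : 1 \leq \vdim < \adim\}$ to obtain a constant depending only on $\adim$.

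For $a=0$, $r=1$ I would take $h = \fint_{\oball{0}{1}} u \ud \mathscr{L}^\vdim$ and argue by contradiction. Were the estimate false for every $\Gamma_0$, there would be $u_k \in \Sob{}{1}{2}(\oball{0}{1},\rel^\codim)$ with $\fint_{\oball{0}{1}} u_k \ud \mathscr{L}^\vdim = 0$, $\norm{u_k}{2}{0,1} = 1$, and $\norm{\weakD u_k}{2}{0,1} \to 0$; this sequence is bounded in $\Sob{}{1}{2}(\oball{0}{1},\rel^\codim)$, so by the Rellich--Kondrachov compactness theorem a subsequence converges in $\Lp{2}(\mathscr{L}^\vdim \restrict \oball{0}{1}, \rel^\codim)$ to some $v$; the vanishing of the derivatives in $\Lp{2}$ forces $v$ to be weakly differentiable with $\weakD v = 0$, hence constant on the connected set $\oball{0}{1}$, and passing to the limit in $\fint u_k \ud \mathscr{L}^\vdim = 0$ gives $v = 0$, contradicting $\norm{v}{2}{0,1} = \lim_k \norm{u_k}{2}{0,1} = 1$.

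Every step here is classical, so there is no genuine obstacle; the one point deserving a remark is that in the present paper weakly differentiable functions are not identified when they agree $\mathscr{L}^\vdim$ almost everywhere and $\weakD$ is fixed through the representative of condition \eqref{item:representative}, but this convention does not alter the values of the $\Lp{2}$ seminorms involved, so the compactness argument applies without change.
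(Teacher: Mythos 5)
Your argument is correct. The paper itself offers no proof of this lemma beyond a citation to the standard Poincar\'e inequality in Gilbarg and Trudinger, (7.45), so the only real comparison to draw is with that cited proof: there the estimate on the unit ball is obtained constructively, via the sub-representation formula $|u(x)-\fint u| \leq \Gamma \int |x-y|^{1-\vdim} |\weakD u (y)| \ud \mathscr{L}^\vdim y$ and a Riesz-potential (convolution) estimate, which yields an explicit constant; you instead prove the unit-ball case by the Rellich--Kondrachov compactness argument, which is shorter but produces a non-effective constant. For the purposes of this paper either is adequate, since $\Gamma$ is only required to be finite and to depend on $\adim$ alone, and your scaling step together with taking the supremum over $\vdim \in \{1,\ldots,\adim-1\}$ handles that dependence correctly. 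Your closing remark about the paper's convention of not identifying functions agreeing $\mathscr{L}^\vdim$ almost everywhere is also apt and correctly dismissed as harmless, since only $\Lp{2}$ seminorms enter the statement.
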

\begin{proof}
	This is Poincar\'e's inequality, see e.g.~\cite[(7.45)]{MR1814364}.
\end{proof}
\section{A coercive estimate} \label{sec:coercive}
In the present section two improved versions of Brakke's coercive estimate in
\cite[5.5]{MR485012} are derived in \ref{lemma:coercive_estimate_rect} and
\ref{lemma:coercive_estimate}. First, some computations for the catenoid are
carried out in \ref{example:catenoid} which are used in \ref{remark:catenoid}
to rule out a certain generalisation of the coercive estimate. Then, some
basic facts about approximate differentiability with respect to the weight
measure of a varifold are given in \ref{lemma:approx_diff} which are needed to
construct a cut-off function in \ref{lemma:capacity}. Finally, the coercive
estimate for rectifiable varifolds satisfying a lower bound on the density is
proven in \ref{lemma:coercive_estimate_rect} and a simpler version for general
varifolds is indicated in \ref{lemma:coercive_estimate}.
\begin{miniremark} \label{miniremark:projections}
	Frequently, the following estimates from Allard
	\cite[8.9\,(5)]{MR0307015} will be used:

	\emph{Suppose $\vdim, \adim \in \nat$, $\vdim < \adim$, $T \in
	\grass{\adim}{\vdim}$ and $\eta_1, \eta_2 \in \Hom ( S, S^\perp )$. If
	\begin{gather*}
		S_i = \classification{\rel^\adim}{z}{z+\eta_i (z) \with z \in
		S} \quad \text{for $i = 1,2$},
	\end{gather*}
	then
	\begin{gather*}
		\| \eqproject{S_1} - \eqproject{S_2} \| \leq \| \eta_1 -
		\eta_2 \|, \\
		\big ( 1 - \| \eqproject{S_1} - \project{S} \|^2 \big ) \|
		\eta_1 - \eta_2 \|^2 \leq \big ( 1 + \| \eta_2 \|^2 \big ) \|
		\eqproject{S_1} - \eqproject{S_2} \|^2.
	\end{gather*}}
\end{miniremark}
\begin{example} \label{example:catenoid}
	Suppose $\vdim = 2$, $\adim = 3$, and $f :
	\classification{\rel}{t}{1 \leq t < \infty} \to \rel$ as well as $N$,
	$T$, and $P_R$ are defined by
	\begin{gather*}
		f (t) = \log \big ( t + ( t^2-1)^{1/2} \big  ) \quad \text{for
		$1 \leq t < \infty$}, \\
		N = \classification{\rel^3}{z}{ | \qq (z) | = f ( | \pp (z) |
		)}, \quad T = \im \pp^\ast, \\
		P_R = \classification{\rel^3}{z}{| \qq (z) | = \log (2R)}
		\quad \text{for $2 \leq R < \infty$}.
	\end{gather*}

	Then there exists a universal, positive, finite number $\Gamma$ with
	the following two properties:
	\begin{enumerate}
		\item \label{item:catenoid:height} $\int_{\rel^3 \cap
		\cball{0}{R}} | \dist ( z, P_R ) |^2 \ud ( \mathscr{H}^2
		\restrict N ) z \leq \Gamma R^2$ for $2 \leq R < \infty$.
		\item \label{item:catenoid:tilt} $\int_{\rel^3 \cap
		\cball{0}{R}} | \project{\Tan ( N, z )} - \project{T} |^2 \ud
		( \mathscr{H}^2 \restrict N ) z \geq \Gamma^{-1} \log R$ for $2
		\leq R < \infty$.
	\end{enumerate}
\end{example}
\begin{proof} [Construction of example]
	First, note
	\begin{gather*}
		f'(t) = \frac{1}{t+(t^2-1)^{1/2}} \cdot \left (
		1+\frac{t}{(t^2-1)^{1/2}} \right ) \quad \text{for $1 < t <
		\infty$},
	\end{gather*}
	hence $( \Gamma_1 )^{-1} t^{-1} \leq f'(t) \leq \Gamma_1 t^{-1}$ for
	$2 \leq t < \infty$ and some universal, positive, finite number
	$\Gamma_1$, in particular $\Lip f | \classification{\rel}{s}{s \geq 2}
	< \infty$.
	
	To prove \eqref{item:catenoid:height}, one estimates
	\begin{gather*}
		\tint{\cylind{T}{0}{R} \without
		\cylind{T}{0}{2}}{} \dist (z, P_R)^2 \ud (
		\mathscr{H}^2 \restrict N ) z \leq \Gamma_2 ( a_1 + a_2 )
	\end{gather*}
	where $\Gamma_2$ is a universal, positive, finite number and
	\begin{gather*}
		\begin{aligned}
			a_1 & = \tint{\cball{0}{R} \without \cball{0}{2}}{} |
			\log (2R) - \log ( 2 |x| ) |^2 \ud \mathscr{L}^2 x, \\
			a_2 & = \tint{\cball{0}{R} \without \cball{0}{2}}{} |
			\log ( 2 |x| ) - f ( |x| ) |^2 \ud \mathscr{L}^2 x.
		\end{aligned}
	\end{gather*}
	Concerning $a_1$, note
	\begin{gather*}
		a_1 = 2 \pi \tint{2}{R} | \log ( t/R ) |^2 t \ud \mathscr{L}^1
		t \leq 2 \pi R^2 \tint{0}{1} | \log ( t ) |^2 t \ud
		\mathscr{L}^1 t < \infty.
	\end{gather*}
	To estimate $a_2$, define $h : \classification{\rel}{t}{t > 0} \to
	\rel$ by $h(t) = t^{1/2}$ and note for $2 \leq t < \infty$
	\begin{gather*}
		| \log ( 2t) - \log ( t + (t^2-1)^{1/2} ) | \leq \Lip ( \log |
		\classification{\rel}{s}{s \geq t} ) | t - ( t^2 -1 )^{1/2} |
		\\
		\leq t^{-1} \Lip (h| \classification{\rel}{s}{s \geq (t^2-1)})
		\leq t^{-1} 2^{-1} ( t^2-1)^{-1/2} \leq 2^{-1/2} t^{-2},
	\end{gather*}
	hence $a_2 \leq \pi \tint{2}{R} t^{-3} \ud \mathscr{L}^1 t \leq \pi
	/8$. Together, the estimates for $a_1$ and $a_2$ yield
	\eqref{item:catenoid:height}. By \ref{miniremark:projections}, it
	follows
	\begin{gather*}
		\| \project{\Tan ( N,z )} - \project{T} \| \leq f' ( |\pp(z)|
		) \leq \Gamma_1 | \pp (z) |^{-1}
	\end{gather*}
	for $z \in N \without \cylind{T}{0}{2}$, hence by
	\ref{miniremark:projections} with $S$, $S_1$, $S_2$ replaced by $T$,
	$\Tan (N,z)$, $T$,
	\begin{gather*}
		| \project{\Tan (N,z)} - \project{T} | \geq \| \project{\Tan
		(N,z)} - \project{T} \| \geq f'(|\pp (z)|)/2 \geq  ( 2
		\Gamma_1)^{-1} | \pp (z) |^{-1}
	\end{gather*}
	for $z \in N \without \cylind{T}{0}{2 \Gamma_1}$. Noting for
	$2 \leq R < \infty$
	\begin{gather*}
		f (t) \leq f (R) \leq 2R \quad \text{for $1 \leq t \leq R$},
		\qquad N \cap \cylind{T}{0}{R} \subset \rel^3 \cap
		\cball{0}{3R},
	\end{gather*}
	this implies for $2 \sup \{ \Gamma_1,1 \} \leq R < \infty$ that
	\begin{gather*}
		\begin{aligned}
			& \tint{\rel^3 \cap \cball{0}{3R}}{} | \project{\Tan
			(N,z)} - \project{T} |^2 \ud ( \mathscr{H}^2 \restrict
			N ) z \\
			& \qquad \geq \tint{\cylind{T}{0}{R}
			\without \cylind{T}{0}{2\Gamma_1}}{} |
			\project{\Tan (N,z)} - \project{T} |^2 \ud (
			\mathscr{H}^2 \restrict N ) z \\
			& \qquad \geq ( 2\Gamma_1)^{-2} \tint{2\Gamma_1}{R}
			t^{-1} \ud \mathscr{L}^1 t = ( 2\Gamma_1 )^{-2} \log (
			R/(2\Gamma_1) ).
		\end{aligned}
	\end{gather*}
	Since $\int_{\rel^3 \cap \cball{0}{2}} | \project{\Tan (N,z)} -
	\project{T} |^2 \ud ( \mathscr{H}^2 \restrict N ) z  > 0$, one infers
	\eqref{item:catenoid:tilt}.
\end{proof}
\begin{miniremark} \label{miniremark:situation}
	The following situation will be studied: $\vdim, \adim \in \nat$,
	$\vdim < \adim$, $1 \leq p \leq \infty$, $U$ is an open subset of
	$\rel^\adim$, $V \in \Var_\vdim ( U)$, $\| \delta V \|$ is a Radon
	measure and, if $p > 1$,
	\begin{gather*}
		( \delta V ) ( g ) = - {\textstyle\int} g (z) \bullet \mathbf{h}
		(V;z) \ud \| V \| (z) \quad \text{whenever $g \in \mathscr{D}
		( U, \rel^\adim )$}, \\
		\mathbf{h} (V;\cdot) \in \Lp{p} ( \| V \| \restrict K,
		\rel^\adim ) \quad \text{whenever $K$ is a compact subset of
		$U$}.
	\end{gather*}

	If $p < \infty$ then the measure $\psi$ is defined by
	\begin{gather*}
		\psi = \| \delta V \| \quad \text{if $p = 1$}, \qquad \psi =
		| \mathbf{h} ( V ; \cdot ) |^p \| V \| \quad \text{if $p >
		1$}.
	\end{gather*}
\end{miniremark}
\begin{miniremark} \label{miniremark:extension}
	Suppose $\vdim$, $\adim$, $p=1$, $U$ and $V$ are as in
	\ref{miniremark:situation}. Then $\delta V \in \mathscr{D}' ( U,
	\rel^\adim )$ will be extended to $\Lp{1} ( \| \delta V \|, \rel^\adim
	)$ by continuity with respect to $\| \delta V \|_{(1)}$ and $(\delta V
	) (g)$ will be used to denote this extension for $g \in \Lp{1} (
	\| \delta V \|, \rel^\adim )$ as in \cite[4.1.5]{MR41:1976}.
\end{miniremark}
\begin{lemma} \label{lemma:approx_diff}
	Suppose $\vdim, \adim \in \nat$, $\vdim \leq \adim$, $U$ is an open
	subset of $\rel^\adim$, and $V \in \RVar_\vdim (U)$.

	Then the following four statements hold:
	\begin{enumerate}
		\item \label{item:approx_diff:measurability} If $f : U \to
		\rel$ is $\| V \|$ measurable and $A$ denotes the set of all
		$z \in U$ such that $f$ is $(\| V \|, \vdim )$ approximately
		differentiable at $z$, then $A$ is $\| V \|$ measurable and
		$( \| V \|, \vdim ) \ap Df(z) \circ \project{\Tan^\vdim ( \| V
		\|, z )}$ depends $\| V \| \restrict A$ measurably on $z$.
		\item \label{item:approx_diff:diff} If $f : U \to \rel$ is
		Lipschitzian, then $f$ is $(\|V\|,\vdim)$ approximately
		differentiable at $\|V\|$ almost all $z$.
		\item \label{item:approx_diff:conv} If $f_i : U \to \rel$ is a
		sequence of functions converging locally uniformly to $f : U
		\to \rel$ and $\sup \{ \Lip f_i \with i \in \nat \} < \infty$,
		then
		\begin{gather*}
			{\textstyle\int} \left < g(z), ( \| V \|, \vdim ) \ap
			Df_i (z) \right > \ud \| V \| z \to {\textstyle\int}
			\left < g (z), ( \| V \|, \vdim ) \ap D f (z) \right >
			\ud \| V \| z
		\end{gather*}
		as $i \to \infty$ whenever $g \in \Lp{1} ( \| V \|,
		\rel^\adim)$ with $g(z) \in \Tan^\vdim ( \| V \|, z )$ for $\|
		V \|$ almost all $z$.
		\item \label{item:approx_diff:variation} If $f : U \to
		\rel^\adim$ is a Lipschitzian function with compact support in
		$U$ and $\| \delta V \|$ is a Radon measure, then (see
		\ref{miniremark:extension})
		\begin{gather*}
			\delta V ( f ) = {\textstyle\int} \project{S} \bullet
			(( \|V \|, \vdim ) \ap Df (z) \circ \project{S} ) \ud
			V (z,S).
		\end{gather*}
	\end{enumerate}
\end{lemma}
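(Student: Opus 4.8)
The plan is to prove the four statements largely independently, relying on rectifiability of $V$ to reduce everything to classical facts about Lipschitzian functions on the countably $\vdim$ rectifiable set $W = \{ z : \density^\vdim ( \| V \|, z) > 0 \}$, on which $\| V \|$ is $(\mathscr{H}^\vdim, \vdim)$ rectifiable with approximate tangent planes $\Tan^\vdim ( \| V \|, z )$.

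For \eqref{item:approx_diff:measurability}, I would follow the standard route: write $W$ as an $\mathscr{H}^\vdim \restrict W$-essentially disjoint countable union of Borel pieces $W_j$, each contained in a $\vdim$ dimensional submanifold $M_j$ of class $1$ (possible by \cite[3.1.21, 3.2.29]{MR41:1976}, cf.\ the rectifiability description given in Section \ref{sec:not}), so that for $\| V \|$ almost all $z \in W_j$ one has $\Tan^\vdim ( \| V \|, z ) = \Tan ( M_j, z )$. On each $M_j$, $(\| V \|, \vdim)$ approximate differentiability of $f$ at $z$ is equivalent to approximate differentiability of $f | M_j$ relative to $\mathscr{H}^\vdim \restrict M_j$ at $z$ in the sense of \cite[3.1.22]{MR41:1976}, and the approximate differential composed with $\project{\Tan^\vdim ( \| V \|, z )}$ equals the intrinsic one extended by zero on the normal space. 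The measurability assertions then reduce to \cite[3.1.22, 2.9.12]{MR41:1976} applied chart-wise, after which one assembles the countably many pieces; the set $A$ is measurable because it is, up to a $\| V \|$ null set, the union of the corresponding measurable pieces.

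Statement \eqref{item:approx_diff:diff} is then immediate from \eqref{item:approx_diff:measurability} together with the classical fact that a Lipschitzian function on $\rel^\adim$ is $\mathscr{L}^\vdim$-almost everywhere differentiable, transported to each $M_j$ via \cite[3.1.22, 3.2.19]{MR41:1976} (Rademacher's theorem on submanifolds). For \eqref{item:approx_diff:conv} I would extend each $f_i$ and $f$ to Lipschitzian functions on $\rel^\adim$ with a common bound $L$ on the Lipschitz constants (say by Kirszbraun, or just McShane coordinate-wise), observe that $( \| V \|, \vdim ) \ap Df_i (z) \circ \project{\Tan^\vdim ( \| V \|, z )}$ agrees $\| V \|$-almost everywhere with the intrinsic tangential gradient of $f_i | M_j$, and note that these tangential gradients are bounded by $L$ in $\Lp{\infty}$. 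One then wants to pass to the limit in $\int \langle g(z), \ap Df_i(z) \rangle \ud \| V \| z$; since $g \in \Lp{1}$ is tangent-valued, it suffices to show the tangential gradients converge weakly-$\ast$ in $\Lp{\infty} ( \| V \| )$ against $\Lp{1}$ test functions to the tangential gradient of $f$. This follows by testing against compactly supported Lipschitz tangent fields and integrating by parts on each $M_j$ (or, cleanly, by the fact that locally uniform convergence of $f_i \to f$ plus the uniform Lipschitz bound forces distributional convergence of the gradients, hence weak-$\ast$ $\Lp{\infty}$ convergence); a density argument extends from Lipschitz $g$ to general $g \in \Lp{1}$ using the uniform $\Lp{\infty}$ bound.

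Finally, \eqref{item:approx_diff:variation} is the identity I expect to require the most care, since it must match the convention for $\delta V(f)$ on $\Lp{1}( \| \delta V \|)$ from \ref{miniremark:extension}. I would argue as follows: for $f \in \mathscr{D}( U, \rel^\adim)$ the asserted formula is exactly the definition $\delta V ( f ) = \int D f(z) \bullet \project{S} \ud V(z,S)$, because $Df(z) \bullet \project{S} = \project{S} \bullet ( Df(z) \circ \project{S})$ and, by rectifiability, $Df(z) \circ \project{S}$ may be replaced $V$-almost everywhere by $(\| V \|, \vdim) \ap Df(z) \circ \project{S}$ (the approximate tangential differential of a smooth function coincides with its restriction to the tangent plane). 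For general Lipschitzian $f$ with compact support, mollify: choose $f_i \in \mathscr{D}(U, \rel^\adim)$ with $\spt f_i$ in a fixed compact neighbourhood of $\spt f$, $f_i \to f$ uniformly, and $\sup_i \Lip f_i < \infty$. The left side converges because $f_i \to f$ in $\Lp{1}( \| \delta V \|)$ by dominated convergence (using that $\| \delta V \|$ is Radon, hence finite on the compact set in question) and by the continuity in \ref{miniremark:extension}; the right side converges by \eqref{item:approx_diff:conv} applied with $g$ ranging over $S \bullet ( \cdot \circ \project{S})$ disintegrated suitably — more precisely, one applies the already-proven convergence on the slices of $V$ over $\grass{\adim}{\vdim}$, or simply reruns the weak-$\ast$ argument of \eqref{item:approx_diff:conv} directly for the bilinear pairing $\int \project{S} \bullet ( \ap Df_i(z) \circ \project{S}) \ud V(z,S)$. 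Equating the two limits gives the claim. The main obstacle is bookkeeping the three notions of differential (ambient, intrinsic on $M_j$, and $(\| V \|, \vdim)$-approximate) and verifying they agree $\| V \|$-almost everywhere with the correct compositions by $\project{\Tan^\vdim(\| V \|, z)}$; once that dictionary is set up via \cite[3.1.21, 3.1.22, 3.2.19]{MR41:1976}, all four parts follow from classical Lipschitz theory on rectifiable sets.
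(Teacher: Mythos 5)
Your proposal follows essentially the same route as the paper: parts \eqref{item:approx_diff:measurability} and \eqref{item:approx_diff:diff} by decomposing the rectifiable support into pieces of class $\class{1}$ submanifolds and quoting the classical Federer results, part \eqref{item:approx_diff:conv} by a uniform Lipschitz bound plus partial integration plus an $\Lp{1}$ density argument, and part \eqref{item:approx_diff:variation} by mollification from the case $f \in \mathscr{D}(U,\rel^\adim)$ combined with \eqref{item:approx_diff:conv}. The only caveat is that integrating by parts ``on each $M_j$'' is awkward (the pieces $W_j \subset M_j$ are merely measurable and $M_j$ is only of class $\class{1}$, so boundary and curvature terms are problematic); the clean implementation — which your parenthetical alternative amounts to, and which the paper adopts — is to use the density of the admissible $g$ to reduce first to $\| V \| = \mathscr{H}^\vdim \restrict W$ and then, via bi-Lipschitzian charts, to $\vdim = \adim$ with $\| V \| = \mathscr{L}^\vdim$, where partial integration against $\mathscr{D}(\rel^\vdim,\rel^\vdim)$ is unobstructed.
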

\begin{proof} [Proof of \eqref{item:approx_diff:measurability} and \eqref{item:approx_diff:diff}]
	Since $\| V \| ( \classification{U}{z}{\density^{\ast \vdim} ( \| V
	\|, z ) = \infty} ) = 0$, a set $B$ is $\| V \|$ measurable if and
	only if $\classification{B}{z}{\density^{\ast \vdim} ( \| V \|, z ) >
	0}$ is $\mathscr{H}^\vdim$ measurable by
	\cite[2.10.19\,(1)\,(3)]{MR41:1976}. Hence
	\eqref{item:approx_diff:measurability} and
	\eqref{item:approx_diff:diff} follow from \cite[3.2.17--19, 3.1.4,
	2.10.19\,(4), 2.9.9]{MR41:1976}.
\end{proof}
\begin{proof} [Proof of \eqref{item:approx_diff:conv}]
	Clearly, the assertion needs only to be verified for elements $g$ of
	some subset $X$ of $\Lp{1} ( \| V \|, \rel^\adim )$ whose span is
	$\| V \|_{(1)}$ dense in $\classification{\Lp{1} ( \| V \|,
	\rel^\adim)}{g}{g(z) \in \text{$\Tan^\vdim ( \| V \|, z )$ for $z \in
	U$}}$.  Therefore one may first assume $\| V \| = \mathscr{H}^\vdim
	\restrict W$ for some $( \mathscr{H}^\vdim, \vdim )$ rectifiable and
	$\mathscr{H}^\vdim$ measurable subset of $U$ by \cite[3.2.19,
	2.10.19\,(4), 2.9.9]{MR41:1976} and then $\vdim = \adim$, $\| V \| =
	\mathscr{L}^\vdim$ by \cite[3.2.17--20, 3.1.5, 2.9.11]{MR41:1976}.
	This case can be treated with $X = \mathscr{D} ( \rel^\vdim,
	\rel^\vdim )$ using partial integration.
\end{proof}
\begin{proof} [Proof of \eqref{item:approx_diff:variation}]
	\eqref{item:approx_diff:conv} readily implies
	\eqref{item:approx_diff:variation} by means of convolution.
\end{proof}
\begin{remark}
	Concerning the possible use of $(\| V\|, \vdim)$ approximate
	differentials for a similar purpose, see Federer \cite[\S 2,
	p.~415]{MR833403}.  Also, an argument similar to the proof of
	\eqref{item:approx_diff:conv} and \eqref{item:approx_diff:variation}
	is indicated in Hutchinson \cite[p.~60]{MR1066398}.
\end{remark}
\begin{lemma} \label{lemma:capacity}
	Suppose $\vdim$, $\adim$, $p$, $U$, $V$, and $\psi$ are as in
	\ref{miniremark:situation}, $p < \vdim$, $V \in \RVar_\vdim (U)$,
	$\density^\vdim ( \| V \| , z ) \geq 1$ for $\| V \|$ almost all $z$,
	$K$ is a compact subset of $U$, $0 < \delta \leq \frac{1}{40}$, and
	$H$ is the set of all $z \in \spt \| V \|$ such that
	\begin{gather*}
		\measureball{\| V \|}{\cball{z}{r}} \geq \delta^\vdim (
		\isoperimetric{\vdim} \vdim )^{-\vdim} r^\vdim \quad
		\text{whenever $0 < r < \infty$, $\cball{z}{r} \subset K$}.
	\end{gather*}

	Then there exists a Baire function $f : U \to
	\classification{\rel}{t}{0 \leq t \leq 1 }$ satisfying for $g \in
	\mathscr{D} ( U, \rel^\adim )$
	\begin{gather*}
		\classification{\rel^\adim}{z}{f (z) \neq 0} \subset K, \quad
		\| V \| ( \classification{U}{z}{f(z) \neq 1} \without H ) = 0,
		\\
		\text{$f$ is $(\|V\|,\vdim)$ approximately differentiable
		at $\| V \|$ almost all $z$}, \\
		{\textstyle\int} \project{S} \bullet Dg(z) f (z) \ud V
		(z,S) = \delta V ( f g ) - {\textstyle\int} \left <
		\project{S} ( g(z)), \ap D f (z) \right > \ud V ( z,S ), \\
		\Lpnorm{\| V \|}{p}{| \ap D f |} \leq \delta (400)^\vdim \,
		\psi (K)^{1/p}, \\
		\| V \| ( \classification{U}{z}{f(z)\neq 0} ) \leq \Gamma
		\, \psi (K)^{\vdim/(\vdim-p)}
	\end{gather*}
	(see \ref{miniremark:extension}) where $\Gamma = ((400)^\vdim
	\isoperimetric{\vdim} \vdim)^{\vdim p/(\vdim-p)}$.
\end{lemma}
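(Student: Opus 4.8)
The plan is to cover the \enquote{bad set} $B = ( \spt \| V \| \cap K ) \without H$ by balls --- one for each of its points, taken at the scale where the density lower bound defining $H$ first fails --- to observe via \ref{app:lemma:mydiss} that this failure forces a lower bound for $\psi$ on such a ball, and then to let $f$ be a tent function equal to $1$ on $B$ and supported in the disjointified union of these balls. The single inequality produced by \ref{app:lemma:mydiss} is what drives all of the quantitative conclusions.

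First I would record the pointwise estimate. Write $c = \delta^\vdim ( \isoperimetric{\vdim} \vdim )^{-\vdim}$. If $z \in B$ then $z \in \Int K$; setting $R = \dist ( z, \rel^\adim \without K )$ there is a smallest $r = r(z) \in ( 0, R ]$ with $\| V \| ( \cball{z}{r} ) = c r^\vdim$ and $\| V \| ( \cball{z}{\sigma} ) \geq c \sigma^\vdim$ for $0 < \sigma < r$. Writing $\beta ( \sigma ) = \| V \| ( \cball{z}{\sigma} )$, the estimate \ref{app:lemma:mydiss} applied in $\oball{z}{r}$ gives $\isoperimetric{\vdim}^{-1} \beta(\sigma)^{1-1/\vdim} \leq \| \delta V \| ( \cball{z}{\sigma} ) + \beta' ( \sigma )$ for $\mathscr{L}^1$ almost all $0 < \sigma < r$; inserting $\beta ( \sigma ) \geq c \sigma^\vdim$, integrating over $( 0, r )$, and using that $\sigma \mapsto \| \delta V \| ( \cball{z}{\sigma} )$ is nondecreasing together with $\int_0^r \beta' \leq \beta(r) = c r^\vdim$ yields $\| \delta V \| ( \cball{z}{r} ) \geq ( \tfrac1\delta - 1 ) c \, r^{\vdim - 1}$, since $\isoperimetric{\vdim}^{-1} c^{-1/\vdim} \vdim^{-1} = 1/\delta$. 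As $\delta \leq \tfrac1{40}$ the coefficient $\tfrac1\delta - 1$ exceeds $\tfrac{39}{40\delta}$; if $p > 1$ one estimates in addition $\| \delta V \| ( \cball{z}{r} ) \leq \psi ( \cball{z}{r} )^{1/p} ( c r^\vdim )^{1-1/p}$ by H\"older's inequality. In every case this gives
\begin{gather*}
	r(z)^{\vdim - p} \leq \big ( \tfrac{40\delta}{39} \big )^p c^{-1} \, \psi ( \cball{z}{r(z)} ) \qquad \text{for $z \in B$}.
\end{gather*}

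Next I would construct $f$. The radii $r(z)$, $z \in B$, are bounded by $\diam K$, so a covering argument produces countably many balls $\cball{w_i}{r_i}$, $r_i = r(w_i)$, which are pairwise disjoint, satisfy $\cball{w_i}{r_i} \subset K$ and $c \sigma^\vdim \leq \| V \| ( \cball{w_i}{\sigma} ) \leq c r_i^\vdim$ for $0 < \sigma \leq r_i$, and cover $B$ through their cores, $B \subset \bigcup_i \cball{w_i}{r_i/2}$. Choosing a smooth $\chi : \{ t \with t \geq 0 \} \to \{ s \with 0 \leq s \leq 1 \}$ with $\chi = 1$ on $\{ t \with t \leq \tfrac12 \}$, $\chi = 0$ on $\{ t \with t \geq 1 \}$ and $| \chi' | \leq 3$, I would set $\phi_i ( x ) = \chi ( | x - w_i | / r_i )$ and $f = \sum_i \phi_i$. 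Disjointness of the supports makes $f$ a Baire function with $0 \leq f \leq 1$ --- a pointwise limit of finite sums of continuous functions, possibly discontinuous at accumulation points of $\{ w_i \}$ --- with $\classification{\rel^\adim}{z}{f(z) \neq 0} \subset \bigcup_i \cball{w_i}{r_i} \subset K$ and $f = 1$ on $B$; consequently $\| V \| ( \classification{U}{z}{f(z) \neq 1} \without H ) = 0$, that set being contained in $U \without \spt \| V \|$ apart from a portion of $B$ on which $f$ equals $1$.

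Finally I would read off the remaining properties. Measurability of the differentiability set and of $\ap Df$ comes from \ref{lemma:approx_diff}\,\eqref{item:approx_diff:measurability}, and approximate differentiability $\| V \|$ almost everywhere from \ref{lemma:approx_diff}\,\eqref{item:approx_diff:diff} applied to the $\phi_i$, once one checks that the accumulation set of $\{ w_i \}$ is $\| V \|$ negligible (which follows from $\sum_i r_i^{\vdim - p} < \infty$ and the density lower bound) and that off it $f$ is locally a finite sum of smooth functions with $\ap Df = \ap D \phi_i$ wherever $\phi_i$ carries $f$. The partial integration identity follows by writing $f = \lim_N f_N$ with $f_N = \sum_{i \leq N} \phi_i$, which is Lipschitzian with compact support in $K$, applying \ref{lemma:approx_diff}\,\eqref{item:approx_diff:variation} to $f_N g$ and the product rule, and letting $N \to \infty$: the term $\int \project{S} \bullet Dg(z) f_N(z) \ud V (z,S)$ and the term $\int \left < \project{S} ( g(z) ), \ap D f_N (z) \right > \ud V(z,S)$ converge by dominated convergence since $0 \leq f_N \uparrow f$, $| \ap D f_N | \leq | \ap Df | \in \Lp{1} ( \| V \| \restrict K )$, and $Dg$ is bounded with compact support, while $\delta V ( f_N g ) \to \delta V ( fg )$ because $f_N g \to fg$ in $\Lp{1} ( \| \delta V \|, \rel^\adim )$ if $p = 1$ and $f_N g \cdot \mathbf{h} ( V ; \cdot ) \to fg \cdot \mathbf{h} ( V ; \cdot )$ in $\Lp{1} ( \| V \| )$ if $p > 1$. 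For the two norm bounds, disjointness of the $\cball{w_i}{r_i}$, the estimate for $r(z)^{\vdim - p}$, and $\sum_i \psi ( \cball{w_i}{r_i} ) \leq \psi ( K )$ give
\begin{gather*}
	\int | \ap Df |^p \ud \| V \| \leq \tsum{i}{} ( 3/r_i )^p \| V \| ( \cball{w_i}{r_i} ) = 3^p c \tsum{i}{} r_i^{\vdim - p} \leq \big ( \tfrac{120\delta}{39} \big )^p \psi ( K ),
\end{gather*}
so $\Lpnorm{\| V \|}{p}{| \ap Df |} \leq \tfrac{120\delta}{39} \psi(K)^{1/p} \leq \delta ( 400 )^\vdim \psi(K)^{1/p}$, and, using $\vdim/(\vdim - p) > 1$,
\begin{gather*}
	\| V \| ( \classification{U}{z}{f(z) \neq 0} ) \leq c \tsum{i}{} r_i^\vdim \leq c \big ( ( \tfrac{40\delta}{39} )^p c^{-1} \big )^{\vdim/(\vdim - p)} \psi(K)^{\vdim/(\vdim - p)} = \big ( \tfrac{40}{39} \isoperimetric{\vdim} \vdim \big )^{\vdim p/(\vdim - p)} \psi(K)^{\vdim/(\vdim - p)}
\end{gather*}
after substituting $c = \delta^\vdim ( \isoperimetric{\vdim} \vdim )^{-\vdim}$ and noting the powers of $\delta$ cancel; both right hand sides are of the required form since $\tfrac{120}{39} < 4 \leq ( 400 )^\vdim$ and $\tfrac{40}{39} < ( 400 )^\vdim$.

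The step I expect to be the main obstacle is the covering: it must simultaneously cover $B$ by first-failure balls, keep the balls inside $K$ \emph{without} an enlargement that would spoil the mass identity $\| V \| ( \cball{w_i}{r_i} ) = c r_i^\vdim$, retain enough disjointness that no overlap constant intrudes into the estimates (which leave no room for a factor like $\besicovitch{\adim}$ when $\adim$ is large compared with $\vdim$), and render the accumulation set of the centres $\| V \|$ negligible so that the approximate differentiability and the partial integration identity survive the truncation limit. The inequality extracted from \ref{app:lemma:mydiss} is, by contrast, a short computation and constitutes the conceptual core.
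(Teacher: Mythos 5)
Your pointwise computation via \ref{app:lemma:mydiss} is sound and close in spirit to the paper's, but the covering step you yourself flag as ``the main obstacle'' is a genuine gap, and as formulated it cannot be closed. You ask for pairwise disjoint balls $\cball{w_i}{r_i}$, with $r_i=r(w_i)$ the first-failure radius, whose concentric \emph{half}-balls $\cball{w_i}{r_i/2}$ already cover $B$. No covering theorem produces this, and it is false in general because the radii are dictated by the varifold rather than at your disposal: two points $z_0,z_1\in B$ with $r(z_0)=r(z_1)=1$ and $|z_0-z_1|=3/2$ defeat it, since disjointness forces you to keep only one of the two balls and its half-ball misses the other point. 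Vitali gives the opposite inclusion --- disjoint $\cball{w_i}{r_i}$ with $B\subset\bigcup_i\cball{w_i}{5r_i}$ --- so the tent functions must equal $1$ on the enlarged balls and be supported in something like $\cball{w_i}{10r_i}$; those supports overlap (forcing $f=\sup_i$ rather than $\sum_i$, or else $f\leq 1$ fails), and every one of your quantitative estimates then requires control of $\|V\|(\cball{w_i}{10r_i})$, not of $\|V\|(\cball{w_i}{r_i})$. Your first-failure radius gives $\|V\|(\cball{w_i}{r_i})=c\,r_i^\vdim$ exactly, but says nothing about the mass of the ten-times-enlarged ball, which can be arbitrarily large or leave $K$ altogether. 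A Besicovitch covering would not rescue you either, since, as you note, the factor $\besicovitch{\adim}$ depends on $\adim$ and overruns the stated constants.

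This is precisely the difficulty the paper's proof is organised around. Instead of a single first-failure radius it selects, for each $z\in B$, an entire interval of scales $[\varrho,20\varrho]$ on which $\|V\|(\cball{z}{s})\leq(20\delta)^\vdim(\isoperimetric{\vdim}\vdim)^{-\vdim}s^\vdim$ (the sets $P$ and $Q$ and $\varrho=\inf Q$), with a matching lower bound at $\varrho$ because $\varrho$ is approached from below by scales outside $P$; the differential inequality from \ref{app:lemma:mydiss} is then exploited on $(\varrho,2\varrho)$ to produce $t$ with $t^{-1}\|V\|(\cball{z}{10t})^{1/p}\leq\delta(400)^\vdim\psi(\cball{z}{t})^{1/p}$ and $\|V\|(\cball{z}{10t})\leq\Gamma\,\psi(\cball{z}{t})^{\vdim/(\vdim-p)}$ --- exactly the comparison between the large ball carrying the tent function and $\psi$ of the small disjoint ball that your argument lacks. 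With that assertion, Vitali applied to the small balls, the supremum of tent functions, and a limiting argument over finite subfamilies (essentially your argument for the integration-by-parts identity) complete the proof. One smaller point: you should restrict $B$ to points with $\density_{\ast}^{\vdim}(\|V\|,z)\geq 1$, as the excluded set is $\|V\|$ null and at other points of $\spt\|V\|\without H$ the first-failure radius need not exist.
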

\begin{proof}
	Let $B = \eqclassification{U \without H}{z}{\density_\ast^\vdim ( \| V
	\|, z ) \geq 1}$ and assume $B \neq \emptyset$. First, the following
	assertion will be shown: \emph{Whenever $z \in B$ there exists $0 < t
	< \infty$ such that $\cball{z}{10t} \subset K$ and
	\begin{gather*}
		t^{-1} \| V \|\cball{z}{10t})^{1/p} \leq \delta (400)^\vdim \,
		\psi ( \cball{z}{t} )^{1/p}, \\
		\measureball{\| V \|}{\cball{z}{10t}} \leq \Gamma \,
		\psi (
		\cball{z}{t})^{\vdim/(\vdim-p)}.
	\end{gather*}}
	For this purpose choose $0 < r < \infty$ with $\cball{z}{r} \subset K$
	and
	\begin{gather*}
		\measureball{\| V\|}{\cball{z}{r}} \leq \delta^\vdim ( 
		\isoperimetric{\vdim} \vdim )^{-\vdim} r^\vdim,
	\end{gather*}
	let $P$ denote the set of all $0 < t \leq r$ such that
	\begin{gather*}
		\measureball{\| V \|}{\cball{z}{t}} \leq (20\delta)^\vdim (
		\isoperimetric{\vdim} \vdim )^{-\vdim} t^\vdim
	\end{gather*}
	and $Q$ the set of all $0 < t \leq \frac{r}{20}$ such that $\{ s \with
	t \leq s \leq 20t \} \subset P$. One notes for $\frac{r}{20} \leq s
	\leq r$
	\begin{gather*}
		s^{-\vdim} \measureball{\| V \|}{\cball{z}{s}} \leq ( 20
		)^\vdim r^{-\vdim} \measureball{\| V
		\|}{\cball{z}{r}} \leq (20 \delta )^\vdim (
		\isoperimetric{\vdim} \vdim )^{-\vdim},
	\end{gather*}
	hence $\frac{r}{20} \in Q$. Let $\varrho = \inf Q$ and note $\varrho >
	0$ since $20 \delta < 1$ and $( \isoperimetric{\vdim} \vdim
	)^{-\vdim} \leq \unitmeasure{\vdim}$. Clearly, $\{ s \with \varrho
	\leq s \leq 20 \varrho \} \subset P$. Also, whenever $\varrho \leq s
	\leq 20 \varrho$
	\begin{gather*}
		s^{-\vdim} \measureball{\| V \|}{\cball{z}{s}} \geq
		(20)^{-\vdim} \varrho^{-\vdim} \measureball{\| V
		\|}{\cball{z}{\varrho}} = \delta^\vdim (
		\isoperimetric{\vdim} \vdim )^{-\vdim}
	\end{gather*}
	because $\varrho \in \Clos{(\{ s \with s < \varrho \} \without P
	)}$.

	Define $\alpha : \{ s \with 0 < s < r \} \to \rel$ and $\beta : \{ s
	\with 0 < s < r \} \to \rel$ by
	\begin{gather*}
		\alpha(s) = \measureball{\| V \|}{\cball{z}{s}}, \quad
		\beta(s) = \psi ( \cball{z}{s} )^{1/p}
	\end{gather*}
	whenever $0 < s < r$. Then by \ref{app:lemma:mydiss}
	\begin{gather*}
		\isoperimetric{\vdim}^{-1} \leq \alpha(s)^{1/\vdim-1} (
		\measureball{\| \delta V \|}{\cball{z}{s}} + \alpha'(s))
	\end{gather*}
	for $\mathscr{L}^1$ almost all $0 < s < r$, hence by H\"older's
	inequality
	\begin{gather*}
		( \vdim \isoperimetric{\vdim} )^{-1} \leq
		\alpha(s)^{1/\vdim-1/p} \beta(s) + (\alpha^{1/\vdim} )' (s)
	\end{gather*}
	for $\mathscr{L}^1$ almost all $0 < s < r$. This inequality implies
	the existence of $\varrho < t < 2\varrho$ satisfying
	\begin{gather*}
		t^{-1} \alpha ( 10t )^{1/p}\leq \delta (400)^\vdim \beta(t);
	\end{gather*}
	in fact if this were not the case, then for $\mathscr{L}^1$ almost all
	$\varrho < s < 2\varrho$, recalling $\{ s, 10s \} \subset P$,
	\begin{gather*}
		\begin{aligned}
			( \isoperimetric{\vdim} \vdim )^{-1} -
			(\alpha^{1/\vdim})'(s) & < \alpha (s)^{1/\vdim-1/p}
			(400)^{-\vdim} \delta^{-1} s^{-1} \alpha ( 10s)^{1/p} \\
			& \leq (1/2) ( \isoperimetric{\vdim} \vdim )^{-1},
		\end{aligned} \\
		(20\delta) ( \isoperimetric{\vdim} \vdim )^{-1} \leq (1/2) (
		\isoperimetric{\vdim} \vdim )^{-1} < (\alpha^{1/\vdim})'(s),
	\end{gather*}
	hence, using $\alpha^{1/\vdim} (\varrho) = (20 \delta ) (
	\isoperimetric{\vdim} \vdim )^{-1} \varrho$ and
	\cite[2.9.19]{MR41:1976} or \cite[3.29]{MR2003a:49002}, one would
	obtain for $\varrho < s < 2 \varrho$
	\begin{gather*}
		\alpha^{1/\vdim} (s) \geq \alpha^{1/\vdim} ( \varrho) +
		{\textstyle\int_\varrho^s} (\alpha^{1/\vdim})'(t) \ud
		\mathscr{L}^1 t > (20\delta) ( \isoperimetric{\vdim} \vdim
		)^{-1} s, \quad s \notin P.
	\end{gather*}
	The second part of the assertion now follows, noting $10t \leq
	20\varrho$, from
	\begin{align*}
		\| V \| ( \cball{z}{10t} )^{1/p-1/\vdim} & \leq t^{-1}
		\delta^{-1} \isoperimetric{\vdim}\vdim \, \| V \| (
		\cball{z}{10t} )^{1/p} \\
		& \leq (400)^\vdim \isoperimetric{\vdim} \vdim \, \psi (
		\cball{z}{t} )^{1/p}.
	\end{align*}

	By the preceding assertion and Vitali's covering theorem, see
	e.g.~\cite[2.8.5]{MR41:1976} or \cite[3.3]{MR87a:49001}, there exist a
	nonempty, countable set $I$ and $z_i \in B$, $0 < t_i < \infty$ and
	$u_i : U \to \rel$ for $i \in I$ such that
	\begin{gather*}
		u_i (z) = \sup \{ 0, 1 - \dist (z, \cball{z_i}{5t_i})/t_i \}
		\quad \text{for $z \in U$, $i \in I$}, \\
		\spt u_i \subset \cball{z_i}{10t_i} \subset K \quad \text{for
		$i \in I$}, \\
		\cball{z_i}{t_i} \cap \cball{z_j}{t_j} = \emptyset \quad
		\text{whenever $i,j \in I$, $i \neq j$}, \\
		\begin{aligned}
			\Lpnorm{\| V \|}{p}{|\ap Du_i|} & \leq \delta ( 400
			)^\vdim \, \psi ( \cball{z_i}{t_i} )^{1/p}, \\
			\measureball{\| V \|}{\cball{z_i}{10t_i}} & \leq
			\Gamma \, \psi ( \cball{z_i}{t_i} )^{\vdim/(\vdim-p)},
		\end{aligned} \\
		B \subset {\textstyle\bigcup} \{ \cball{z_i}{5t_i} \with i \in
		I \}.
	\end{gather*}
	Define $v_J : U \to \rel$ by
	\begin{gather*}
		v_J (z) = \sup ( \{ 0 \} \cup \{ u_j (z) \with j \in J \} )
		\quad \text{for $z \in U$}
	\end{gather*}
	whenever $J \subset I$, and $f = v_I$. Note $0 \leq f \leq 1$ and
	\begin{gather*}
		u_i (z) = 1 \quad \text{whenever $z \in \cball{z_i}{5t_i}$, $i
		\in I$}, \qquad f(z) = 1 \quad \text{for $z \in B$}.
	\end{gather*}
	Noting \ref{lemma:approx_diff}\,\eqref{item:approx_diff:diff} and
	defining $g = \sup \{ | \ap Du_i | \with i \in I \}$, one estimates
	for $J \subset I$
	\begin{gather*}
		\begin{aligned}
			\Lpnorm{\| V \|}{p}{g}^p & \leq {\textstyle\sum_{i \in
			I}} \Lpnorm{\| V \|}{p}{|\ap Du_i|}^p \\
			& \leq \delta^p ( 400 )^{\vdim p} {\textstyle\sum_{i
			\in I}} \psi ( \cball{z_i}{t_i} ) \leq \delta^p
			(400)^{\vdim p} \psi (K),
		\end{aligned} \\
		\begin{aligned}
			& \| V \| ( \classification{U}{z}{f(z)> v_J(z)} ) \\
			& \qquad \leq {\textstyle\sum_{i\in I \without J}}
			\measureball{\| V\|}{\cball{z_i}{10t_i}} \leq \Gamma
			\, {\textstyle\sum_{i\in I \without J}} \psi (
			\cball{z_i}{t_i})^{\vdim/(\vdim-p)} \\
			& \qquad \leq \Gamma \left ( {\textstyle\sum_{i \in I
			\without J}} \measureball{\psi}{\cball{z_i}{t_i}}
			\right)^{\vdim/(\vdim-p)} \leq \Gamma \, \psi
			(K)^{\vdim/(\vdim-p)}.
		\end{aligned}
	\end{gather*}
	Choose a sequence $J(k)$ with $J(k) \subset J(k+1) \subset I$,
	$\card J(k) < \infty$ for $k \in \nat$ and $\bigcup \{ J(k) \with k
	\in \nat \} = I$. Then
	\begin{gather*}
		\| V \| \left ( U \cap {\textstyle\bigcap} \left \{ \{ z \with
		f (z) > v_{J(k)} (z) \} \with k \in \nat \right \} \right )
		= 0,
	\end{gather*}
	hence $f$ is $(\|V\|, \vdim)$ approximately differentiable at $\| V
	\|$ almost all $z$ and
	\begin{gather*}
		\sup \{ | \ap Dv_{J(k)} (z) |, | \ap Df(z) | \} \leq g (z)
		\quad \text{for $\| V \|$ almost all $z$}, \\
		\Lpnorm{\| V \|}{p}{ | \ap D v_{J(k)} - \ap D f | } \to 0
		\quad \text{as $k \to \infty$}
	\end{gather*}
	by \cite[2.10.19\,(4)]{MR41:1976} or \cite[3.5]{MR87a:49001} and
	\ref{lemma:approx_diff}\,\eqref{item:approx_diff:measurability}.  The
	integral formula holds with $f$ replaced by $v_{J(k)}$ for $k \in
	\nat$ by \ref{lemma:approx_diff}\,\eqref{item:approx_diff:variation},
	hence, taking the limit $k \to \infty$, also for $f$.
\end{proof}
\begin{remark} \label{remark:capacity}
	The function $f$ cannot be required to be continuous at $\| V \|$
	almost all $z$. To prove this let $\vdim p / ( \vdim - p ) < \eta <
	\infty$, $\adim = \vdim + 1$, $U = \rel^\adim$, apply
	\cite[1.2]{snulmenn.isoperimetric} with $\alpha_1 q_1 = \alpha_2 q_2 =
	\eta$ to obtain $\mu$ and $T$ and define $V$ by the requirement $\| V
	\| = \mu$. Take $\xi \in T$ with $\density^\vdim ( \psi , \xi ) = 0$;
	the existence of such $\xi$ follows from
	\cite[2.10.19\,(4)]{MR41:1976} or \cite[3.5]{MR87a:49001} as $\psi (T)
	= 0$. (Alternately, it follows from the estimates in
	\cite[1.2]{snulmenn.isoperimetric} that one can take any $\xi \in T$.)
	Let $0 < r \leq 1$ and $K = \cball{\xi}{2r}$. One verifies the
	existence of $\varepsilon > 0$ depending only on $V$, $\delta$,
	$\eta$, and $\vdim$ such that
	\begin{gather*}
		\classification{\cball{\xi}{r}}{z}{0 < \dist (z,T) \leq
		\varepsilon } \cap H = \emptyset.
	\end{gather*}
	Therefore any such function $f$ would have to satisfy $f ( z ) = 1$
	for $\| V \|$ almost all $z \in T \cap \oball{\xi}{r}$, hence
	\begin{gather*}
		\| V \| ( \classification{U}{z}{f(z) \neq 0} ) \geq
		\unitmeasure{\vdim} r^\vdim
	\end{gather*}
	which would be incompatible with the last inequality of
	\ref{lemma:capacity} for small $r$ even if $\Gamma$ would be allowed
	to depend additionally on $V$ and $\delta$.
\end{remark}
\begin{miniremark} \label{miniremark:schaukel}
	If $a \geq 0$, $b \geq 0$, $c > 0$ and $d > 0$ then
	\begin{gather*}
		\inf \{ a t^c + b t^{-d} \with 0 < t < \infty \} = \big (
		(d/c)^{c/(c+d)} + ( d/c )^{-d/(c+d)} \big ) a^{d/(c+d)}
		b^{c/(c+d)}.
	\end{gather*}
\end{miniremark}
\begin{lemma} \label{lemma:coercive_estimate_rect}
	Suppose $\vdim$, $\adim$, $p$, $U$, $V$, and $\psi$ are as in
	\ref{miniremark:situation}, $p < \vdim$, $V \in \RVar_\vdim (U)$,
	$\density^\vdim ( \| V \|, z ) \geq 1$ for $\| V \|$ almost all $z$,
	$K$ is a compact subset of $U$, $H$ is the set of all $z \in \spt \| V
	\|$ such that
	\begin{gather*}
		\measureball{\| V \|}{\cball{z}{r}} \geq (40)^{-\vdim} (
		\isoperimetric{\vdim} \vdim )^{-\vdim} r^\vdim \quad
		\text{whenever $0 < r < \infty$, $\cball{z}{r} \subset K$},
	\end{gather*}
	$\phi \in \mathscr{D}^0 ( U )$, $0 \leq \phi \leq 1$, $\spt
	\phi \subset K$, $1 \leq q \leq \infty$, $1/p + 1/q \geq 1$, $a \in
	\rel^\adim$, $T \in \grass{\adim}{\vdim}$, $h : U \to \rel$ with $h(z)
	= \dist (z-a,T)$ for $z \in U$, and
	\begin{gather*}
		\alpha = \psi (K)^{1/p}, \quad \beta = \left (
		{\textstyle\int} \phi (z)^2 | \project{S} - \project{T} |^2
		\ud V(z,S) \right )^{1/2}, \\
		\begin{aligned}
			\gamma & = \eqLpnorm{ \phi^2 \| V \| \restrict H
			}{q}{h} && \quad \text{if $q < \infty$},
			\\
			\gamma & = \sup \{ h(z) \with z \in
			\spt \| V \|, \phi (z) > 0 \} && \quad \text{if $q =
			\infty$},
		\end{aligned} \\
		\xi = \eqLpnorm{ \| V \| \restrict H }{2}{| D \phi | h}.
	\end{gather*}

	Then
	\begin{gather*}
		\beta^2 \leq \Gamma \big ( \alpha^{\vdim p/(\vdim-p)} + (
		\alpha \gamma )^{1/(1/p+1/q)} \big ) + ( 16 + 4 \vdim )
		\xi^2
	\end{gather*}
	where $\Gamma$ is a positive, finite number depending only on $\vdim$,
	$p$, and $q$.
\end{lemma}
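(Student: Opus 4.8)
The plan is to integrate by parts against the vector field $g(z) = \phi(z)^2 \perpproject{T}(z-a)$, to localise away from the set of points of low density by means of the cut-off function furnished by \ref{lemma:capacity}, and finally to absorb. Since $\project{S} \bullet \project{S} = \project{T} \bullet \project{T} = \vdim$ and $\perpproject{T}$ is the orthogonal projection onto $T^\perp$, one has $| \project{S} - \project{T} |^2 = 2 \, \project{S} \bullet \perpproject{T}$ whenever $S, T \in \grass{\adim}{\vdim}$; writing $w(z) = \perpproject{T}(z-a)$, so that $|w(z)| = h(z)$ and $\project{T} \circ w = 0$, and $g = \phi^2 w$, the map $g : U \to \rel^\adim$ is Lipschitzian with $\spt g \subset K$ and $\tfrac{1}{2} \beta^2 = \int \phi(z)^2 \, \project{S} \bullet \perpproject{T} \ud V(z,S)$. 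Its $(\| V \|, \vdim)$ approximate differential is $\ap Dg = \phi^2 \perpproject{T} + 2 \phi \, w \otimes \ap D\phi$, and $( w \otimes \ap D\phi ) \bullet \project{S} = \langle \project{S}(w), \ap D\phi \rangle = \langle ( \project{S} - \project{T} )(w), \ap D\phi \rangle$ because $\project{S}$ is symmetric and $\project{T} \circ w = 0$; so \ref{lemma:approx_diff}\,\eqref{item:approx_diff:variation}, applied to $g$ (after mollifying $\phi^2$, should $\mathscr{D}^0$ functions not already be Lipschitzian), gives
\begin{gather*}
	\tfrac{1}{2} \beta^2 = \delta V ( g ) - 2 \int \phi(z) \, \langle ( \project{S} - \project{T} )( w(z) ), \ap D\phi(z) \rangle \ud V(z,S).
\end{gather*}

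Now apply \ref{lemma:capacity} with $\delta = \tfrac{1}{40}$, so that the set ``$H$'' there coincides with the present one, to obtain a Baire function $f : U \to \rel$ with $0 \leq f \leq 1$, with $\classification{\rel^\adim}{z}{f(z) \neq 0} \subset K$, with $\| V \| ( \classification{U}{z}{f(z) \neq 1} \without H ) = 0$, with $\Lpnorm{\| V \|}{p}{| \ap Df |} \leq \tfrac{1}{40} (400)^\vdim \alpha$ and $\| V \| ( \classification{\rel^\adim}{z}{f(z) \neq 0} ) \leq \Gamma_{\ref{lemma:capacity}} \alpha^{\vdim p / ( \vdim - p )}$, and satisfying the partial integration identity, which extends to Lipschitzian $g$ of compact support by mollification. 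Split $\tfrac{1}{2} \beta^2$ according to $1 = f + ( 1 - f )$. Since $0 \leq \project{S} \bullet \perpproject{T} = \tfrac{1}{2} | \project{S} - \project{T} |^2 \leq \vdim$, the $f$ part is at most $\vdim \, \| V \| ( \classification{\rel^\adim}{z}{f(z) \neq 0} ) \leq \vdim \, \Gamma_{\ref{lemma:capacity}} \alpha^{\vdim p / ( \vdim - p )}$. For the $(1-f)$ part, subtracting the partial integration identity (with the above $g$) from the first variation identity yields
\begin{gather*}
	\int ( 1 - f ) \phi^2 \, \project{S} \bullet \perpproject{T} \ud V = \delta V ( (1-f) g ) + \int \phi^2 \, \langle \project{S}(w), \ap Df \rangle \ud V - 2 \int ( 1 - f ) \phi \, \langle ( \project{S} - \project{T} )( w ), \ap D\phi \rangle \ud V,
\end{gather*}
leaving a mean-curvature term, an $\ap Df$ term, and a $\ap D\phi$ boundary term.

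Using $| \langle ( \project{S} - \project{T} )( w ), \ap D\phi \rangle | \leq | \project{S} - \project{T} | \, h \, | \ap D\phi |$, $\ap D\phi = D\phi$ $\| V \|$ almost everywhere, and $0 \leq 1 - f \leq \mathbf{1}_H$ $\| V \|$ almost everywhere, Cauchy's inequality applied to $\phi | \project{S} - \project{T} |$ and $(1-f)^{1/2} h | D\phi |$ bounds the boundary term by $2 \beta \xi$, which Young's inequality converts into $\tfrac{1}{4} \beta^2 + 4 \xi^2$. The mean-curvature term $\delta V ( (1-f) g )$ equals $- \int (1-f) \phi^2 w \bullet \mathbf{h}(V;\cdot) \ud \| V \|$ if $p > 1$, of modulus at most $\int_H \phi^2 h \, | \mathbf{h}(V;\cdot) | \ud \| V \|$ since $1 - f \leq \mathbf{1}_H$ almost everywhere, and equals $( \delta V )( (1-f) g )$ in the sense of \ref{miniremark:extension} if $p = 1$, of modulus at most $\int (1-f) \phi^2 h \ud \| \delta V \|$; the $\ap Df$ term has modulus at most $\int_H \phi^2 h \, | \ap Df | \ud \| V \|$, using $| \langle \project{S}(w), \ap Df \rangle | \leq h | \ap Df |$ and that $\ap Df$ vanishes $\| V \|$ almost everywhere on $U \without H$ ($\| V \|$ almost every point of $U \without H$ is a $\| V \|$ density point of $\classification{U}{z}{f(z) = 1}$, as $f = 1$ $\| V \|$ almost everywhere on $U \without H$). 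Thus, besides the $\tfrac{1}{4} \beta^2 + 4 \xi^2$ and the $\vdim \, \Gamma_{\ref{lemma:capacity}} \alpha^{\vdim p/(\vdim-p)}$ already obtained, one is reduced to bounding $\int_H \phi^2 h \, w_0 \ud \| V \|$ — where $w_0 \geq 0$ with $\Lpnorm{\| V \|}{p}{w_0} \leq \alpha$, or $\int (1-f) \phi^2 h \ud \| \delta V \|$ with $\| \delta V \| ( K ) = \alpha$ when $p = 1$ — by a constant multiple of $( \alpha \gamma )^{1/(1/p+1/q)}$, the spurious factor $\tfrac{1}{40}(400)^\vdim$ entering via $\ap Df$ being harmless. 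Assembling the estimates, absorbing $\tfrac{1}{4} \beta^2$, multiplying through by $4$ and tracking the constants then produces the assertion, with the coefficient $16 + 4 \vdim$ of $\xi^2$ emerging from the bookkeeping.

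The main obstacle is precisely this last reduction: proving $\int_H \phi^2 h \, w_0 \ud \| V \| \leq \Gamma ( \vdim, p, q ) \, ( \alpha \gamma )^{1/(1/p+1/q)}$ with a constant free of $K$, $a$, $T$ and $\| V \|$. When $1/p + 1/q = 1$ this is immediate from H\"older's inequality after writing $\phi^2 = \phi^{2/p} \phi^{2/q}$; when $1/p + 1/q > 1$ — the regime making the lemma sharp, for instance $q = \tfrac{2\vdim}{\vdim-2}$ and $p < \tfrac{2\vdim}{\vdim+2}$ — there is spare integrability to be used, and I would expect to exploit the lower density bound defining $H$ (respectively, when $p = 1$, the isoperimetric inequality \ref{app:lemma:mydiss}, which also underlies \ref{lemma:capacity}) in order to control the distribution function of $h$ on $H \cap \spt \phi$ by $\gamma$, in combination with H\"older's and Young's inequalities, the latter optimised through \ref{miniremark:schaukel}. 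Carrying this out with a dimension-only constant is the technical heart of the proof; the case $p = 1$ is additionally delicate, since $\| \delta V \|$ need not be absolutely continuous with respect to $\| V \|$, so that $\classification{U}{z}{f(z) \neq 1}$, which is $\| V \|$ almost contained in $H$, need not be $\| \delta V \|$ almost contained in it.
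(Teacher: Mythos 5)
Your reduction reproduces the skeleton of the paper's argument: the identity $|\project{S}-\project{T}|^2=2\,\project{S}\bullet\perpproject{T}$, the cut-off $f$ from \ref{lemma:capacity} with $\delta=\tfrac{1}{40}$, the bound $\vdim\,\Gamma_{\ref{lemma:capacity}}\alpha^{\vdim p/(\vdim-p)}$ for the part where $f\neq 0$, the first variation of $g=\phi^2\perpproject{T}(\,\cdot\,-a)$ against $V_2=(1-f)V$, and the absorption of the $D\phi$ term into $\tfrac14\beta^2+4\xi^2$. (The paper packages this as $V=V_1+V_2$ with $\|\delta V_2\|\leq(1-f)\|\delta V\|+|\ap Df|\,\|V\|$, but that is the same computation.) The genuine gap is exactly the step you flag as "the technical heart'': bounding the remaining pairing by $\Gamma(\alpha\gamma)^{1/(1/p+1/q)}$ when $1/p+1/q>1$. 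Your proposed route --- controlling the distribution function of $h$ on $H$ via the density lower bound, plus H\"older and Young --- cannot work as stated: once you have reduced to $\int_H\phi^2 h\,w_0\ud\|V\|$ with $\Lpnorm{\|V\|}{p}{w_0}\leq\alpha$, H\"older requires the $L_{p/(p-1)}$ norm of $h$, and since $q<p/(p-1)$ in this regime the $q$-norm $\gamma$ gives no control of it; any Chebyshev-type splitting of $\{h>\lambda\}$ leaves a low-height region whose contribution is bounded only by $\lambda\alpha\|V\|(K)^{1-1/p}$, which is not admissible. The lower density bound on $H$ enters the proof only through \ref{lemma:capacity}; it does not supply the missing integrability here.

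What the paper actually does at this point is a truncation \emph{of the test vector field itself}: with $r=1-q(1-1/p)$ and a parameter $t$, it writes $\perpproject{T}=\eta_1+\eta_2$ where $\eta_1(z)=\eta(|\perpproject{T}(z)|)\perpproject{T}(z)$, $\eta(s)=\inf\{1,ts^{-r}\}$. The truncated part satisfies $|\eta_1|\leq t\,h^{1-r}$, so $|(\delta V_2)(\phi^2\eta_1)|$ is estimated by total variation and H\"older as $t\,(800)^\vdim\alpha\gamma^{1-r}$ (here $(1-r)\cdot\frac{p}{p-1}=q$, which is the point of the choice of $r$); the complementary part has $\|D\eta_2\|\leq 1$ and vanishes where $h<t^{1/r}$, so $(\delta V_2)(\phi^2\eta_2)$ is sent \emph{back} through the first variation formula and bounded by $\vdim\|V_2\|(2t^{-q/r}\phi^2h^q+|D\phi|^2h^2)\leq 2\vdim t^{-q/r}\gamma^q+\vdim\xi^2$. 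Optimising over $t$ via \ref{miniremark:schaukel} produces precisely the exponent $1/(1/p+1/q)$, and the extra $\vdim\xi^2$ is the source of the $4\vdim$ in the coefficient $16+4\vdim$ that your bookkeeping would not generate. So the decisive interpolation is between the total-variation estimate and the tilt estimate, mediated by a height cut-off in the comparison field --- an idea adapted from Brakke --- and without it the lemma is not proved.
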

\begin{proof}
	Assume $a = 0$, hence $h (z) = |\perpproject{T} (z) |$ for $z \in U$.
	Use \ref{lemma:capacity} with $\delta = \frac{1}{40}$ to obtain $f$
	and define $V_1,V_2 \in \RVar_\vdim (U)$ by
	\begin{gather*}
		V_1 (A) = {\textstyle\int_A^\ast} f (z) \ud V(z,S) \quad
		\text{for $A \subset U \times \grass{\adim}{\vdim}$}
	\end{gather*}
	and $V_2 = V-V_1$. Using \cite[2.10.19\,(4)]{MR41:1976} or
	\cite[3.5]{MR87a:49001}, one remarks
	\begin{gather*}
		\text{$f(z) = 1$ and $\ap Df(z) = 0$ for $\| V \|$ almost all
		$z \in U \without H$}, \\
		{\textstyle\int} \phi (z)^2 | \project{S} - \project{T} |^2
		\ud V_1(z,S) \leq 4 \vdim \Gamma_{\ref{lemma:capacity}} (
		\vdim, p ) \, \alpha^{\vdim p/(\vdim - p)}, \\
		\| \delta V_2 \| \leq (1-f) \| \delta V \| + | \ap Df | \| V
		\|, \quad \Lpnorm{\| V \|}{p}{| \ap Df |} \leq ( 400 )^\vdim
		\alpha.
	\end{gather*}
	Defining $g = \phi^2 ( \perpproject{T} | U )$, one obtains
	\begin{gather*}
		{\textstyle\int} \phi (z)^2 | \project{S} - \project{T} |^2
		\ud V_2(z,S) \leq 4 | ( \delta V_2 ) ( g ) | + 16 \xi^2
	\end{gather*}
	as in \cite[5.5]{MR485012}. If $1/p+1/q = 1$ then the conclusion is a
	consequence of the preceding remarks and H\"older's inequality.
	Therefore suppose $1/p+1/q > 1$, hence $p < \infty$ and $q < \infty$.

	Letting $0 < t < \infty$, $r = 1 - q (1-1/p)$, and defining $\eta : \{
	s \with 0 \leq s < \infty \} \to \{ s \with 0 \leq s \leq 1 \}$ by
	$\eta(s) = \inf \{ 1 , t s^{-r} \}$ for $0 \leq s < \infty$, one
	observes $0 < r \leq 1$ and
	\begin{gather*}
		0 \leq s \eta (s) \leq t s^{1-r} \quad \text{whenever $0 < s <
		\infty$}, \\
		| s \eta'(s) | + |1-\eta(s)| \leq 1 \quad \text{whenever
		$t^{1/r} < s < \infty$}.
	\end{gather*}
	Moreover, defining $\eta_1 : U \to \rel^\adim$, $\eta_2 : U \to
	\rel^\adim$ by
	\begin{gather*}
		\eta_1 (z) = \eta ( | \perpproject{T} ( z ) | )
		\perpproject{T} (z), \quad \eta_2 (z) = ( 1 - \eta ( |
		\perpproject{T} (z) | ) ) \perpproject{T} ( z )
	\end{gather*}
	whenever $z \in U$,
	\begin{gather*}
		Z_1 = \bigclassification{U}{z}{0 < h(z) < t^{1/r}}, \quad Z_2
		= \bigclassification{U}{z}{t^{1/r} < h(z)},
	\end{gather*}
	one notes $\eta_1+\eta_2 = \perpproject{T} | U$ and computes
	\begin{gather*}
		\left < v, D\eta_2 (z) \right > = - \eta' ( | \perpproject{T}
		(z) |) \frac{\perpproject{T} (z) \bullet v}{| \perpproject{T}
		( z ) |} \perpproject{T} (z) + (1-\eta ( | \perpproject{T} ( z
		) | )) \perpproject{T} ( v )
	\end{gather*}
	for $z \in Z_2$, $v \in \rel^\adim$, hence
	\begin{gather*}
		\| D \eta_2 (z) \| \leq 1 \quad \text{for $z \in Z_2$}
	\end{gather*}
	and for $z \in U$
	\begin{gather*}
		|\eta_1 (z) | \leq t h(z)^{1-r} \quad \text{if $r < 1$},
		\qquad |\eta_1 (z) | \leq t \quad \text{if $r = 1$}.
	\end{gather*}
	Letting $g_1 = \phi^2 \eta_1$, $g_2 = \phi^2 \eta_2$,
	one notes $g_1 + g_2 = g$ and infers $|g_1| = \phi^2 |\eta_1|$,
	\begin{align*}
		& \| D g_2 (z) \| \leq 2\phi(z)|D\phi(z)| h (z) + \phi^2 (z)
		\| D\eta_2 (z) \| \\
		& \qquad \leq 2 \phi^2 (z) + | D\phi(z)|^2 h (z)^2 \leq 2
		\phi^2 (z) t^{-q/r} h (z)^q + | D \phi (z) |^2 h (z)^2
	\end{align*}
	for $z \in Z_2$. Since $Dg_2 (z) = 0$ for $z \in Z_1$ and $\phi$,
	$D\phi$, and $h$ are continuous, approximating $g_1$ and $g_2$ by
	smooth functions yields that $| ( \delta V_2 ) ( g ) |$ does not
	exceed
	\begin{gather*}
		t \| \delta V_2 \| ( \phi^2 h^{1-r} ) + \vdim \| V_2 \| \big (
		2t^{-q/r} \phi^2 h^q + |D\phi|^2 h^2 \big ) \quad \text{if $r
		< 1$}, \\
		t \| \delta V_2 \| ( \phi^2) + \vdim \| V_2 \| \left ( 2t^{-q}
		\phi^2 h^q + |D\phi|^2 h^2 \right ) \quad \text{if $r = 1$},
	\end{gather*}
	hence, using H\"older's inequality and recalling the remarks of the
	first paragraph, one obtains
	\begin{gather*}
		| ( \delta V_2 ) ( g ) | \leq t (800)^\vdim \alpha
		\gamma^{1-r} + 2 \vdim t^{-q/r} \gamma^q + \vdim \xi^2 \quad
		\text{if $r < 1$}, \\
		| ( \delta V_2 ) ( g ) | \leq t (800)^\vdim \alpha + 2 \vdim
		t^{-q} \gamma^q + \vdim \xi^2 \quad \text{if $r = 1$}.
	\end{gather*}
	The conclusion is now a consequence of \ref{miniremark:schaukel}.
\end{proof}
\begin{remark} \label{remark:coercive_estimate_rect}
	Using the inequality relating arithmetic and geometric means (cf.
	\cite[2.4.13]{MR41:1976}), one obtains for $0 < \lambda < \infty$
	\begin{gather*}
		( \alpha \gamma )^{1/(1/p+1/q)} \leq
		{\textstyle\frac{2(1/p+1/q)-1}{2 (1/p+1/q)}}
		(\alpha/\lambda)^{\frac{2}{2(1/p+1/q)-1}} +
		{\textstyle\frac{1}{2 (1/p+1/q)}} ( \lambda \gamma )^2.
	\end{gather*}
	Note, concerning the exponent of $\alpha$, if $1/q = 1/2 - 1/\vdim$,
	then $\frac{2}{2(1/p+1/q) -1} = \frac{\vdim p}{\vdim-p}$.
\end{remark}
\begin{remark}
	The estimate for $| ( \delta V_2 ) (g) |$ is adapted from Brakke
	\cite[5.5]{MR485012} where $p \in \{ 1, 2 \}$ and $q=2$.
\end{remark}
\begin{remark} \label{remark:catenoid}
	One cannot replace $h$ by the distance from two planes parallel to
	$T$, as may be seen from the estimates for the catenoid in
	\ref{example:catenoid} considering $R \to \infty$. This behaviour is
	in contrast to the Sobolev Poincar\'e type inequality in
	\cite[4.4]{snulmenn.poincare}.
\end{remark}
\begin{lemma} \label{lemma:coercive_estimate}
	Suppose $\vdim$, $\adim$, $p$, $U$, and $V$ are as in
	\ref{miniremark:situation}, $\phi \in \mathscr{D}^0 ( U )$, $\phi \geq
	0$, $1 \leq q \leq \infty$, $1/p + 1/q \geq 1$, $a \in \rel^\adim$, $T
	\in \grass{\adim}{\vdim}$, $h : U \to \rel$ with $h(z) = \dist
	(z-a,T)$ for $z \in U$, and
	\begin{gather*}
		\alpha = \| \delta V \| ( \phi^2 ) \quad \text{if $p = 1$},
		\qquad \alpha = ( \phi^2 \| V \| )_{(p)} ( \mathbf{h}
		(V;\cdot) ) \quad \text{if $p > 1$}, \\
		\beta = \big ( {\textstyle\int} \phi (z)^2 | \project{S} -
		\project{T} |^2 \ud V(z,S) \big )^{1/2}, \quad \xi =
		\eqLpnorm{\| V \|}{2}{| D\phi| h }, \\
		\gamma = ( \phi^2 \| V \| )_{(q)} ( h ) \quad
		\text{if $q < \infty$}, \qquad \gamma = ( \phi^2 \| \delta V
		\| )_{(\infty)} ( h ) \quad \text{if $q =
		\infty$}.
	\end{gather*}

	Then
	\begin{align*}
		\beta^2 \leq \Gamma ( \alpha \gamma )^{1/(1/p+1/q)} +
		(16+4\vdim) \xi^2
	\end{align*}
	where $\Gamma$ is a positive, finite number depending only on $\vdim$,
	$p$, and $q$.
\end{lemma}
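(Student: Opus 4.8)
The plan is to run the proof of \ref{lemma:coercive_estimate_rect} with the capacity-type cut-off function of \ref{lemma:capacity} deleted throughout: that cut-off was needed there only to neutralise points of low density, and it is what introduced the dependence on $\isoperimetric{\vdim}$ and the factors $(400)^\vdim$, so discarding it both shortens the argument and is the reason $\Gamma$ may here be taken to depend only on $\vdim$, $p$ and $q$. Accordingly I would assume $a=0$, so that $h(z)=|\perpproject{T}(z)|$, and work with the test vector field $g(z)=\phi(z)^2\perpproject{T}(z)$, which is Lipschitzian with compact support in $\spt\phi$.

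The first step is the computation behind Brakke's coercive estimate \cite[5.5]{MR485012}, used verbatim in \ref{lemma:coercive_estimate_rect}: from the algebraic identities $\perpproject{T}\bullet\project{S}=\tfrac12|\project{S}-\project{T}|^2$ for $S\in\grass{\adim}{\vdim}$ and $\project{S}\circ\perpproject{T}=(\project{S}-\project{T})\circ\perpproject{T}$ one finds $Dg(z)\bullet\project{S}=\tfrac12\phi(z)^2|\project{S}-\project{T}|^2+2\phi(z)\langle D\phi(z),\project{S}(\perpproject{T}(z))\rangle$ with the cross term bounded by $|\project{S}-\project{T}|\,|D\phi(z)|\,h(z)$; approximating $g$ by smooth fields, applying the first variation formula and absorbing the cross term by Young's inequality then gives $\beta^2\leq4|\delta V(g)|+16\xi^2$. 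It remains to estimate $|\delta V(g)|$, where $\delta V(g)=-\int g(z)\bullet\mathbf{h}(V;z)\ud\|V\|z$ if $p>1$ and $|\delta V(g)|\leq\int|g|\ud\|\delta V\|$ if $p=1$ (see \ref{miniremark:extension}).

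If $1/p+1/q=1$ I would simply apply H\"older's inequality on the measure $\phi^2\|V\|$ (using $\int\phi^2|\mathbf{h}|^p\ud\|V\|=\alpha^p$ and $\int\phi^2 h^q\ud\|V\|=\gamma^q$ when $p>1$), respectively on $\phi^2\|\delta V\|$ when $p=1$ and $q=\infty$, to get $|\delta V(g)|\leq\alpha\gamma$, which already yields the claim. If $1/p+1/q>1$ — so $p,q<\infty$ — I would copy the truncation of \ref{lemma:coercive_estimate_rect}: with $r=1-q(1-1/p)\in(0,1]$ and a parameter $t>0$, set $\eta(s)=\inf\{1,ts^{-r}\}$, $\eta_2(z)=(1-\eta(h(z)))\perpproject{T}(z)$, and $g=g_1+g_2$ with $g_1(z)=\phi(z)^2\eta(h(z))\perpproject{T}(z)$ and $g_2=\phi^2\eta_2$. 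Since $s\eta(s)\leq ts^{1-r}$ one has $|g_1|\leq t\phi^2 h^{1-r}$, and H\"older's inequality with the conjugate pair $p,p'$ together with $(1-r)p'=q$ gives $|\delta V(g_1)|\leq t\alpha\gamma^{1-r}$ (for $p=1$ this reads $t\alpha$). As in that proof, $\|D\eta_2(z)\|\leq1$ where $h(z)>t^{1/r}$ and $Dg_2(z)=0$ where $h(z)<t^{1/r}$, so $\|Dg_2(z)\|\leq2\phi(z)^2 t^{-q/r}h(z)^q+|D\phi(z)|^2 h(z)^2$; then $|\delta V(g_2)|\leq\vdim\bigl(2t^{-q/r}\gamma^q+\xi^2\bigr)$ follows from $|Dg_2(z)\bullet\project{S}|\leq\vdim\|Dg_2(z)\|$ after approximation by smooth fields. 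Thus $|\delta V(g)|\leq t\alpha\gamma^{1-r}+2\vdim t^{-q/r}\gamma^q+\vdim\xi^2$.

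Finally I would optimise in $t$ via \ref{miniremark:schaukel} with $c=1$, $d=q/r$: using $r+q=1+q/p$, a short computation shows that the exponent of $\gamma$ collapses and $\inf_{t>0}(t\alpha\gamma^{1-r}+2\vdim t^{-q/r}\gamma^q)$ equals a constant depending only on $\vdim$, $p$, $q$ times $(\alpha\gamma)^{q/(r+q)}=(\alpha\gamma)^{1/(1/p+1/q)}$. Substituting into $\beta^2\leq4|\delta V(g)|+16\xi^2$ gives the assertion, the summand $4\vdim\xi^2$ arising from the $\vdim\xi^2$ above. I expect no serious obstacle here: the two substantive ingredients — the test-field identity of the second paragraph and the truncation of the third — are exactly those of \ref{lemma:coercive_estimate_rect} and \cite[5.5]{MR485012} with the cut-off removed, so the only care needed is the exponent bookkeeping, the degenerate subcase $p=1$ (where $r=1$ and $\gamma$ is taken against $\|V\|$), and checking that approximating the merely Lipschitzian $g_1,g_2$ by smooth fields is legitimate for a general, possibly non-rectifiable, $V$.
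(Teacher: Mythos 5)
Your proposal is correct and is exactly the paper's intended argument: the paper's proof of this lemma consists of the single remark that the proof of \ref{lemma:coercive_estimate_rect} was designed so that omitting all arguments involving the cut-off function $f$ (hence the splitting $V=V_1+V_2$) yields the present statement, which is precisely the deletion you carry out. Your exponent bookkeeping, the treatment of the boundary case $1/p+1/q=1$, and the optimisation via \ref{miniremark:schaukel} all match the source proof.
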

\begin{proof}
	The proof of \ref{lemma:coercive_estimate_rect} has been designed such
	that a proof of the present assertion results when the arguments
	involving the function $f$ are omitted.
\end{proof}
\section{Approximation by $\qspace_Q( \rel^\codim )$ valued functions}
\label{sec:approx}
The purpose of this section is to establish the necessary adaptions and
extensions of the approximation by $\qspace_Q ( \rel^\codim )$ valued
functions carried out in \cite[3.15]{snulmenn.poincare}. This is done in
\ref{lemma:lipschitz_approximation}\,\eqref{item:lipschitz_approximation:yz}--\eqref{item:lipschitz_approximation:poincare}
and supplemented by a basic estimate concerning the partial differential
equation satisfied by the ``average'' of the approximating function in
\ref{lemma:lipschitz_approximation}\,\eqref{item:lipschitz_approximation:pde}
leaving the estimates more directly related to the purposes of the present
paper to Section \ref{sec:iteration}. The results are based on those in
\cite[\S 3]{snulmenn.poincare}. To effectively treat measurability questions
the concept of universal measurability is recalled in
\ref{def:univ}--\ref{lemma:univ_meas:coarea}.
\begin{definition} \label{def:univ}
	A subset of a topological space $X$ is called \emph{universally
	measurable} if and only if it is measurable with respect to every
	measure $\phi$ on $X$ which has the property that all closed sets are
	$\phi$ measurable.
	
	A function between topological spaces is \emph{universally
	measurable} if and only if every preimage of an open set is
	universally measurable.
\end{definition}
\begin{remark} \label{remark:univ_meas}
	Among the basic properties of the concept of universal measurability
	are the following:
	\begin{enumerate}
		\item \label{item:univ_meas:borel_family} The universally
		measurable sets form a Borel family containing the Borel sets.
		(Note that ``Borel family'' is termed ``$\sigma$-algebra'' in
		\cite[1.1]{MR87a:49001} and ``tribe'' in
		\cite[\printRoman{3},\,\S 0]{MR57:7169}.)
		\item \label{item:univ_meas:preimage1} The preimage of a Borel
		set under a universally measurable function is
		universally measurable.
		\item \label{item:univ_meas:preimage2} The preimage of a
		universally measurable set under a Borel function is
		universally measurable.
		\item \label{item:univ_meas:projection} If $X$ is a complete
		separable metric space, $A$ is a Borel subset of $X$, $Y$ is a
		Hausdorff space and $f : X \to Y$ is continuous then $f \lIm A
		\rIm$ is universally measurable.
	\end{enumerate}
	\eqref{item:univ_meas:borel_family} is evident and implies
	\eqref{item:univ_meas:preimage1}, \eqref{item:univ_meas:preimage2} is
	readily verified by means of \cite[2.1.2]{MR41:1976} and
	\eqref{item:univ_meas:projection} is a consequence of
	\cite[2.2.13]{MR41:1976}.
\end{remark}
\begin{example}
	The following classical example illustrates the use of
	\ref{remark:univ_meas}\,\eqref{item:univ_meas:projection} in the proof
	of
	\ref{lemma:lipschitz_approximation}\,\eqref{item:lipschitz_approximation:measurability}.
	There exists a Borel subset $A$ of $\rel^2$ and an orthogonal
	projection $f : \rel^2 \to \rel$ such that $f \lIm A \rIm$ is not a
	Borel subset of $\rel$. A proof may be obtained by appropriately
	combining the results in \cite[2.2.9,\,11]{MR41:1976}.
\end{example}
\begin{remark}
	The present definition can be shown to be a special case of the
	concept introduced in \cite[\printRoman{3}.21]{MR57:7169}.
\end{remark}
\begin{lemma} \label{lemma:univ_meas:coarea}
	Suppose $X$ is a complete, separable metric space, $Y$ is a Hausdorff
	topological space, $f : X \to Y$ is continuous, $B$ is a Borel subset
	of $X$, and $g : B \to \{ t \with 0 \leq t \leq \infty \}$ is a Borel
	function.
	
	Then $h : Y \to \{ t \with 0 \leq t \leq \infty \}$ defined by
	\begin{gather*}
		h (y) = \sum_{B \cap f^{-1} \lIm\{y\}\rIm} g \quad
		\text{whenever $y \in Y$}
	\end{gather*}
	is universally measurable.
\end{lemma}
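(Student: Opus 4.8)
The plan is to reduce the assertion to the special case $g = \id{A}$ for a Borel subset $A$ of $X$, in which case $h(y) = \card ( A \cap f^{-1} \lIm \{y\} \rIm )$, and then to realise the superlevel sets of this counting function as continuous images of Borel subsets of suitable Polish spaces so that Remark \ref{remark:univ_meas}\,\eqref{item:univ_meas:projection} applies; the passage back to a general Borel function $g$ will be by monotone approximation with simple functions.

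First I would treat the counting case; fix $n \in \nat$. The $n$-fold product $X^n$ is again a complete, separable metric space. Since $X$ is metric, the set $D_n$ of all $(x_1, \ldots, x_n) \in X^n$ with $x_1, \ldots, x_n$ pairwise distinct is open in $X^n$ --- its complement being a finite union of preimages of the closed diagonal of $X \times X$ under continuous maps. Since $Y$ is Hausdorff, its diagonal is closed in $Y \times Y$, hence the set $C_n$ of all $(x_1, \ldots, x_n) \in X^n$ with $f(x_1) = \cdots = f(x_n)$ is closed in $X^n$, being a finite intersection of preimages of that diagonal under continuous maps $X^n \to Y \times Y$. As $A^n$ is Borel in $X^n$, so is $E_n = A^n \cap C_n \cap D_n$, and the continuous map $X^n \to Y$ sending $(x_1, \ldots, x_n)$ to $f(x_1)$ carries $E_n$ onto $\classification{Y}{y}{\card ( A \cap f^{-1} \lIm \{y\} \rIm ) \geq n}$; by Remark \ref{remark:univ_meas}\,\eqref{item:univ_meas:projection} the latter set is universally measurable. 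Since this holds for every $n \in \nat$, and since a function $Y \to \{ t \with 0 \leq t \leq \infty \}$ taking values in $\{ 0, 1, 2, \ldots \} \cup \{ \infty \}$ is universally measurable once its superlevel sets at all levels in $\nat$ are, Remark \ref{remark:univ_meas}\,\eqref{item:univ_meas:borel_family} shows $y \mapsto \card ( A \cap f^{-1} \lIm \{y\} \rIm )$ to be universally measurable.

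Finally I would pass to a general Borel function $g$. By the standard construction I would choose Borel simple functions $g_k : B \to \{ t \with 0 \leq t \leq \infty \}$ with $g_k \leq g_{k+1}$ and $g_k \to g$ pointwise on $B$, say $g_k = {\textstyle\sum_{i=1}^{N_k}} c_{k,i} \id{B_{k,i}}$ with $0 \leq c_{k,i} < \infty$ and $B_{k,i}$ Borel subsets of $B$. Then, for $y \in Y$,
\begin{gather*}
	h_k (y) = {\textstyle\sum_{B \cap f^{-1} \lIm \{y\} \rIm}} g_k = {\textstyle\sum_{i=1}^{N_k}} c_{k,i} \, \card ( B_{k,i} \cap f^{-1} \lIm \{y\} \rIm )
\end{gather*}
depends universally measurably on $y$ by the preceding paragraph, since --- the universally measurable subsets of $Y$ forming a Borel family by Remark \ref{remark:univ_meas}\,\eqref{item:univ_meas:borel_family} --- universally measurable $\{ t \with 0 \leq t \leq \infty \}$ valued functions on $Y$ are closed under multiplication by nonnegative constants and under finite sums. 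Monotone convergence with respect to counting measure on $B \cap f^{-1} \lIm \{y\} \rIm$ yields $h_k(y) \to h(y)$ for every $y \in Y$, whence $h = \sup_k h_k$ is universally measurable, again by Remark \ref{remark:univ_meas}\,\eqref{item:univ_meas:borel_family}. The single delicate point is the Borel measurability of the fibre-comparison sets $C_n$ and $D_n$, which is precisely where the Hausdorff property of $Y$ (closedness of its diagonal) and the metrisability of $X$ enter, together with the fact that a finite power of a Polish space is again Polish so that Remark \ref{remark:univ_meas}\,\eqref{item:univ_meas:projection} is applicable; the remaining verifications are routine.
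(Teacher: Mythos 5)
Your proof is correct, and it takes a genuinely different route from the paper's. The paper's proof is by reference: it adapts Federer \cite[2.10.10]{MR41:1976}, where the multiplicity function $y \mapsto \card(A \cap f^{-1}\lIm\{y\}\rIm)$ is realised as a monotone limit of sums $\sum_{S \in \Phi_j} \id{f \lIm S \rIm}(y)$ associated to countable partitions $\Phi_j$ of $A$ into Borel pieces of small diameter, the universal measurability of each $f \lIm S \rIm$ being supplied by \ref{remark:univ_meas}\,\eqref{item:univ_meas:projection}; the passage to general $g$ then runs through Federer's algebra of measurable functions \cite[2.3.2\,(4)--(6), 2.3.3]{MR41:1976}. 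You instead parameterise the superlevel set $\classification{Y}{y}{\card(A \cap f^{-1}\lIm\{y\}\rIm) \geq n}$ directly, as the image under $(x_1,\ldots,x_n) \mapsto f(x_1)$ of the Borel subset $A^n \cap C_n \cap D_n$ of the Polish space $X^n$, and conclude again by \ref{remark:univ_meas}\,\eqref{item:univ_meas:projection}. Both arguments draw on exactly the same Suslin-theory input (ultimately Federer \cite[2.2.13]{MR41:1976}), but yours avoids the partition-and-refine scheme entirely and is more transparent about which structural hypotheses do what: closedness of the diagonal of $X\times X$ makes $D_n$ open, Hausdorffness of $Y$ makes $C_n$ closed, and Polishness is preserved under finite powers. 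The reduction to indicator functions by monotone simple approximation, the decomposition $h_k(y) = \sum_i c_{k,i}\,\card(B_{k,i} \cap f^{-1}\lIm\{y\}\rIm)$, the closure of universally measurable $[0,\infty]$-valued functions under nonnegative scalar multiples, finite sums, and countable suprema, and the monotone-convergence identification $h = \sup_k h_k$ are all carried out correctly. The only point worth making explicit in a final write-up is the observation that a $(\{0\} \cup \nat \cup \{\infty\})$-valued function is universally measurable once $\classification{Y}{y}{h(y) \geq n}$ is so for every $n \in \nat$, since preimages of a countable base for $[0,\infty]$ then lie in the $\sigma$-algebra generated by these sets.
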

\begin{proof}
	One may adapt \cite[2.10.10, 2.3.2\,(4)--(6), 2.3.3]{MR41:1976} by use
	of
	\ref{remark:univ_meas}\,\eqref{item:univ_meas:borel_family}\,\eqref{item:univ_meas:projection}
	to obtain the conclusion.
\end{proof}
\begin{lemma} \label{lemma:quasi_linear}
	Suppose $X$, $Y$ are normed vector spaces, $f : X \to Y$ is of class
	$\class{1}$, $a \in X$, $0 < r < \infty$, $Q \in \nat$, $x_i \in
	\cball{a}{r}$ for $i = 1, \ldots, Q$, and $\gamma = \Lip ( Df |
	\cball{a}{r} )$.

	Then
	\begin{gather*}
		\left | \frac{1}{Q} \sum_{i=1}^Q f (x_i) - f \left (
		\frac{1}{Q} \sum_{i=1}^Q x_i \right ) \right | \leq \gamma
		r^2.
	\end{gather*}
\end{lemma}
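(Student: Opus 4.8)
The plan is to reduce the estimate to a single first-order Taylor inequality with Lipschitz remainder, and then to exploit that, to first order around the centre $a$, the average of the points $x_i$ is exactly their barycentre $\bar x := Q^{-1}\sum_{i=1}^Q x_i$. One may assume $\gamma < \infty$, since otherwise the asserted bound is vacuous, and one records that $\bar x \in \cball{a}{r}$ because $\cball{a}{r}$ is convex.

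First I would establish the auxiliary estimate
\[
	| f(y) - f(x) - Df(x)(y-x) | \leq \tfrac{1}{2}\, \gamma\, | y - x |^2
	\qquad \text{whenever $x, y \in \cball{a}{r}$}.
\]
Since $\cball{a}{r}$ is convex, the segment from $x$ to $y$ lies in it, so the fundamental theorem of calculus applied to $t \mapsto f(x + t(y-x)) - t\, Df(x)(y-x)$ on $[0,1]$, together with $| Df(x + t(y-x)) - Df(x) | \leq \gamma\, t\, | y - x |$, gives the displayed bound after integrating the resulting factor $\gamma\, t\, | y - x |^2$ over $t \in [0,1]$.

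Next I would apply this auxiliary estimate with base point $x = a$: once with $y = x_i$ for each $i \in \{1,\ldots,Q\}$, and once with $y = \bar x$. This writes
\[
	f(x_i) = f(a) + Df(a)(x_i - a) + \rho_i, \qquad f(\bar x) = f(a) + Df(a)(\bar x - a) + \sigma,
\]
where $| \rho_i | \leq \tfrac{1}{2}\gamma\, | x_i - a |^2 \leq \tfrac{1}{2}\gamma r^2$ and $| \sigma | \leq \tfrac{1}{2}\gamma\, | \bar x - a |^2 \leq \tfrac{1}{2}\gamma r^2$. Averaging the $Q$ identities for the $x_i$ and subtracting the identity for $\bar x$, the constant terms cancel, and the linear terms cancel because $Q^{-1}\sum_{i=1}^Q (x_i - a) = \bar x - a$; hence
\[
	\frac{1}{Q}\sum_{i=1}^Q f(x_i) - f(\bar x) = \frac{1}{Q}\sum_{i=1}^Q \rho_i - \sigma,
\]
and the triangle inequality bounds the norm of the right-hand side by $\tfrac{1}{2}\gamma r^2 + \tfrac{1}{2}\gamma r^2 = \gamma r^2$, which is the claim.

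I do not expect a genuine obstacle; the only point requiring care is the choice of base point. Expanding instead around $\bar x$ also annihilates the linear term, but then one is left controlling $\tfrac{\gamma}{2Q}\sum_{i=1}^Q | x_i - \bar x |^2$, and in an arbitrary normed space one can only invoke $| x_i - \bar x | \leq | x_i - a | + | a - \bar x | \leq 2r$, which costs a factor $2$. Anchoring the expansion at $a$ splits the error symmetrically into two pieces of size $\tfrac{1}{2}\gamma r^2$ each and yields the constant $1$ with no recourse to inner-product structure.
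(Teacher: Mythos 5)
Your proof is correct and is essentially the paper's own argument: expanding about $a$ with the affine Taylor polynomial $P(x)=f(a)+\left<x-a,Df(a)\right>$, bounding $|f-P|$ by $(\gamma/2)r^2$ on $\cball{a}{r}$ via the fundamental theorem of calculus, and using that $P$ commutes with averaging. Nothing further is needed.
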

\begin{proof}
	Let $P : X \to Y$ by defined by $P(x) = f (a) + \left < x-a, Df(a)
	\right >$ for $x \in X$. Then for $x \in \cball{a}{r}$
	\begin{gather*}
		| f(x)-P(x) | = \big | \big < x-a, {\textstyle\int_0^1} Df ( a
		+ t(x-a)) - Df (a) \ud \mathscr{L}^1 t \big > \big | \leq
		( \gamma/2 ) r^2.
	\end{gather*}
	Since $\frac{1}{Q} \sum_{i=1}^Q P (x_i) = P ( Q^{-1} \sum_{i=1}^Q x_i
	)$, this implies the conclusion.
\end{proof}
\begin{lemma} \label{lemma:lipschitz_approximation}
	Suppose $\adim, Q \in \nat$, $0 < L < \infty$, $1 \leq M < \infty$,
	and $0 < \delta_i \leq 1$ for $i \in \{1,2,3,4,5\}$.
	
	Then there exists a positive, finite number $\varepsilon$ with the
	following property.
	
	If $\vdim \in \nat$, $\vdim < \adim$, $0 < r < \infty$, $0 < h \leq
	\infty$, $h > 2 \delta_4 r$, $T = \im \pp^\ast$,
	\begin{gather*}
		U = \eqclassification{\rel^\vdim \times
		\rel^\codim}{(x,y)}{\dist ((x,y), \cylinder{T}{0}{r}{h}) <
		2r},
	\end{gather*}
	$V \in \IVar_\vdim ( U )$, $\| \delta V \|$ is a Radon measure,
	\begin{gather*}
		( Q - 1 + \delta_1 ) \unitmeasure{\vdim} r^\vdim \leq \| V \| (
		\cylinder{T}{0}{r}{h} ) \leq ( Q + 1 - \delta_2 )
		\unitmeasure{\vdim} r^\vdim, \\
		\| V \| ( \cylinder{T}{0}{r}{h+\delta_4 r} \without
		\cylinder{T}{0}{r}{h-2\delta_4 r}) \leq ( 1 - \delta_3 )
		\unitmeasure{\vdim} r^\vdim, \\
		\| V \| ( U ) \leq M \unitmeasure{\vdim} r^\vdim,
	\end{gather*}
	$0 < \delta \leq \varepsilon$, $B$ denotes the set of all $z
	\in \cylinder{T}{0}{r}{h}$ with $\density^{\ast \vdim} ( \| V \|, z) >
	0$ such that
	\begin{gather*}
		\text{either} \quad
		\measureball{\| \delta V \|}{\cball{z}{\varrho}} >
		\delta \, \| V \| ( \cball{z}{\varrho} )^{1-1/\vdim}
		\quad \text{for some $0 < \varrho < 2 r$}, \\
		\text{or} \quad {\textstyle\int_{\cball{z}{\varrho} \times
		\grass{\adim}{\vdim}}} | \project{S} - \project{T} | \ud V
		(\xi,S) > \delta \, \measureball{\| V
		\|}{\cball{z}{\varrho}} \quad \text{for some $0 < \varrho <
		2 r$},
	\end{gather*}
	$A = \cylinder{T}{0}{r}{h} \without B$, $A (x) =
	\classification{A}{z}{\pp (z) = x}$ for $x \in \rel^\vdim$, $X_1$ is
	the set of all $x \in \rel^\vdim \cap \cball{0}{r}$ such that
	\begin{gather*}
		{\textstyle\sum_{z \in A(x)}} \density^\vdim ( \| V \|, z )
		= Q \quad \text{and} \quad \text{$\density^\vdim ( \| V \|,
		z ) \in \nat \cup \{0\}$ for $z \in A(x)$},
	\end{gather*}
	$X_2$ is the set of all $x \in \rel^\vdim \cap \cball{0}{r}$ such
	that
	\begin{gather*}
		{\textstyle\sum_{z \in A(x)}} \density^\vdim ( \| V \|, z )
		\leq Q - 1 \quad \text{and} \quad \text{$\density^\vdim
		( \| V \|, z ) \in \nat \cup \{ 0 \}$ for $z \in A(x)$},
	\end{gather*}
	$N = \rel^\vdim \cap \cball{0}{r} \without ( X_1 \cup X_2 )$, and $f :
	X_1 \to \qspace_Q ( \rel^{\codim} )$ is characterised by the
	requirement
	\begin{gather*}
		\density^\vdim ( \| V \|, z) = \density^0 ( \| f (x) \|, \qq
		(z) ) \quad \text{whenever $x \in X_1$ and $z \in A(x)$},
	\end{gather*}
	then the following nine statements hold:
	\begin{enumerate}
		\item \label{item:lipschitz_approximation:yz} $X_1$ and $X_2$
		are universally measurable, and $\mathscr{L}^\vdim (N) =0$.
		\item \label{item:lipschitz_approximation:ab} $A$ and $B$
		are Borel sets and
		\begin{gather*}
			\qq \lIm A \cap \spt \| V \| \rIm \subset
			\cball{0}{h-\delta_4r}.
		\end{gather*}
		\item \label{item:lipschitz_approximation:para} $\pp \lIm
		\classification{A}{z}{\density^\vdim ( \| V \|, z ) = Q} \rIm
		\subset X_1$.
		\item \label{item:lipschitz_approximation:lip} The function
		$f$ is Lipschitzian with $\Lip f \leq L$.
		\item \label{item:lipschitz_approximation:misc} For
		$\mathscr{L}^\vdim$ almost all $x \in X_1$ the following is
		true:
		\begin{enumerate}
			\item
			\label{item:item:lipschitz_approximation:misc:a} The
			function $f$ is approximately strongly affinely
			approximable at $x$.
			\item
			\label{item:item:lipschitz_approximation:misc:b}
			If $(x,y) \in \graph_Q f$ then
			\begin{gather*}
				\Tan^\vdim ( \| V \|, (x,y) ) = \Tan \big (
				\graph_Q \ap A f ( x ) , (x,y) \big) \in
				\grass{\adim}{\vdim}.
			\end{gather*}
		\end{enumerate}
		\item \label{item:lipschitz_approximation:measurability} If $a
		\in \cylinder{T}{0}{r}{h}$, $0 < \varrho \leq r - | \pp(a) |$,
		$| \qq(a) | + \delta_4 \varrho \leq h$, and
		\begin{align*}
			B_{a,\varrho} & = \cylinder{T}{a}{\varrho}{\delta_4
			\varrho} \cap B, \\
			C_{a,\varrho} & = \cball{\pp (a)}{\varrho}
			\without ( X_1 \without \pp \lIm B_{a,\varrho} \rIm ),
			\\
			D_{a,\varrho} & = \cylinder{T}{a}{\varrho}{\delta_4
			\varrho} \cap \pp^{-1} \lIm C_{a,\varrho} \rIm,
		\end{align*}
		then $B_{a,\varrho}$ is a Borel set and $C_{a,\varrho}$ and
		$D_{a,\varrho}$ are universally measurable.
		\item \label{item:lipschitz_approximation:estimate_b} If $a$,
		$\varrho$, $B_{a,\varrho}$, $C_{a,\varrho}$, and
		$D_{a,\varrho}$ are as in
		\eqref{item:lipschitz_approximation:measurability} and
		\begin{gather*}
			\graph_Q f | \cball{\pp(a)}{\varrho} \subset
			\cylinder{T}{a}{\varrho}{\delta_4\varrho/2}, \\
			\| V \| ( \cylinder{T}{a}{\varrho}{\delta_4\varrho} )
			\geq ( Q - 1/4 ) \unitmeasure{\vdim} \varrho^\vdim,
		\end{gather*}
		then
		\begin{gather*}
			\mathscr{L}^\vdim ( C_{a,\varrho} ) + \| V \| (
			D_{a,\varrho}) \leq
			\Gamma_{\eqref{item:lipschitz_approximation:estimate_b}}
			\, \| V \| ( B_{a,\varrho} )
		\end{gather*}
		with
		$\Gamma_{\eqref{item:lipschitz_approximation:estimate_b}} =
		3 + 2Q + ( 12Q + 6 ) 5^\vdim$.
		\item \label{item:lipschitz_approximation:poincare} Suppose
		$H$ denotes the set of all $z \in \cylinder{T}{0}{r}{h}$ such
		that
		\begin{gather*}
			\measureball{\| \delta V \|}{\oball{z}{2r}} \leq
			\varepsilon \, \| V \| ( \oball{z}{2r} )^{1-1/\vdim},
			\\
			{\textstyle\int_{\oball{z}{2r} \times
			\grass{\adim}{\vdim}}} | \project{S} - \project{T} |
			\ud V (z,S) \leq \varepsilon \, \measureball{\| V
			\|}{\oball{z}{2r}}, \\
			\measureball{\| V \|}{\cball{z}{\varrho}} \geq
			\delta_5 \unitmeasure{\vdim} \varrho^\vdim \quad
			\text{for $0 < \varrho < 2r$}.
		\end{gather*}

		Then there exists a positive, finite number
		$\varepsilon_{\eqref{item:lipschitz_approximation:poincare}}$
		depending only on $\vdim$, $\delta_2$, and $\delta_4$ with the
		following property:

		If $c \in \rel^\vdim \cap \oball{0}{r}$, $0 < \varrho \leq r -
		|c|$, $\mathscr{L}^\vdim ( \cball{c}{\varrho} \without X_1 )
		\leq
		\varepsilon_{\eqref{item:lipschitz_approximation:poincare}}
		\unitmeasure{\vdim} \varrho^\vdim$, $\emptyset \neq P \subset
		\cylind{T}{\pp^\ast ( c)}{ \varrho}$, for every $z \in P$ and
		$x \in \cball{c}{\varrho}$ there exists $y$ with $(x,y) \in P$
		and $|y-\qq(z)| \leq |x-\pp(z)|$, and $d :
		\cylinder{T}{\pp^\ast (c)}{\varrho}{h} \to \rel$ and $g : X_1
		\cap \cball{c}{\varrho} \to \rel$ are defined by
		\begin{align*}
			d(z) & = \inf \{ | \qq ( \xi - z ) | \with \xi \in P,
			\pp ( \xi ) = \pp ( z ) \} && \quad \text{for $z \in
			\cylinder{T}{\pp^\ast (c)}{\varrho}{h}$}, \\
			g(x) & = \sup \{ d(x,y) \with y \in \spt f (x) \} &&
			\quad \text{for $x \in X_1 \cap \cball{c}{\varrho}$},
		\end{align*}
		then $\Lip d \leq 2^{1/2}$, $\Lip g \leq 2^{1/2} ( 1 + L )$,
		and
		\begin{gather*}
			\eqLpnorm{\| V \| \restrict H \cap \cylinder{T}{\pp^\ast
			(c)}{\varrho}{h}}{q}{d} \\
			\leq
			\Gamma_{\eqref{item:lipschitz_approximation:poincare}}
			Q \big ( \eqLpnorm{\mathscr{L}^\vdim \restrict
			\cball{c}{\varrho} \cap X_1}{q}{g} + \mathscr{L}^\vdim
			( \cball{c}{\varrho} \without X_1)^{1/q+1/\vdim} \big)
		\end{gather*}
		whenever $1 \leq q \leq \infty$ where
		$\Gamma_{\eqref{item:lipschitz_approximation:poincare}}$ is a
		positive, finite number depending only on $\vdim$.
		\item \label{item:lipschitz_approximation:pde} If $a$,
		$\varrho$, $C_{a,\varrho}$, $D_{a,\varrho}$ are as in
		\eqref{item:lipschitz_approximation:measurability},
		\begin{gather*}
			\graph_Q f | \cball{\pp(a)}{\varrho} \subset
			\cylinder{T}{a}{\varrho}{\delta_4\varrho/2},
		\end{gather*}
		$g : \rel^\vdim \to \rel^\codim$, $\Lip g < \infty$, $g | X_1
		= \boldsymbol{\eta}_Q \circ f$, $\tau \in \Hom ( \rel^\vdim,
		\rel^\codim )$, $\theta \in \mathscr{D} ( \rel^\vdim,
		\rel^\codim)$, $\eta \in \mathscr{D}^0 ( \rel^\codim )$,
		\begin{gather*}
			\spt \theta \subset \oball{\pp(a)}{\varrho}, \qquad
			0 \leq \eta (y) \leq 1 \quad \text{for $y \in
			\rel^\codim$}, \\
			\spt \eta \subset \oball{\qq (a)}{\delta_4 \varrho},
			\quad \cball{\qq (a)}{\delta_4 \varrho/2 } \subset
			\Int ( \classification{\rel^\codim}{y}{\eta (y) = 1 }
			),
		\end{gather*}
		and $\Psi^\S$ denotes the nonparametric integrand associated
		to the area integrand $\Psi$, then
		\begin{gather*}
			\big | Q \tint{}{} \big < D \theta (x), D\Psi^\S_0 ( D
			g (x)) \big > \ud \mathscr{L}^\vdim x - ( \delta V ) (
			( \eta \circ \qq) \cdot ( \qq^\ast \circ \theta \circ
			\pp ) ) \big | \\
			\begin{aligned}
				& \leq \gamma_1 Q \vdim^{1/2} \Lip g
				{\textstyle\int_{C_{a,\varrho}}} | D \theta |
				\ud \mathscr{L}^\vdim \\
				& \phantom{\leq} \ + \gamma_2
				{\textstyle\int_{E_{a,\varrho} \without
				C_{a,\varrho}}} | D \theta (x) | | \ap A f (x)
				\aplus ( - \tau ) |^2 \ud \mathscr{L}^\vdim x
				\\
				& \phantom{\leq} \ + \vdim^{1/2}
				{\textstyle\int_{D_{a,\varrho}}} | D ( ( \eta
				\circ \qq ) \cdot ( \qq^\ast \circ \theta
				\circ \pp ) ) | \ud \| V \|
			\end{aligned}
		\end{gather*}
		where
		\begin{gather*}
			\gamma_1 = \sup \| D^2 \Psi_0^\S \| \lIm
			\cball{0}{\vdim^{1/2}\Lip g} \rIm, \\
			\gamma_2 = \Lip \big ( D^2 \Psi_0^\S|
			\cball{0}{\vdim^{1/2}(L+2\| \tau \|) } \big ), \\
			E_{a,\varrho} = \cball{\pp (a)}{\varrho } \cap
			\classification{X_1}{x}{\density^0 ( \| f (x) \|, g
			(x)) \neq Q }.
		\end{gather*}
	\end{enumerate}
\end{lemma}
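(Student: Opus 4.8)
\emph{The plan is as follows.} Compare the two sides after splitting $\cball{\pp(a)}{\varrho}$ into the good set $G = \cball{\pp(a)}{\varrho} \without C_{a,\varrho} = X_1 \cap \cball{\pp(a)}{\varrho} \without \pp\lIm B_{a,\varrho}\rIm$ and the bad set $C_{a,\varrho}$, the point being that over $G$ the varifold $V$ coincides inside $\cylinder{T}{a}{\varrho}{\delta_4\varrho}$ with the varifold associated to the graph of $f$. First I would record three preliminary remarks. Since $\spt\theta \subset \oball{\pp(a)}{\varrho}$ and $\spt\eta \subset \oball{\qq(a)}{\delta_4\varrho}$, writing $\psi = (\eta\circ\qq)\cdot(\qq^\ast\circ\theta\circ\pp)$ one has $\psi \in \mathscr{D}(\rel^\adim,\rel^\adim)$ with $\spt\psi \subset \cylinder{T}{a}{\varrho}{\delta_4\varrho} \subset \cylinder{T}{0}{r}{h} \subset U$, the last inclusions using $\varrho \leq r-|\pp(a)|$ and $|\qq(a)|+\delta_4\varrho \leq h$. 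By $\graph_Q f|\cball{\pp(a)}{\varrho} \subset \cylinder{T}{a}{\varrho}{\delta_4\varrho/2}$ together with $\cball{\qq(a)}{\delta_4\varrho/2} \subset \Int \classification{\rel^\codim}{y}{\eta(y)=1}$, the factor $\eta\circ\qq$ equals $1$ and has vanishing differential on a neighbourhood of $\graph_Q f|\cball{\pp(a)}{\varrho}$, so there $\psi$ and $\ap D\psi$ agree with $\qq^\ast\circ\theta\circ\pp$ and its tangential derivative. Finally, $\cball{\pp(a)}{\varrho} = G \cup C_{a,\varrho}$ disjointly.

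Next I would use \ref{lemma:approx_diff}\,\eqref{item:approx_diff:variation} to write $(\delta V)(\psi) = {\textstyle\int} \project{S}\bullet((\|V\|,\vdim)\ap D\psi(z)\circ\project{S})\ud V(z,S)$ and, by the universal measurability of $C_{a,\varrho}$ and $D_{a,\varrho}$ from \eqref{item:lipschitz_approximation:measurability}, split this integral as well as $Q{\textstyle\int}\langle D\theta, D\Psi_0^\S(Dg)\rangle\ud\mathscr{L}^\vdim$ according as the $\pp$-image lies in $G$ or in $C_{a,\varrho}$, noting that the $C_{a,\varrho}$-part of $(\delta V)(\psi)$ is carried by $D_{a,\varrho}$ since $\spt\psi \subset \cylinder{T}{a}{\varrho}{\delta_4\varrho}$. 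On $\pp^{-1}\lIm G\rIm \cap \cylinder{T}{a}{\varrho}{\delta_4\varrho}$ the points of $\spt\|V\|$ lie in $A\cap\spt\|V\| \subset H$ (because $G \subset X_1\without\pp\lIm B_{a,\varrho}\rIm$ and $A = \cylinder{T}{0}{r}{h}\without B$; cf.\ \ref{app:lemma:lipschitz_approximation}\,\eqref{app:item:lipschitz_approximation_2:lip_related}), hence in $H\cap\pp^{-1}\lIm X_1\rIm = \graph_Q f$, while conversely $\graph_Q f|G \subset \cylinder{T}{a}{\varrho}{\delta_4\varrho/2}$; thus $V$ restricted there is the varifold associated to $f|G$ in the sense of \ref{miniremark:first_variation}, and, using the first preliminary remark, \eqref{item:item:lipschitz_approximation:misc:b}, and \ref{miniremark:f_i}, the calculation of \ref{miniremark:first_variation} localised to $\cylinder{T}{a}{\varrho}{\delta_4\varrho}$ identifies this contribution with $\sum_i {\textstyle\int_{\dmn\ap Df_i\cap G}} \langle D\theta(x), D\Psi_0^\S(\ap Df_i(x))\rangle\ud\mathscr{L}^\vdim x$ where the $f_i$, $i \in I$, are as in \ref{miniremark:f_i}.

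The heart of the argument is the pointwise comparison over $G$. For $\mathscr{L}^\vdim$ almost every $x\in G$ the set $I(x) = \{ i \with x\in\dmn\ap Df_i \}$ has cardinality $Q$ and $Dg(x) = Q^{-1}\sum_{i\in I(x)}\ap Df_i(x)$, since $g|X_1 = \boldsymbol{\eta}_Q\circ f$ agrees at common density points with the approximately differentiable function $x\mapsto Q^{-1}\sum_{i\in I(x)}f_i(x)$. If $x\notin E_{a,\varrho}$, then $\density^0(\|f(x)\|,g(x)) = Q$ forces all sheet values at $x$ to coincide, whence by \eqref{item:item:lipschitz_approximation:misc:a} the slopes $\ap Df_i(x) = Dg(x)$ agree and $\sum_{i\in I(x)}D\Psi_0^\S(\ap Df_i(x)) = Q\,D\Psi_0^\S(Dg(x))$; so the $G$-contributions to the two sides cancel off $E_{a,\varrho}$. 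If $x\in E_{a,\varrho}$, Taylor expansion of $D\Psi_0^\S$ about $\tau$, with the first-order terms cancelled via $\sum_{i\in I(x)}\ap Df_i(x) = Q\,Dg(x)$ and the $Dg(x)$-remainder absorbed by Jensen's inequality, bounds $\big| \sum_{i\in I(x)}D\Psi_0^\S(\ap Df_i(x)) - Q\,D\Psi_0^\S(Dg(x)) \big|$ by $\gamma_2\sum_{i\in I(x)}|\ap Df_i(x)-\tau|^2 = \gamma_2|\ap Af(x)\aplus(-\tau)|^2$, the constant $\gamma_2 = \Lip(D^2\Psi_0^\S|\cball{0}{\vdim^{1/2}(L+2\|\tau\|)})$ being admissible because the segments joining $\tau$ to the $\ap Df_i(x)$ and to $Dg(x)$, whose endpoints have norm at most $L$, stay in that ball. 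Pairing with $D\theta(x)$ and integrating over $G\cap E_{a,\varrho} = E_{a,\varrho}\without C_{a,\varrho}$ produces the second error term.

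Finally I would estimate the bad-set contributions crudely. Using $D\Psi_0^\S(0) = 0$ and $|D\Psi_0^\S(Dg(x))| \leq \gamma_1|Dg(x)| \leq \gamma_1\vdim^{1/2}\Lip g$ with $\gamma_1 = \sup\|D^2\Psi_0^\S\|\lIm\cball{0}{\vdim^{1/2}\Lip g}\rIm$ one gets $\big| Q{\textstyle\int_{C_{a,\varrho}}}\langle D\theta, D\Psi_0^\S(Dg)\rangle\ud\mathscr{L}^\vdim \big| \leq \gamma_1 Q\vdim^{1/2}\Lip g\,{\textstyle\int_{C_{a,\varrho}}}|D\theta|\ud\mathscr{L}^\vdim$, and by $|\project{S}\bullet(\sigma\circ\project{S})| \leq \vdim^{1/2}|\sigma|$ for $S\in\grass{\adim}{\vdim}$ one gets $\big| {\textstyle\int_{D_{a,\varrho}}}\project{S}\bullet((\|V\|,\vdim)\ap D\psi(z)\circ\project{S})\ud V(z,S) \big| \leq \vdim^{1/2}{\textstyle\int_{D_{a,\varrho}}}|D\psi|\ud\|V\|$; adding the three estimates yields the claim. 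I expect the principal difficulty to be the careful identification, with all the measurability and support bookkeeping, of the region on which $V$ coincides with the graph varifold of $f$ together with the corresponding localisation of the first-variation identity \ref{miniremark:first_variation}; the Taylor estimate and the operator-norm bounds are then routine.
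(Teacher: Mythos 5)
Your argument addresses only conclusion \eqref{item:lipschitz_approximation:pde}; the statement has nine conclusions, and nothing in the proposal proves \eqref{item:lipschitz_approximation:yz}--\eqref{item:lipschitz_approximation:poincare}. Several of these are not routine: \eqref{item:lipschitz_approximation:para} requires the multilayer monotonicity lemma \ref{app:lemma:multilayer_monotonicity_offset} combined with \ref{app:lemma:good_point}; \eqref{item:lipschitz_approximation:estimate_b} rests on a coarea estimate comparing $\| V \|$ with $\pp_\#$ of the graph measure together with a Vitali covering argument bounding $\mathscr{L}^\vdim ( X_2 \cap \cball{\pp(a)}{\varrho} )$ by $6 \cdot 5^\vdim \, \| V \| ( B_{a,\varrho} )$; and \eqref{item:lipschitz_approximation:poincare} is a lengthy Sobolev--Poincar\'e-type covering argument using \ref{app:lemma:inverse_multilayer_monotonicity}. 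The choice of $\varepsilon$ --- which must be pinned down against $\varepsilon_{\ref{app:lemma:lipschitz_approximation}}$, $\varepsilon_{\ref{app:lemma:multilayer_monotonicity_offset}}$ and $\varepsilon_{\ref{app:lemma:inverse_multilayer_monotonicity}}$, and which even your part of the argument uses implicitly through the inclusions $A \cap \spt \| V \| \subset H$ and $H \cap \pp^{-1} \lIm X_1 \rIm = \graph_Q f$ --- is never made. So as a proof of the lemma the proposal has a substantial gap.

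For the one conclusion you do treat, the argument is correct and essentially coincides with the paper's: the same decomposition of $\cball{\pp(a)}{\varrho}$ into $C_{a,\varrho}$ and its complement, the same identification of $V$ over the good set with the varifold associated to $f$ via \ref{app:lemma:lipschitz_approximation}\,\eqref{app:item:lipschitz_approximation_2:lip_related} and \ref{miniremark:first_variation}, the same exact cancellation off $E_{a,\varrho}$ using strong affine approximability, and the same crude operator-norm bounds for the $C_{a,\varrho}$- and $D_{a,\varrho}$-contributions. The only cosmetic difference is that you carry out the second-order Taylor expansion with the Jensen absorption by hand where the paper packages it as \ref{lemma:quasi_linear}; your check that the relevant segments stay in $\cball{0}{\vdim^{1/2}(L+2\|\tau\|)}$ is the right justification for the constant $\gamma_2$.
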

\begin{proof} [Choice of constants]
	One can assume $2L \leq \delta_4$ and $\delta_5 \leq ( 2
	\isoperimetric{\vdim} \vdim)^{-\vdim}/\unitmeasure{\vdim}$ whenever
	$\vdim \in \nat$ with $\vdim < \adim$.
	
	Choose $0 < s_0 < 1$, $0 < s < 1$ close to $1$ satisfying
	\begin{gather*}
		( s_0^{-2} - 1 )^{1/2} \leq \delta_4 / 2, \quad ( s^{-2} - 1
		)^{1/2} \leq \min \{ \delta_4/4, L \}
	\end{gather*}
	and define $\varepsilon > 0$ so small that
	\begin{gather*}
		1 - \adim \varepsilon^2 \geq 1/2, \quad ( 1 - \adim
		\varepsilon^2) ( Q - 1/4 ) \geq Q - 1/2
	\end{gather*}
	and not larger than the infimum of the following numbers corresponding
	to $\vdim \in \nat$ with $\vdim < \adim$
	\begin{gather*}
		\varepsilon_{\ref{app:lemma:lipschitz_approximation}} ( \vdim,
		\adim, Q, L, M, \delta_1, \delta_2, \delta_3, \delta_4 ,
		\delta_5), \quad ( 2 \isoperimetric{\vdim} )^{-1}, \\
		\varepsilon_{\ref{app:lemma:multilayer_monotonicity_offset}} (
		\adim, Q+1, M, \inf \{ \delta_2 / 2, ( 2
		\isoperimetric{\vdim}\vdim )^{-\vdim}/\unitmeasure{\vdim} \},
		s ) \quad
		\varepsilon_{\ref{app:lemma:multilayer_monotonicity_offset}} (
		\adim, Q, M, 1/4, s ), \\
		\varepsilon_{\ref{app:lemma:inverse_multilayer_monotonicity}}
		( \vdim, \adim, 1, \delta_2, 0, s_0, M ).
	\end{gather*}
	
	Clearly, $\delta$ satisfies the same inequalities as
	$\varepsilon$ and one can assume $r = 1$.
\end{proof}
\begin{proof}
[Proof of
\eqref{item:lipschitz_approximation:yz}\,\eqref{item:lipschitz_approximation:ab}\,\eqref{item:lipschitz_approximation:lip}\,\eqref{item:lipschitz_approximation:misc}]
	By
	\ref{app:lemma:lipschitz_approximation}\,\eqref{app:item:lipschitz_approximation_2:def},
	\ref{remark:univ_meas}\,\eqref{item:univ_meas:preimage1} and
	\ref{lemma:univ_meas:coarea} the sets $X_1$ and $X_2$ are universally
	measurable. Hence the assertion follows from
	\ref{app:lemma:lipschitz_approximation}\,\eqref{app:item:lipschitz_approximation:N}\,\eqref{app:item:lipschitz_approximation_2:def}\,\eqref{app:item:lipschitz_approximation_2:lip}\,\eqref{app:item:lipschitz_approximation_2:misc}.
\end{proof}
\begin{proof} [Proof of \eqref{item:lipschitz_approximation:para}]
	Let $\eta = \inf \{ \delta_2/2, ( 2 \isoperimetric{\vdim} \vdim
	)^{-\vdim} / \unitmeasure{\vdim} \}$, consider $z \in A$ with
	$\density^\vdim ( \| V \|, z ) = Q$, $Z = A ( \pp (z) )$, note, using
	\eqref{item:lipschitz_approximation:ab}, that
	\begin{gather*}
		\classification{\oball{\xi-\pp^\ast ( \pp
		(z))}{1}}{\kappa}{|\project{T} ( \kappa - \xi ) > s | \kappa -
		\xi |} \subset \cylinder{T}{0}{1}{h}
	\end{gather*}
	for $\xi \in A ( \pp (z) )$ and apply
	\ref{app:lemma:multilayer_monotonicity_offset} with
	\begin{gather*}
		\text{$Q$, $\delta$, $d$, $r$, $t$, and $f$} \\
		\text{replaced by $Q+1$, $\eta$, $1$, $2$, $1$, and
		$\trans{-\pp^\ast (\pp(z))}|Z$}
	\end{gather*}
	to obtain $\sum_{\xi \in A ( \pp (z))} \density_\ast^\vdim ( \| V \|,
	\xi ) < Q + \eta$, hence \ref{app:lemma:good_point} implies
	\eqref{item:lipschitz_approximation:para}.
\end{proof}
\begin{proof} [Proof of \eqref{item:lipschitz_approximation:measurability}]
	Recalling \eqref{item:lipschitz_approximation:ab}, the set $\pp \lIm
	B_{a,\varrho} \rIm$ is universally measurable by
	\ref{remark:univ_meas}\,\eqref{item:univ_meas:projection}, hence
	$C_{a,\varrho}$, $D_{a,\varrho}$ are universally measurable sets by
	\eqref{item:lipschitz_approximation:yz} and
	\ref{remark:univ_meas}\,\eqref{item:univ_meas:borel_family}\,\eqref{item:univ_meas:preimage2}.
\end{proof}
\begin{proof} [Proof of \eqref{item:lipschitz_approximation:estimate_b}]
	Let $\nu$ denote the Radon measure characterised by
	\begin{gather*}
		\nu (Z) = {\textstyle\int_Z \lVert \bigwedge_\vdim ( \pp | S )
		\rVert \ud V (z,S)}
	\end{gather*}
	whenever $Z$ is a Borel subset of $U$, and note
	\begin{gather*}
		| \project{S} - \project{T} | \leq
		\varepsilon \quad \text{for $V$ almost all $(z,S) \in A \times
		\grass{\adim}{\vdim}$},
	\end{gather*}
	hence $1 - \lVert \bigwedge_\vdim ( \pp | S ) \rVert \leq 1 -
	\lVert \bigwedge_\vdim ( \project{T} | S ) \rVert^2 \leq \vdim
	\varepsilon^2$ for those $(z,S)$ by \ref{app:miniremark:tilt}.
	Therefore
	\begin{gather*}
		( 1 - \vdim \varepsilon^2 ) \, \| V \| \restrict A \leq \nu
		\restrict A.
	\end{gather*}
	This implies the \emph{coarea estimate}
	\begin{gather*}
		( 1 - \vdim \varepsilon^2 ) \, \| V \| \bigl (
		\cylinder{T}{a}{\varrho}{\delta_4 \varrho} \cap \pp^{-1} \lIm
		W \rIm \bigr ) \\
		\leq \| V \| \bigl ( B_{a,\varrho} \cap \pp^{-1} \lIm W\rIm \bigr
		) + Q \mathscr{L}^\vdim ( X_1 \cap W ) + ( Q-1 )
		\mathscr{L}^\vdim ( X_2 \cap W)
	\end{gather*}
	for every subset $W$ of $\rel^\vdim$; in fact the estimate holds for
	every Borel set by the coarea formula, see
	e.g.~\cite[3.2.22\,(3)]{MR41:1976} or \cite[12.7]{MR87a:49001}, and
	$\pp_\# ( \| V \| \restrict B_{a,\varrho} )$ is a Radon measure by
	\cite[2.2.17]{MR41:1976}. In particular, taking $W = \cball{\pp (a)}{
	\varrho }$ yields
	\begin{gather*}
		( 1 - \vdim \varepsilon^2 ) \| V \| ( \cylinder{T}{a}{ \varrho}{
		\delta_4 \varrho} ) \leq \| V \| ( B_{a,\varrho} ) + Q
		\unitmeasure{\vdim} \varrho^\vdim,
	\end{gather*}
	thus one can assume, since $8Q + 6 \leq
	\Gamma_{\eqref{item:lipschitz_approximation:estimate_b}}$, that
	\begin{gather*}
		\| V \| ( B_{a,\varrho} ) \leq {\textstyle\frac{1}{4}}
		\unitmeasure{\vdim} \varrho^\vdim.
	\end{gather*}
	
	Next, it will be shown that this assumption implies
	\begin{gather*}
		\mathscr{L}^\vdim ( X_1 \cap \cball{ \pp (a)}{ \varrho } ) >
		0;
	\end{gather*}
	in fact, using the coarea estimate with $W = \cball{ \pp (a)}{
	\varrho}$, one obtains
	\begin{gather*}
		\begin{aligned}
			& ( Q - 1/2 ) \unitmeasure{\vdim}
			\varrho^\vdim \\
			& \qquad \leq (1-\vdim\varepsilon^2) \| V \| ( \cylinder{T}{
			a}{ \varrho}{ \delta_4 \varrho} ) \\
			& \qquad \leq \| V \| ( B_{a,\varrho} ) + Q \mathscr{L}^\vdim
			( X_1 \cap \cball{ \pp (a)}{ \varrho } ) + ( Q - 1 )
			\mathscr{L}^\vdim ( X_2 \cap \cball{ \pp (a) }{
			\varrho}) \\
			& \qquad \leq ( Q - 1/2 ) \unitmeasure{\vdim} \varrho^\vdim +
			\mathscr{L}^\vdim ( X_1 \cap \cball{ \pp (a)}{ \varrho
			}) - {\textstyle\frac{1}{4}} \mathscr{L}^\vdim ( X_2
			\cap \cball{ \pp (a)}{ \varrho}),
		\end{aligned} \\
		\mathscr{L}^\vdim ( X_2 \cap \cball{ \pp (a)}{ \varrho } )
		\leq 4 \, \mathscr{L}^\vdim ( X_1 \cap \cball{ \pp (a)}{
		\varrho } ), \quad \mathscr{L}^\vdim ( X_1 \cap \cball{ \pp
		(a)}{ \varrho } ) > 0.
	\end{gather*}
	In order to estimate $\mathscr{L}^\vdim (X_2 \cap
	\cball{\pp(a)}{\varrho})$, the following assertion will be proven.
	\emph{If $x \in X_2 \cap \cball{ \pp (a)}{ \varrho }$ and
	$\density^\vdim ( \mathscr{L}^\vdim \restrict \rel^\vdim \without X_2
	, x ) = 0$, then there exist $\zeta \in \rel^\vdim$ and $0 < t <
	\infty$ with
	\begin{gather*}
		x \in \cball{\zeta}{t} \subset \cball{ \pp (a)}{ \varrho },
		\quad \measureball{\mathscr{L}^\vdim}{\cball{ \zeta}{5t }}
		\leq 6 \cdot 5^\vdim \, \| V \| \big ( B_{a,\varrho} \cap
		\pp^{-1} \lIm \cball{\zeta}{t} \rIm \big ).
	\end{gather*}}
	Since $\mathscr{L}^\vdim ( X_1 \cap \cball{ \pp (a)}{ \varrho } ) > 0$,
	some element $\cball{\zeta}{t}$ of the family of balls
	\begin{gather*}
		\{ \cball{ ( 1-\theta) x + \theta \pp (a) }{ \theta
		\varrho } \with 0 < \theta \leq 1 \}
	\end{gather*}
	will satisfy
	\begin{gather*}
		x \in \cball{\zeta}{t} \subset \cball{ \pp (a)}{ \varrho
		}, \quad 0 < \mathscr{L}^\vdim ( X_1 \cap \cball{\zeta}{t} )
		\leq {\textstyle\frac{1}{2}} \mathscr{L}^\vdim ( X_2 \cap
		\cball{\zeta}{t} ).
	\end{gather*}
	Hence there exists $\eta \in X_1 \cap \oball{ \zeta}{ t }$. Noting for
	$\xi \in A(\eta)$ with $\density^\vdim ( \| V \|, \xi ) > 0$
	\begin{gather*}
		\oball{\boldsymbol{\tau}_{\pp^\ast ( \zeta - \eta )}
		(\xi)}{t} \subset \pp^{-1} \lIm \cball{\zeta}{t} \rIm, \quad
		\xi \in \spt f ( \eta ) \subset
		\cball{\qq(a)}{\delta_4\varrho/2}, \\
		( s^{-2} - 1)^{1/2} | \pp ( \kappa - \xi ) | \leq \delta_4 t /
		2 \leq \delta_4 \varrho / 2 \quad \text{for $\kappa \in
		\pp^{-1} \lIm \cball{\zeta}{t} \rIm$},
	\end{gather*}
	the inclusion
	\begin{gather*}
		\classification{\oball{\boldsymbol{\tau}_{\pp^\ast
		(\zeta-\eta)} ( \xi)}{t}}{\kappa}{| \pp ( \kappa - \xi) | > s
		| \kappa - \xi | } \subset \cylinder{T}{ a}{ \varrho}{
		\delta_4 \varrho} \cap \pp^{-1} \lIm \cball{\zeta}{t} \rIm
	\end{gather*}
	is valid for such $\xi$ and
	\ref{app:lemma:multilayer_monotonicity_offset} can be applied with
	\begin{gather*}
		\text{$\delta$, $Z$, $d$, $r$, and $f$ replaced by} \\
		\text{$1/4$, $\classification{A (\eta)}{\xi}{\density^\vdim (
		\| V \| , \xi ) > 0 }$, $t$, $2$}, \\
		\text{and $\boldsymbol{\tau}_{\pp^\ast ( \zeta - \eta)} |
		\classification{A (\eta)}{\xi}{\density^\vdim ( \| V \|, \xi )
		> 0 }$}
	\end{gather*}
	to obtain
	\begin{gather*}
		( Q - 1/4 ) \unitmeasure{\vdim} t^\vdim \leq \| V \| \big (
		\cylinder{T}{ a}{ \varrho}{ \delta_4 \varrho} \cap \pp^{-1}
		\lIm \cball{\zeta}{t} \rIm \big ).
	\end{gather*}
	The coarea estimate with $W = \cball{\zeta}{t}$ now implies
	\begin{align*}
		& ( Q - 1/2 ) \unitmeasure{\vdim} t^\vdim - \| V \| (
		B_{a,\varrho} \cap \pp^{-1} \lIm \cball{\zeta}{t} \rIm ) \\
		& \qquad \leq Q \mathscr{L}^\vdim ( X_1 \cap \cball{\zeta}{t}
		) + ( Q-1 ) \mathscr{L}^\vdim ( X_2 \cap \cball{\zeta}{t} ) \\
		& \qquad = ( Q - 1/2 ) \unitmeasure{\vdim} t^\vdim +
		{\textstyle\frac{1}{2}} \mathscr{L}^\vdim ( X_1 \cap
		\cball{\zeta}{t} ) - \textstyle{\frac{1}{2}} \mathscr{L}^\vdim
		( X_2 \cap \cball{\zeta}{t} ),
	\end{align*}
	hence, recalling $\mathscr{L}^\vdim ( X_1 \cap \cball{\zeta}{t} ) \leq
	{\textstyle\frac{1}{2}} \mathscr{L}^\vdim ( X_2 \cap \cball{\zeta}{t}
	)$,
	\begin{gather*}
		{\textstyle\frac{2}{3}} \mathscr{L}^\vdim ( \cball{\zeta}{t}
		) \leq \mathscr{L}^\vdim ( X_2 \cap \cball{\zeta}{t} ) \leq 4
		\, \| V \| \big ( B_{a,\varrho} \cap \pp^{-1} \lIm
		\cball{\zeta}{t} \rIm \big)
	\end{gather*}
	and the assertion follows.
	
	The assumption of the last assertion is satisfied for
	$\mathscr{L}^\vdim$ almost all $x \in X_2 \cap \cball{ \pp (a)}{
	\varrho}$ by \cite[2.9.11]{MR41:1976} or \cite[3.65]{MR2003a:49002}
	and Vitali's covering theorem, see e.g.~\cite[2.8.5]{MR41:1976} or
	\cite[3.3]{MR87a:49001}, implies
	\begin{gather*}
		\mathscr{L}^\vdim ( X_2 \cap \cball{ \pp (a)}{ \varrho } ) \leq
		6 \cdot 5^\vdim \| V \| ( B_{a,\varrho} ).
	\end{gather*}
	Clearly,
	\begin{gather*}
		\mathscr{L}^\vdim ( \pp \lIm B_{a,\varrho} \rIm ) \leq
		\mathscr{H}^\vdim ( B_{a,\varrho} ) \leq \| V \| ( B_{a,\varrho}
		).
	\end{gather*}
	Since $C_{a,\varrho} \without N \subset ( X_2 \cap \cball{ \pp (a)}{
	\varrho } ) \cup \pp \lIm B_{a,\varrho} \rIm$, it follows
	\begin{gather*}
		\mathscr{L}^\vdim ( C_{a,\varrho} ) \leq ( 1 + 6 \cdot 5^\vdim
		) \| V \| ( B_{a,\varrho} ).
	\end{gather*}
	Finally, applying the coarea estimate with $W=C_{a,\varrho}$ yields
	\begin{align*}
		( 1-\vdim\varepsilon^2) \| V \| ( D_{a,\varrho} ) & \leq \| V
		\| ( B_{a,\varrho} ) + Q \mathscr{L}^\vdim ( C_{a,\varrho} )
		\\
		& \leq ( 1 + Q + 6Q \cdot 5^\vdim ) \| V \| ( B_{a,\varrho} )
	\end{align*}
	and the conclusion follows.
\end{proof}
\begin{proof} [Proof of \eqref{item:lipschitz_approximation:poincare}]
	Choose $0 < \lambda \leq 1$ such that
	\begin{gather*}
		\lambda \leq \inf \{
		\lambda_{\ref{app:lemma:lipschitz_approximation}\,\eqref{app:item:lipschitz_approximation_2:lip_related}}
		( \vdim, \delta_2, \delta_4 ) ,
		\lambda_{\ref{app:lemma:inverse_multilayer_monotonicity}} (
		\vdim, \delta_2, s_0 ) / 2 \}
	\end{gather*}
	and define
	$\varepsilon_{\eqref{item:lipschitz_approximation:poincare}} = (1/2) (
	\lambda / 6 )^\vdim \leq 1$.
	
	Suppose $z_1,z_2 \in \cylinder{T}{\pp^\ast (c)}{\varrho}{h}$ and $\xi_1
	\in P$ with $\pp ( \xi_1 ) = \pp ( z_1 )$. Then there exists $\xi_2 \in
	P$ such that $\pp ( \xi_2 ) = z_2$ and $| \qq ( \xi_1 - \xi_2 ) | \leq
	| \pp ( \xi_1 - \xi_2 ) |$, hence
	\begin{align*}
		| \qq ( \xi_2 - z_2 ) | & \leq | \qq ( \xi_2 - \xi_1 ) | + | \qq
		( \xi_1 - z_1 ) | + | \qq ( z_1 - z_2 ) | \\
		& \leq 2^{1/2} | z_1 - z_2 | + | \qq ( \xi_1 - z_1 ) |
	\end{align*}
	and $\Lip d \leq 2^{1/2}$.

	Suppose $x_1, x_2 \in X_1 \cap \cball{c}{\varrho}$, $y_1 \in \spt f
	(x_1 )$. Then there exists $y_2 \in \spt f (x_2)$ with $|y_1-y_2|
	\leq L |x_1-x_2|$, hence
	\begin{gather*}
		d (x_1,y_1) \leq 2^{1/2} | (x_1,y_1) - (x_2,y_2) | + d
		(x_2,y_2) \leq 2^{1/2} ( 1 + L ) | x_1 - x_2 | + g ( x_2 )
	\end{gather*}
	and $\Lip g \leq 2^{1/2} (1+L)$.

	First, \emph{the case $q < \infty$} will be treated. Note $A \cap \spt
	\| V \| \subset H$ and $H \cap \pp^{-1} \lIm X_1 \rIm = \graph_Q f$
	by
	\ref{app:lemma:lipschitz_approximation}\,\eqref{app:item:lipschitz_approximation_2:lip_related},
	let $\psi = \| V \| \restrict H \cap \cylinder{T}{\pp^\ast
	(c)}{\varrho}{h}$ and recall
	\begin{gather*}
		( \pp_\# \psi ) \restrict X_1 \leq 2 ( \pp_\# ( \nu \restrict
		H ) ) \restrict X_1 \leq 2 Q \mathscr{L}^\adim \restrict X_1
	\end{gather*}
	with $\nu$ as in the proof of
	\eqref{item:lipschitz_approximation:estimate_b}. Using
	\begin{gather*}
		\classification{H \cap \cylinder{T}{\pp^\ast (c)}{\varrho}{h}
		\cap \pp^{-1} \lIm X_1 \rIm}{z}{d(z) > \gamma} \\
		\subset H \cap \pp^{-1} \lIm \classification{X_1 \cap
		\cball{c}{\varrho}}{x}{g(x) > \gamma} \rIm
	\end{gather*}
	for $0 < \gamma < \infty$, one infers
	\begin{gather*}
		\eqLpnorm{\psi \restrict \pp^{-1} \lIm X_1 \rIm}{q}{d} \leq 2Q
		\eqLpnorm{\mathscr{L}^\vdim \restrict X_1 \cap
		\cball{c}{\varrho}}{q}{g}.
	\end{gather*}
	Therefore it remains to estimate $\eqLpnorm{\psi \restrict U \without
	\pp^{-1} \lIm X_1 \rIm }{q}{d}$.

	Whenever $x \in \cball{c}{\varrho} \without \Clos{X_1}$ there exist
	$\zeta \in \rel^\vdim$, $0 < t \leq (2
	\varepsilon_{\eqref{item:lipschitz_approximation:poincare}})^{1/\vdim}
	\varrho = \lambda \varrho/ 6$ such that
	\begin{gather*}
		x \in \cball{\zeta}{t} \subset \cball{c}{\varrho}, \quad
		\mathscr{L}^\vdim ( \cball{\zeta}{t} \cap X_1 ) =
		\mathscr{L}^\vdim ( \cball{\zeta}{t} \without X_1 )
	\end{gather*}
	as may be verified by consideration of the family of closed balls
	\begin{gather*}
		\{ \cball{\theta c + (1-\theta) x}{\theta \varrho } \with 0 <
		\theta \leq (2
		\varepsilon_{\eqref{item:lipschitz_approximation:poincare}})^{1/\vdim}
		\}.
	\end{gather*}
	Therefore Vitali's covering theorem, see e.g.~\cite[2.8.5]{MR41:1976}
	or \cite[3.3]{MR87a:49001}, yields a countable set $I$ and $\zeta_i
	\in \rel^\vdim$, $0 < t_i \leq \lambda\varrho/6$ and $x_i \in X_1 \cap
	\cball{\zeta_i}{t_i}$ for each $i \in I$ such that
	\begin{gather*}
		\cball{\zeta_i}{t_i} \subset \cball{c}{\varrho}, \quad
		\mathscr{L}^\vdim ( \cball{\zeta_i}{t_i} \cap X_1 ) =
		\mathscr{L}^\vdim ( \cball{\zeta_i}{t_i} \without X_1 ), \\
		\cball{\zeta_i}{t_i} \cap \cball{\zeta_j}{t_j} = \emptyset
		\quad \text{whenever $i,j \in I$ with $i \neq j$}, \\
		\cball{c}{\varrho} \without \Clos X_1 \subset
		{\textstyle\bigcup} \{ E_i \with i \in I \} \subset
		\cball{c}{\varrho}
	\end{gather*}
	where $E_i = \cball{\zeta_i}{5t_i} \cap \cball{c}{\varrho}$ for $i \in
	I$. Let
	\begin{gather*}
		h_i = g (x_i), \quad Z_i = \classification{A
		(x_i)}{\xi}{\density^\vdim ( \| V \|, \xi ) \in \nat}
	\end{gather*}
	for $i \in I$, $J = \classification{I}{i}{h_i \geq 24t_i}$, and $K = I
	\without J$.

	In view of \ref{app:lemma:lipschitz_approximation}\,\eqref{app:item:lipschitz_approximation_2:boundary} there holds
	\begin{gather*}
		\begin{aligned}
			& \eqLpnorm{\psi \restrict U \without \pp^{-1} \lIm
			X_1 \rIm}{q}{d} \\
			& \qquad \leq \eqLpnorm{\psi \restrict \pp^{-1} \lIm
			{\textstyle\bigcup} \{ E_j \with j \in J \}
			\rIm}{q}{d} + \eqLpnorm{\psi \restrict \pp^{-1} \lIm
			{\textstyle\bigcup} \{ E_k \with k \in K \}
			\rIm}{q}{d}.
		\end{aligned}
	\end{gather*}
	In order to estimate the terms on the right hand side, two
	observations will be useful. Firstly, \emph{if $i \in I$, $z \in H \cap
	\cylinder{T}{\pp^\ast(c)}{\varrho}{h} \cap \pp^{-1} \lIm
	E_i \rIm$, then
	\begin{gather*}
		d (z) \leq 24 t_i + h_i;
	\end{gather*}}
	in fact $| \pp (z) - x_i | \leq 6t_i \leq \lambda \varrho \leq
	\lambda$ and
	\ref{app:lemma:lipschitz_approximation}\,\eqref{app:item:lipschitz_approximation_2:lip_related}
	yields a point $\xi \in Z_i$ with $|\qq (z-\xi) | \leq L | \pp (
	z-\xi) |$, hence
	\begin{gather*}
		| z - \xi | \leq ( 1 + L ) | \pp ( z-\xi ) | = ( 1 + L ) |
		\pp ( z ) - x_i | \leq 12 t_i, \\
		d(z) \leq 2^{1/2} |z-\xi| + d ( \xi ) \leq 24 t_i + h_i.
	\end{gather*}
	Moreover, since
	\begin{gather*}
		H \cap \cylinder{T}{\pp^\ast(c)}{\varrho}{h} \cap \pp^{-1}
		\lIm E_i \rIm \subset {\textstyle\bigcup} \{
		\cball{\xi}{12t_i} \with \xi \in Z_i \},
	\end{gather*}
	one may apply \ref{app:lemma:inverse_multilayer_monotonicity}\,\eqref{app:item:inverse_multilayer_monotonicity:upper_bound}, verifying
	\begin{gather*}
		\classification{\oball{z-\pp^\ast(x_i)}{1}}{\xi}{|\pp(\xi-z)|
		> s_0 | \xi - z |} \subset \cylinder{T}{0}{1}{h}
	\end{gather*}
	whenever $z \in A(x_i)$ with the help of
	\eqref{item:lipschitz_approximation:ab}, with
	\begin{gather*}
		\text{$\delta_1$, $s$, $\lambda$, $X$, $d$, $r$, $t$, $\zeta$,
		$\mu$, and $\tau$ replaced by} \\
		\text{$1$, $0$,
		$\lambda_{\ref{app:lemma:inverse_multilayer_monotonicity}\,\eqref{app:item:inverse_multilayer_monotonicity:upper_bound}}
		(\vdim,\delta_2,s_0)$, $Z_i$, $1$, $2$, $1$, $-\pp^\ast(x_i)$,
		$\| V \|$, and $12t_i$}
	\end{gather*}
	to obtain the second observation, \emph{namely
	\begin{gather*}
		\psi \big ( \pp^{-1} \lIm E_i \rIm \big ) \leq ( Q + 1 )
		\unitmeasure{\vdim} ( 12 t_i )^\vdim \quad \text{whenever $i
		\in I$}.
	\end{gather*}}

	Now, the first term will be estimated. Note, if $j \in J$, then
	\begin{gather*}
		d(z) \leq 2 h_j \quad \text{whenever $z \in H \cap
		\cylinder{T}{\pp^\ast (c)}{\varrho}{h} \cap \pp^{-1}
		\lIm E_j \rIm$}, \\
		2 h_j \leq 3 g(x) \quad \text{whenever $x \in X_1 \cap
		\cball{\zeta_j}{t_j}$},
	\end{gather*}
	because
	\begin{gather*}
		g(x) \geq g(x_j) - 4 |x_j-x| \geq h_j - 8 t_j \geq 2 h_j/3.
	\end{gather*}
	Using this fact and the preceding observations, one estimates with $J
	( \gamma ) = \classification{J}{j}{2h_j > \gamma}$ for $0 < \gamma <
	\infty$
	\begin{gather*}
		\begin{aligned}
			& \psi \big ( \classification{\pp^{-1} \lIm
			{\textstyle\bigcup} \{ E_j \with j \in J \}
			\rIm}{z}{d(z) > \gamma} \big ) \leq {\textstyle\sum_{j
			\in J(\gamma)}} \psi \big ( \pp^{-1} \lIm E_j \rIm
			\big ) \\
			& \qquad \leq {\textstyle\sum_{j \in J(\gamma)}} ( Q +
			1 ) \unitmeasure{\vdim} ( 12t_j )^\vdim \\
			& \qquad \leq ( Q + 1 ) (12)^\vdim \mathscr{L}^\vdim
			\big ( {\textstyle\bigcup} \{ \cball{\zeta_j}{t_j}
			\with j \in J ( \gamma ) \} \big ) \\
			& \qquad \leq 2 ( Q+1 ) ( 12 )^\vdim \mathscr{L}^\vdim
			\big ( {\textstyle\bigcup} \{ X_1 \cap
			\cball{\zeta_j}{t_j} \with j \in J ( \gamma ) \} \big
			) \\
			& \qquad \leq 2 ( Q + 1 ) ( 12 )^\vdim
			\mathscr{L}^\vdim ( \classification{X_1 \cap
			\cball{c}{\varrho}}{x}{g(x) > \gamma / 3 },
		\end{aligned}
	\end{gather*}
	hence
	\begin{gather*}
		\eqLpnorm{\psi \restrict \pp^{-1} \lIm {\textstyle\bigcup} \{
		E_j \with j \in J \} \rIm}{q}{d} \leq Q ( 12 )^{\vdim + 1}
		\eqLpnorm{\mathscr{L}^\vdim \restrict X_1 \cap
		\cball{c}{\varrho} }{q}{g}.
	\end{gather*}

	To estimate the second term, one notes
	\begin{gather*}
		d(z) < 48 t_k \quad \text{whenever $k \in K$, $z \in H \cap
		\cylinder{T}{\pp^\ast (c)}{\varrho}{h} \cap \pp^{-1} \lIm E_k
		\rIm$}.
	\end{gather*}
	Therefore one estimates with $K( \gamma ) =
	\classification{K}{k}{48t_k > \gamma}$ for $0 < \gamma < \infty$ and
	$u : \rel^\vdim \to \rel$ defined by $u = \sum_{i \in I}t_i b_i$ where
	$b_i$ is the characteristic function of $\cball{\zeta_i}{t_i}$
	\begin{gather*}
		\begin{aligned}
			& \psi \big ( \classification{\pp^{-1} \lIm
			{\textstyle\bigcup} \{ E_k \with k \in K \}
			\rIm}{z}{d(z) > \gamma} \big ) \leq {\textstyle\sum_{k
			\in K( \gamma )}} \psi \big ( \pp^{-1} \lIm E_k \rIm
			\big ) \\
			& \qquad \leq {\textstyle\sum_{k \in K ( \gamma)}} ( Q
			+ 1 ) \unitmeasure{\vdim} ( 12 t_k )^\vdim \\
			& \qquad \leq ( Q + 1 ) ( 12)^\vdim \mathscr{L}^\vdim
			( {\textstyle\bigcup} \{ \cball{\zeta_k}{t_k} \with k
			\in K( \gamma ) \} ) \\
			& \qquad \leq ( Q + 1 ) ( 12 )^\vdim \mathscr{L}^\vdim
			( \classification{\rel^\vdim}{x}{u(x) > \gamma/(48)}
			),
		\end{aligned}
	\end{gather*}
	hence
	\begin{gather*}
		\eqLpnorm{\psi \restrict \pp^{-1} \lIm {\textstyle\bigcup} \{
		E_k \with k \in K \} \rIm}{q}{d} \leq Q ( 12 )^{\vdim
		+ 2} \Lpnorm{\mathscr{L}^\vdim}{q}{u}.
	\end{gather*}

	Combining these two estimates and
	\begin{gather*}
		\mathscr{L}^\vdim ( {\textstyle\bigcup} \{
		\cball{\zeta_i}{t_i} \with i \in I \} ) \leq 2 \mathscr{L}^\vdim
		( \cball{c}{\varrho} \without X_1 ), \\
		\begin{aligned}
			{\textstyle\int} |u|^q \ud \mathscr{L}^\vdim & =
			\unitmeasure{\vdim}^{-q/\vdim} {\textstyle\sum_{i \in
			I}} \mathscr{L}^\vdim ( \cball{\zeta_i}{t_i}
			)^{1+q/\vdim} \\
			& \leq \unitmeasure{\vdim}^{-q/\vdim} \big (
			{\textstyle\sum_{i \in I}} \mathscr{L}^\vdim (
			\cball{\zeta_i}{t_i} ) \big)^{1+q/\vdim},
		\end{aligned} \\
		\eqLpnorm{\mathscr{L}^\vdim}{q}{u} \leq 4
		\unitmeasure{\vdim}^{-1/\vdim} \mathscr{L}^\vdim (
		\cball{c}{\varrho} \without X_1 )^{1/q+1/\vdim},
	\end{gather*}
	one obtains the conclusion for $q < \infty$.

	\emph{The case $q = \infty$} follows by taking the limit $q \to
	\infty$ with the help of \cite[2.4.17]{MR41:1976}.
\end{proof}
\begin{proof} [Proof of \eqref{item:lipschitz_approximation:pde}]
	Let $I$, $f_i$ be associated to $f$ as in \ref{miniremark:f_i}, and
	define $C_i = \dmn f_i$ for $i \in I$ and $G = \graph_Q f$. Note
	\begin{gather*}
		G \cap \pp^{-1} \lIm \cball{ \pp (a)}{ \varrho } \without
		C_{a,\varrho} \rIm = G \cap \cylinder{T}{a}{\varrho}{ \delta_4
		\varrho/2} \without \pp^{-1} \lIm C_{a,\varrho} \rIm, \\
		\pp \lIm B_{a,\varrho} \rIm \subset C_{a,\varrho}, \quad \| V
		\| \big ( \cylinder{T} {a}{\varrho}{\delta_4 \varrho} \without
		(  G \cup \pp^{-1} \lIm C_{a,\varrho} \rIm ) \big ) = 0.
	\end{gather*}
	Therefore one computes using \ref{miniremark:first_variation} and
	recalling that $C_{a,\varrho}$, $D_{a,\varrho}$, and, by
	\ref{remark:univ_meas}\,\eqref{item:univ_meas:preimage2}, also
	$\pp^{-1} \lIm C_{a,\varrho} \rIm$ are universally measurable
	\begin{align*}
		& \phantom{=} \ \sum_{i \in I} {\textstyle\int_{C_i \cap
		\cball{ \pp (a)}{ \varrho} \without C_{a,\varrho}}} \leftB D
		\theta (x), D\Psi^\S_0 ( \ap Df_i (x) ) \rightB \ud
		\mathscr{L}^\vdim x \\
		& = \delta \big ( V \restrict (G \cap \pp^{-1} \lIm \cball{\pp
		(a)}{ \varrho } \without C_{a,\varrho} \rIm ) \times
		\grass{\adim}{\vdim} \big ) ( \qq^\ast \circ \theta \circ \pp)
		\\
		& = \delta \big ( V \restrict (G \cap \cylinder{T}{ a}{
		\varrho}{ \delta_4 \varrho/2} \without \pp^{-1} \lIm
		C_{a,\varrho} \rIm ) \times \grass{\adim}{\vdim} \big ) ( (
		\eta \circ \qq ) \cdot ( \qq^\ast \circ \theta \circ \pp )) \\
		& = \delta \big ( V \restrict ( \cylinder{T} { a} {\varrho}{
		\delta_4 \varrho} \without \pp^{-1} \lIm C_{a,\varrho} \rIm )
		\times \grass{\adim}{\vdim} \big) ( ( \eta \circ \qq ) \cdot (
		\qq^\ast \circ \theta \circ \pp ) ) \\
		& = ( \delta V ) ( ( \eta \circ \qq ) \cdot ( \qq^\ast \circ
		\theta \circ \pp ) ) - \delta ( V \restrict (
		D_{a,\varrho} \times \grass{\adim}{\vdim} )) ( ( \eta \circ
		\qq ) \cdot ( \qq^\ast \circ \theta \circ \pp ) ),
	\end{align*}
	hence
	\begin{gather*}
		Q \tint{}{} \leftB D \theta (x), D\Psi^\S_0 ( D g (x) )
		\rightB \ud \mathscr{L}^\vdim x - ( \delta V ) ( ( \eta \circ
		\qq ) \cdot ( \qq^\ast \circ \theta \circ \pp ) ) \\
		\begin{aligned}
			& = Q {\textstyle\int_{C_{a,\varrho}}} \leftB D
			\theta (x), D\Psi_0^\S ( D g (x) ) \rightB \ud
			\mathscr{L}^\vdim x \\
			& \phantom{=}\ + Q \Big ( {\textstyle\int_{\cball{
			\pp (a)}{ \varrho} \without C_{a,\varrho} }} \leftB
			D \theta (x), D\Psi_0^\S ( Dg (x) ) \rightB \ud
			\mathscr{L}^\vdim x \\
			& \phantom{=\ + Q \Big (} \ - \frac{1}{Q} \sum_{i \in
			I} {\textstyle\int_{C_i \cap \cball{ \pp (a)}{
			\varrho } \without C_{a,\varrho}}} \leftB D \theta
			(x), D\Psi_0^\S ( \ap D f_i (x) ) \rightB \ud
			\mathscr{L}^\vdim x \Big ) \\
			& \phantom{=} \ - \delta ( V \restrict (
			D_{a,\varrho} \times \grass{\adim}{\vdim})) ( ( \eta
			\circ \qq) \cdot ( \qq^\ast \circ \theta \circ \pp )
			).
		\end{aligned}
	\end{gather*}
	The first summand may be estimated using
	\begin{gather*}
		D\Psi_0^\S(0)=0, \quad \| D\Psi_0^\S ( \alpha ) \| \leq
		\gamma_1 | \alpha | \leq \gamma_1 \vdim^{1/2} \Lip g
	\end{gather*}
	for $\alpha \in \Hom ( \rel^\vdim, \rel^\codim )$ with $\|
	\alpha \| \leq \Lip g$. The second summand can be treated noting
	\begin{gather*}
		Dg (x) = \frac{1}{Q} \sum_{i \in I(x)} \ap D f_i (x) \quad
		\text{where $I(x) = \classification{I}{i}{x \in \dmn \ap Df_i
		}$}
	\end{gather*}
	for $\mathscr{L}^\vdim$ almost all $x \in \cball{ \pp (a)}{ \varrho }
	\without C_{a,\varrho}$ and applying \ref{lemma:quasi_linear} with
	\begin{gather*}
		\text{$X$, $Y$, $f$, $a$, $r$, and $\{ x_1, \ldots, x_Q \}$}
		\\
		\text{replaced by $\Hom ( \rel^\vdim, \rel^\codim )$,
		$\Hom ( \Hom ( \rel^\vdim, \rel^\codim ), \rel)$,
		$D\Psi_0^\S$, $\tau$,} \\
		\text{$Q^{-1/2} | \ap Af (x) \aplus ( - \tau ) |$, and $\{ \ap
		Df_i (x) \with i \in I(x) \}$}
	\end{gather*}
	for $\mathscr{L}^\vdim$ almost all $x \in E_{a,\varrho} \without
	C_{a,\varrho}$. Finally, the third summand is estimated by use of
	\begin{gather*}
		| \project{S} \bullet \beta | \leq \vdim^{1/2} | \beta | \quad
		\text{for $S \in \grass{\adim}{\vdim}$, $\beta \in \Hom (
		\rel^\adim, \rel^\adim )$}. \qedhere
	\end{gather*}
\end{proof}
\begin{remark}
	If $a$ and $\varrho$ are as in
	\eqref{item:lipschitz_approximation:measurability}, $a \in A$,
	$\density^\vdim ( \| V \| , a ) = Q$, $0 < s < 1$, $(s^{-2}-1)^{1/2}
	\leq \delta_4$, $\delta \leq
	\varepsilon_{\ref{app:lemma:multilayer_monotonicity_offset}} ( \adim, Q,
	M, 1/4, s )$, then
	\begin{gather*}
		\classification{\oball{a}{\varrho}}{\xi}{| \pp (\xi-a) | > s |
		\xi - a|} \subset \cylinder{T}{a}{\varrho}{\delta_4\varrho}
	\end{gather*}
	and \ref{app:lemma:multilayer_monotonicity_offset} applied with
	\begin{gather*}
		\text{$\delta$, $Z$, $d$, $r$, $t$, and $f$ replaced by} \\
		\text{$1/4$, $\{a\}$, $0$, $2$, $\varrho$, and $\id{\{a\}}$}
	\end{gather*}
	yields
	\begin{gather*}
		\| V \| ( \cylinder{T}{a}{\varrho}{\delta_4\varrho} ) \geq ( Q
		- 1/4 ) \unitmeasure{\vdim} \varrho^\vdim.
	\end{gather*}
	Moreover, if additionally $L \leq \delta_4/2$ then
	\eqref{item:lipschitz_approximation:para} implies $a \in \graph_Q f$
	and
	\begin{gather*}
		\graph_Q f | \cball{\pp (a)}{\varrho} \subset
		\cylinder{T}{a}{\varrho}{\delta_4 \varrho/2}.
	\end{gather*}
\end{remark}
\section{An interpolation inequality} \label{sec:interpolation}
In this section an interpolation inequality for weakly differentiable
functions defined in a ball $\oball{a}{r}$ with $a \in \rel^\vdim$, $0 < r <
\infty$ with values in $\rel^\codim$ is proven (see
\ref{lemma:function_with_holes}) which states that the Lebesgue seminorm of a
function can be controlled by a small multiple of a suitable Lebesgue seminorm
of its weak derivative and a large multiple of the $\Lp{1} ( \mathscr{L}^\vdim
\restrict A , \rel^\codim)$ seminorm of the function where $A$ is subset of
$\oball{a}{r}$ which is large in $\mathscr{L}^\vdim$ measure. The possibility
to neglect a set of small $\mathscr{L}^\vdim$ measure will be important in
Section \ref{sec:iteration}.  The proof is accomplished following essentially
the usual lines (see e.g.~\cite[Theorem 7.27]{MR1814364}). The case of
Lipschitzian functions with values in $\qspace_Q ( \rel^\codim )$ then is a
simple consequence of Almgren's bi-Lipschitzian embedding of $\qspace_Q (
\rel^\codim )$ into $\rel^{PQ}$ for some $P$, see
\ref{lemma:Q_function_with_holes}.
\begin{lemma} \label{lemma:conv_embedding}
	Suppose $\vdim, \adim \in \nat$, $1 \leq \zeta \leq \vdim < \adim$,
	either $\zeta = \vdim = 1$ or $\zeta < \vdim$, $q = \infty$ if $\vdim
	= 1$, $q = \vdim \zeta / (\vdim-\zeta)$ if $\vdim > 1$, $U$ is an
	open, bounded, convex subset of $\rel^\vdim$, $A$ is an
	$\mathscr{L}^\vdim$ measurable subset of $U$ with $\mathscr{L}^\vdim
	(A) > 0$, $u \in \Sob{}{1}{1} (U,\rel^\codim)$ and $h = \fint_A u \ud
	\mathscr{L}^\vdim$.

	Then
	\begin{gather*}
		\norm{u-h}{q}{U} \leq \Gamma \frac{( \diam
		U)^\vdim}{\mathscr{L}^\vdim (A)} \norm{\weakD u}{\zeta}{U}
	\end{gather*}
	where $\Gamma$ is a positive, finite number depending only on $\vdim$
	and $\zeta$.
\end{lemma}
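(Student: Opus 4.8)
The plan is to follow the classical derivation of a Sobolev--Poincar\'e inequality on a convex set (see e.g.\ \cite[Theorem 7.27]{MR1814364}) with the mean value over $U$ replaced by the one over $A$. The statement is vacuous unless $\norm{\weakD u}{\zeta}{U} < \infty$, so I assume this. By a routine mollification argument it then suffices to prove the estimate when $u$ is of class $1$ on $U$ --- the convexity of $U$ ensuring that every segment with endpoints in $U$ lies in $U$, so that the computation below applies; one recovers the general case by choosing $u_k$ of class $\infty$ with $u_k \to u$ in $\Lp{1} ( \mathscr{L}^\vdim \restrict A, \rel^\codim )$ and $\weakD u_k \to \weakD u$ in $\Lp{\zeta} ( \mathscr{L}^\vdim \restrict U, \Hom ( \rel^\vdim, \rel^\codim ) )$, noting $\fint_A u_k \ud \mathscr{L}^\vdim \to h$, and passing to the limit by Fatou's lemma (with the obvious modification for $q = \infty$).

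For $u$ of class $1$ the key is the pointwise estimate
\[
	| u(x) - h | \leq \frac{( \diam U )^\vdim}{\vdim \, \mathscr{L}^\vdim ( A )} \int_U \frac{ | \weakD u ( w ) | }{ |w-x|^{\vdim-1} } \ud \mathscr{L}^\vdim w \qquad \text{whenever $x \in U$} .
\]
To obtain it I would write, for $z \in A$ and $\omega = |z-x|^{-1} ( z-x )$,
\[
	u(x) - u(z) = - \int_0^{|z-x|} \langle \omega, \weakD u ( x + \varrho \omega ) \rangle \ud \mathscr{L}^1 \varrho ,
\]
average this over $z \in A$, use $| \langle \omega, \weakD u \rangle | \leq | \weakD u |$, pass to polar coordinates about $x$ so that $\ud \mathscr{L}^\vdim z$ becomes $r^{\vdim-1} \ud \mathscr{L}^1 r \ud \mathscr{H}^{\vdim-1} \omega$ with $0 < r = |z-x| \leq \diam U$, enlarge the inner integral $\int_0^r$ to $\int_0^{\diam U}$ and the characteristic function of $A$ to $1$, evaluate $\int_0^{\diam U} r^{\vdim-1} \ud \mathscr{L}^1 r = ( \diam U )^\vdim / \vdim$, and rewrite the remaining spherical integral of $| \weakD u |$ over $U$ as the Cartesian integral against the kernel $|w-x|^{1-\vdim}$ displayed above, extending $\weakD u$ by $0$ off $U$ and using $U \subset \cball{x}{\diam U}$.

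It then remains to take $\Lp{q}$ norms in $x$. If $\vdim = 1$, then $\zeta = \vdim = 1$, the kernel equals $1$, and the pointwise estimate is already the asserted inequality $\norm{u-h}{\infty}{U} \leq ( \diam U ) \, \mathscr{L}^1 ( A )^{-1} \norm{\weakD u}{1}{U}$. If $\vdim > 1$, then $1 \leq \zeta < \vdim$ and $q = \vdim \zeta / ( \vdim - \zeta )$, and the pointwise estimate bounds $| u-h |$ by a constant multiple of the Riesz potential of order $1$ of $| \weakD u |$ extended by $0$; for $1 < \zeta < \vdim$ the Hardy--Littlewood--Sobolev fractional integration theorem bounds the $\Lp{q}$ norm of this potential by $\Gamma ( \vdim, \zeta ) \norm{\weakD u}{\zeta}{U}$, which gives the conclusion after restricting the norm to $U$. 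In the endpoint case $\zeta = 1$, hence $q = \vdim / ( \vdim - 1 )$, the Riesz potential estimate only yields membership in weak $\Lp{q}$, so one invokes instead the genuine $\Lp{1}$ Sobolev inequality, deduced from the pointwise estimate by the standard truncation argument applied to $u - h$ (see \cite[Theorem 7.27]{MR1814364}).

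I expect this endpoint case $\zeta = 1$ with $\vdim \geq 2$ to be the one point requiring care: the pointwise Riesz potential bound falls short of the strong $\Lp{\vdim/(\vdim-1)}$ estimate by precisely the gap between weak and strong type, so one must run the Marcinkiewicz type truncation improvement characteristic of the $\Lp{1}$ Sobolev inequality, ensuring that the averages $\fint_A$ of the truncations of $u - h$ remain under control. The remaining ingredients --- the representation along segments furnished by convexity, the polar coordinate bookkeeping, and the fractional integration step for $\zeta > 1$ --- are routine.
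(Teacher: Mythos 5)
Your proposal is correct and takes essentially the same route as the paper: the pointwise bound obtained by averaging the segment representation over $A$ is precisely the method of \cite[Lemma 7.16]{MR1814364} that the paper invokes, the passage to $\Lp{q}$ for $\zeta>1$ is the cited convolution estimate, and your truncation argument at $\zeta=1<\vdim$ supplies the weak-to-strong upgrade that convolution estimates alone do not give. One caution on the point you yourself flag: to keep the final constant linear in $( \diam U)^\vdim/\mathscr{L}^\vdim(A)$, it is cleanest to run the truncation in the case $A=U$ (around the mean or a median over $U$) and only afterwards swap to the mean over $A$ by H\"older's inequality, since truncating $u-h$ directly and controlling $\fint_A$ of the truncations produces an extra factor of a power of $\mathscr{L}^\vdim(U)/\mathscr{L}^\vdim(A)$.
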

\begin{proof}
	If $\zeta = \vdim = 1$ then $u$ is $\mathscr{L}^1 \restrict
	\oball{a}{r}$ almost equal to an absolutely continuous function by
	\cite[4.5.9\,(30), 4.5.16]{MR41:1976} and the assertion follows from
	\cite[2.9.20]{MR41:1976}; alternately one may use
	\cite[p.~139]{MR2003a:49002}.

	If $\zeta < \vdim$ this fact can be obtained by combining the method
	of \cite[Lemma 7.16]{MR1814364} with estimates for convolutions, see
	e.g.~O'Neil \cite{MR0146673}.
\end{proof}
\begin{miniremark} \label{miniremark:ball_contained}
	Suppose $a, x \in \rel^\vdim$, $0 < \varrho \leq 2r < \infty$, $x \in
	\oball{a}{r}$ and $b = a$ if $|x-a| < \varrho/2$ and $b = x + (
	\varrho / 2 ) (a-x)/|a-x|$ else. Then one readily verifies
	$\oball{b}{\varrho/2} \subset \oball{a}{r} \cap \oball{x}{\varrho}$.
\end{miniremark}
\begin{lemma} \label{lemma:function_with_holes}
	Suppose $\vdim, \adim \in \nat$, $1 \leq \zeta \leq \vdim < \adim$,
	either $\zeta = \vdim = 1$ or $\zeta < \vdim$, $q = \infty$ if $\vdim
	= 1$, $q = \vdim \zeta / ( \vdim - \zeta )$ if $\vdim > 1$, $1 \leq
	\xi \leq q$, $\zeta \leq s \leq q$, $0 < \lambda < \infty$, $a \in
	\rel^\vdim$, $0 < r < \infty$, $u \in \Sob{}{1}{1} ( \oball{a}{r},
	\rel^\codim )$, $A$ is an $\mathscr{L}^\vdim$ measurable subset of
	$\oball{a}{r}$, and $\mathscr{L}^\vdim ( \oball{a}{r} \without A
	) \leq \lambda \leq (1/2) \unitmeasure{\vdim} r^\vdim$.

	Then
	\begin{gather*}
		\norm{u}{q}{a,r} \leq \Gamma \lambda^{1/\zeta-1/s}
		\norm{\weakD u}{s}{a,r} + 2^{5\vdim+2} \lambda^{1/q-1/\xi}
		\norm{u}{\xi}{A}
	\end{gather*}
	where $\Gamma$ is a positive, finite number depending only on $\vdim$
	and $\zeta$.
\end{lemma}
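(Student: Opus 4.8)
The plan is to reduce the statement to the pointwise control of $u$ by a truncation of its Dirichlet-type potential, the standard device being the representation of $u-h_x$ by a Riesz-type integral of $\weakD u$ over small balls centered near $x$, combined with the freedom to choose $h_x$ as the average of $u$ over a portion of $A$. First I would fix, for each $x\in\oball{a}{r}$, a radius $\varrho$ to be optimized at the end (roughly $\varrho\approx(\lambda/\unitmeasure{\vdim})^{1/\vdim}$ up to a constant, so that $\mathscr{L}^\vdim(\oball{x}{\varrho}\cap A)$ is comparable to $\unitmeasure{\vdim}\varrho^\vdim$); using \ref{miniremark:ball_contained} I can find a ball $\oball{b}{\varrho/2}\subset\oball{a}{r}\cap\oball{x}{\varrho}$, and since $\mathscr{L}^\vdim(\oball{a}{r}\without A)\le\lambda\le\tfrac12\unitmeasure{\vdim}r^\vdim$ and $\varrho$ is chosen so that $\unitmeasure{\vdim}(\varrho/2)^\vdim\ge 2\lambda$ (this forces $\varrho\le 2r$, hence the hypothesis of \ref{miniremark:ball_contained} is met), the set $A_x=\oball{b}{\varrho/2}\cap A$ satisfies $\mathscr{L}^\vdim(A_x)\ge\tfrac12\unitmeasure{\vdim}(\varrho/2)^\vdim$, which is the nondegeneracy I need to apply \ref{lemma:conv_embedding} on the convex set $\oball{x}{\varrho}\cap\oball{a}{r}$.

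Next I would split $u = (u - h_x) + h_x$ where $h_x=\fint_{A_x}u\ud\mathscr{L}^\vdim$, and estimate the two parts separately. For $u-h_x$, apply \ref{lemma:conv_embedding} on the convex set $W_x=\oball{x}{\varrho}\cap\oball{a}{r}$ with $A$ there replaced by $A_x$: since $\diam W_x\le 2\varrho$ and $\mathscr{L}^\vdim(A_x)\gtrsim\unitmeasure{\vdim}\varrho^\vdim$, this gives $\norm{u-h_x}{q}{W_x}\le\Gamma\varrho^{\vdim/q - \vdim/s + 1}\cdot\varrho^{\,\vdim/s-\vdim(1/s-1/\zeta)}$—more precisely, combining the Sobolev exponent bookkeeping (using $q=\vdim\zeta/(\vdim-\zeta)$ if $\vdim>1$, resp.\ $q=\infty$ if $\vdim=1$) with Hölder's inequality to pass from the $\zeta$-seminorm of $\weakD u$ on $W_x$ to the $s$-seminorm, I get a bound of the form $\Gamma\varrho^{\kappa}\norm{\weakD u}{s}{a,r}$ for an appropriate $\kappa$; one checks $\kappa$ comes out so that substituting $\varrho^\vdim\approx\lambda$ yields exactly the exponent $1/\zeta-1/s$ on $\lambda$. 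In particular this controls $\norm{u-h_x}{q}{a,r}$ once we note $\oball{a}{r}$ is covered efficiently by such balls (or, more cleanly, apply the estimate on a single ball containing $\oball{a}{r}$ after first handling the local piece; alternatively use Besicovitch covering \besicovitch{\vdim} to sum local $\Lp{q}$ contributions with controlled overlap). For the constant part $h_x$, since $|h_x|\le\fint_{A_x}|u|\ud\mathscr{L}^\vdim\le\mathscr{L}^\vdim(A_x)^{-1}\norm{u}{1}{A_x}$ and $\mathscr{L}^\vdim(A_x)\gtrsim\lambda$, I bound $\norm{h_x}{q}{\oball{x}{\varrho}}$ by $\unitmeasure{\vdim}^{1/q}\varrho^{\vdim/q}|h_x|$ and then convert $\norm{u}{1}{A_x}$ to $\norm{u}{\xi}{A}$ by Hölder using $\mathscr{L}^\vdim(A_x)\le\unitmeasure{\vdim}\varrho^\vdim\approx\lambda$; tracking exponents gives the factor $\lambda^{1/q-1/\xi}$ and a purely dimensional constant, which I would arrange to be at most $2^{5\vdim+2}$ by being explicit about the ball radii (the factor $5^\vdim$-type loss from covering arguments, times the $2^\vdim$ from the $\oball{b}{\varrho/2}$ shrinkage).

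I expect the main obstacle to be bookkeeping the exponents so that the same radius $\varrho$ makes both pieces come out with the stated powers of $\lambda$ while keeping the constant in front of $\norm{u}{\xi}{A}$ purely dimensional and explicitly $\le 2^{5\vdim+2}$ — the $\varrho$-dependence has to cancel in a way that is consistent across the whole admissible range $\zeta\le s\le q$, $1\le\xi\le q$, and the borderline case $\vdim=1$ (where $q=\infty$) must be checked separately, using the absolutely continuous representative as in the proof of \ref{lemma:conv_embedding}. A secondary technical point is making the passage from the local estimate on balls $W_x$ to a global estimate on $\oball{a}{r}$ quantitatively clean; I would do this by choosing $\varrho$ as a fixed fraction of $r$ when $\lambda$ is comparable to $\unitmeasure{\vdim}r^\vdim$ (so a single ball suffices) and, when $\lambda\ll\unitmeasure{\vdim}r^\vdim$, by a Vitali/Besicovitch argument with the absorption of the overlap into the $5^\vdim$ and $2^\vdim$ constants, which is exactly the source of the $2^{5\vdim+2}$ in the statement.
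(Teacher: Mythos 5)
Your proposal follows essentially the same route as the paper's proof: the same choice of scale $\varrho\approx(\lambda/\unitmeasure{\vdim})^{1/\vdim}$, the same use of \ref{miniremark:ball_contained} to find a ball $\oball{b}{\varrho/2}$ inside $\oball{a}{r}\cap\oball{x}{\varrho}$ on which $A$ has majority measure, the same split $u=(u-h_x)+h_x$ with $h_x$ an average over a portion of $A$, the same appeal to \ref{lemma:conv_embedding} plus H\"older to pass from the $\zeta$-seminorm to the $s$-seminorm, and a covering/summation argument (the paper uses a maximal family of balls with disjoint half-radius cores and an explicit overlap bound $2^{4\vdim}$, which is morally what you call "Vitali/Besicovitch") to globalize. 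The only small deviation is cosmetic: the paper takes the average over $A\cap\oball{a}{r}\cap\oball{b}{\varrho}$ directly rather than defining an auxiliary $A_x$, and the precise bookkeeping that produces $2^{5\vdim+2}$ comes from the overlap bound $2^{4\vdim}$, a factor $2$ from the elementary $\ell^q$ inequality, and $2^{\vdim+1}$ from $\mathscr{L}^\vdim(\oball{a}{r}\cap\oball{b}{\varrho})\le 2^{\vdim+1}\lambda$, but your plan is sound.
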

\begin{proof}
	Define $\Delta_1 = \Gamma_{\ref{lemma:conv_embedding}} ( \vdim, \zeta
	) \unitmeasure{\vdim}^{-1} 2^{3\vdim+2}$, $\Delta_2 = 2^{\vdim+1}$ and
	$\Gamma = 2^{4\vdim+1} \Delta_1$. Let $\varrho = \lambda^{1/\vdim}
	\unitmeasure{\vdim}^{-1/\vdim} 2^{1+1/\vdim}$, note $\varrho \leq 2r$
	and define
	\begin{gather*}
		E (b,t) = \oball{a}{r} \cap \oball{b}{t} \quad
		\text{whenever $b \in \rel^\vdim$, $0 < t < \infty$}.
	\end{gather*}
	One estimates, using \ref{miniremark:ball_contained},
	\begin{gather*}
		\mathscr{L}^\vdim ( E (b,\varrho) \without A ) \leq \lambda =
		2^{-1-\vdim} \unitmeasure{\vdim} \varrho^\vdim \leq
		\mathscr{L}^\vdim ( E(b,\varrho) )/2 \leq
		\mathscr{L}^\vdim ( A \cap E (b,\varrho) ), \\
		\mathscr{L}^\vdim ( E(b,\varrho) ) \leq \unitmeasure{\vdim}
		\varrho^\vdim = 2^{\vdim+1} \lambda,
	\end{gather*}
	whenever $b \in \oball{a}{r}$. Therefore one applies
	\ref{lemma:conv_embedding} with $h_b = \fint_{A \cap E(b,\varrho)} u
	\ud \mathscr{L}^\vdim$ to obtain
	\begin{gather*}
		\norm{u}{q}{E(b,\varrho)} \leq
		\Gamma_{\ref{lemma:conv_embedding}} ( \vdim, \zeta )
		2^{2\vdim+1} \unitmeasure{\vdim}^{-1} \norm{\weakD
		u}{\zeta}{E(b,\varrho)} + 2^{(\vdim+1)/q} \lambda^{1/q} |
		h_b |
	\end{gather*}
	for $b \in \oball{a}{r}$. Using H\"older's inequality, this yields
	\begin{gather*}
		\norm{u}{q}{E(b,\varrho)} \leq \Delta_1 \lambda^{1/\zeta-1/s}
		\norm{\weakD u}{s}{E(b,\varrho)} + \Delta_2
		\lambda^{1/q-1/\xi} \norm{u}{\xi}{A \cap E(b,\varrho)}
	\end{gather*}
	for $b \in \oball{a}{r}$. If $q=\infty$, the conclusion is now evident.

	If $q < \infty$, choosing a maximal set $B$ (with respect to
	inclusion) such that
	\begin{gather*}
		B \subset \oball{a}{r}, \quad \text{$\{ E(b,\varrho/2) \with b
		\in B \}$ is disjointed},
	\end{gather*}
	one notes for $x \in B$ and $S_x = \classification{B}{b}{E(b,\varrho)
	\cap E(x,\varrho) \neq \emptyset}$
	\begin{gather*}
		\oball{a}{r} \subset {\textstyle\bigcup} \{ E(b,\varrho) \with
		b \in B \}, \quad \card{S_x} \leq 2^{4\vdim};
	\end{gather*}
	in fact for the estimate one uses \ref{miniremark:ball_contained} to
	infer
	\begin{gather*}
		E (b,\varrho) \subset E(x,3\varrho) \quad \text{whenever $b
		\in S_x$}, \\
		\begin{aligned}
			( \card{S_x} ) \unitmeasure{\vdim} 2^{-2\vdim}
			\varrho^\vdim & \leq \tsum{b \in S_x}{}
			\mathscr{L}^\vdim ( E (b,\varrho/2) ) \\
			& \leq \mathscr{L}^\vdim ( E (x,3\varrho)) \leq
			\unitmeasure{\vdim} 3^\vdim \varrho^\vdim.
		\end{aligned}
	\end{gather*}
	Therefore, as $q \geq \sup \{ s, \xi \}$,
	\begin{gather*}
		\tsum{b \in B}{} 
		\norm{\weakD u}{s}{E(b,\varrho)}^q \leq \big ( \tsum{b \in
		B}{} \norm{\weakD u}{s}{E(b,\varrho)}^s \big )^{q/s} \leq \big
		( 2^{4\vdim} \norm{\weakD u}{s}{a,r} \big )^q, \\
		\tsum{b \in B}{} \norm{u}{\xi}{A \cap E(b,\varrho)}^q \leq
		\big ( \tsum{b\in B}{} \norm{u}{\xi}{A \cap E(b,\varrho)}^\xi
		\big )^{q/\xi} \leq \big ( 2^{4\vdim} \norm{u}{\xi}{A} \big
		)^q,
	\end{gather*}
	hence one obtains form the estimate of the preceding paragraph
	\begin{gather*}
		\begin{aligned}
			\norm{u}{q}{a,r}^q & \leq 2^{q-1} \tsum{b\in B}{} \Big
			( \big (
			\Delta_1 \lambda^{1/\zeta-1/s} \norm{\weakD
			u}{s}{E(b,\varrho)}
			\big )^q + \big ( \Delta_2 \lambda^{1/q-1/\xi}
			\norm{u}{\xi}{A \cap E(b,\varrho)} \big )^q \Big )\\
			& \leq \big ( 2^{4\vdim+1} \Delta_1
			\lambda^{1/\zeta-1/s} \norm{\weakD u}{s}{a,r} \big )^q
			+ \big ( 2^{4\vdim+1} \Delta_2
			\lambda^{1/q-1/\xi} \norm{u}{\xi}{A} \big )^q.
		\end{aligned} 
	\end{gather*}
	and the conclusion follows.
\end{proof}
\begin{lemma} \label{lemma:Q_function_with_holes}
	Suppose $\vdim, \adim, Q \in \nat$, $\vdim < \adim$, $q = \infty$ if
	$\vdim = 1$, $2 \leq q < \infty$ if $\vdim = 2$, $2 \leq q \leq 2
	\vdim/ (\vdim-2)$ if $\vdim > 2$, $a \in \rel^\vdim$, $0 < r <
	\infty$, $f : \oball{a}{r} \to \qspace_Q ( \rel^\codim )$ is
	Lipschitzian, $0 < \eta \leq 1/2$, and $A$ is an
	$\mathscr{L}^\vdim$ measurable subset of $\oball{a}{r}$ with
	$\mathscr{L}^\vdim ( \oball{a}{r} \without A ) \leq \eta
	\unitmeasure{\vdim} r^\vdim$, then
	\begin{gather*}
		r^{-\vdim/q} \norm{f}{q}{a,r} \leq \Gamma \big (
		\eta^{1/q+1/\vdim-1/2} r^{1-\vdim/2} \norm{Af}{2}{a,r} +
		\eta^{1/q-1} r^{-\vdim} \norm{f}{1}{A} \big )
	\end{gather*}
	where $\Gamma$ is a positive, finite number depending only on $\adim$,
	$Q$, and $q$.
\end{lemma}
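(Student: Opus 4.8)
The plan is to reduce the assertion, by means of Almgren's bi-Lipschitzian embedding, to the interpolation inequality already available in \ref{lemma:function_with_holes}. First I would invoke Theorem~\ref{app:thm:bilip_embedding} with $l$ replaced by $\codim$ to obtain $P \in \nat$ and a univalent $\xi : \qspace_Q ( \rel^\codim ) \to \rel^{PQ}$ with $\xi ( Q \Lbrack 0 \Rbrack ) = 0$, $\Lip \xi < \infty$, $\Lip \xi^{-1} < \infty$ and $| D ( \xi \circ f ) (x) | \leq ( \Lip \xi ) | A f (x) |$ for $x \in \dmn D ( \xi \circ f )$. Setting $\Delta = \max \{ \Lip \xi, \Lip \xi^{-1} \}$, which for each fixed $\vdim$ depends only on $\adim$ and $Q$, and writing $u = \xi \circ f$ and $g (x) = \mathscr{G} ( f(x), Q \Lbrack 0 \Rbrack )$, the embedding gives $g \leq \Delta | u |$ and $| u | \leq \Delta g$ pointwise on $\oball{a}{r}$, hence, recalling \ref{definition:norms}, $\norm{f}{q}{a,r} = \norm{g}{q}{a,r} \leq \Delta \norm{u}{q}{a,r}$ and $\norm{u}{1}{A} \leq \Delta \norm{f}{1}{A}$. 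Moreover $u$ is Lipschitzian on the bounded open ball $\oball{a}{r}$, so $u \in \Sob{}{1}{1} ( \oball{a}{r}, \rel^{PQ} )$ and $\weakD u$ coincides $\mathscr{L}^\vdim$ almost everywhere with $D ( \xi \circ f )$, whence $\norm{\weakD u}{2}{a,r} \leq \Delta \norm{Af}{2}{a,r}$.

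Next I would apply \ref{lemma:function_with_holes} to $u$ — its proof, resting on \ref{lemma:conv_embedding}, is insensitive to the dimension of the target, so it applies verbatim to the $\rel^{PQ}$ valued $u$; alternatively one applies it to the $PQ$ scalar components of $u$, each adjoined $\codim - 1$ zero components to become an $\rel^\codim$ valued map, and sums. I would take the parameters $s = 2$, the lemma's ``$\xi$'' equal to $1$, $\lambda = \eta \unitmeasure{\vdim} r^\vdim$, and $\zeta = \vdim$ if $\vdim = 1$ while $\zeta = \vdim q / ( \vdim + q )$ if $\vdim > 1$. The crucial identity is $1/\zeta = 1/\vdim + 1/q$, which forces $q = \vdim \zeta / ( \vdim - \zeta )$ when $\vdim > 1$ and makes $1/\zeta - 1/s = 1/q + 1/\vdim - 1/2$. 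Distinguishing $\vdim = 1$ (then $q = \infty$, $\zeta = 1$), $\vdim = 2$ (then $q \in [2,\infty)$, $\zeta = 2q/(q+2) \in [1,2)$) and $\vdim \geq 3$ (then $q \in [2, 2\vdim/(\vdim-2)]$, $\zeta \in [2\vdim/(\vdim+2),2]$), one checks that $1 \leq \zeta$, that either $\zeta = \vdim = 1$ or $\zeta < \vdim$, and that $\zeta \leq 2 = s \leq q$, so all hypotheses of \ref{lemma:function_with_holes} hold; note also $\mathscr{L}^\vdim ( \oball{a}{r} \without A ) \leq \lambda \leq (1/2) \unitmeasure{\vdim} r^\vdim$ because $\eta \leq 1/2$. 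This yields $\norm{u}{q}{a,r} \leq \Gamma_{\ref{lemma:function_with_holes}} ( \vdim, \zeta ) \lambda^{1/q + 1/\vdim - 1/2} \norm{\weakD u}{2}{a,r} + 2^{5\vdim+2} \lambda^{1/q-1} \norm{u}{1}{A}$.

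Finally I would substitute the comparisons of the first paragraph together with $\lambda = \eta \unitmeasure{\vdim} r^\vdim$ and multiply through by $r^{-\vdim/q}$. The powers of $r$ then collapse: $\vdim ( 1/q+1/\vdim-1/2 ) - \vdim/q = 1 - \vdim/2$ and $\vdim ( 1/q-1 ) - \vdim/q = -\vdim$, giving precisely the factors $r^{1-\vdim/2}$ in front of $\norm{Af}{2}{a,r}$ and $r^{-\vdim}$ in front of $\norm{f}{1}{A}$, while the powers of $\eta$ come out exactly as $\eta^{1/q+1/\vdim-1/2}$ and $\eta^{1/q-1}$. Collecting the remaining factors — $\Delta^2$, the two constants from \ref{lemma:function_with_holes}, the harmless powers of $\unitmeasure{\vdim}$, and (in the componentwise variant) the factor $PQ$ — and taking the supremum over the finitely many admissible $\vdim < \adim$, one obtains a constant $\Gamma$ depending only on $\adim$, $Q$, and $q$, which is the claim. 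I expect no genuine obstacle here: the passage through Almgren's embedding and the appeal to \ref{lemma:function_with_holes} are routine, and the only point demanding care is the bookkeeping, namely verifying that the single choice $\zeta = \vdim q/(\vdim+q)$ (resp.\ $\zeta = \vdim$ for $\vdim = 1$) remains in the range required by \ref{lemma:function_with_holes} in all three cases of $(\vdim,q)$ and reproduces the asserted exponents of $\eta$ and $r$ on the nose.
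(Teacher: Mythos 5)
Your proof is correct and follows the paper's own argument step for step: you invoke Almgren's bi-Lipschitzian embedding from \ref{app:thm:bilip_embedding}, set $u = \xi \circ f$, apply \ref{lemma:function_with_holes} with the same choice $\zeta = \vdim q/(\vdim+q)$ (resp.\ $\zeta = 1$ for $\vdim = 1$), $s = 2$, $\xi = 1$, $\lambda = \eta\unitmeasure{\vdim} r^\vdim$, and convert back via the pointwise comparisons between $|u|$, $\mathscr{G}(f,Q\Lbrack 0\Rbrack)$ and between $|Du|$, $|Af|$. The extra bookkeeping you supply --- verifying that $\zeta$ lands in the admissible range in all three cases of $\vdim$, and noting that \ref{lemma:function_with_holes} is indifferent to the codimension of the target $\rel^{PQ}$ --- is correct and merely fills in details the paper leaves implicit.
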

\begin{proof}
	Suppose $P$ and $\xi : \qspace_Q ( \rel^\codim ) \to \rel^{PQ}$ are as
	in \ref{app:thm:bilip_embedding}. Define $u = \xi \circ f$, $\mu =
	1/q+1/\vdim-1/2 \geq 0$, $\nu = 1-1/q \geq 1/2$, $\zeta = 1$ if $\vdim
	= 1$ and $\zeta = q\vdim/(\vdim+q)$ if $\vdim > 1$, hence $1 \leq
	\zeta < \vdim$ and $\zeta \vdim / ( \vdim-\zeta ) = q$ if $\vdim > 1$.
	From \ref{lemma:function_with_holes} applied with $\lambda$, $s$ and
	$\xi$ replaced by $\eta \unitmeasure{\vdim} r^\vdim$, $2$, and $1$ one
	obtains
	\begin{gather*}
		r^{-\vdim/q} \norm{u}{q}{a,r} \leq \Delta \big ( \eta^\mu
		r^{1-\vdim/2} \norm{Du}{2}{a,r} + \eta^{-\nu} r^{-\vdim}
		\norm{u}{1}{A} \big )
	\end{gather*}
	where $\Delta = \sup \big \{ \Gamma_{\ref{lemma:function_with_holes}} (
	\vdim, \zeta ) \unitmeasure{\vdim}^{1/\zeta-1/2}, 2^{5\vdim+2}
	\unitmeasure{\vdim}^{1/q-1} \big \}$. Since
	\begin{gather*}
		( \Lip \xi )^{-1} | u(x) | \leq \mathscr{G} ( f(x), Q \Lbrack
		0 \Rbrack ) \leq \Lip \xi^{-1} \, | u(x) | \quad \text{for $x
		\in \oball{a}{r}$}, \\
		| Du(x) | \leq \Lip \xi \, | Af (x) | \quad \text{for $x \in
		\dmn Du$}
	\end{gather*}
	by \ref{app:thm:bilip_embedding}, the conclusion follows.
\end{proof}
\section{Some estimates concerning linear second order elliptic systems}
\label{sec:elliptic}
The purpose of the present section is to gather some standard estimates
precisely in the form needed in Section \ref{sec:iteration}. Proofs are
included for the convenience of the reader.
\begin{miniremark} \label{miniremark:function_C}
	The following situation will occur repeatedly: $\vdim, \adim \in
	\nat$, $\vdim < \adim$, $0 < c \leq M < \infty$, and $\Upsilon \in
	\bigodot^2 \Hom ( \rel^\vdim, \rel^\codim )$ with $\| \Upsilon \| \leq
	M$ is strongly elliptic with ellipticity bound $c$, i.e.~$\Upsilon$ is
	an $\rel$ valued bilinear form on $\Hom ( \rel^\vdim, \rel^\codim )$
	with $\Upsilon ( \sigma, \tau ) \leq M |\sigma| |\tau|$ whenever
	$\sigma, \tau \in \Hom (\rel^\vdim, \rel^\codim )$ and
	\begin{gather*}
		\tint{}{} \Upsilon ( D \theta (x), D \theta (x) ) - c | D
		\theta (x) |^2 \ud \mathscr{L}^\vdim x \geq 0 \quad
		\text{whenever $\theta \in \mathscr{D} ( \rel^\vdim,
		\rel^\codim )$}.
	\end{gather*}

	Following \cite[5.2.11]{MR41:1976}, one associates to any
	$\Upsilon \in \bigodot^2 \Hom ( \rel^\vdim, \rel^\codim )$ a linear
	function $S : \bigodot^2 ( \rel^\vdim, \rel^\codim ) \cong (
	\bigodot^2 \rel^\vdim ) \otimes \rel^\codim \to \rel^\codim$
	characterised by
	\begin{gather*}
		\left < ( \xi \odot \psi ) y, S \right > \bullet \upsilon =
		\left < ( \xi \, y, \psi \, \upsilon ), \Upsilon \right > +
		\left < ( \psi \, y, \xi \, \upsilon ), \Upsilon \right >
	\end{gather*}
	whenever $\xi, \psi \in \bigodot^1 \rel^\vdim$, $y, \upsilon \in
	\rel^\codim$; here $\xi \, y \in \Hom ( \rel^\vdim, \rel^\codim )$ is
	given by $( \xi \, y ) (x) = \xi (x) y$ for $x \in \rel^\vdim$.
	Applying this construction with the area integrand $\Psi$ to $D^2
	\Psi_0^\S ( \sigma )$ for each $\sigma \in \Hom ( \rel^\vdim,
	\rel^\codim )$, one obtains a function $C : \Hom ( \rel^\vdim,
	\rel^\codim ) \to \Hom \big( \bigodot^2 ( \rel^\vdim , \rel^\codim )
	,\rel^\codim \big )$ which satisfies
	\begin{gather*}
		\left < \phi, C ( \sigma ) \right > = \sum_{i=1}^\vdim
		\sum_{j=1}^\codim \sum_{k=1}^\vdim \sum_{l=1}^\codim \leftB (
		X_i \upsilon_j, X_k \upsilon_l ), D^2 \Psi_0^\S ( \sigma )
		\rightB ( \phi ( e_i, e_k ) \bullet \upsilon_j ) \upsilon_l
	\end{gather*}
	for $\phi \in \bigodot^2 ( \rel^\vdim , \rel^\codim )$ where $e_1,
	\ldots, e_\vdim$ and $X_1, \ldots, X_\vdim$ are dual orthonormal bases
	of $\rel^\vdim$ and $\bigodot^1 \rel^\vdim$, and $\upsilon_1, \ldots,
	\upsilon_\codim$ form an orthonormal base of $\rel^\codim$. Hence
	whenever $U$ is an open subset of $\rel^\vdim$, $u \in \Sob{}{2}{1} (
	U, \rel^\codim )$ is Lipschitzian, $v \in \Sob{}{2}{1} ( U,
	\rel^\codim )$, $\sigma \in \Hom ( \rel^\vdim, \rel^\codim )$, and
	$\theta \in \mathscr{D} ( U, \rel^\codim )$ one obtains by partial
	integration the formulae
	\begin{gather*}
		- {\textstyle\int_U} \leftB D \theta (x), D\Psi_0^\S ( D u (x)
		) \rightB \ud \mathscr{L}^\vdim x = {\textstyle\int_U} \theta
		(x) \bullet \left < \weakD^2 u (x), C ( D u(x) ) \right > \ud
		\mathscr{L}^\vdim x, \\
		- {\textstyle\int_U} \leftB D \theta (x) \odot \weakD v (x),
		D^2\Psi_0^\S ( \sigma) \rightB \ud \mathscr{L}^\vdim x =
		{\textstyle\int_U} \theta (x) \bullet \left < \weakD^2 v (x),
		C ( \sigma ) \right > \ud \mathscr{L}^\vdim x,
	\end{gather*}
	here $\odot$ denotes multiplication in $\bigodot_\ast \Hom ( \rel^\vdim,
	\rel^\codim )$, see \cite[1.9.1]{MR41:1976}.
\end{miniremark}
\begin{lemma} \label{lemma:standard_w012}
	Suppose $\vdim$, $\adim$, $c$, $M$, and $\Upsilon$ are as in
	\ref{miniremark:function_C}, $a \in \rel^\vdim$, $0 < r < \infty$,
	$v \in \Sob{}{1}{2} ( \oball{a}{r}, \rel^\codim )$, $T \in
	\mathscr{D}' ( \oball{a}{r}, \rel^\codim )$ with $\dnorm{T}{2}{a,r} <
	\infty$.

	Then there exists an $\mathscr{L}^\vdim \restrict \oball{a}{r}$ almost
	unique $u \in \Sob{}{1}{2} ( \oball{a}{r}, \rel^\codim )$ such that
	\begin{gather*}
		- {\textstyle\int_{\oball{a}{r}}} \left < D \theta (x) \odot
		\weakD u (x) , \Upsilon \right > \ud \mathscr{L}^\vdim x = T
		( \theta ) \quad \text{for $\theta \in \mathscr{D} (
		\oball{a}{r}, \rel^\codim )$}, \\
		u-v \in \Sob{0}{1}{2} ( \oball{a}{r}, \rel^\codim ).
	\end{gather*}
	Moreover, for every affine function $P : \rel^\vdim \to \rel^\codim$
	\begin{gather*}
		\norm{\weakD (u-v)}{2}{a,r} \leq c^{-1} \big ( M
		\norm{\weakD (v-P)}{2}{a,r} + \dnorm{T}{2}{a,r}
		\big ).
	\end{gather*}
\end{lemma}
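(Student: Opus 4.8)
The plan is to recognise the system as the weak form of a coercive linear second-order elliptic equation and to apply the Lax--Milgram (equivalently, Riesz representation) theorem on the Hilbert space $\Sob{0}{1}{2} ( \oball{a}{r}, \rel^\codim )$. Writing $U = \oball{a}{r}$, I would introduce the bilinear form $B ( \phi, \theta ) = \int_U \langle D \theta (x) \odot \weakD \phi (x), \Upsilon \rangle \ud \mathscr{L}^\vdim x$ for $\phi, \theta \in \Sob{}{1}{2} ( U, \rel^\codim )$. It is symmetric, satisfies $| B ( \phi, \theta ) | \leq M \norm{\weakD \phi}{2}{a,r} \norm{\weakD \theta}{2}{a,r}$, and, by the strong ellipticity hypothesis recalled in \ref{miniremark:function_C} (which holds on $\mathscr{D} ( U, \rel^\codim )$ and hence, by density, on $\Sob{0}{1}{2} ( U, \rel^\codim )$), satisfies $B ( \phi, \phi ) \geq c \norm{\weakD \phi}{2}{a,r}^2$ for $\phi \in \Sob{0}{1}{2} ( U, \rel^\codim )$. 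By Poincar\'e's inequality (in the spirit of \ref{lemma:poincare}, applied after extension by zero to a slightly larger ball), $\norm{\weakD \cdot}{2}{a,r}$ is a norm on $\Sob{0}{1}{2} ( U, \rel^\codim )$ equivalent to the one inherited from $\Sob{}{1}{2} ( U, \rel^\codim )$, so relative to it $B$ is bounded and coercive.

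For existence and uniqueness I would pass to homogeneous boundary data. Setting $u = w + v$, the required identity for $u$ is equivalent to demanding $w \in \Sob{0}{1}{2} ( U, \rel^\codim )$ with $B ( w, \theta ) = - T ( \theta ) - B ( v, \theta )$ for $\theta \in \mathscr{D} ( U, \rel^\codim )$. Since $\dnorm{T}{2}{a,r} < \infty$ gives $| T ( \theta ) | \leq \dnorm{T}{2}{a,r} \norm{\weakD \theta}{2}{a,r}$, the right-hand side is bounded by $( \dnorm{T}{2}{a,r} + M \norm{\weakD v}{2}{a,r} ) \norm{\weakD \theta}{2}{a,r}$ and hence extends uniquely to a bounded linear functional on $\Sob{0}{1}{2} ( U, \rel^\codim )$; Lax--Milgram then yields a unique such $w$, and $u = w + v$ solves the problem. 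The $\mathscr{L}^\vdim \restrict U$ almost uniqueness follows because the difference $w_0$ of two solutions lies in $\Sob{0}{1}{2} ( U, \rel^\codim )$ and satisfies $B ( w_0, \theta ) = 0$ on $\mathscr{D} ( U, \rel^\codim )$, hence, by density, $c \norm{\weakD w_0}{2}{a,r}^2 \leq B ( w_0, w_0 ) = 0$, so $\weakD w_0 = 0$ and $w_0 = 0$ almost everywhere in $U$.

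Finally, for the estimate I would exploit that affine functions are invisible to $B ( \cdot, \theta )$: if $P : \rel^\vdim \to \rel^\codim$ is affine then $\weakD P$ is constant and $\int_U \weakD \theta \ud \mathscr{L}^\vdim = 0$ for $\theta \in \mathscr{D} ( U, \rel^\codim )$, so $B ( P, \theta ) = 0$, and this persists for $\theta \in \Sob{0}{1}{2} ( U, \rel^\codim )$ by density. Testing the equation for $w = u - v$ with $\theta = w$ and subtracting $B ( P, w ) = 0$ then gives
\begin{gather*}
	c \norm{\weakD (u-v)}{2}{a,r}^2 \leq B ( w, w ) = - T ( w ) - B ( v - P, w ) \leq \big ( \dnorm{T}{2}{a,r} + M \norm{\weakD (v-P)}{2}{a,r} \big ) \norm{\weakD (u-v)}{2}{a,r} ,
\end{gather*}
using the continuous extension of $T$ from the first step; dividing by $\norm{\weakD (u-v)}{2}{a,r}$, or observing the claim is trivial when it vanishes, yields the asserted inequality. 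I expect the only points requiring any care to be the continuous extension of $T$ and of $B$ from $\mathscr{D} ( U, \rel^\codim )$ to $\Sob{0}{1}{2} ( U, \rel^\codim )$ (so that $w$ itself is an admissible argument in the two displayed applications) and the appeal to Poincar\'e's inequality that makes $\norm{\weakD \cdot}{2}{a,r}$ an equivalent Hilbert space norm there; the remainder is routine.
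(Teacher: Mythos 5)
Your proposal is correct and follows essentially the same route as the paper: the paper obtains existence by minimising the associated convex energy on $\Sob{0}{1}{2} ( \oball{a}{r}, \rel^\codim )$ after reducing to $v=0$, which for a symmetric coercive form is interchangeable with your Lax--Milgram argument, and it derives the estimate exactly as you do, by testing with approximations of $u-v$ after normalising $P=0$. No gaps.
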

\begin{proof}
	To prove existence, assume $v = 0$, let $R$ denote the extension of
	$T$ to $\Sob{0}{1}{2} ( \oball{a}{r}, \rel^\codim )$ by continuity and
	observe that one can take $u$ to be a minimiser of
	\begin{gather*}
		{\textstyle\frac{1}{2} \textstyle\int_{\oball{a}{r}}} \left <
		\weakD u (x) \odot \weakD u (x), \Upsilon \right > \ud
		\mathscr{L}^\vdim x + R ( u )
	\end{gather*}
	in $\Sob{0}{1}{2} ( \oball{a}{r}, \rel^\codim )$

	To prove the estimate, assuming $P = 0$ by possibly replacing $u$,
	$v$, $P$ by $u-P$, $v-P$, $0$, one lets $\theta$ approximate $u-v$
	in $\Sob{0}{1}{2} ( \oball{a}{r}, \rel^\codim )$ to obtain
	\begin{gather*}
		c \norm{\weakD (u-v)}{2}{a,r}^2 \leq \big ( M \norm{\weakD
		(v-P)}{2}{a,r} + \dnorm{T}{2}{a,r} \big ) \norm{\weakD
		(u-v)}{2}{a,r}.
	\end{gather*}

	The uniqueness follows from the estimate.
\end{proof}
\begin{remark} \label{remark:standard_w012}
	If $T=0$ then $u$ is $\mathscr{L}^\vdim \restrict \oball{a}{r}$ almost
	equal to an analytic $\Upsilon$ harmonic function by
	\cite[5.2.5,\,6]{MR41:1976}.
\end{remark}
\begin{lemma} \label{lemma:interior_c2a_estimate}
	Suppose $\vdim$, $\adim$, $c$, $M$, $\Upsilon$, and $S$ are as in
	\ref{miniremark:function_C}, $0 < \alpha < 1$, $a \in \rel^\vdim$, $0
	<  r < \infty$, $u : \oball{a}{r} \to \rel^\codim$ is of class
	$\class{2}$, $D^2 u$ locally satisfies a H\"older condition with
	exponent $\alpha$, $f : \oball{a}{r} \to \rel^\codim$, and $S \circ
	D^2 u = f$.

	Then
	\begin{gather*}
		r^{-\alpha} \norm{D^2u}{\infty}{a,r/2} + \hoelder{\alpha}{D^2
		u | \cball{a}{r/2}} \leq \Gamma \big ( r^{-2-\alpha-\vdim}
		\norm{u}{1}{a,r} + \hoelder{\alpha}{f} \big )
	\end{gather*}
	where $\Gamma$ is a positive, finite number depending only on $\adim$,
	$c$, $M$, and $\alpha$.
\end{lemma}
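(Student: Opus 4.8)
The plan is to reduce to the unit ball by scaling, to invoke the classical interior Schauder estimate for the constant--coefficient strongly elliptic system associated with $\Upsilon$, and to break the self--referential dependence on $f = S \circ D^2 u$ by an interpolation inequality for interior norms.

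First I would reduce to $a = 0$, $r = 1$. Putting $\tilde u (x) = r^{-2} u ( a + rx )$ for $x \in \oball{0}{1}$ gives $D^2 \tilde u (x) = (D^2 u)(a+rx)$ and $S \circ D^2 \tilde u = \tilde f$ with $\tilde f(x) = f(a+rx)$, hence
\begin{gather*}
	\norm{D^2 \tilde u}{\infty}{0,1/2} = \norm{D^2 u}{\infty}{a,r/2}, \quad \hoelder{\alpha}{D^2 \tilde u | \cball{0}{1/2}} = r^\alpha \hoelder{\alpha}{D^2 u | \cball{a}{r/2}}, \\
	\hoelder{\alpha}{\tilde f} = r^\alpha \hoelder{\alpha}{f}, \quad \norm{\tilde u}{1}{0,1} = r^{-2-\vdim} \norm{u}{1}{a,r};
\end{gather*}
so the assertion for general $a$ and $r$ follows from the case $r = 1$ on multiplying the resulting inequality by $r^{-\alpha}$. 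Assume therefore $a = 0$, $r = 1$ and write $L = S \circ D^2$, which by \ref{miniremark:function_C} is the classical form of the constant--coefficient second order elliptic system in divergence form determined by $\Upsilon$.

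With $d_x = 1 - |x|$ introduce the interior norms
\begin{gather*}
	| u |^\ast_2 = {\textstyle \sup_x} \, d_x^2 | D^2 u (x) |, \quad | u |^\ast_{2,\alpha} = | u |^\ast_2 + {\textstyle \sup_x} \, d_x^{2+\alpha} \hoelder{\alpha}{D^2 u | \cball{x}{d_x/2}}, \\
	| f |^{(2)}_{0,\alpha} = {\textstyle \sup_x} \, d_x^2 | f(x) | + {\textstyle \sup_x} \, d_x^{2+\alpha} \hoelder{\alpha}{f | \cball{x}{d_x/2}},
\end{gather*}
all suprema over $x \in \oball{0}{1}$. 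Since $\Upsilon$ is strongly elliptic in the sense of \ref{miniremark:function_C}, the standard interior Schauder estimate for the system $L$ --- see e.g.~Gilbarg--Trudinger \cite[\S 6]{MR1814364} in the scalar case and Giaquinta \cite{MR895589} for systems, the $\Lp{1}$ norm on the right being obtained from interior estimates for $\Upsilon$ harmonic functions, cf.~\ref{remark:standard_w012} and \ref{lemma:standard_w012} --- yields a positive, finite $\Gamma_1 = \Gamma_1 ( \adim, c, M, \alpha )$ with
\begin{gather*}
	| u |^\ast_{2,\alpha} \leq \Gamma_1 \big ( \norm{u}{1}{0,1} + | L u |^{(2)}_{0,\alpha} \big ).
\end{gather*}
Here $Lu = f$ and $| \langle \sigma, S \rangle | \leq \| S \| \, | \sigma |$, so the Hölder part of $| f |^{(2)}_{0,\alpha}$ is at most $\hoelder{\alpha}{f}$, while $d_x^2 | f(x) | \leq d_x^2 \hoelder{\alpha}{f} | x |^\alpha + d_x^2 | f(0) | \leq \hoelder{\alpha}{f} + \| S \| \, | D^2 u (0) | \leq \hoelder{\alpha}{f} + \| S \| \, | u |^\ast_2$ because $d_x, |x| \leq 1$ and $d_0 = 1$; thus $| L u |^{(2)}_{0,\alpha} \leq 2 \hoelder{\alpha}{f} + \| S \| \, | u |^\ast_2$. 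The crucial ingredient is then the interpolation inequality
\begin{gather*}
	| u |^\ast_2 \leq \varepsilon \, | u |^\ast_{2,\alpha} + \Gamma_2 ( \adim, \varepsilon ) \norm{u}{1}{0,1} \quad \text{for $0 < \varepsilon < \infty$},
\end{gather*}
which I would establish by a compactness argument as in the proof of \ref{lemma:simple_interpolation} (and \cite[\S 6]{MR1814364}): a normalised sequence violating it would, by \ref{lemma:simple_interpolation}, be bounded and equicontinuous on interior balls, hence subconverge locally uniformly to a function that is nonzero yet vanishes $\mathscr{L}^\vdim$ almost everywhere, a contradiction. Taking $\varepsilon = ( 2 \Gamma_1 \| S \| )^{-1}$ and combining these estimates, the term $\tfrac12 | u |^\ast_{2,\alpha}$ is absorbed into the left hand side, so $| u |^\ast_{2,\alpha} \leq \Gamma_3 ( \norm{u}{1}{0,1} + \hoelder{\alpha}{f} )$; since $d_x \geq \tfrac12$ on $\cball{0}{1/2}$ this gives $\norm{D^2 u}{\infty}{0,1/2} + \hoelder{\alpha}{D^2 u | \cball{0}{1/2}} \leq \Gamma \, | u |^\ast_{2,\alpha}$ and completes the case $r = 1$.

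The main obstacle is exactly this absorption: because $f = S \circ D^2 u$, the Schauder estimate is self--referential through $| L u |^{(2)}_{0,\alpha}$, and it can only be closed by noting that, modulo the genuinely prescribed quantity $\hoelder{\alpha}{f}$, that term is controlled by the \emph{lower order} interior norm $| u |^\ast_2$ and not by $| u |^\ast_{2,\alpha}$, whence interpolation renders it absorbable. Setting up the weighted interior norms so that the Schauder estimate, the bound on $| Lu |^{(2)}_{0,\alpha}$ and the interpolation inequality all live on compatible balls is the step requiring care; the remainder is the classical Schauder theory together with the scaling and interpolation already available in \ref{lemma:standard_w012}, \ref{remark:standard_w012} and \ref{lemma:simple_interpolation}.
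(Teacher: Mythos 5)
Your overall strategy---a local Schauder estimate plus absorption via weighted interior seminorms, with an interpolation step to control the lower order term---is the same in spirit as the paper's, but the weights you chose are mis-calibrated for an $\Lp{1}$ right hand side, and two of the inequalities you rely on are false as stated. The interior Schauder estimate $| u |^\ast_{2,\alpha} \leq \Gamma_1 ( \norm{u}{1}{0,1} + | L u |^{(2)}_{0,\alpha} )$ does not hold with the unweighted $\Lp{1}$ norm: already for $Lu=0$ the interior derivative estimate on $\oball{x}{d_x/2}$ only gives $d_x^2 | D^2 u(x) | \leq C d_x^{-\vdim} \norm{u}{1}{0,1}$, and the factor $d_x^{-\vdim}$ blows up at the boundary (a Poisson-kernel type $\Upsilon$ harmonic function has finite $\Lp{1}$ norm but $| u |^\ast_2 = \infty$). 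The classical estimate (e.g.~\cite[Theorem 6.2]{MR1814364}) has $\sup | u |$ on the right, and that cannot be traded for $\norm{u}{1}{0,1}$ while keeping the weights $d_x^2$, $d_x^{2+\alpha}$. The same defect kills your interpolation inequality $| u |^\ast_2 \leq \varepsilon | u |^\ast_{2,\alpha} + \Gamma_2 \norm{u}{1}{0,1}$: taking bumps $u_j$ supported in balls of radius $\delta_j$ at distance $\approx \delta_j$ from $\partial \oball{0}{1}$, normalised so that $| u_j |^\ast_2 \approx 1$, one finds $| u_j |^\ast_{2,\alpha} = O(1)$ while $\norm{u_j}{1}{0,1} \to 0$, contradicting the inequality for small $\varepsilon$. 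Relatedly, the compactness proof you sketch cannot work, because the supremum defining $| u_j |^\ast_2$ may be attained at points escaping to the boundary, where locally uniform convergence gives no information.

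Both defects have the same cure, which is exactly what the paper's proof implements: the weight must carry an extra factor $d_x^{\vdim}$ to match the scaling of the $\Lp{1}$ norm. The paper's local estimate reads $\hoelder{\alpha}{D^2u|\cball{b}{s}} \leq 2^{-6-\vdim} \hoelder{\alpha}{D^2u|\cball{b}{2s}} + \Delta ( s^{-2-\alpha-\vdim} \norm{u}{1}{b,2s} + \hoelder{\alpha}{f|\cball{b}{2s}} )$, and the absorbed quantity is $\mu = \sup_b h(b)^{2+\alpha+\vdim} \hoelder{\alpha}{D^2u|\cball{b}{h(b)}}$ with $h(b) = \frac14 \dist (b, \rel^\vdim \without \oball{a}{r})$; with the exponent $2+\alpha+\vdim$ the term $h(b)^{2+\alpha+\vdim} \cdot h(b)^{-2-\alpha-\vdim} \norm{u}{1}{a,r}$ is weight-free and the small constant $2^{-6-\vdim}$ beats the factor $2^{4+\alpha+\vdim}$ coming from $h(b) \leq 2 h(c)$. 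Alternatively you could keep your norms, retain $\sup |u|$ in the Schauder estimate, and only convert to $\norm{u}{1}{a,r}$ at the end on the fixed interior ball $\cball{0}{1/2}$ via \ref{lemma:simple_interpolation} (note also that the quantity you actually need to absorb is $| f(0) | \leq \| S \| \, | D^2 u (0) |$, a value at the centre, for which interpolation on a fixed interior ball is legitimate). Finally, you should address finiteness of the quantity being absorbed: the hypothesis is only a \emph{local} H\"older condition on $D^2 u$, so one must first pass to a slightly smaller radius, as the paper does, before subtracting $\frac12 \mu$ from both sides.
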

\begin{proof}
	Interpolating by use of Ehring's lemma, see
	e.g.~\cite[Theorem\,\printRoman{1}.7.3]{MR895589}, and Arzel\`a's and
	Ascoli's theorem, it is enough to prove the assertion remaining when
	the term $r^{-\alpha} \norm{D^2u}{\infty}{a,r/2}$ is omitted.

	Considering slightly smaller $r$, one may assume $\hoelder{\alpha}{
	D^2 u } < \infty$.

	Applying \cite[5.2.14]{MR41:1976} to the partial derivatives of $u$
	and using Ehring's lemma as above, one infers the existence of a
	positive, finite number $\Delta$ depending only on $\adim$, $c$, $M$,
	and $\alpha$ such that
	\begin{align*}
		\hoelder{\alpha}{D^2u| \cball{b}{s}} & \leq 2^{-6-\vdim}
		\hoelder{\alpha}{D^2u| \cball{b}{2s}} \\
		& \quad + \Delta \big ( s^{-2-\alpha-\vdim} \norm{u}{1}{b,2s}
		+ \hoelder{\alpha}{f| \cball{b}{2s}} \big )
	\end{align*}
	whenever $b \in \rel^\vdim$, $0 < s < \infty$ and $\cball{b}{2s}
	\subset \oball{a}{r}$.

	Defining $h : \oball{a}{r} \to \rel$ by $h (x) = \frac{1}{4} \dist (x,
	\rel^\vdim \without \oball{a}{r} )$ for $x \in \oball{a}{r}$,
	\begin{gather*}
		\mu = \sup \big \{ h(b)^{2+\alpha+\vdim} \hoelder{\alpha}{D^2u |
		\cball{b}{h(b)}} \with b \in \oball{a}{r} \big \}
	\end{gather*}
	and noting $\mu \leq r^{2+\alpha+\vdim} \hoelder{\alpha}{D^2u} <
	\infty$, one estimates for $b \in \oball{a}{r}$
	\begin{gather*}
		\begin{aligned}
			\hoelder{\alpha}{D^2u | \cball{b}{h(b)}} & \leq
			2^{-6-\vdim} \hoelder{\alpha}{D^2u | \cball{b}{2h(b)}}
			\\
			& \quad + \Delta \big ( h(b)^{-2-\alpha-\vdim}
			\norm{u}{1}{a,r} + \hoelder{\alpha}{f} \big ),
		\end{aligned} \\
		| h(b)-h(c) | \leq ( \Lip h ) |b-c| \leq h(b)/2, \  h(b)
		\leq 2 h(c) \qquad \text{for $c \in \cball{b}{2h(b)}$}, \\
		h(b)^{2+\alpha+\vdim} \hoelder{\alpha}{D^2u |
		\cball{b}{2h(b)}} \leq 2^{4+\alpha+\vdim} \mu, \\
		h(b)^{2+\alpha+\vdim} \hoelder{\alpha}{D^2u| \cball{b}{h(b)}}
		\leq \mu / 2 + \Delta \big ( \norm{u}{1}{a,r} +
		r^{2+\alpha+\vdim} \hoelder{\alpha}{f} \big ),
	\end{gather*}
	hence
	\begin{gather*}
		(r/4)^{2+\alpha+\vdim} \hoelder{\alpha}{D^2u|\cball{a}{r/2}}
		\leq 2^{5 + \vdim} \mu \leq 2^{6 + \vdim} \Delta \big (
		\norm{u}{1}{a,r} + r^{2+\alpha+\vdim} \hoelder{\alpha}{f} \big
		)
	\end{gather*}
	and the remaining assertion is evident.
\end{proof}
\begin{remark}
	Similar absorption procedures can be found for example in
	\cite[5.2.14]{MR41:1976} or \cite[Theorem 9.11]{MR1814364}.
\end{remark}
\begin{lemma} \label{lemma:w2p_estimate_ball}
	Suppose $\vdim$, $\adim$, $c$, $M$, and $\Upsilon$ are as in
	\ref{miniremark:function_C}, $2 \leq p < \infty$, $a \in \rel^\vdim$,
	and $0 < r < \infty$.

	Then for every $f \in \Lp{p} ( \mathscr{L}^\vdim \restrict
	\oball{a}{r}, \rel^\codim )$ there exists an $\mathscr{L}^\vdim
	\restrict \oball{a}{r}$ almost unique $u \in \Sob{0}{1}{p} (
	\oball{a}{r}, \rel^\codim )$ such that
	\begin{gather*}
		- {\textstyle\int_{\oball{a}{r}}} \left < D \theta (x) \odot
		\weakD u (x) , \Upsilon \right > \ud \mathscr{L}^\vdim x =
		\pairing{\theta}{f}{a,r} \quad \text{for $\theta \in
		\mathscr{D} ( \oball{a}{r}, \rel^\codim )$}.
	\end{gather*}
	Moreover, $u \in \Sob{}{2}{p} ( \oball{a}{r}, \rel^\codim )$ and
	\begin{gather*}
		{\textstyle\sum_{i=0}^2} r^{i-2} \norm{\weakD^i u}{p}{a,r}
		\leq \Gamma \norm{f}{p}{a,r}
	\end{gather*}
	where $\Gamma$ is a positive, finite number depending only on $\adim$,
	$c$, $M$, and $p$.
\end{lemma}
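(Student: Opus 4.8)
The plan is to treat existence and uniqueness first, then to reduce the a priori estimate to the case $r=1$ with smooth right hand side, and finally to obtain the estimate by combining an interior $\Lp{p}$ bound with a boundary $\Lp{p}$ bound obtained after flattening the sphere.

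\textbf{Normalisation, existence and uniqueness.} First I would pass to $r=1$ by the substitution $v(x) = r^{-2}u(a+rx)$, $g(x) = f(a+rx)$: a change of variables shows that $v$ solves the analogous problem on $\oball{0}{1}$ with right hand side $g$, that $r^{i-2}\norm{\weakD^i u}{p}{a,r} = r^{\vdim/p}\norm{\weakD^i v}{p}{0,1}$ for $i \in \{0,1,2\}$ and $\norm{f}{p}{a,r} = r^{\vdim/p}\norm{g}{p}{0,1}$, and that belonging to $\Sob{0}{1}{p}$ is preserved; hence it suffices to treat $\oball{0}{1}$. For $\oball{0}{1}$, existence of a weak solution in $\Sob{0}{1}{2}(\oball{0}{1},\rel^\codim)$ follows from \ref{lemma:standard_w012} applied with vanishing boundary function and $T(\theta) = \pairing{\theta}{f}{0,1}$; indeed $\dnorm{T}{2}{0,1} < \infty$ since $\norm{f}{2}{0,1} \leq \unitmeasure{\vdim}^{1/2-1/p}\norm{f}{p}{0,1}$ by H\"older's inequality (using $p \geq 2$) and $|T(\theta)| \leq \norm{f}{2}{0,1}\norm{\theta}{2}{0,1} \leq \Gamma_{\ref{lemma:poincare}}\norm{f}{2}{0,1}\norm{\weakD\theta}{2}{0,1}$ by \ref{lemma:poincare}. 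Uniqueness within $\Sob{0}{1}{p}$ is immediate from \ref{lemma:standard_w012}, because the difference of two such solutions lies in $\Sob{0}{1}{p} \subset \Sob{0}{1}{2}$ (as $\oball{0}{1}$ is bounded and $p \geq 2$) and satisfies the homogeneous equation, hence vanishes.

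\textbf{Reduction to smooth data and passage to a strong system.} For the estimate it is enough to prove $\tsum{i=0}{2}\norm{\weakD^i u}{p}{0,1} \leq \Gamma\norm{f}{p}{0,1}$ when $f \in \mathscr{D}(\oball{0}{1},\rel^\codim)$: for general $f \in \Lp{p}(\mathscr{L}^\vdim\restrict\oball{0}{1},\rel^\codim)$ one picks $f_k \in \mathscr{D}(\oball{0}{1},\rel^\codim)$ with $f_k \to f$ in $\Lp{p}$ and applies the smooth case to $f_k$ and to $f_k-f_l$, so that the corresponding solutions form a Cauchy sequence in $\Sob{}{2}{p}(\oball{0}{1},\rel^\codim)$ whose limit lies in $\Sob{0}{1}{p}$, solves the weak equation for $f$, and satisfies the estimate. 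When $f$ is smooth the solution $u$ is of class $\class{\infty}$ on $\Clos{\oball{0}{1}}$ by classical elliptic regularity for the constant coefficient system on the ball (whose boundary is real analytic) with vanishing boundary values, so the desired inequality is a genuine a priori estimate. By the partial integration formulae of \ref{miniremark:function_C} the weak equation is then equivalent to the strong constant coefficient second order system $\langle\weakD^2 u, S\rangle = -f$ on $\oball{0}{1}$, where $S$ is the linear map associated with $\Upsilon$; the strong ellipticity of $\Upsilon$ (ellipticity bound $c$, norm bound $M$) translates into the Legendre--Hadamard condition for $S$ with comparable bounds, which is what the $\Lp{p}$ theory requires.

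\textbf{The estimate and the main obstacle.} The inequality for $u$ of class $\class{\infty}$ on $\Clos{\oball{0}{1}}$ with $\langle\weakD^2 u, S\rangle = -f$ and $u|\partial\oball{0}{1} = 0$ is obtained by a covering and absorption argument in the spirit of the proof of \ref{lemma:interior_c2a_estimate}. On $\oball{0}{1/2}$ one uses the interior $\Lp{p}$ estimate $\norm{\weakD^2 u}{p}{0,1/2} \leq \Gamma(\norm{u}{p}{0,1} + \norm{f}{p}{0,1})$, cf.~\cite[Theorem 9.11]{MR1814364} for the scalar case and the Agmon--Douglis--Nirenberg theory for constant coefficient systems satisfying the Legendre--Hadamard condition; near a point of $\partial\oball{0}{1}$ one flattens the sphere by a local $\class{\infty}$ diffeomorphism, which converts the system into one with smooth coefficients still satisfying the Legendre--Hadamard condition there, applies the boundary $\Lp{p}$ estimate (cf.~\cite[Theorem 9.15]{MR1814364} in the scalar case) for the resulting problem on a half ball with vanishing trace, and transforms back. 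A finite partition of unity subordinate to $\oball{0}{1/2}$ and to such boundary neighbourhoods then gives $\norm{\weakD^2 u}{p}{0,1} \leq \Gamma(\norm{\weakD u}{p}{0,1} + \norm{u}{p}{0,1} + \norm{f}{p}{0,1})$; the term $\norm{\weakD u}{p}{0,1}$ is absorbed by interpolation (Ehring's lemma, see e.g.~\cite[Theorem\,\printRoman{1}.7.3]{MR895589}, as in \ref{lemma:interior_c2a_estimate}) and the term $\norm{u}{p}{0,1}$ is eliminated by the standard compactness argument based on the uniqueness assertion of \ref{lemma:standard_w012}, which yields $\tsum{i=0}{2}\norm{\weakD^i u}{p}{0,1} \leq \Gamma\norm{f}{p}{0,1}$. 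I expect the only nonroutine ingredient to be the up to the boundary $\Lp{p}$ estimate for the constant coefficient strongly elliptic system on the ball with zero Dirichlet data; the energy estimate, the interior regularity, the scaling and the approximation are standard.
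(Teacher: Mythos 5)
The paper's own proof consists of a single citation to \cite[p.~368--370]{MR1962933}, with a remark afterwards indicating the intended route: a Calder\'on--Zygmund style $\Sob{}{1}{p}$ estimate for the flat problem (the cited Theorem 10.15) followed by a localised difference-quotient argument to recover the second derivatives. Your proposal is a genuine proof rather than a delegation, and it goes a different way for the crux: you invoke the Agmon--Douglis--Nirenberg $\Sob{}{2}{p}$ interior and boundary estimates for constant-coefficient Legendre--Hadamard systems directly, then glue with a partition of unity and absorb by Ehrling and a compactness/uniqueness argument. Both routes are correct; the paper's reference route buys a more self-contained derivation of the $\Sob{}{2}{p}$ bound from a lower-order $\Sob{}{1}{p}$ theorem, while yours is shorter on paper but outsources the hard part (the up-to-the-boundary $\Lp{p}$ estimate for a Legendre--Hadamard \emph{system} with Dirichlet data, which is not in Gilbarg--Trudinger) to ADN, and you rightly flag this as the only nonroutine ingredient. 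Two small points worth tightening if you were to write this out fully: (i) after flattening the sphere you obtain a variable-coefficient system, so the half-ball estimate you cite needs to be supplemented by a freezing-of-coefficients perturbation before you transform back; (ii) the scaling, normalisation, existence/uniqueness via \ref{lemma:standard_w012}, density argument, and the compactness step removing $\norm{u}{p}{0,1}$ are all correctly set up — in particular the inclusion $\Sob{0}{1}{p}\subset\Sob{0}{1}{2}$ used for uniqueness is legitimate since $p\geq 2$ and the ball is bounded.
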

\begin{proof}
	See \cite[p.~368-370]{MR1962933}.
\end{proof}
\begin{remark}
	The condition $p \geq 2$ can, of course, be replaced by $p > 1$.
	For example \cite[Theorem 10.15]{MR1962933} extends to this case via
	duality and the estimate of the second order derivatives can be
	carried out by using the method of difference quotients starting from
	a suitably localised version of the theorem cited.
\end{remark}
\begin{lemma} \label{lemma:l1_estimate}
	Suppose $\vdim$, $\adim$, $c$, $M$, and $\Upsilon$ are as in
	\ref{miniremark:function_C}, $a \in \rel^\vdim$, $0 <  r < \infty$, $u
	\in \Sob{0}{1}{1} ( \oball{a}{r}, \rel^\codim )$, $T \in \mathscr{D}'
	( \oball{a}{r}, \rel^\codim )$, and
	\begin{gather*}
		- {\textstyle\int_{\oball{a}{r}}} \left < D \theta (x) \odot
		\weakD u (x), \Upsilon \right > \ud \mathscr{L}^\vdim x = T (
		\theta ) \quad \text{for $\theta \in \mathscr{D} (
		\oball{a}{r}, \rel^\codim )$}.
	\end{gather*}

	Then
	\begin{gather*}
		\norm{u}{1}{a,r} \leq \Gamma r \dnorm{T}{1}{a,r}
	\end{gather*}
	where $\Gamma$ is a positive, finite number depending only on
	$\adim$, $c$, and $M$.
\end{lemma}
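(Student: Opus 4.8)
The plan is to argue by duality: I would bound $\norm{u}{1}{a,r}$ by testing $u$ against an arbitrary $g \in \mathscr{D} ( \oball{a}{r}, \rel^\codim )$ with $\norm{g}{\infty}{a,r} \leq 1$, then solving the (formally self-adjoint) equation with right hand side $g$ and pairing the solution against $u$. By the duality of $\Lp{1}$ with $\Lp{\infty}$ together with a routine convolution-and-truncation approximation of bounded $\mathscr{L}^\vdim \restrict \oball{a}{r}$ measurable functions bounded by $1$ by such $g$ (using $u \in \Lp{1} ( \mathscr{L}^\vdim \restrict \oball{a}{r}, \rel^\codim )$ and dominated convergence), one has
\begin{gather*}
	\norm{u}{1}{a,r} = \sup \{ \pairing{u}{g}{a,r} \with g \in
	\mathscr{D} ( \oball{a}{r}, \rel^\codim ), \ \norm{g}{\infty}{a,r}
	\leq 1 \}.
\end{gather*}

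Now fix such a $g$ and put $p = \vdim + 1$, so that $2 \leq p < \infty$ and $\vdim < p$. Applying \ref{lemma:w2p_estimate_ball} with $f$ replaced by $g$ — admissible since $\mathscr{D} ( \oball{a}{r}, \rel^\codim ) \subset \Lp{p} ( \mathscr{L}^\vdim \restrict \oball{a}{r}, \rel^\codim )$ — produces $w \in \Sob{0}{1}{p} ( \oball{a}{r}, \rel^\codim ) \cap \Sob{}{2}{p} ( \oball{a}{r}, \rel^\codim )$ with
\begin{gather*}
	- \tint{\oball{a}{r}}{} \left < D\psi (x) \odot \weakD w (x),
	\Upsilon \right > \ud \mathscr{L}^\vdim x = \pairing{\psi}{g}{a,r}
	\quad \text{for $\psi \in \mathscr{D} ( \oball{a}{r}, \rel^\codim
	)$}, \\
	\tsum{i=0}{2} r^{i-2} \norm{\weakD^i w}{p}{a,r} \leq \Gamma
	\norm{g}{p}{a,r},
\end{gather*}
where $\Gamma$ denotes, here and below (changing from line to line), a positive, finite number depending only on $\adim$, $c$, and $M$ — the dependence on $p = \vdim + 1 \leq \adim$ being subsumed under that on $\adim$. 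Since $p > \vdim$, the Sobolev embedding theorem (see e.g.~\cite[Theorem 7.26]{MR1814364}, applied on $\oball{a}{r}$, the powers of $r$ arising by scaling) applied to $\weakD w \in \Sob{}{1}{p} ( \oball{a}{r}, \bigodot^1 ( \rel^\vdim, \rel^\codim ) )$ gives, together with the preceding estimate and $\norm{g}{p}{a,r} \leq \unitmeasure{\vdim}^{1/p} r^{\vdim/p} \norm{g}{\infty}{a,r}$,
\begin{gather*}
	\norm{\weakD w}{\infty}{a,r} \leq \Gamma \big ( r^{-\vdim/p}
	\norm{\weakD w}{p}{a,r} + r^{1-\vdim/p} \norm{\weakD^2 w}{p}{a,r}
	\big ) \leq \Gamma r^{1-\vdim/p} \norm{g}{p}{a,r} \leq \Gamma r
	\norm{g}{\infty}{a,r}.
\end{gather*}
Because $\weakD w$ and $g$ are bounded, both sides of the displayed equation for $w$ depend continuously on $\psi$ with respect to the $\Sob{}{1}{1}$ seminorm, so by the definition of $\Sob{0}{1}{1} ( \oball{a}{r}, \rel^\codim )$ the equation remains valid for every $\psi$ in that space; taking $\psi = u$ yields
\begin{gather*}
	\pairing{u}{g}{a,r} = - \tint{\oball{a}{r}}{} \left < \weakD u (x)
	\odot \weakD w (x), \Upsilon \right > \ud \mathscr{L}^\vdim x.
\end{gather*}

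It thus remains to establish $\big| \tint{\oball{a}{r}}{} \left < \weakD u (x) \odot \weakD w (x), \Upsilon \right > \ud \mathscr{L}^\vdim x \big| \leq \norm{\weakD w}{\infty}{a,r} \dnorm{T}{1}{a,r}$; granting this, the identity above and the bound on $\norm{\weakD w}{\infty}{a,r}$ give $| \pairing{u}{g}{a,r} | \leq \Gamma r \dnorm{T}{1}{a,r}$ for every admissible $g$, hence the assertion upon taking the supremum over $g$. The expected main obstacle is precisely this last estimate, since $w$ is not an admissible test function — it is only Lipschitzian up to the boundary of $\oball{a}{r}$ and not compactly supported therein. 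I would overcome it by approximation: since $p > \vdim$, $w$ has a representative continuous on $\cball{a}{r}$ which vanishes on the boundary of $\oball{a}{r}$ (its trace being zero), so for $1 < \varrho \leq 2$ the function $w_\varrho$ given by $w_\varrho (x) = w ( a + \varrho ( x - a ) )$ for $x \in \cball{a}{r/\varrho}$ and $w_\varrho (x) = 0$ for $x \in \oball{a}{r} \without \cball{a}{r/\varrho}$ is Lipschitzian with compact support in $\oball{a}{r}$, satisfies $\norm{\weakD w_\varrho}{\infty}{a,r} \leq \varrho \norm{\weakD w}{\infty}{a,r}$, and has $\weakD w_\varrho \to \weakD w$ pointwise $\mathscr{L}^\vdim$ almost everywhere on $\oball{a}{r}$ and boundedly as $\varrho \pluslim{1}$. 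Mollifying $w_\varrho$ yields $\theta_\epsilon \in \mathscr{D} ( \oball{a}{r}, \rel^\codim )$ with $\norm{D\theta_\epsilon}{\infty}{a,r} \leq \norm{\weakD w_\varrho}{\infty}{a,r}$ and $D\theta_\epsilon \to \weakD w_\varrho$ almost everywhere as $\epsilon \pluslim{0}$. The hypothesis on $u$ gives $T ( \theta_\epsilon ) = - \tint{\oball{a}{r}}{} \left < D\theta_\epsilon (x) \odot \weakD u (x), \Upsilon \right > \ud \mathscr{L}^\vdim x$, while $| T ( \theta_\epsilon ) | \leq \norm{D\theta_\epsilon}{\infty}{a,r} \dnorm{T}{1}{a,r}$ by the definition of $\dnorm{T}{1}{a,r}$; passing to the limit first as $\epsilon \pluslim{0}$ and then as $\varrho \pluslim{1}$, with the dominated convergence theorem (dominating function a fixed multiple of $M \norm{\weakD w_\varrho}{\infty}{a,r} | \weakD u | \in \Lp{1}$) controlling the integrals and commutativity of $\odot$ matching the integrands, gives the required bound.
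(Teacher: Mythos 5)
Your proof is correct and takes essentially the same route as the paper's: both solve the formally self-adjoint dual problem via \ref{lemma:w2p_estimate_ball} at some supercritical exponent $p > \vdim$, extract an $\Lp{\infty}$ bound on the dual solution's gradient from the Sobolev embedding, and close by pairing against $u$ with a density argument that passes the dual solution to compactly supported smooth test functions. The paper takes $p = 2\vdim$ with the $\Lp{q}$--$\Lp{p}$ pairing ($q = p/(p-1)$), incidentally obtaining a stronger $\Lp{q}$ estimate which it then weakens by H\"older, whereas you take $p = \vdim + 1$ with the $\Lp{1}$--$\Lp{\infty}$ pairing directly and spell out the dilate-and-mollify approximation that the paper leaves implicit; these are cosmetic variations of one argument.
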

\begin{proof}
	Let $p = 2\vdim$ and $q=p/(p-1)$ and assume $r=1$.

	Whenever $\theta \in \mathscr{D} ( \oball{a}{r}, \rel^\codim )$ one
	obtains $\eta \in \Sob{0}{1}{p} ( \oball{a}{r}, \rel^\codim )$ from
	\ref{lemma:w2p_estimate_ball} such that with $\Delta_1 =
	\Gamma_{\ref{lemma:w2p_estimate_ball}} ( \adim, c, M, p )$
	\begin{gather*}
		- {\textstyle\int_{\oball{a}{1}}} \left < D \zeta (x) \odot
		\weakD \eta (x), \Upsilon \right > \ud \mathscr{L}^\vdim x =
		\pairing{\zeta}{\theta}{a,1} \quad \text{for $\zeta \in
		\mathscr{D} ( \oball{a}{1}, \rel^\codim )$}, \\
		{\textstyle\sum_{i=0}^2} \norm{\weakD^i \eta}{p}{a,1} \leq
		\Delta_1 \norm{\theta}{p}{a,1},
	\end{gather*}
	hence by \cite[Theorem 7.26\,(ii)]{MR1814364}
	\begin{gather*}
		\norm{\weakD \eta}{\infty}{a,1} \leq \Delta_2 \big (
		\norm{\weakD \eta}{p}{a,1} +
		\norm{\weakD^2 \eta}{p}{a,1} \big ) \leq \Delta_1 \Delta_2
		\norm{\theta}{p}{a,1}
	\end{gather*}
	where $\Delta_2$ is a positive, finite number depending only on
	$\adim$ and $p$. Approximating and $u$ by $\zeta_i \in \mathscr{D} (
	\oball{a}{1}, \rel^\codim )$ in $\Sob{0}{1}{1} ( \oball{a}{1},
	\rel^\codim )$ and $\eta$ by a sequence $\eta_i \in \mathscr{D}
	(\oball{a}{r}, \rel^\codim )$ such that
	\begin{gather*}
		\eta_i \to \eta \quad \text{in $\Sob{}{1}{p} ( \oball{a}{1},
		\rel^\codim )$ as $i \to \infty$}, \quad \lim_{i \to \infty}
		\norm{D \eta_i}{\infty}{a,1} = \norm{\weakD
		\eta}{\infty}{a,1},
	\end{gather*}
	one obtains
	\begin{gather*}
		\pairing{\theta}{u}{a,1} = - {\textstyle\int_{\oball{a}{1}}}
		\left < \weakD \eta (x) \odot \weakD u (x), \Upsilon \right >
		\ud \mathscr{L}^\vdim x \leq \dnorm{T}{1}{a,1}
		\norm{\weakD \eta}{\infty}{a,1}.
	\end{gather*}
	Therefore (cp. \cite[2.4.16]{MR41:1976})
	\begin{gather*}
		\norm{u}{1}{a,1} \leq \unitmeasure{\vdim}^{1/p}
		\norm{u}{q}{a,1} \leq \unitmeasure{\vdim}^{1/p} \Delta_1
		\Delta_2 \dnorm{T}{1}{a,1}
	\end{gather*}
	and one may take $\Gamma = \sup \{ \unitmeasure{i}^{1/p} \Delta_1
	\Delta_2 \with \adim > i \in \nat \}$.
\end{proof}
\begin{remark}
	If $\vdim > 1$ the estimate may be sharpened to
	\begin{gather*}
		\sup \big \{ t \mathscr{L}^\vdim (
		\classification{\oball{a}{r} }{x}{|u(x)|>t} )^{1-1/\vdim} : 0
		< t < \infty \big \} \leq \Gamma \dnorm{T}{1}{a,r};
	\end{gather*}
	in fact one may follow the same line of arguments with the Lorentz
	space $\Lp{\vdim,1}$ replacing $\Lp{p}$.
\end{remark}
\section{A model case of partial regularity} \label{sec:model_case}
The present section uses the new iteration technique in the setting of
pointwise decay estimates for the Euler Lagrange differential operator
associated to an integrand satisfying a quadratic growth condition. Its
purpose is to indicate applications in the study of partial regularity for
elliptic systems as well as to outline some of the techniques used in Section
\ref{sec:iteration} in a significantly simpler setting. However, the results
of this section are not needed in the remaining sections. They depend only on
Section \ref{sec:elliptic} and \ref{lemma:simple_interpolation},
\ref{lemma:poincare}.
\begin{miniremark} \label{miniremark:situation_pde}
	Suppose $\vdim, \adim \in \nat$, $\vdim < \adim$, $0 < c \leq M <
	\infty$, and $F : \Hom ( \rel^\vdim, \rel^\codim ) \to \rel$ is of
	class $\class{2}$ such that for $\sigma, \tau \in \Hom ( \rel^\vdim,
	\rel^\codim )$
	\begin{gather*}
		\left < \sigma \odot \sigma, D^2 F ( \tau ) \right > \geq c |
		\sigma |^2, \quad \| D^2 F ( \tau ) \| \leq M.
	\end{gather*}
\end{miniremark}
\begin{lemma} \label{lemma:caccioppoli_inequality}
	Suppose $\vdim$, $\adim$, $c$, $M$, and $F$ are as in
	\ref{miniremark:situation_pde}, $a \in \rel^\vdim$, $0 < r < \infty$,
	$u \in \Sob{}{1}{2}(\oball{a}{r},\rel^\codim)$, $T \in \mathscr{D} (
	\oball{a}{r}, \rel^\codim )$, and
	\begin{gather*}
		- \tint{\oball{a}{r}}{} \left < D \theta (x), DF ( \weakD u (x))
		\right > \ud \mathscr{L}^\vdim x = T ( \theta ) \quad
		\text{for $\theta \in \mathscr{D} ( \oball{a}{r}, \rel^\codim
		)$}.
	\end{gather*}

	Then there holds for every affine function $P : \rel^\vdim \to
	\rel^\codim$
	\begin{gather*}
		r^{-\vdim/2} \norm{\weakD (u-P)}{2}{a,r/2} \leq \Gamma \big (
		r^{-1-\vdim} \norm{u-P}{1}{a,r} + r^{-\vdim/2}
		\dnorm{T}{2}{a,r} \big )
	\end{gather*}
	where $\Gamma$ is a positive, finite number depending only on $\vdim$,
	$\adim$, $c$, and $M$.
\end{lemma}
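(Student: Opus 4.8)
The plan is to reduce, by linearising the Euler--Lagrange equation about $DP$, to a Caccioppoli inequality for a divergence form equation with $\mathscr{L}^\vdim$ measurable, uniformly elliptic and bounded coefficients in which the constant $DF(DP)$ no longer occurs; to prove that Caccioppoli inequality directly; and finally to trade the $\Lp{2}$ norm appearing on its right hand side for an $\Lp{1}$ norm by means of a Gagliardo--Nirenberg type interpolation together with the standard hole filling iteration.

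First I would discard the trivial case $\dnorm{T}{2}{a,r}=\infty$ and put $w=u-P$ and $\mathcal{A}(x)=\int_0^1 D^2 F ( DP + t \weakD w(x) ) \ud \mathscr{L}^1 t$. By the fundamental theorem of calculus $\sigma \bullet ( DF(\weakD u(x)) - DF(DP) ) = \langle \sigma \odot \weakD w(x), \mathcal{A}(x) \rangle$ for $\sigma \in \Hom(\rel^\vdim,\rel^\codim)$, while averaging the two inequalities of \ref{miniremark:situation_pde} yields $\langle \sigma \odot \sigma, \mathcal{A}(x) \rangle \geq c|\sigma|^2$ and $\| \mathcal{A}(x) \| \leq M$ for $\mathscr{L}^\vdim$ almost all $x$. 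Since $\| D^2 F \| \leq M$ makes $DF$ Lipschitzian, $DF(\weakD u) \in \Lp{2}(\mathscr{L}^\vdim \restrict \oball{a}{r},\Hom(\rel^\vdim,\rel^\codim))$, so both sides of the hypothesis are continuous in $\theta$ with respect to the $\Sob{}{1}{2}$ seminorm and therefore extend from $\mathscr{D}(\oball{a}{r},\rel^\codim)$ to $\Sob{0}{1}{2}(\oball{a}{r},\rel^\codim)$; as $\int \weakD\theta \ud \mathscr{L}^\vdim = 0$ there and $DF(DP)$ is constant, the constant term drops out and one gets $-\int_{\oball{a}{r}} \langle D\theta(x) \odot \weakD w(x), \mathcal{A}(x) \rangle \ud \mathscr{L}^\vdim x = T(\theta)$ for every $\theta \in \Sob{0}{1}{2}(\oball{a}{r},\rel^\codim)$. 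This is the step that makes the final constant independent of $P$ (and of the modulus of continuity of $D^2 F$).

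For $r/2 \leq s < t \leq r$ I would then pick $\eta \in \mathscr{D}^0(\rel^\vdim)$ with $0 \leq \eta \leq 1$, $\eta=1$ on $\cball{a}{s}$, $\spt\eta \subset \oball{a}{t}$ and $|D\eta| \leq 2/(t-s)$, test the linearised equation with $\theta=\eta^2 w$, expand $D(\eta^2 w)$ by the product rule, use ellipticity on the leading term, $\| \mathcal{A}(x) \| \leq M$ on the remaining term and $|T(\eta^2 w)| \leq \dnorm{T}{2}{a,r} \norm{D(\eta^2 w)}{2}{a,t}$ on the right hand side, and absorb by Young's inequality, obtaining
\[ \norm{\weakD w}{2}{a,s}^2 \leq \frac{\Gamma_1}{(t-s)^2}\,\norm{w}{2}{a,t}^2 + \Gamma_1 \dnorm{T}{2}{a,r}^2 \qquad (r/2 \leq s < t \leq r), \]
where $\Gamma_1$ depends only on $c$ and $M$ and, crucially, the coefficient of $\dnorm{T}{2}{a,r}^2$ stays bounded as $t-s \to 0$. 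Next, a Gagliardo--Nirenberg inequality on the ball $\oball{a}{t}$ --- which one may read off from \ref{lemma:function_with_holes} applied with $A=\oball{a}{t}$, with the exponents there named $s$ and $\xi$ taken equal to $2$ and $1$, with $\zeta = \tfrac{2\vdim}{\vdim+2}$ if $\vdim \geq 2$ (so that $q=2$ there) and $\zeta = \vdim = 1$ if $\vdim=1$ (reducing via $\Sob{}{1}{1} \hookrightarrow \Lp{\infty}$ on intervals), and with the parameter $\lambda$ there a suitably small multiple of $(t-s)^\vdim$ --- gives, for every $\varepsilon>0$, $\norm{w}{2}{a,t} \leq \varepsilon(t-s)\norm{\weakD w}{2}{a,t} + \Gamma_\varepsilon (t-s)^{-\vdim/2}\norm{w}{1}{a,r}$. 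Inserting this into the Caccioppoli inequality and fixing $\varepsilon$ so small that $2\Gamma_1\varepsilon^2 \leq \tfrac12$ produces, for $r/2 \leq s < t \leq r$,
\[ \norm{\weakD w}{2}{a,s}^2 \leq \tfrac12\,\norm{\weakD w}{2}{a,t}^2 + \frac{\Gamma_2}{(t-s)^{\vdim+2}}\,\norm{w}{1}{a,r}^2 + \Gamma_2 \dnorm{T}{2}{a,r}^2 . \]
Since the factor $\tfrac12$ in front of $\norm{\weakD w}{2}{a,t}^2$ is a fixed number $<1$, this estimate can be iterated along a geometric sequence of radii in $[r/2,r]$ (the usual hole filling argument) to give $\norm{\weakD w}{2}{a,r/2}^2 \leq \Gamma_3 ( r^{-\vdim-2}\norm{w}{1}{a,r}^2 + \dnorm{T}{2}{a,r}^2 )$; taking square roots, multiplying by $r^{-\vdim/2}$ and recalling $w=u-P$ yields the assertion.

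The hard part is the first step: ensuring that neither $P$ nor the modulus of continuity of $D^2 F$ enters the constant. This is exactly what the linearisation combined with $\int \weakD\theta = 0$ achieves, and it is also why the Caccioppoli inequality has to be proved by hand for the measurable coefficient operator $\mathcal{A}(x)$ rather than deduced from a comparison with the fixed constant coefficient operator $D^2 F(DP)$ (the latter would reintroduce the modulus of continuity of $D^2F$). The second point to be careful about is that the interpolation used must be genuinely sublinear in the radius --- a Gagliardo--Nirenberg inequality as above, not merely Poincar\'e's inequality \ref{lemma:poincare} --- so that Young's inequality can make the coefficient of $\norm{\weakD w}{2}{a,t}$ arbitrarily small, which is what lets the hole filling iteration close with a factor strictly below $1$.
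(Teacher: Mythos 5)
Your proposal is correct and follows essentially the same route as the paper: linearisation about $DP$ via $\mathcal{A}(x)=\int_0^1 D^2F(DP+t\weakD w(x))\ud\mathscr{L}^1t$ (which kills the dependence on $P$ and on the modulus of continuity of $D^2F$), a Caccioppoli inequality exploiting the pointwise Legendre condition of \ref{miniremark:situation_pde}, a Gagliardo--Nirenberg interpolation of $\norm{w}{2}$ between $\norm{\weakD w}{2}$ and $\norm{w}{1}$, and an absorption iteration. The paper merely packages the last two steps slightly differently (interpolation via \cite[Theorem 7.26\,(i)]{MR1814364} and Ehrling's lemma on balls $\oball{b}{\varrho}\subset\oball{b}{2\varrho}$, then Simon's absorption lemma instead of the hand-rolled hole-filling on concentric radii).
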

\begin{proof}
	Assume $r=1$ and abbreviate $v=u-P$. Observing
	\begin{gather*}
		- \tint{\oball{a}{r}}{} \left < D \theta (x) \odot \weakD v
		(x), A (x) \right > \ud \mathscr{L}^\vdim x = T ( \theta )
		\quad \text{for $\theta \in \mathscr{D} ( \oball{a}{1},
		\rel^\codim)$} \\
		\text{where $A(x) = \tint{0}{1} D^2F ( t \weakD u(x) + (1-t)
		DP(x) ) \ud \mathscr{L}^1 t$},
	\end{gather*}
	one may infer, e.g.~as in \cite[5.2.3]{MR41:1976}, that
	\begin{gather*}
		\norm{\weakD v}{2}{b,\varrho} \leq c^{-1/2} M^{1/2}
		\varrho^{-1} \norm{v}{2}{b,2\varrho} + c^{-1}
		\dnorm{T}{2}{b,2\varrho}
	\end{gather*}
	whenever $b \in \rel^\vdim$, $0 < \varrho < \infty$ with
	$\oball{b}{2\varrho} \subset \oball{a}{1}$.
	
	From \cite[Theorem 7.26\,(i)]{MR1814364} and Ehring's lemma, see
	e.g.~\cite[Theorem\,\printRoman{1}.7.3]{MR895589}, it follows that for
	every $0 < \kappa < \infty$ there exists a positive, finite number
	$\Delta$ depending only on $\adim$ and $\kappa$ such that
	\begin{gather*}
		\varrho^{-1} \norm{v}{2}{b,2\varrho} \leq \delta \norm{\weakD
		v}{2}{b,2\varrho} + \Delta \varrho^{-1-\vdim/2}
		\norm{v}{1}{b,2\varrho}
	\end{gather*}
	whenever $b \in \rel^\vdim$, $0 < \varrho < \infty$ with
	$\oball{b}{2\varrho} \subset \oball{a}{1}$. Therefore one readily
	verifies the conclusion by use of Simon's absorption lemma
	\cite[p.~398]{MR1459795}.
\end{proof}
\begin{miniremark} \label{miniremark:situation_pde2}
	If $\vdim$, $\adim$, $c$, $M$, and $F$ are as in
	\ref{miniremark:situation_pde} then $D^2F$ is uniformly continuous if
	and only if there exists
	$\Omega : \{ t \with 0 \leq t < \infty \} \to \{ t \with 0 \leq t \leq
	2 M \}$ such that
	\begin{gather*}
		\text{$\Omega$ is continuous at $0$ with $\Omega (0)=0$},
		\quad \text{$\Omega^2$ is concave}, \\
		\| D^2 F ( \sigma ) - D^2 F ( \tau ) \| \leq \Omega ( |
		\sigma-\tau | ) \quad \text{for $\sigma, \tau \in \Hom (
		\rel^\vdim, \rel^\codim )$}.
	\end{gather*}
	Observe that such $\Omega$ is nondecreasing and satisfies $\Omega ( st
	) \leq s^{1/2} \Omega ( t )$ for $1 \leq s < \infty$ and $0 \leq t <
	\infty$.

	Moreover, let $0 < \alpha \leq 1$ and define $\omega : \{ t \with 0 <
	t \leq 1 \} \to \{ t \with 0 \leq t \leq 1 \}$ by
	\begin{gather*}
		\omega (t) = t^\alpha \quad \text{if $\alpha < 1$}, \quad
		\omega (t) = t ( 1 + \log (1/t) ) \quad \text{if $\alpha = 1$}
	\end{gather*}
	whenever $0 < t \leq 1$.
\end{miniremark}
\begin{theorem} \label{thm:example_decay}
	Suppose $\vdim, \adim \in \nat$, $\vdim < \adim$, $0 < c \leq M <
	\infty$, and $0 < \alpha \leq 1$.
	
	Then there exists a positive, finite number $\varepsilon$ with the
	following property.

	If $a \in \rel^\vdim$, $0 < r < \infty$, $F$, $\Omega$, $\omega$ are
	related to $\vdim$, $\adim$, $c$, $M$, $\alpha$ as in
	\ref{miniremark:situation_pde} and \ref{miniremark:situation_pde2}, $u
	\in \Sob{}{1}{2} ( \oball{a}{r} , \rel^\codim )$, $T \in \mathscr{D}'
	( \oball{a}{r}, \rel^\codim )$, $\sigma \in \Hom ( \rel^\vdim,
	\rel^\codim )$, $0 \leq \gamma < \infty$, and
	\begin{gather*}
		\Omega ( \gamma ) \leq \varepsilon \quad \text{if $\alpha <
		1$}, \qquad \Omega ( t ) \leq \varepsilon ( 1 + \log (
		\gamma/t) )^{-1} \quad \text{for $0 < t \leq \gamma$ if
		$\alpha = 1$}, \\
		- \tint{\oball{a}{r}}{} \left < D \theta (x), DF ( \weakD u (x))
		\right > \ud \mathscr{L}^\vdim x = T ( \theta ) \quad
		\text{for $\theta \in \mathscr{D} ( \oball{a}{r} ,
		\rel^\codim )$}, \\
		\big ( \tfint{\oball{a}{r}}{} | \weakD (u-\sigma) |^2 \ud
		\mathscr{L}^\vdim \big )^{1/2} \leq \gamma, \\
		\varrho^{-\vdim/2} \dnorm{T}{2}{a,\varrho} \leq \gamma (
		\varrho/r )^\alpha \quad \text{for $0 < \varrho \leq r$},
	\end{gather*}
	then $a \in \dmn \weakD u$ and
	\begin{gather*}
		\big ( \tfint{\oball{a}{\varrho}}{} | \weakD (u-\weakD u(a))
		|^2 \ud \mathscr{L}^\vdim \big )^{1/2} \leq \Gamma \omega (
		\varrho/r ) \gamma \quad \text{for $0 < \varrho \leq r$}
	\end{gather*}
	where $\Gamma$ is a positive, finite number depending only on $\vdim$,
	$\adim$, $c$, $M$, and $\alpha$.
\end{theorem}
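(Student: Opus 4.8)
The plan is to adapt the iteration described in the introduction to the present nonlinear setting, the genuinely new feature being that the coefficients of the comparison operator must be re-frozen at every scale. After a translation and a rescaling I may assume $a=0$ and $r=1$; $F$, $\Omega$, $\omega$, and $\gamma$ are unaffected. Fixing a small $\lambda \in (0,1/2]$ depending only on $\vdim$, $\adim$, $c$, $M$, I would work along the radii $\varrho_k = \lambda^k$, recovering the intermediate radii at the end from the comparability of $\varrho \mapsto \fint_{\oball{0}{\varrho}} |\weakD u - \weakD u(0)|^2 \ud \mathscr{L}^\vdim$ and of $\omega$ on $[\varrho_{k+1},\varrho_k]$. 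Inductively I put $\xi_0 = \sigma$ and, given $\xi_k$, let $\Upsilon_k = D^2 F(\xi_k)$ — strongly elliptic with ellipticity bound $c$ and $\|\Upsilon_k\|\leq M$ by \ref{miniremark:situation_pde}, hence admissible in \ref{miniremark:function_C} — let $u_k \in \Sob{}{1}{2}(\oball{0}{\varrho_k},\rel^\codim)$ be the $\Upsilon_k$-harmonic function with $u-u_k \in \Sob{0}{1}{2}(\oball{0}{\varrho_k},\rel^\codim)$ furnished by \ref{lemma:standard_w012} (analytic in $\oball{0}{\varrho_k}$ by \ref{remark:standard_w012}), and set $\xi_{k+1}=\weakD u_k(0)$. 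With $\phi_1(k)=\norm{\weakD^2 u_k}{\infty}{0,\varrho_k/2}$ and $\phi_2(k)=\big(\fint_{\oball{0}{\varrho_k}}|\weakD u-\xi_k|^2\ud\mathscr{L}^\vdim\big)^{1/2}$ the aim is to propagate, by induction on $k$, the bounds $\phi_1(k)\leq\gamma_1\varrho_k^{-1}\omega(\varrho_k)$ and $\phi_2(k)\leq\gamma_2\omega(\varrho_k)$ with $\gamma_1,\gamma_2$ multiples of $\gamma$ depending only on $\vdim$, $\adim$, $c$, $M$, $\alpha$; here $\varrho^{-1}\omega(\varrho)$ — that is $\varrho^{\alpha-1}$ if $\alpha<1$ and $1+\log(1/\varrho)$ if $\alpha=1$ — is the $\phi_1$-profile of the introduction, and $\varrho\phi_1$ carries the same profile as $\phi_2$. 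Throughout, $\Gamma$ denotes positive finite constants depending only on $\vdim$, $\adim$, $c$, $M$ (and in places $\alpha$).

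For the two modified iteration inequalities I would first note that $\weakD u_k$ is $\Upsilon_k$-harmonic, so the interior estimates of Section \ref{sec:elliptic} (via \ref{lemma:interior_c2a_estimate} and \ref{lemma:poincare}) give $\big(\fint_{\oball{0}{\varrho_{k+1}}}|\weakD u_k-\xi_{k+1}|^2\ud\mathscr{L}^\vdim\big)^{1/2}\leq\Gamma\lambda\big(\fint_{\oball{0}{\varrho_k}}|\weakD u_k-\xi_k|^2\ud\mathscr{L}^\vdim\big)^{1/2}$, while $\big(\fint_{\oball{0}{\varrho_k}}|\weakD u_k-\xi_k|^2\ud\mathscr{L}^\vdim\big)^{1/2}\leq\Gamma\phi_2(k)$ since $u_k$ minimises the Dirichlet integral of $\Upsilon_k$ among functions with boundary data $u$ (\ref{lemma:standard_w012}); moreover $w_k:=u-u_k$ satisfies $-\int\left<D\theta\odot\weakD w_k,\Upsilon_k\right>\ud\mathscr{L}^\vdim=T(\theta)+\int\left<D\theta,\mathcal{E}_k\right>\ud\mathscr{L}^\vdim$ for $\theta\in\mathscr{D}(\oball{0}{\varrho_k},\rel^\codim)$, where the Taylor remainder $\mathcal{E}_k=DF(\weakD u)-DF(\xi_k)-\left<\weakD u-\xi_k,\Upsilon_k\right>$ obeys $|\mathcal{E}_k|\leq|\weakD u-\xi_k|\,\Omega(|\weakD u-\xi_k|)$, the affine part being a constant matrix which pairs to zero with $D\theta$. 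Testing this equation with $w_k$ and using strong ellipticity yields $\norm{\weakD w_k}{2}{0,\varrho_k}\leq\Gamma(\dnorm{T}{2}{0,\varrho_k}+\norm{\mathcal{E}_k}{2}{0,\varrho_k})$. Combining the first three displays with $\xi_{k+1}=\weakD u_k(0)$, and the last with the decay hypothesis on $\dnorm{T}{2}{0,\varrho_k}$, gives
\begin{gather*}
	\phi_2(k+1)\leq\Gamma\lambda\,\phi_2(k)+\Gamma\lambda^{-\vdim/2}\big(\gamma\varrho_k^\alpha+\varrho_k^{-\vdim/2}\norm{\mathcal{E}_k}{2}{0,\varrho_k}\big).
\end{gather*}
For the $\phi_1$ comparison I would use the triangle inequality $\phi_1(k+1)\leq\norm{\weakD^2 u_k}{\infty}{0,\varrho_{k+1}/2}+\norm{\weakD^2 v_k}{\infty}{0,\varrho_{k+1}/2}\leq\phi_1(k)+\norm{\weakD^2 v_k}{\infty}{0,\varrho_{k+1}/2}$ with $v_k:=u_{k+1}-u_k$, which satisfies $-\int\left<D\theta\odot\weakD v_k,\Upsilon_{k+1}\right>\ud\mathscr{L}^\vdim=\int\left<D\theta\odot\weakD u_k,\Upsilon_{k+1}-\Upsilon_k\right>\ud\mathscr{L}^\vdim$; since $\weakD u_k$ has oscillation at most $\Gamma\varrho_k\phi_1(k)$ on $\oball{0}{\varrho_{k+1}/2}$ and $\|\Upsilon_{k+1}-\Upsilon_k\|\leq\Omega(|\xi_{k+1}-\xi_k|)$, the right hand side is the divergence of a field of oscillation at most $\Gamma\Omega(|\xi_{k+1}-\xi_k|)\varrho_k\phi_1(k)$, and the interior estimate \ref{lemma:interior_c2a_estimate} (after passing to nonparametric form through \ref{miniremark:function_C}, with \ref{lemma:simple_interpolation} absorbing lower order terms) bounds $\norm{\weakD^2 v_k}{\infty}{0,\varrho_{k+1}/2}$ by $\Gamma\varrho_k^{-1-\vdim/2}\norm{\weakD v_k}{2}{0,\varrho_{k+1}}+\Gamma\Omega(|\xi_{k+1}-\xi_k|)\phi_1(k)$; estimating $\norm{\weakD v_k}{2}{0,\varrho_{k+1}}\leq\norm{\weakD w_k}{2}{0,\varrho_k}+\norm{\weakD w_{k+1}}{2}{0,\varrho_{k+1}}$ by the energy test above and the drift by $|\xi_{k+1}-\xi_k|\leq\Gamma\phi_2(k)$ (comparing $\xi_k$, $\weakD u_k(0)$, and the average of $\weakD u$ over $\oball{0}{\varrho_{k+1}}$ by excess decay and \ref{lemma:standard_w012}) yields
\begin{gather*}
	\phi_1(k+1)\leq\big(1+\Gamma\Omega(\phi_2(k))\big)\phi_1(k)+\Gamma\gamma\varrho_k^{\alpha-1}+\Gamma\varrho_k^{-1-\vdim/2}\big(\norm{\mathcal{E}_k}{2}{0,\varrho_k}+\norm{\mathcal{E}_{k+1}}{2}{0,\varrho_{k+1}}\big).
\end{gather*}

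The delicate point — the main obstacle of the proof — is the handling of the nonlinearity in these inequalities. In the linear case ($\mathcal{E}_k=0$) they are triangular, $\phi_1$ not seeing $\phi_2$, and the crucial feature, inherited here from the triangle inequality, is that $\phi_1(k)$ enters its own inequality with coefficient $1+o(1)$ rather than a constant exceeding $1$, so that the $\phi_1$-profile is propagated — for $\alpha<1$ because $\varrho^{\alpha-1}$ gains the factor $\lambda^{\alpha-1}>1$ per step, for $\alpha=1$ because $1+\log(1/\varrho)$ gains only the additive constant $\log(1/\lambda)$. With $\mathcal{E}_k\neq0$ the terms $\norm{\mathcal{E}_k}{2}{0,\varrho_k}$ re-introduce a feedback of $\phi_2$ into $\phi_1$, and the crude bound $\norm{\mathcal{E}_k}{2}{0,\varrho_k}\leq 2M\norm{\weakD u-\xi_k}{2}{0,\varrho_k}$ afforded by $\Omega\leq2M$ is too weak to close it; one must instead produce a \emph{small} multiple of the excess, using that $\weakD u$ is locally in $\Lp{q}$ for some $q>2$ (a Gehring-type consequence of Caccioppoli's inequality \ref{lemma:caccioppoli_inequality} with a Sobolev--Poincar\'e inequality), splitting the set $\{|\weakD u-\xi_k|\leq\phi_2(k)\}$ from its complement, and invoking the concavity of $\Omega^2$, to obtain $\varrho_k^{-\vdim/2}\norm{\mathcal{E}_k}{2}{0,\varrho_k}$ below $\Gamma\varepsilon\,\phi_2(k)$ once $\varrho_k$ lies under a threshold fixed by $\vdim$, $\adim$, $c$, $M$, $\alpha$ (the finitely many larger radii being treated directly, at the cost of bounded factors); for $\alpha=1$ the Dini-type part of the hypothesis gives the sharper $\Omega(\phi_2(k))(1+\log(1/\varrho_k))\leq\Gamma\varepsilon$, exactly what is needed to keep the $\Omega$-errors in the $\phi_1$-inequality compatible with the additive gain $\log(1/\lambda)$ of the profile. (A secondary technical point, already used, is that since $\weakD u$ is a priori only square summable the remainder distribution cannot be controlled in $\Lp{\infty}$; the same higher integrability, or in places the norm $\dnorm{\cdot}{1}{\cdot}$ together with \ref{lemma:l1_estimate}, circumvents this.) Inserting these refinements and using $\omega(\varrho_{k+1})\geq(\lambda/\Gamma)\omega(\varrho_k)$ and $\varrho_{k+1}^{-1}\omega(\varrho_{k+1})\geq\lambda^{\alpha-1}\varrho_k^{-1}\omega(\varrho_k)$ (respectively $=\varrho_k^{-1}\omega(\varrho_k)+\log(1/\lambda)$ for $\alpha=1$), one first chooses $\lambda$ small (to dominate the fixed constant multiplying $\phi_2(k)$ in the excess improvement), then $\varepsilon$ small, then $\gamma_1$ and $\gamma_2$ large — the feedback between them being weak since the $\phi_2$-dependence of the $\phi_1$-inequality carries the factor $\varepsilon$ — so that both inductive bounds persist at step $k+1$.

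Finally, with the iteration closed, $\phi_2(k)\to0$ and $\sum_k|\xi_{k+1}-\xi_k|\leq\Gamma\gamma\sum_k\omega(\varrho_k)<\infty$ with $|\xi_k-\xi_\infty|\leq\Gamma\gamma\omega(\varrho_k)$; since $\big|\fint_{\oball{0}{\varrho_k}}\weakD u\ud\mathscr{L}^\vdim-\xi_k\big|\leq\phi_2(k)$, it follows that $\fint_{\oball{0}{\varrho}}\weakD u\ud\mathscr{L}^\vdim\to\xi_\infty$ as $\varrho\to0+$, hence $0\in\dmn\weakD u$ and $\weakD u(0)=\xi_\infty$ by condition \eqref{item:representative}; and $\big(\fint_{\oball{0}{\varrho_k}}|\weakD u-\weakD u(0)|^2\ud\mathscr{L}^\vdim\big)^{1/2}\leq\phi_2(k)+|\xi_k-\xi_\infty|\leq\Gamma\gamma\omega(\varrho_k)$, which after filling in the intermediate radii is the asserted estimate, with $\Gamma$ depending only on $\vdim$, $\adim$, $c$, $M$, $\alpha$.
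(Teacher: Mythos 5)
Your architecture (two coupled quantities, $\phi_1$ iterated with leading coefficient $1+o(1)$, coefficients re-frozen at every scale, Taylor remainder controlled via concavity of $\Omega^2$) matches the paper's, and your argument closes for $\alpha<1$. But there is a genuine gap in the case $\alpha=1$, which is the case the theorem is really about. Your excess inequality
\begin{gather*}
\phi_2(k+1)\leq\Gamma\lambda\,\phi_2(k)+\Gamma\lambda^{-\vdim/2}\big(\gamma\varrho_k^\alpha+\dots\big)
\end{gather*}
is exactly the classical decay inequality that the introduction shows cannot produce the modulus $\varrho(1+\log(1/\varrho))$: since $\omega(\lambda\varrho)/\omega(\varrho)=\lambda\,\big(1+\log(1/\lambda)/(1+\log(1/\varrho))\big)\to\lambda$ as $\varrho\to0$, propagating $\phi_2(k)\leq\gamma_2\gamma\,\omega(\varrho_k)$ through your inequality would force $\Gamma\leq1$ for large $k$, and your $\Gamma$ (coming from the interior estimate for the $\Upsilon_k$-harmonic function and the energy comparison) genuinely exceeds $1$; choosing $\lambda$ small does not help, because the multiplicative gain of the profile per step is asymptotically only $\lambda$, with no surplus to absorb $\Gamma$. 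The paper's fix is to make the $\phi_2$-inequality route through $\phi_1$ rather than through itself: applying the Caccioppoli inequality \ref{lemma:caccioppoli_inequality} with the affine function $P_\varrho(x)=u_\varrho(a)+Du_\varrho(x-a)$ and Taylor-expanding $u_\varrho$ yields \eqref{eqn:second_iteration}, $\phi_2(\varrho/4)\leq\Delta_6\big(\varrho\,\phi_1(\varrho)+\Omega(\phi_2(\varrho))\phi_2(\varrho)+\phi_3(\varrho)\big)$, in which the only multiple of $\phi_2(\varrho)$ carries the small factor $\Omega(\phi_2(\varrho))$ and the leading term $\varrho\,\phi_1(\varrho)$ inherits the correct logarithmic profile from the coefficient-$1$ iteration of $\phi_1$ (one then takes $\gamma_2$ a large multiple of $\gamma_1$). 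You need this replacement; without it your induction on $\phi_2$ does not close at $\alpha=1$.

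A secondary divergence: you control the Taylor remainder $\mathcal{E}_k$ in $\Lp{2}$ and therefore have to invoke higher integrability of $\weakD u$ (a Gehring-type reverse H\"older inequality), which is machinery the paper never uses and whose quantitative form at $\alpha=1$ (you need the coefficient of $\phi_2(k)$ to decay like $(1+\log(1/\varrho_k))^{-1}$, including the tail term from the level-set splitting, not merely to be small) you have not verified. The paper avoids this entirely by measuring the error in the weak norm $\dnorm{\cdot}{1}{a,\varrho}$ and passing through the $\Lp{1}$ estimate \ref{lemma:l1_estimate}: only $\fint|\weakD(u-\sigma_\varrho)|\,\Omega(|\weakD(u-\sigma_\varrho)|)$ is needed, and Cauchy--Schwarz followed by Jensen for the concave $\Omega^2$ bounds this by $\Omega(\phi_2(\varrho))\,\phi_2(\varrho)$ directly, with the subsequent $C^2$ control of the comparison functions recovered from $\Lp{1}$ closeness via \ref{lemma:interior_c2a_estimate}. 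I recommend adopting that route; it removes both the Gehring step and the unverified rate.
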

\begin{proof}
	Define
	\begin{gather*}
		\Delta_1 = \sup \{ \unitmeasure{\vdim},
		\unitmeasure{\vdim}^{1/2} \} \Gamma_{\ref{lemma:l1_estimate}}
		( \adim, c, M ), \quad \Delta_2 = 2^{\vdim+5} ( \vdim+1
		)^{\vdim+2} ( M/c)^{\vdim+1}, \\
		\Delta_3 = \sup \{ 2^{4+2\vdim}, \adim ( \codim ) \}
		\Gamma_{\ref{lemma:interior_c2a_estimate}} ( \adim, c, M, 1/2
		), \quad \Delta_4 = 2 \Delta_3 \sup \{ \Delta_1, 2^\vdim
		\Delta_2 \}, \\
		\Delta_5 = \unitmeasure{\vdim}^{-1/2} 2^{1+2\vdim}
		\Gamma_{\ref{lemma:caccioppoli_inequality}} ( \vdim, \adim, c,
		M ), \quad \Delta_6 = \Delta_5 \sup \{ 1 + \Delta_1,
		\unitmeasure{\vdim} \}, \\
		\Delta_7 = \Gamma_{\ref{lemma:interior_c2a_estimate}} ( \adim,
		c, M, 1/2 ) \big ( \Delta_1 ( 2 M + 1 ) +
		\unitmeasure{\vdim} \Gamma_{\ref{lemma:poincare}} (
		\adim ) \big ).
	\end{gather*}
	Moreover, define
	\begin{gather*}
		\Delta_8 = 1- 4^{\alpha-1} \quad \text{if $\alpha < 1$},
		\qquad \Delta_8 = \log 4 \quad \text{if $\alpha = 1$}, \\
		\Delta_9 = \sup \{ 2^{\vdim+3} \Delta_7, 2 \Delta_4
		\Delta_8^{-1} \}, \quad \Delta_{10} = \sup \{ 2^{\vdim+2}, 8
		\Delta_6 \}, \\
		\Delta_{11} = \sup \{ s^{1/2} ( 1 + \log (1/s) ) \with 0 < s
		\leq 1 \}, \quad
		\Delta_{12} = \big ( 8 \Delta_6 \big (1+2\Delta_{11}^{1/2}
		\big ) \big
		)^{-1}, \\
		\Delta_{13} = \inf \big \{ \Delta_{12}, \big ( \Delta_4 ( 2
		\Delta_{11}^{1/2}) ( 1 + \Delta_{12}^{-1} ) \big )^{-1}
		\Delta_8/2 \big \}, \\
		\gamma_1 = \sup \{ \Delta_9, \Delta_{10} \Delta_{12} \}, \quad
		\gamma_2 = \Delta_{12}^{-1} \gamma_1, \quad \varepsilon =
		\Delta_{13} \gamma_2^{-1/2}, \\
		\Delta_{14} = ( 1 + 4^{-\alpha} )^{-1} \quad \text{if $\alpha
		< 1$}, \qquad \Delta_{14} = (4/3) + (4/9) \log 4 \quad
		\text{if $\alpha=1$}, \\
		\Delta_{15} = \gamma_2 \Delta_{14}, \quad \Gamma = \gamma_2
		+ 2^{\vdim+1} \Delta_{15}.
	\end{gather*}

	Suppose $a$, $r$, $F$, $\Omega$, $\omega$, $u$, $T$, $\sigma$, and
	$\gamma$ satisfy the hypotheses in the body of the theorem with
	$\varepsilon$.

	Assume $r=1$.

	Define $\sigma_\varrho = \tfint{\oball{a}{\varrho}}{} \weakD u \ud
	\mathscr{L}^\vdim \in \Hom (\rel^\vdim, \rel^\codim )$ for $0 <
	\varrho \leq 1$ and note
	\begin{gather*}
		\tint{\oball{a}{\varrho}}{} | \weakD (u-\sigma_\varrho ) |^2
		\ud \mathscr{L}^\vdim \leq \tint{\oball{a}{\varrho}}{} |\weakD
		(u-\tau) |^2 \ud \mathscr{L}^\vdim
	\end{gather*}
	whenever $0 < \varrho \leq 1$ and $\tau \in \Hom ( \rel^\vdim,
	\rel^\codim )$. Denote by $u_\varrho$ the unique function such that,
	see \ref{lemma:standard_w012},\,\ref{remark:standard_w012},
	\begin{gather*}
		u_\varrho \in \mathscr{E} ( \oball{a}{\varrho}, \rel^\codim
		), \quad u-u_\varrho \in \Sob{0}{1}{2} ( \oball{a}{\varrho},
		\rel^\codim ), \\
		\tint{\oball{a}{\varrho}}{} \left < D \theta (x) \odot \weakD
		u_\varrho (x), D^2 F ( \sigma_\varrho ) \right > \ud
		\mathscr{L}^\vdim x = 0 \quad \text{for $\theta \in
		\mathscr{D} ( \oball{a}{\varrho}, \rel^\codim )$}
	\end{gather*}
	whenever $0 < \varrho \leq 1$. Define $\phi_i : \{ \varrho \with 0 <
	\varrho \leq 1 \} \to \rel$ for $i \in \{ 1,2,3 \}$ and $S_\varrho,
	R_\varrho \in \mathscr{D}' ( \oball{a}{\varrho}, \rel^\codim )$ by
	\begin{gather*}
		\phi_1 ( \varrho ) = \norm{D^2
		u_\varrho}{\infty}{a,\varrho/2}, \quad \phi_2 ( \varrho ) =
		\unitmeasure{\vdim}^{-1/2} \varrho^{-\vdim/2} \norm{\weakD (u -
		\sigma_\varrho)}{2}{a,\varrho}, \\
		\phi_3 ( \varrho ) = \varrho^{-\vdim/2}
		\dnorm{T}{2}{a,\varrho}, \\
		\begin{aligned}
			R_\varrho ( \theta ) & = - \tint{\oball{a}{\varrho}}{}
			\left < D \theta (x) \odot \weakD ( u-u_\varrho ) (x),
			D^2 F ( \sigma_\varrho) \right >  \ud
			\mathscr{L}^\vdim x, \\
			S_\varrho ( \theta ) & = - \tint{\oball{a}{\varrho}}{}
			\left < D \theta (x), DF ( \weakD u (x) ) \right > \ud
			\mathscr{L}^\vdim x
		\end{aligned}
	\end{gather*}
	whenever $\theta \in \mathscr{D} ( \oball{a}{\varrho}, \rel^\codim )$
	and $0 < \varrho \leq 1$. Moreover, define $P_\varrho : \rel^\vdim \to
	\rel^\codim$ by $P_\varrho ( x) = u_\varrho (a) + Du_\varrho ( x-a )$
	for $x \in \rel^\vdim$.

	Next, the following four inequalities valid for $0 < \varrho \leq 1$
	will be established.
	\begin{align}
		\varrho^{-1-\vdim} \norm{u-u_\varrho}{1}{a,\varrho} & \leq
		\Delta_1 \big ( \Omega ( \phi_2 ( \varrho ) ) \phi_2 (
		\varrho) + \phi_3 ( \varrho ) \big ),
		\label{eqn:diff_estimate} \\
		\phi_1(\varrho) & \leq \Delta_7 \varrho^{-1} ( \phi_2(\varrho)
		+  \phi_3 ( \varrho ) ), \label{eqn:final_estimate} \\
		\phi_1 ( \varrho/4 ) & \leq \phi_1 ( \varrho ) + \Delta_4
		\left ( \Omega ( \phi_2 ( \varrho ) ) ( \phi_1 ( \varrho ) +
		\varrho^{-1} \phi_2 ( \varrho ) ) + \varrho^{-1} \phi_3 (
		\varrho ) \right ), \label{eqn:first_iteration} \\
		\phi_2 ( \varrho/4) & \leq \Delta_6 \big ( \varrho \phi_1 (
		\varrho ) + \Omega ( \phi_2 ( \varrho ) ) \phi_2 ( \varrho ) +
		\phi_3 ( \varrho ) \big ).  \label{eqn:second_iteration}
	\end{align}

	To prove \eqref{eqn:diff_estimate}, compute for $\mathscr{L}^\vdim$
	almost all $x \in \oball{a}{\varrho}$ by means of Taylor's formula
	\begin{multline*}
		DF ( \weakD u (x) ) = DF ( \sigma_\varrho ) + ( \weakD
		u(x)-\sigma_\varrho ) \mathop{\lrcorner} D^2F ( \sigma_\varrho
		) \\
		+ ( \weakD u(x) -\sigma_\varrho ) \mathop{\lrcorner}
		\tint{0}{1} D^2F ( t \weakD u (x) + (1-t) \sigma_\varrho ) -
		D^2 F ( \sigma_\varrho) \ud \mathscr{L}^1 t
	\end{multline*}
	and observe for $\theta \in \mathscr{D} ( \oball{a}{\varrho},
	\rel^\codim)$
	\begin{gather*}
		( S_\varrho - R_\varrho ) ( \theta ) = -
		\tint{\oball{a}{\varrho}}{} \left < D \theta (x) \odot \weakD
		(u-\sigma_\varrho) (x), A(x) \right > \ud \mathscr{L}^\vdim x
		\\
		\text{where $A(x) = \tint{0}{1} D^2 F(t \weakD u (x) + (1-t)
		\sigma_\varrho ) - D^2 F ( \sigma_\varrho ) \ud \mathscr{L}^1
		t$},
	\end{gather*}
	hence, one readily estimates by use H\"older's inequality and Jensen's
	inequality
	\begin{gather*}
		\begin{aligned}
			\unitmeasure{\vdim}^{-1} \varrho^{-\vdim}
			\dnorm{R_\varrho-S_\varrho}{1}{a,\varrho} & \leq
			\tfint{\oball{a}{\varrho}}{} | \weakD
			(u-\sigma_\varrho)| ( \Omega \circ | \weakD (
			u-\sigma_\varrho) | ) \ud \mathscr{L}^\vdim \\
			& \leq \Omega ( \phi_2 ( \varrho ) ) \phi_2 ( \varrho
			),
		\end{aligned} \\
		\varrho^{-\vdim} \dnorm{R_\varrho}{1}{a,\varrho} \leq
		\unitmeasure{\vdim} \Omega ( \phi_2 ( \varrho ) ) \phi_2 (
		\varrho ) + \unitmeasure{\vdim}^{1/2} \phi_3 ( \varrho )
	\end{gather*}
	for $0 < \varrho \leq 1$. Consequently, one infers
	\eqref{eqn:diff_estimate} by \ref{lemma:l1_estimate}.

	To prove \eqref{eqn:final_estimate}, note for every affine function $Q
	: \rel^\vdim \to \rel^\codim$
	\begin{gather*}
		\phi_1 ( \varrho ) \leq
		\Gamma_{\ref{lemma:interior_c2a_estimate}} ( \adim, c, M,
		1/2 ) \varrho^{-2-\vdim} ( \norm{u_\varrho-u}{1}{a,\varrho} +
		\norm{u-Q}{1}{a,\varrho} )
	\end{gather*}
	by \ref{lemma:interior_c2a_estimate}, hence \eqref{eqn:diff_estimate}
	and \ref{lemma:poincare} imply \eqref{eqn:final_estimate}.

	To prove \eqref{eqn:first_iteration}, first compute
	\begin{gather*}
		\begin{aligned}
			& \tint{\oball{a}{\varrho/4}}{} \left < D \theta (x)
			\odot D ( u_\varrho-u_{\varrho/4}) (x), D^2 F (
			\sigma_{\varrho/4} ) \right > \ud \mathscr{L}^\vdim x
			\\
			& \qquad = \tint{\oball{a}{\varrho/4}}{} \left < D
			\theta (x) \odot D u_\varrho (x), D^2 F
			(\sigma_{\varrho/4}) - D^2 F ( \sigma_\varrho ) \right
			>
			\ud \mathscr{L}^\vdim x
		\end{aligned}
	\end{gather*}
	for $\theta \in \mathscr{D} ( \oball{a}{\varrho/4}, \rel^\codim )$.
	Therefore, noting
	\begin{gather*}
		| \sigma_{\varrho/4} - \sigma_\varrho | \leq 2^{\vdim+1}
		\phi_2 ( \varrho ), \quad \phi_2 ( \varrho/4 ) \leq 2^\vdim
		\phi_2 ( \varrho ), \quad \phi_3 ( \varrho/4 ) \leq 2^\vdim
		\phi_3 ( \varrho ), \\
		\varrho^{1/2} \mathbf{h}_{1/2} ( D^2 u_\varrho |
		\cball{a}{\varrho/4} ) \leq \Delta_2 \phi_1 ( \varrho )
	\end{gather*}
	by \cite[5.2.5]{MR41:1976}, one uses \ref{lemma:interior_c2a_estimate}
	and \eqref{eqn:diff_estimate} to infer
	\begin{gather*}
		\begin{aligned}
			& \norm{D^2 ( u_\varrho-u_{\varrho/4}
			)}{\infty}{a,\varrho/8} \\
			& \quad \leq \Delta_3 \big ( \varrho^{-2-\vdim}
			\norm{u_\varrho-u_{\varrho/4}}{1}{a,\varrho/4} +
			\Omega ( | \sigma_{\varrho/4}-\sigma_\varrho| )
			\varrho^{1/2} \mathbf{h}_{1/2} ( D^2 u_\varrho |
			\cball{a}{\varrho/4} ) \big ) \\
			& \quad \leq \Delta_4 \left ( \Omega ( \phi_2 (
			\varrho ) ) ( \phi_1 ( \varrho ) + \varrho^{-1}
			\phi_2 ( \varrho ) ) + \varrho^{-1} \phi_3 (
			\varrho ) \right )
		\end{aligned}
	\end{gather*}
	and \eqref{eqn:first_iteration} follows.

	To prove \eqref{eqn:second_iteration}, apply
	\ref{lemma:caccioppoli_inequality} with $r$, $u$, $T$, and $P$
	replaced by $\varrho/2$, $u | \oball{a}{\varrho/2}$, $S_{\varrho/2}$
	and $P_\varrho$ to infer
	\begin{gather*}
		\phi_2 ( \varrho/4 ) \leq \Delta_5 \big ( \varrho^{-1-\vdim}
		( \norm{u-u_\varrho}{1}{a,\varrho} +
		\norm{u_\varrho-P_\varrho}{1}{a,\varrho/2} ) + \phi_3 (
		\varrho ) \big )
	\end{gather*}
	and use \eqref{eqn:diff_estimate} and Taylor's formula to verify
	\eqref{eqn:second_iteration}.

	Next, \emph{it will be shown
	\begin{gather} \label{eqn:iteration}
		\phi_1 ( \varrho ) \leq \gamma \gamma_1 \varrho^{-1} \omega (
		\varrho ), \quad \phi_2 ( \varrho ) \leq \gamma \gamma_2 \omega
		( \varrho )
	\end{gather}
	for $0 < \varrho \leq 1$}. If $1/4 \leq \varrho \leq 1$ then
	\eqref{eqn:iteration} holds for $\varrho$ since by
	\eqref{eqn:final_estimate}
	\begin{gather*}
		\phi_1 ( \varrho ) \leq 2^{\vdim+2} \Delta_7 ( \phi_2 ( 1 ) +
		\phi_3 ( 1 ) ) \leq \gamma \gamma_1 \leq \gamma \gamma_1
		\varrho^{-1} \omega ( \varrho ), \\
		\phi_2 ( \varrho ) \leq 2^\vdim \phi_2 (1) \leq \gamma
		2^{\vdim+2} \varrho^\alpha \leq \gamma
		\gamma_2 \omega ( \varrho ).
	\end{gather*}
	Suppose now \eqref{eqn:iteration} holds for some $0 < \varrho
	\leq 1$. In \emph{case $\alpha < 1$}, noting $\Omega ( \gamma \gamma_2
	) \leq\gamma_2^{1/2} \Omega (\gamma) \leq \Delta_{13} \leq \Delta_{12}$,
	\eqref{eqn:first_iteration} and \eqref{eqn:second_iteration} imply
	\begin{gather*}
		\phi_1 ( \varrho/4 ) \leq \gamma \gamma_1 ( \varrho/4
		)^{\alpha-1} \big ( 4^{\alpha-1} + \Delta_4 \Omega ( \gamma
		\gamma_2 ) ( 1 + \Delta_{12}^{-1} ) + \Delta_4 \gamma_1^{-1}
		\big ) \leq \gamma \gamma_1 ( \varrho/4 )^{\alpha-1}, \\
		\phi_2 ( \varrho/4 ) \leq \gamma \gamma_2 ( \varrho/4 )^\alpha
		\big ( 4 \Delta_6 ( 2 \Delta_{12} + \gamma_{2}^{-1} ) \big )
		\leq \gamma \gamma_2 ( \varrho/4)^\alpha.
	\end{gather*}
	and \eqref{eqn:iteration} holds for $\varrho/4$. In \emph{case
	$\alpha = 1$}, noting
	\begin{gather*}
		\begin{aligned}
			\Omega \big ( \gamma \gamma_2 \varrho ( 1 + \log
			(1/\varrho) ) \big ) & \leq ( \gamma_2
			\Delta_{11})^{1/2} \Omega \big ( \gamma \varrho^{1/2}
			\big ) \\
			& \leq 2 \Delta_{11}^{1/2} \Delta_{13} ( 1 + \log ( 1
			/ \varrho ) )^{-1} \leq 2 \Delta_{11}^{1/2}
			\Delta_{12},
		\end{aligned}
	\end{gather*}
	\eqref{eqn:first_iteration} and \eqref{eqn:second_iteration} imply
	\begin{gather*}
		\begin{aligned}
			\phi_1 ( \varrho/4 )
			& \leq \gamma \gamma_1
			\Big ( ( 1 + \log ( 1/\varrho ) ) \big ( 1 + \Delta_4
			\Omega ( \gamma \gamma_2 \varrho ( 1 + \log
			(1/\varrho) ) ) ( 1 + \Delta_{12}^{-1} ) \big ) \\
			& \quad \phantom{\gamma \gamma_1 \Big ( } + \Delta_4
			\gamma_1^{-1} \Big ) \\
			& \leq \gamma \gamma_1 \big ( ( 1 + \log
			(1/\varrho) ) + 2 \Delta_4 \Delta_{11}^{1/2} ( 1 +
			\Delta_{12}^{-1} ) \Delta_{13} + \Delta_4
			\Delta_9^{-1} \big ) \\
			& \leq \gamma \gamma_1 ( 1 + \log (4/\varrho)),
		\end{aligned} \\ \displaybreak[0]
		\begin{aligned}
			\phi_2 ( \varrho/4 ) & \leq \gamma \gamma_2 \varrho (
			1 + \log (1/\varrho) ) \Delta_6 \big ( \Delta_{12} +
			\Omega ( \gamma \gamma_2 \varrho ( 1 + \log
			(1/\varrho) ) ) + \gamma_2^{-1} \big ), \\
			& \leq \gamma \gamma_2 \omega ( \varrho/4 ) \big ( 4
			\Delta_6 \Delta_{12} ( 1 + 2 \Delta_{11}^{1/2} ) + 4
			\Delta_6 \Delta_{10}^{-1} \big )
			\\
			& \leq \gamma \gamma_2 \omega ( \varrho/4 )
		\end{aligned}
	\end{gather*}
	and \eqref{eqn:iteration} holds for $\varrho/4$. Hence the
	assertion follows in both cases.

	One readily estimates by use of \eqref{eqn:iteration}
	\begin{gather*}
		\tsum{\nu=0}{\infty} \phi_2 ( 4^{-\nu} \varrho ) \leq
		\Delta_{15} \gamma \omega ( \varrho ) \quad \text{for $0 <
		\varrho \leq 1$}
	\end{gather*}
	hence, noting $| \sigma_\varrho - \sigma_s | \leq 2^{\vdim+1} \phi_2 (
	\varrho )$ if $\varrho/4 \leq s \leq \varrho$, one infers the
	existence of $\tau \in \Hom ( \rel^\vdim, \rel^\codim )$ such that
	\begin{gather*}
		| \tau - \sigma_\varrho | \leq 2^{\vdim+1} \Delta_{15} \gamma
		\omega ( \varrho ) \quad \text{for $0 < \varrho \leq 1$}.
	\end{gather*}
	Therefore, noting \eqref{eqn:iteration},
	\begin{gather*}
		\big ( \tfint{\oball{a}{\varrho}}{} | \weakD ( u-\tau ) |^2 \ud
		\mathscr{L}^\vdim \big )^{1/2} \leq \Gamma \gamma \omega (
		\varrho ) \quad \text{for $0 < \varrho \leq 1$},
	\end{gather*}
	in particular $a \in \dmn \weakD u$ with $\tau = \weakD u (a)$.
\end{proof}
\begin{remark}
	A similar but simpler argument shows the following proposition:
	\emph{If $\adim \in \nat$ and $0 < c \leq M < \infty$ then there exist
	positive, finite numbers $\varepsilon$ and $\Gamma$ such that if
	$\adim > \vdim \in \nat$, $a \in \rel^\vdim$, $0 < r < \infty$, $A :
	\oball{a}{r} \to \bigodot^2 \Hom ( \rel^\vdim, \rel^\codim )$ is
	$\mathscr{L}^\vdim \restrict \oball{a}{r}$ measurable,
	\begin{gather*}
		\| A (a) \| \leq M, \quad \text{$A(a)$ is strongly elliptic
		with ellipticity bound $c$}, \\
		\sup \{ (1 + \log (r/|x-a|) ) \| A(x)-A(a) \| \with x \in
		\oball{a}{r} \without \{ a \} \} \leq \varepsilon,
	\end{gather*}
	$u \in \Sob{}{1}{2} ( \oball{a}{r}, \rel^\codim )$, $T \in
	\mathscr{D}' ( \oball{a}{r}, \rel^\codim )$, $0 \leq \gamma < \infty$,
	\begin{gather*}
		\tint{\oball{a}{r}}{} \left < D \theta (x) \odot \weakD u (x),
		A(x) \right > \ud \mathscr{L}^\vdim x = T ( \theta ) \quad
		\text{for $\theta \in \mathscr{D} ( \oball{a}{r}, \rel^\codim
		)$}, \\
		\varrho^{-\vdim/2} \dnorm{T}{2}{a,\varrho} \leq \gamma \quad
		\text{for $0 < \varrho \leq r$}
	\end{gather*}
	then with $\sigma_\varrho = \tfint{\oball{a}{\varrho}}{} \weakD u \ud
	\mathscr{L}^\vdim$
	\begin{gather*}
		\varrho^{-\vdim/2} \norm{\weakD
		(u-\sigma_\varrho)}{2}{a,\varrho} \leq \Gamma \big (
		r^{-\vdim/2} \norm{ \weakD u }{2}{a,r} + \gamma \big ) \quad
		\text{for $0 < \varrho \leq r$}.
	\end{gather*}}
	One may use the example exhibited by Jin, Maz'ya and Van Schaftingen
	in \cite[Proposition 1.6]{MR2543981} to verify that ``$\leq
	\varepsilon$'' cannot be replaced by ``$\leq M$'' even if $\codim=1$
	and $T=0$. Moreover, if $F : \Hom ( \rel^\vdim, \rel^\codim ) \to
	\rel$ is of class $\class{2}$, $\Omega$ is related to $F$ as in
	\ref{miniremark:situation_pde2}, $0 < \beta < 1$, $0 < \delta <
	\infty$, $1 \leq \Delta < \infty$, $v \in \Sob{}{2}{2} ( \oball{a}{r},
	\rel^\codim )$, $v$ is of class $1$, $\mathbf{h}_\beta ( Dv ) \leq
	\Delta \delta r^{-\beta}$, $\sigma = Dv(a)$,
	\begin{gather*}
		\| D^2 F (\sigma) \| \leq M, \quad \text{$D^2 F(\sigma)$ is
		strongly elliptic with ellipticity bound $c$}, \\
		\Omega ( t ) \leq \Delta^{-1/2} \beta \varepsilon ( 1 + \log (
		\delta/t ) )^{-1} \quad \text{for $0 < t \leq \delta$}, \\
		\tint{\oball{a}{r}}{} \left < D \theta (x), DF ( Dv (x) )
		\right > \ud \mathscr{L}^\vdim x = 0 \quad \text{for $\theta
		\in \mathscr{D} ( \oball{a}{r}, \rel^\codim )$},
	\end{gather*}
	then the preceding proposition applies with $A$, $u$, $T$, and
	$\gamma$ replaced by $D^2 F \circ Dv$, $D_iv$, $0$, and $0$ whenever
	$i \in \{ 1, \ldots, \vdim \}$.
\end{remark}
\begin{remark}
	More information and references on the regularity questions for
	elliptic systems may be found in the surveys of Mingione
	\cite{MR2291779} and Duzaar and Mingione \cite{MR2499905}. The latter
	specifically describes the approximation techniques originating from
	De~Giorgi \cite{MR0179651} which are used also in the present paper in
	modified form.
\end{remark}
\section{Estimates concerning the quadratic tilt-excess} \label{sec:iteration}
The estimates of the present section constitute the core of the proof of the
pointwise regularity theorem, Theorem \ref{thm:pointwise_decay}, in Section
\ref{sec:thm}. All constructions are based on the approximation by a
$\qspace_Q ( \rel^\codim )$ valued function of Section \ref{sec:approx}.
First, in \ref{lemma:aux_monotonicity} and \ref{lemma:lower_mass_bound} some
lower mass bounds are derived by a simple adaption of \cite[17.7]{MR87a:49001}
and a straightforward use of Allard's compactness theorem for integral
varifolds, see \cite[6.4]{MR0307015} or \cite[42.8]{MR87a:49001}.  Then, in
\ref{lemma:iteration_prep} several auxiliary estimates concerning the
approximation by a $\qspace_Q ( \rel^\codim)$ valued function in
\ref{lemma:lipschitz_approximation} are carried out. In \ref{lemma:iteration}
the main elliptic estimates are established, see below for a more detailed
description. Finally, a reformulation of a special case of
\ref{lemma:iteration}\,\eqref{item:iteration:prep_tilt} replacing any
reference to the specific approximating functions used there by quantities
more tightly connected to the varifold is provided in \ref{lemma:aux_c2rekt}
for use in \cite{snulmenn:c2.v3}.

Next, an overview of the constructions in \ref{lemma:iteration} in comparison
to the estimates \eqref{eqn:diff_estimate}--\eqref{eqn:iteration} in the proof
of the model case \ref{thm:example_decay} is given. One considers cylinders
centred at a fixed point $a \in \rel^\adim$ with projection $c \in
\rel^\vdim$. For any radius $\varrho$ functions $u_\varrho$ solving a
Dirichlet problem in $\oball{c}{\varrho}$ for a suitable linear elliptic
system with constant coefficients with the ``average'' $g$ of the
approximating $\qspace_Q ( \rel^\codim )$ valued function $f$ as boundary
values are defined. It is readily seen in
\ref{lemma:iteration}\,\eqref{item:iteration:starting_control} that $\phi_1 (
\varrho ) = \norm{D^2 u_\varrho}{\infty}{c,\varrho/2}$, the leading quantity
in the iteration, is controlled by the tilt-excess of the varifold and mean
curvature, compare \ref{thm:example_decay}\,\eqref{eqn:final_estimate}. More
importantly, an estimate of $\norm{u-g}{1}{c,\varrho}$, compare
\ref{thm:example_decay}\,\eqref{eqn:diff_estimate}, mainly in terms of mean
curvature is established in
\ref{lemma:iteration}\,\eqref{item:iteration:l1_estimate} by use of
\ref{lemma:l1_estimate}. Using this estimate, the iteration inequality for
$\phi_1$, compare \ref{thm:example_decay}\,\eqref{eqn:first_iteration},
follows in \ref{lemma:iteration}\,\eqref{item:iteration:key_estimate}. In
order to derive an iteration inequality for the tilt-excess of the varifold,
i.e.~controlling the tilt-excess basically by $\phi_1$ and mean curvature, the
estimate \ref{lemma:iteration}\,\eqref{item:iteration:prep_tilt} is
established. It asserts that $\norm{f \aplus (-P)}{1}{X}$ with $P : \rel^\vdim
\to \rel^\codim$ an affine function and $X$ a large (with respect to
$\mathscr{L}^\vdim$) subset of $\oball{c}{\varrho/2}$ together with mean
curvature essentially controls the tilt-excess. Here the coercive estimates of
Section \ref{sec:coercive}, the interpolation procedure of Section
\ref{sec:interpolation} and the adaptions of the Sobolev Poincar\'e type
estimates of \cite{snulmenn.poincare} in
\ref{lemma:lipschitz_approximation}\,\eqref{item:lipschitz_approximation:poincare}
are used. Assuming that $f$ agrees with its ``average'' $g$ on a large set,
for example because the density of the varifold is at least $Q$ on a large
set, the iteration inequality for the tilt-excess, compare
\ref{thm:example_decay}\,\eqref{eqn:second_iteration}, is then primarily a
consequence of Taylor's expansion, see
\ref{lemma:iteration}\,\eqref{item:iteration:tilt_estimate}.  Finally, both
iteration inequalities are iterated in
\ref{lemma:iteration}\,\eqref{item:iteration:iteration} as long as the
afore-mentioned density condition is satisfied on the scales involved, compare
\ref{thm:example_decay}\,\eqref{eqn:iteration}. As all the preceding estimates
only hold under various side conditions which have to be checked at each
iteration step and the interdependence of the various constants occurring is
not entirely straightforward, the iteration procedure is presented in some
detail to ease verification.

Finally, it should be mentioned that the current iteration procedure has to be
carried out within a fixed coordinate systems as differences of functions
corresponding to different iteration steps have to be computed, see the
Introduction and \ref{lemma:iteration}\,\eqref{item:iteration:key_estimate}.
Though this fact does not pose a serious difficulty it nevertheless
contributes significantly to the level of technicality, see for example the
definition of $J_4$ and
\ref{lemma:iteration_prep}\,\eqref{item:iteration_prep:estimate_bad_set}.
\begin{lemma} \label{lemma:aux_monotonicity}
	Suppose $\vdim, \adim \in \nat$, $\vdim \leq \adim$, $a \in
	\rel^\adim$, $0 < r < \infty$, $V \in \Var_\vdim ( \oball{a}{r} )$, $a
	\in \spt \| V \|$, $1 \leq p < \infty$, $0 < \alpha \leq 1$, $0 \leq M
	< \infty$, and
	\begin{gather*}
		\measureball{\| \delta V \|}{\cball{a}{\varrho}} \leq M
		\| V \| (\cball{a}{\varrho})^{1-1/p}
		\varrho^{\vdim/p + \alpha - 1} r^{-\alpha} \quad \text{for $0
		< \varrho < r$}.
	\end{gather*}

	Then
	\begin{gather*}
		\big ( \varrho^{-\vdim} \measureball{\| V
		\|}{\oball{a}{\varrho}} \big )^{1/p} + M p^{-1} \alpha^{-1}
		\varrho^\alpha r^{-\alpha}
	\end{gather*}
	is monotone increasing in $\varrho$ for $0 < \varrho < r$. In
	particular, $0 \leq \density^\vdim ( \| V \|, a ) < \infty$.
\end{lemma}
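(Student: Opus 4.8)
The statement is a monotonicity-type result of the kind that goes back to Allard, so the plan is to mimic the classical monotonicity formula computation but carry the mean-curvature term using the postulated bound rather than just integrating it crudely. First I would reduce to the case $r=1$ by scaling. The basic object is the function $\alpha(\varrho)=\measureball{\|V\|}{\cball{a}{\varrho}}$; by the standard first-variation/coarea identity (as in \cite[17.6]{MR87a:49001} or the argument behind \ref{app:lemma:mydiss}) one has, for $\mathscr{L}^1$ almost all $0<\varrho<1$, a differential inequality of the shape
\begin{gather*}
	\frac{d}{d\varrho}\big(\varrho^{-\vdim}\alpha(\varrho)\big) \geq -\varrho^{-\vdim}\measureball{\|\delta V\|}{\cball{a}{\varrho}} \cdot (\text{bounded factor}),
\end{gather*}
more precisely the monotonicity identity gives $\frac{d}{d\varrho}(\varrho^{-\vdim}\alpha(\varrho))\geq -\varrho^{-\vdim}\measureball{\|\delta V\|}{\cball{a}{\varrho}}$ up to the usual nonnegative term one discards. (One should be a little careful here: the clean inequality is obtained after discarding the tangential-part term, exactly as in the derivation of \ref{app:lemma:mydiss}.)

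**Inserting the hypothesis.** Into this I would substitute the assumed bound $\measureball{\|\delta V\|}{\cball{a}{\varrho}} \leq M\,\alpha(\varrho)^{1-1/p}\varrho^{\vdim/p+\alpha-1}$. Writing $\mu(\varrho)=\varrho^{-\vdim}\alpha(\varrho)$, so that $\alpha(\varrho)=\varrho^\vdim\mu(\varrho)$ and $\alpha(\varrho)^{1-1/p}=\varrho^{\vdim(1-1/p)}\mu(\varrho)^{1-1/p}$, the right-hand side becomes
\begin{gather*}
	-\varrho^{-\vdim}\cdot M\,\varrho^{\vdim-\vdim/p}\mu(\varrho)^{1-1/p}\varrho^{\vdim/p+\alpha-1} = -M\,\mu(\varrho)^{1-1/p}\varrho^{\alpha-1}.
\end{gather*}
Thus $\mu'(\varrho)\geq -M\mu(\varrho)^{1-1/p}\varrho^{\alpha-1}$ for a.e.\ $\varrho$. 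Dividing by $\mu(\varrho)^{1-1/p}$ (using $a\in\spt\|V\|$ to guarantee $\mu>0$, hence the division is legitimate — or more carefully, truncating to avoid $\mu=0$ and passing to the limit), one gets $p\,\frac{d}{d\varrho}\mu(\varrho)^{1/p}\geq -M\varrho^{\alpha-1}$, i.e.
\begin{gather*}
	\frac{d}{d\varrho}\Big(\mu(\varrho)^{1/p} + \tfrac{M}{p\alpha}\varrho^\alpha\Big)\geq 0,
\end{gather*}
which is precisely the asserted monotonicity (with $r=1$). Undoing the scaling reinstates the factor $r^{-\alpha}$ in front of $\varrho^\alpha$.

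**Conclusion and the absolute-continuity subtlety.** From monotonicity of $\varrho\mapsto (\varrho^{-\vdim}\alpha(\varrho))^{1/p}+\frac{M}{p\alpha}\varrho^\alpha r^{-\alpha}$ it follows immediately that $\lim_{\varrho\to 0+}\varrho^{-\vdim}\alpha(\varrho)$ exists in $[0,\infty]$, is finite because the quantity is bounded above by its value at any fixed $\varrho_0<r$, and since $\cball{a}{\varrho}\supset\oball{a}{\varrho}$ and $\|V\|(\partial\cball{a}{\varrho})=0$ for all but countably many $\varrho$, this limit equals $\unitmeasure{\vdim}\density^\vdim(\|V\|,a)$ up to the normalising constant; in any case $\density^\vdim(\|V\|,a)$ exists and is finite. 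I expect the only genuinely delicate point to be the justification of the differential inequality for $\mu$ at the level of the not-necessarily-absolutely-continuous function $\alpha$: one must argue that $\alpha$ is the sum of an absolutely continuous part and a nondecreasing singular part, use that $\alpha'(\varrho)$ (the a.c.\ density) appears with a favourable sign, and invoke \cite[2.9.19]{MR41:1976} (as is done in the proof of \ref{lemma:capacity}) to integrate the inequality for $\mu^{1/p}$ despite the singular part only helping. Everything else is the routine monotonicity computation plus bookkeeping of exponents, which I would not spell out in full.
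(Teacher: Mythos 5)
Your proposal is correct in its essential content and arrives at exactly the differential inequality that drives the paper's proof, namely $\frac{d}{d\varrho}\big(\mu(\varrho)^{1/p}\big)\geq -Mp^{-1}\varrho^{\alpha-1}r^{-\alpha}$ after substituting the hypothesis; the exponent bookkeeping is right and the use of $a\in\spt\|V\|$ to permit the division is the correct justification. Where you and the paper genuinely differ is in how the differentiation and subsequent integration of the mass ratio are legitimised. You work with the sharp ratio $\mu(\varrho)=\varrho^{-\vdim}\measureball{\| V\|}{\cball{a}{\varrho}}$, which is only of bounded variation, so you must (as you acknowledge) combine the almost-everywhere differential inequality with control of the singular part of the derivative and an appeal to \cite[2.9.19]{MR41:1976} — most cleanly done by writing $\mu^{1/p}=\varrho^{-\vdim/p}\alpha^{1/p}$, applying \cite[2.9.19]{MR41:1976} to the nondecreasing function $\alpha^{1/p}$ as in the proof of \ref{lemma:capacity}, and handling the product with the smooth factor; note also that the a.e.\ differential inequality for the sharp ratio itself already hides an approximation of the cut-off vector field. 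The paper short-circuits all of this: it fixes a smooth nonincreasing $\phi$ with $\phi=1$ on $(-\infty,\lambda]$ and $\phi=0$ on $[1,\infty)$ and works throughout with $f(\varrho)=\varrho^{-\vdim}\int\phi(\varrho^{-1}|z-a|)\ud\|V\|z$, which is of class $1$ in $\varrho$, so the first-variation inequality, the multiplication by $p^{-1}f^{1/p-1}$ and the integration between two radii are all immediate; the characteristic function is recovered and $\lambda\to1$ only in the final limit, where no regularity is needed. Your route buys a statement phrased directly in terms of the sharp ratio at the cost of the BV bookkeeping you flag; the paper's mollification buys a completely elementary integration step at the cost of a limit argument at the end. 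Neither of the points you identify as delicate is a gap — they are precisely what the mollification is designed to avoid.
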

\begin{proof}
	Suppose $0 < \lambda < 1$ and $\phi \in \mathscr{E}^0 ( \rel )$ with
	$\phi' \leq 0$ and $\phi (t) = 1$ for $-\infty < t \leq \lambda$ and
	$\phi (t) = 0$ for $1 \leq t < \infty$ and $f :
	\classification{\rel}{\varrho}{0 < \varrho < r} \to \rel$ is defined
	by $f ( \varrho ) = \varrho^{-\vdim} \int \phi ( \varrho^{-1} |z-a| )
	\ud \| V \| z$ for $0 < \varrho < r$. Then one obtains as in
	\cite[17.7]{MR87a:49001} that
	\begin{gather*}
		\begin{aligned}
			& f'(\varrho) \geq \varrho^{-\vdim-1} ( \delta V )_z
			\big ( \phi ( \varrho^{-1} |z-a| ) (z-a) \big ) \\
			& \quad \geq - M ( \varrho^{-\vdim} \measureball{\| V
			\|}{\oball{z}{\varrho}} )^{1-1/p} \varrho^{\alpha-1}
			r^{-\alpha} \geq - M \big ( \lambda^{-\vdim} f (
			\lambda^{-1} \varrho ) \big )^{1-1/p}
			\varrho^{\alpha-1} r^{-\alpha}
		\end{aligned}
	\end{gather*}
	for $0 < \varrho < \lambda r$, hence multiplying by $p^{-1}
	f(\varrho)^{1/p-1}$ and integrating yields
	\begin{gather*}
		f ( t )^{1/p} - f (s)^{1/p} \geq - Mp^{-1}r^{-\alpha}
		\tint{s}{t} (\lambda^{-\vdim} f (
		\varrho/\lambda)/f(\varrho))^{1-1/p} \varrho^{\alpha-1} \ud
		\mathscr{L}^1 \varrho
	\end{gather*}
	for $0 < s < t < \lambda r$. Thus, approximating the characteristic
	function of $\classification{\rel}{t}{t<1}$ by such $\phi$ and letting
	$\lambda$ tend to $1$ implies the conclusion.
\end{proof}
\begin{lemma} \label{lemma:lower_mass_bound}
	Suppose $\adim, Q \in \nat$, $0 < \alpha \leq 1$, $1 \leq p < \infty$,
	and $0 < \delta \leq 1$.

	Then there exists a positive, finite number $\varepsilon$ with the
	following property.

	If $\adim > \vdim \in \nat$, $a \in \rel^\adim$, $0 < r < \infty$, $U
	= \classification{\oball{a}{r}}{z}{| \perpproject{T} (z-a) | < \delta
	r}$, $V \in \IVar_\vdim ( U )$, $\psi$ is related to $V$ and $p$ as in
	\ref{miniremark:situation}, $T \in \grass{\adim}{\vdim}$,
	\begin{gather*}
		\density^{\ast \vdim} ( \| V \|, a ) \geq Q-1+\delta, \quad
		\tint{}{} | \project{S} - \project{T} | \ud V (z,S) \leq
		\varepsilon r^\vdim, \\
		\varrho^{1-\vdim/p} \psi ( U \cap \cball{a}{\varrho} )^{1/p}
		\leq \varepsilon ( \varrho/r )^\alpha \quad \text{whenever $0
		< \varrho < r$},
	\end{gather*}
	then
	\begin{gather*}
		\| V \| ( U  ) \geq ( Q-\delta ) \unitmeasure{\vdim} r^\vdim.
	\end{gather*}
\end{lemma}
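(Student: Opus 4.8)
The plan is to argue by contradiction: a sequence violating the conclusion is normalised and shown, via Allard's compactness theorem, to converge in the sense of varifolds to a stationary integral varifold whose slice structure (\ref{app:lemma:planes}) forces the opposite inequality, while the almost‑monotonicity of \ref{lemma:aux_monotonicity} keeps the limiting density large on small balls. Concretely, assume the lemma fails for some $\adim$, $Q$, $\alpha$, $p$, $\delta$. Then there are $\vdim_j < \adim$, $a_j$, $r_j$, $T_j$, $V_j$ and the associated $\psi_j$ satisfying the hypotheses with $\varepsilon$ replaced by $\varepsilon_j \to 0+$, yet $\| V_j \| ( U_j ) < ( Q - \delta ) \unitmeasure{\vdim} r_j^\vdim$ where $U_j = \classification{\oball{a_j}{r_j}}{z}{| \perpproject{T_j} ( z - a_j ) | < \delta r_j}$. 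Since $\vdim_j$ runs through finitely many values one may take $\vdim_j = \vdim$; a homothety reduces to $a_j = 0$, $r_j = 1$; and compactness of $\grass{\adim}{\vdim}$ lets one assume $T_j \to T$. Fixing $0 < \eta < \delta$ and setting $U' = \classification{\oball{0}{1-\eta}}{z}{| \perpproject{T} (z) | < \delta - \eta}$, one has $\cball{0}{\varrho} \subset U_j$ for $0 < \varrho < \delta$ and, since $\project{T_j} \to \project{T}$, also $U' \subset U_j$ once $j$ is large.

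For such $j$ the restrictions $V_j \restrict U' \in \IVar_\vdim ( U' )$ have $\| V_j \restrict U' \| ( U' ) < Q \unitmeasure{\vdim}$; moreover, letting $\varrho \to 1-$ in the curvature hypothesis gives $\psi_j ( U_j ) \leq \varepsilon_j$ when $p = 1$ and $\psi_j ( U_j ) \leq \varepsilon_j^p$ when $p > 1$, hence H\"older's inequality yields $\| \delta V_j \| ( U_j ) \leq \varepsilon_j ( Q \unitmeasure{\vdim} )^{1-1/p} \to 0$. By Allard's compactness theorem for integral varifolds, see \cite[6.4]{MR0307015} or \cite[42.8]{MR87a:49001}, one passes to a subsequence with $V_j \restrict U' \to V$ for some $V \in \IVar_\vdim ( U' )$. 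Letting $j \to \infty$ one obtains $\| \delta V \| ( U' ) = 0$ and, since $\project{T_j} \to \project{T}$ and $\| V_j \| ( U_j )$ stays bounded, $\int | \project{S} - \project{T} | \ud V (z,S) \leq \liminf_j \big ( \varepsilon_j + | \project{T_j} - \project{T} | \, \| V_j \| ( U_j ) \big ) = 0$ by lower semicontinuity; thus $V$ is stationary, $S = T$ for $V$ almost all $(z,S)$, and $\| V \| ( U' ) \leq \liminf_j \| V_j \| ( U' ) \leq ( Q - \delta ) \unitmeasure{\vdim}$.

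For the matching lower bound, \ref{lemma:aux_monotonicity} applies to $V_j$ in $\oball{0}{\delta}$ — with $M$ replaced by $\varepsilon_j \delta^\alpha$, the passage from the curvature hypothesis using $\cball{0}{\varrho} \subset U_j$ and H\"older's inequality — so the associated quantity is monotone increasing in $\varrho$ on $(0,\delta)$ and has a limit as $\varrho \to 0+$; combined with $\density^{\ast \vdim} ( \| V_j \|, 0 ) \geq Q - 1 + \delta$ this forces $\| V_j \| ( \oball{0}{\varrho} ) \geq \varrho^\vdim \big ( ( ( Q - 1 + \delta ) \unitmeasure{\vdim} )^{1/p} - c \varepsilon_j \big )^p$ for $0 < \varrho < \delta$, with $c$ depending only on $\delta$, $p$, $\alpha$. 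Letting $j \to \infty$ and using upper semicontinuity of mass on compact subsets of $U'$ gives $\| V \| ( \cball{0}{\varrho} ) \geq ( Q - 1 + \delta ) \unitmeasure{\vdim} \varrho^\vdim$ for $0 < \varrho < \delta - \eta$, so $0 \in \spt \| V \|$. Since $V$ is a stationary integral varifold in $U'$ whose tangent planes are $\mathscr{H}^\vdim$ almost all equal to $T$, the reasoning of \ref{app:lemma:planes} — which is local, its hypotheses holding on every ball contained in $U'$ — shows $\density^\vdim ( \| V \|, \cdot )$ is a locally constant $\nat \cup \{ 0 \}$ valued function on each connected section $( \zeta + T ) \cap U'$, $\zeta \in T^\perp$. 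Applied to the section $T \cap U' = T \cap \oball{0}{1-\eta}$ through $0$, whose multiplicity $\density^\vdim ( \| V \|, 0 )$ is an integer not smaller than $Q - 1 + \delta$, hence at least $Q$, this yields $\| V \| ( U' ) \geq \density^\vdim ( \| V \|, 0 ) \unitmeasure{\vdim} ( 1 - \eta )^\vdim \geq Q \unitmeasure{\vdim} ( 1 - \eta )^\vdim$, whence $Q ( 1 - \eta )^\vdim \leq Q - \delta$ — impossible once $\eta$ is chosen small in terms of $Q$, $\delta$, $\vdim$. This contradiction proves the lemma.

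The main obstacle is not a single estimate but the organisation of the limit: one must leave enough room to invoke the constancy theorem \ref{app:lemma:planes} on the cylinder $U'$ (rather than on a ball, on which $V$ is not even stationary), and one must make sure that the mass inherited by the limiting varifold is the full contribution $Q \unitmeasure{\vdim} ( 1 - \eta )^\vdim$ of the central slice, not merely the $Q \unitmeasure{\vdim} \varrho^\vdim$ visible on a small ball about $0$. The shrinkage parameter $\eta$ is precisely the device that absorbs the rotations $T_j \to T$ without spoiling the sharp constant $Q - \delta$.
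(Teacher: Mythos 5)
Your proof is correct and follows essentially the same route as the paper's: a compactness contradiction argument normalising the sequence, using Lemma \ref{lemma:aux_monotonicity} with H\"older's inequality for the lower density bound and Lemma \ref{app:lemma:planes} to force $\| V \| ( U ) \geq Q \unitmeasure{\vdim}$ in the limit. The only inessential difference is that the paper disposes of the varying planes by rotating so that $T_i = T$ from the start, which makes your $\eta$-shrinkage of the cylinder unnecessary.
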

\begin{proof}
	If the lemma were false for some $\adim$, $Q$, $\alpha$, $p$, and
	$\delta$, there would exist a sequence $\varepsilon_i$ with
	$\varepsilon_i \downarrow 0$ as $i \to \infty$ and sequences
	$\vdim_i$, $a_i$, $r_i$, $U_i$, $V_i$, $\psi_i$, and $T_i$ showing
	that $\varepsilon= \varepsilon_i$ does not have the asserted property.

	One could assume for some $\vdim \in \nat$, $a \in \rel^\adim$, $T \in
	\grass{\adim}{\vdim}$
	\begin{gather*}
		\vdim_i = \vdim, \quad a_i = a, \quad r_i = 1, \quad T_i = T
	\end{gather*}
	whenever $i \in \nat$. Abbreviating $U =
	\classification{\oball{a}{1}}{z}{ | \perpproject{T} (z-a) | < \delta
	}$ one would deduce for large $i$
	\begin{gather*}
		\| V_i \| ( U \cap \oball{a}{\varrho} ) \geq ( Q-1+\delta/2 )
		\unitmeasure{\vdim} \varrho^\vdim \quad \text{whenever $0 <
		\varrho < \delta$}
	\end{gather*}
	from \ref{lemma:aux_monotonicity} in conjunction with H\"older's
	inequality. Clearly, also
	\begin{gather*}
		\| V_i \| ( U ) \leq ( Q - \delta ) \unitmeasure{\vdim} \quad
		\text{for $i \in \nat$}.
	\end{gather*}
	By Allard's compactness theorem for integral varifolds, see
	e.g.~\cite[6.4]{MR0307015} or \cite[42.8]{MR87a:49001}, possibly
	passing to a subsequence, there would exist $V \in \IVar_\vdim ( U )$
	such that $\delta V = 0$ and
	\begin{gather*}
		V_i ( f ) \to V (f) \quad \text{as $i \to \infty$ for $f \in
		\ccspace{U \times \grass{\adim}{\vdim}}$}, \\
		S = T \quad \text{for $V$ almost all $(z,S) \in U \times
		\grass{\adim}{\vdim}$},
	\end{gather*}
	hence, noting \ref{app:lemma:planes},
	\begin{gather*}
		\density^\vdim ( \| V \|, a ) \geq Q, \quad
		\unitmeasure{\vdim} Q \leq \| V \| ( U ) \leq
		\unitmeasure{\vdim} ( Q - \delta ),
	\end{gather*}
	a contradiction.
\end{proof}
\begin{lemma} \label{lemma:iteration_prep}
	Suppose the hypotheses of \ref{lemma:lipschitz_approximation} are
	satisfied with $h=3r$, i.e.~suppose $\vdim, \adim, Q \in \nat$,
	$\vdim< \adim$, $0 < L < \infty$, $1 \leq M < \infty$, and $0 <
	\delta_i \leq 1$ for $i \in \{1,2,3,4,5\}$, $\varepsilon =
	\varepsilon_{\ref{lemma:lipschitz_approximation}} ( \adim, Q, L, M,
	\delta_1, \delta_2, \delta_3, \delta_4, \delta_5 )$, $0 < r < \infty$,
	$T = \im \pp^\ast$,
	\begin{gather*}
		U = \eqclassification{\rel^\vdim \times
		\rel^\codim}{(x,y)}{\dist ((x,y), \cylinder{T}{0}{r}{3r}) <
		2r},
	\end{gather*}
	$V \in \IVar_\vdim ( U )$, $\| \delta V \|$ is a Radon measure,
	\begin{gather*}
		( Q - 1 + \delta_1 ) \unitmeasure{\vdim} r^\vdim \leq \| V \| (
		\cylinder{T}{0}{r}{3r} ) \leq ( Q + 1 - \delta_2 )
		\unitmeasure{\vdim} r^\vdim, \\
		\| V \| ( \cylinder{T}{0}{r}{3r+\delta_4 r} \without
		\cylinder{T}{0}{r}{3r-2\delta_4 r}) \leq ( 1 - \delta_3 )
		\unitmeasure{\vdim} r^\vdim, \\
		\| V \| ( U ) \leq M \unitmeasure{\vdim} r^\vdim,
	\end{gather*}
	$0 < \delta \leq \varepsilon$, $B$ denotes the set of all $z
	\in \cylinder{T}{0}{r}{3r}$ with $\density^{\ast \vdim} ( \| V \|, z) >
	0$ such that
	\begin{gather*}
		\text{either} \quad
		\measureball{\| \delta V \|}{\cball{z}{\varrho}} >
		\delta \, \| V \| ( \cball{z}{\varrho} )^{1-1/\vdim}
		\quad \text{for some $0 < \varrho < 2 r$}, \\
		\text{or} \quad {\textstyle\int_{\cball{z}{\varrho} \times
		\grass{\adim}{\vdim}}} | \project{S} - \project{T} | \ud V
		(\xi,S) > \delta \, \measureball{\| V
		\|}{\cball{z}{\varrho}} \quad \text{for some $0 < \varrho <
		2 r$},
	\end{gather*}
	$A = \cylinder{T}{0}{r}{3r} \without B$, $A (x) =
	\classification{A}{z}{\pp (z) = x}$ for $x \in \rel^\vdim$, $X_1$ is
	the set of all $x \in \rel^\vdim \cap \cball{0}{r}$ such that
	\begin{gather*}
		{\textstyle\sum_{z \in A(x)}} \density^\vdim ( \| V \|, z )
		= Q \quad \text{and} \quad \text{$\density^\vdim ( \| V \|,
		z ) \in \nat \cup \{0\}$ for $z \in A(x)$},
	\end{gather*}
	and $f : X_1 \to \qspace_Q ( \rel^{\codim} )$ is characterised by the
	requirement
	\begin{gather*}
		\density^\vdim ( \| V \|, z) = \density^0 ( \| f (x) \|, \qq
		(z) ) \quad \text{whenever $x \in X_1$ and $z \in A(x)$}.
	\end{gather*}
	Suppose additionally:
	\begin{enumerate}
		\item \label{item:iteration_prep:initial} Suppose $L \leq
		\delta_4 / 8$, $\delta \leq \inf \{ 1, ( 2
		\isoperimetric{\vdim} )^{-1} \}$, $a \in \Int
		\cylinder{T}{0}{r}{3r}$, $c = \pp (a)$, and $0 < \kappa <
		\infty$.
		\item \label{item:iteration_prep:extension} Suppose $F :
		\rel^\vdim \to \qspace_Q ( \rel^\codim )$ with $F | X_1 = f$
		and $\Lip F \leq
		\Gamma_{\eqref{item:iteration_prep:extension}} \Lip f$ where
		$\Gamma_{\eqref{item:iteration_prep:extension}}$ is a
		positive, finite number depending only on $\codim$ and $Q$, see
		\ref{app:thm:bilip_embedding}. Moreover, let $g =
		\boldsymbol{\eta}_Q \circ F$.
		\item Suppose either $p = \vdim = 1$ or $1 \leq p < \vdim$ and
		$p$, $\psi$ are related to $V$ as in
		\ref{miniremark:situation}.
		\item Define $J = \{ \varrho \with 0 < \varrho < \infty \}$
		and
		$\phi_2 : J \times \grass{\adim}{\vdim} \to \rel$
		and $\phi_3 : J \to \rel$, $\phi_4 : J \to \rel$ by
		\begin{gather*}
			\begin{aligned}
				\phi_2 ( \varrho, R ) & = \big (
				\varrho^{-\vdim} {\textstyle\int_{(U \cap
				\cylinder{T}{a}{\varrho}{\delta_4\varrho})
				\times \grass{\adim}{\vdim}}} | \project{S} -
				\project{R} |^2 \ud V (z,S) \big )^{1/2}
				\span & \span & 
				\\
				\phi_3 ( \varrho ) & = \varrho^{1-\vdim/p}
				\psi ( U \cap
				\cylinder{T}{a}{\varrho}{\delta_4\varrho}
				)^{1/p} \span & \span &
				\\
				\phi_4 ( \varrho ) & = \delta^{-\vdim
				p/(\vdim-p)} \phi_3 ( \varrho )^{\vdim p /
				(\vdim-p)} & \text{if $\vdim > 1$}, \\
				\phi_4 ( \varrho ) & = 0 &
				\text{if $\vdim = 1$},
			\end{aligned}
		\end{gather*}
		whenever $\varrho \in J$, $R \in
		\grass{\adim}{\vdim}$.\footnote{The symbol $\phi_1$ will
		denote the leading iteration quantity introduced in
		\ref{lemma:iteration}\,\eqref{item:iteration:leading_quantity}.}
		\item For $0 < \varrho < \infty$ suppose $T_\varrho \in
		\grass{\adim}{\vdim}$ is defined such that
		\begin{gather*}
			\phi_2 ( \varrho, T_\varrho ) \leq \phi_2 ( \varrho, R
			) \quad \text{whenever $R \in \grass{\adim}{\vdim}$}.
		\end{gather*}
		\item Define
		\begin{align*}
			J_0 & = \classification{J}{\varrho}{0 < \varrho \leq r -
			| \pp (a) |,|\qq(a)| + \delta_4 \varrho \leq 3r}, \\
			J_1 & = \classification{J }{\varrho}{\pp \lIm
			T_\varrho \rIm = \rel^\vdim} \\
			J_2 & = \classification{J}{\varrho}{\| \delta V \| ( U
			\cap \cylinder{T}{a}{\varrho}{\delta_4\varrho} ) \leq
			\kappa \varrho^{\vdim-1}}, \\
			J_3 & =
			\classification{J}{\varrho}{{\textstyle\int_{(U \cap
			\cylinder{T}{a}{\varrho}{\delta_4\varrho}) \times
			\grass{\adim}{\vdim}}} | \project{S} - \project{T} |
			\ud V (z,S) \leq \kappa \varrho^\vdim}, \\
			J_4 & = \classification{J}{\varrho}{\text{$\varrho +
			t/\delta_4 \in J_2 \cap J_3$ for $0 \leq t < 2r$}}, \\
			J_5 & = \classification{J_0}{\varrho}{\| V \| (
			\cylinder{T}{a}{\varrho}{\delta_4 \varrho/4} ) \geq
			\unitmeasure{\vdim} ( Q-1/4) \varrho^\vdim}.
		\end{align*}
		and $T_\varrho = \sigma_\varrho \in \Hom ( \rel^\vdim ,
		\rel^\codim )$ for $ \varrho \in J_1$.
		\item Define $B_{a,\varrho}$, and $C_{a,\varrho}$ for $\varrho
		\in J_0$ as in
		\ref{lemma:lipschitz_approximation}\,\eqref{item:lipschitz_approximation:measurability},
		i.e.
		\begin{gather*}
			B_{a,\varrho} = \cylinder{T}{a}{\varrho}{\delta_4
			\varrho} \cap B, \quad C_{a,\varrho} = \cball{\pp
			(a)}{\varrho} \without ( X_1 \without \pp \lIm
			B_{a,\varrho} \rIm ),
		\end{gather*}
		and $H$ as in
		\ref{lemma:lipschitz_approximation}\,\eqref{item:lipschitz_approximation:poincare},
		i.e.~$H$ denotes the set of all $z \in
		\cylinder{T}{0}{r}{3r}$ such that
		\begin{gather*}
			\measureball{\| \delta V \|}{\oball{z}{2r}} \leq
			\varepsilon \, \| V \| ( \oball{z}{2r} )^{1-1/\vdim},
			\\
			{\textstyle\int_{\oball{z}{2r} \times
			\grass{\adim}{\vdim}}} | \project{S} - \project{T} |
			\ud V (z,S) \leq \varepsilon \, \measureball{\| V
			\|}{\oball{z}{2r}}, \\
			\measureball{\| V \|}{\cball{z}{\varrho}} \geq
			\delta_5 \unitmeasure{\vdim} \varrho^\vdim \quad
			\text{for $0 < \varrho < 2r$}.
		\end{gather*}
		\intertextenum{Then the following six conclusions hold:}
		\item \label{item:iteration_prep:estimate_bad_set} There
		exists a positive finite number
		$\varepsilon_{\eqref{item:iteration_prep:estimate_bad_set}}$
		depending only on $\vdim$, $\delta_4$, and $\delta$ with the
		following property.
		
		If $R \in \grass{\adim}{\vdim}$, $| \project{R} - \project{T}
		| \leq \delta / 2$, $\varrho \in J_0 \cap J_4$, $\kappa \leq
		\varepsilon_{\eqref{item:iteration_prep:estimate_bad_set}}$,
		then
		\begin{gather*}
			\varrho^{-\vdim} \| V \|  ( B_{a,\varrho} ) \leq
			2^\vdim \besicovitch{\adim} \left ( 4 \delta^{-2}
			\phi_2 ( 2 \varrho, R )^2 + \phi_4 ( 2 \varrho )
			\right ).
		\end{gather*}
		Moreover, $4 \delta^{-2} \phi_2 ( 2\varrho, R )^2$ may be
		replaced by $\delta^{-1} \kappa$.
		\item \label{item:iteration_prep:H_is_H} There exists a
		positive, finite number
		$\varepsilon_{\eqref{item:iteration_prep:H_is_H}}$ depending
		only on $\vdim$, $\delta_4$, $\delta_5$, and $\varepsilon$
		with the following property.

		If $8r/\delta_4 \in J_2 \cap J_3$ and $\kappa \leq
		\varepsilon_{\eqref{item:iteration_prep:H_is_H}}$, then $H$ is
		the set of all $z \in \cylinder{T}{0}{r}{3r}$ such that
		\begin{gather*}
			\measureball{\| V \|}{\cball{z}{t}} \geq \delta_5
			\unitmeasure{\vdim} t^\vdim \quad \text{whenever $0 <
			t < 2r$}.
		\end{gather*}
		\item \label{item:iteration_prep:lower_mass_bound} If $0 <
		\alpha \leq 1$ and $0 < \delta_6 \leq 1$ then there exists a
		positive, finite number
		$\varepsilon_{\eqref{item:iteration_prep:lower_mass_bound}}$
		depending only on $\adim$, $Q$, $\delta_4$, $p$, $\alpha$, and
		$\delta_6$ with the following property.

		If $\density^{\ast \vdim} ( \| V \|, a ) \geq Q-1+\delta_6$,
		$\varrho \in J_0 \cap J_3$, $\kappa \leq
		\varepsilon_{\eqref{item:iteration_prep:lower_mass_bound}}$,
		and
		\begin{gather*}
			\phi_3 ( t ) \leq
			\varepsilon_{\eqref{item:iteration_prep:lower_mass_bound}}
			(t/\varrho)^\alpha \quad \text{for $0 < t < \varrho$},
		\end{gather*}
		then $\varrho \in J_5$.
		\item \label{item:iteration_prep:side_conditions} There exists
		a positive, finite number
		$\varepsilon_{\eqref{item:iteration_prep:side_conditions}}$
		depending only on $\adim$, $\delta_4$, and $\delta$ with the
		following three properties.

		\begin{enumerate}
			\item
			\label{item:item:iteration_prep:side_conditions:upper_bound}
			If $\varrho \in J_0 \cap J_4$, $\kappa \leq
			\varepsilon_{\eqref{item:iteration_prep:side_conditions}}$,
			and $\phi_4 ( 2 \varrho ) \leq 2^{-\vdim}
			\besicovitch{\adim}^{-1} \unitmeasure{\vdim} (1/8)$,
			then
			\begin{gather*}
				\| V \| (
				\cylinder{T}{a}{\varrho}{\delta_4\varrho} )
				\leq ( Q + 1/2 ) \unitmeasure{\vdim}
				\varrho^\vdim.
			\end{gather*}
			
			\item
			\label{item:item:iteration_prep:side_conditions:inclusion} If, additionally to the conditions of
			\eqref{item:item:iteration_prep:side_conditions:upper_bound},
			$\varrho \in J_5$, then
			\begin{gather*}
				\graph_Q f | \cball{c}{\varrho} \subset
				\cylinder{T}{a}{\varrho}{\delta_4 \varrho/ 2
				}.
			\end{gather*}

			\item
			\label{item:item:iteration_prep:side_conditions:estimate}
			If, additionally to the conditions of
			\eqref{item:item:iteration_prep:side_conditions:upper_bound}
			and
			\eqref{item:item:iteration_prep:side_conditions:inclusion},
			$0 < \lambda < \infty$,
			\begin{gather*}
				\kappa \leq 2^{-\vdim}
				\besicovitch{\adim}^{-1} \unitmeasure{\vdim}
				\lambda ( 2
				\Gamma_{\ref{lemma:lipschitz_approximation}\eqref{item:lipschitz_approximation:estimate_b}}
				( Q, \vdim ) )^{-1} \delta, \\
				\phi_4 ( 2 \varrho ) \leq 2^{-\vdim}
				\besicovitch{\adim}^{-1} \unitmeasure{\vdim}
				\lambda ( 2
				\Gamma_{\ref{lemma:lipschitz_approximation}\eqref{item:lipschitz_approximation:estimate_b}}
				( Q, \vdim ))^{-1},
			\end{gather*}
			then
			\begin{gather*}
				\mathscr{L}^\vdim ( C_{a,\varrho} ) \leq
				\lambda \unitmeasure{\vdim} \varrho^\vdim.
			\end{gather*}
		\end{enumerate}
		\item \label{item:iteration_prep:graph_tilt} If $\varrho \in
		J_4 \cap J_5$, $\kappa \leq \min \{
		\varepsilon_{\eqref{item:iteration_prep:estimate_bad_set}} (
		\vdim, \delta_4, \delta ),
		\varepsilon_{\eqref{item:iteration_prep:side_conditions}} (
		\adim, \delta_4, \delta ) \}$, and
		\begin{gather*}
			\sigma \in \Hom ( \rel^\vdim, \rel^\codim ), \quad \|
			\sigma \| \leq \adim^{-1/2} \delta/ 2, \quad
			\sigma = R \in \grass{\adim}{\vdim},
		\end{gather*}
		then
		\begin{gather*}
			\varrho^{-\vdim} {\textstyle\int_{\oball{c}{\varrho}}}
			| AF (x) \aplus ( - \sigma ) |^2 \ud \mathscr{L}^\vdim
			x \leq \Gamma_{\eqref{item:iteration_prep:graph_tilt}}
			\big ( \phi_2 ( 2 \varrho, R )^2 + \phi_4 ( 2 \varrho
			) \big )
		\end{gather*}
		where $\Gamma_{\eqref{item:iteration_prep:graph_tilt}}$ is a
		positive, finite number depending only on $\adim$, $Q$, and
		$\delta$.
		\item \label{item:iteration_prep:ch_planes} If $ \varrho \in
		J_0 \cap J_1$, $\varrho/8 \leq s \leq t \leq \varrho$, $0 <
		\lambda \leq 1$, and
		\begin{gather*}
			\| \sigma_\varrho \| \leq \adim^{-1/2} /4,
			\quad \phi_2 ( \varrho, T_\varrho ) \leq \lambda^{1/2}
			2^{-2\vdim-3} \unitmeasure{\vdim}^{1/2},  \\
			\| V \| ( \cylinder{T}{a}{s}{\delta_4 s} ) \geq
			\lambda \unitmeasure{\vdim} s^\vdim,
		\end{gather*}
		then $t \in J_1$ and
		\begin{gather*}
			\| \sigma_\varrho - \sigma_t \| \leq \lambda^{-1/2}
			2^{2\vdim+2} \unitmeasure{\vdim}^{-1/2} \phi_2 (
			\varrho, T_\varrho ).
		\end{gather*}
	\end{enumerate}
\end{lemma}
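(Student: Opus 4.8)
The plan is to establish the six conclusions \eqref{item:iteration_prep:estimate_bad_set}--\eqref{item:iteration_prep:ch_planes} one at a time, in each case feeding the structural description of $A$, $B$, $X_1$ and of the approximating function $f$ supplied by \ref{lemma:lipschitz_approximation} and \ref{app:lemma:lipschitz_approximation} into the covering theorems \cite[2.8.5, 2.8.14]{MR41:1976} and the estimates assembled in Sections \ref{sec:coercive}--\ref{sec:elliptic}. After normalising $r=1$, the recurring mechanism is that $\varrho\in J_4$ makes the $\kappa$-smallness of $\|\delta V\|$ and of $\int|\project{S}-\project{T}|\ud V$ over every concentric cuboid between $\cylinder{T}{a}{\varrho}{\delta_4\varrho}$ and radius $2$ available, so that monotonicity and the multilayer monotonicity lemmas \ref{app:lemma:multilayer_monotonicity_offset}, \ref{app:lemma:inverse_multilayer_monotonicity} apply on the relevant scales. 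For \eqref{item:iteration_prep:estimate_bad_set} I would cover $B_{a,\varrho}$ by closed balls $\cball{z}{\varrho_z}$, $z\in B_{a,\varrho}$, each witnessing the defining violation of the tilt or the first variation smallness; monotonicity applied at $z$ — using that $\density^{\ast\vdim}(\|V\|,z)>0$ forces $\density^\vdim_\ast(\|V\|,z)\geq1$, together with the $\kappa$-control of $\|\delta V\|$ on the enclosing cuboids coming from $\varrho\in J_4$ — confines the witnessing scales to $\varrho_z\leq\delta_4\varrho$ once $\kappa$ is small, so all these balls lie in $\cylinder{T}{a}{2\varrho}{2\delta_4\varrho}$. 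Passing to a $\besicovitch{\adim}$-fold disjointed subfamily and summing, the balls with large tilt ratio contribute — after replacing $\project{T}$ by $\project{R}$ using $|\project{R}-\project{T}|\leq\delta/2$ in the inequality $\int_{\cball{z}{\varrho_z}}|\project{S}-\project{R}|\ud V>(\delta/2)\|V\|(\cball{z}{\varrho_z})$ followed by Cauchy--Schwarz — at most $2^\vdim\besicovitch{\adim}\cdot4\delta^{-2}\phi_2(2\varrho,R)^2$, while the balls with $\|\delta V\|(\cball{z}{\varrho_z})>\delta\|V\|(\cball{z}{\varrho_z})^{1-1/\vdim}$ contribute, after rewriting $\|\delta V\|$ through $\mathbf{h}$ when $p>1$, using H\"older's inequality and the superadditivity of $s\mapsto s^{\vdim/(\vdim-p)}$, at most $2^\vdim\besicovitch{\adim}\phi_4(2\varrho)$ when $\vdim>1$ (and nothing when $\vdim=1$, since then $p=1$ and those balls are empty); the replacement of $4\delta^{-2}\phi_2(2\varrho,R)^2$ by $\delta^{-1}\kappa$ is immediate from $\varrho\in J_4$ since this places $2\varrho$ in $J_3$.

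Conclusions \eqref{item:iteration_prep:H_is_H}--\eqref{item:iteration_prep:side_conditions} follow by combining \eqref{item:iteration_prep:estimate_bad_set} with cited facts. For \eqref{item:iteration_prep:H_is_H}, the hypothesis $8r/\delta_4\in J_2\cap J_3$ and the inclusion $\oball{z}{2r}\subset\cylinder{T}{a}{8r/\delta_4}{8r}$ for $z\in\cylinder{T}{0}{r}{3r}$ render the first two defining conditions of $H$ automatic once $\kappa$ is small relative to $\varepsilon$ and $\delta_5$ — one passes from $\|V\|(\oball{z}{2r})^{1-1/\vdim}$ on the right using the density lower bound that is the surviving third condition — so $H$ reduces to that condition. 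For \eqref{item:iteration_prep:lower_mass_bound} I would apply \ref{lemma:lower_mass_bound} with its radius taken to be $\varrho$ and its $\delta$ taken $\leq\inf\{1/4,\delta_6,\delta_4/4\}$, and match the remaining parameters to $\adim$, $Q$, $p$, $\alpha$: the hypothesis $\varrho\in J_0\cap J_3$ supplies its tilt bound and $\phi_3(t)\leq\varepsilon_{\eqref{item:iteration_prep:lower_mass_bound}}(t/\varrho)^\alpha$, after absorbing fixed powers of $\delta_4$ incurred when comparing balls with the cuboids defining $\phi_3$, supplies its decay hypothesis on $\psi$, while its conclusion yields exactly $\|V\|(\cylinder{T}{a}{\varrho}{\delta_4\varrho/4})\geq(Q-1/4)\unitmeasure{\vdim}\varrho^\vdim$, i.e.\ $\varrho\in J_5$. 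For \eqref{item:iteration_prep:side_conditions}, part \eqref{item:item:iteration_prep:side_conditions:upper_bound} comes from the coarea estimate $(1-\vdim\varepsilon^2)\|V\|(\cylinder{T}{a}{\varrho}{\delta_4\varrho})\leq\|V\|(B_{a,\varrho})+Q\unitmeasure{\vdim}\varrho^\vdim$ — derived as in the proof of \ref{lemma:lipschitz_approximation}\,\eqref{item:lipschitz_approximation:estimate_b} via \ref{app:miniremark:tilt} — together with \eqref{item:iteration_prep:estimate_bad_set} and the smallness of $\phi_4(2\varrho)$ and $\kappa$; part \eqref{item:item:iteration_prep:side_conditions:inclusion} is the Remark following \ref{lemma:lipschitz_approximation} applied at $a$, using $\varrho\in J_5$ to supply the threshold $Q-1/4$ and $L\leq\delta_4/8$; and part \eqref{item:item:iteration_prep:side_conditions:estimate} is \ref{lemma:lipschitz_approximation}\,\eqref{item:lipschitz_approximation:estimate_b}, whose hypotheses are guaranteed by \eqref{item:item:iteration_prep:side_conditions:upper_bound} and \eqref{item:item:iteration_prep:side_conditions:inclusion}, followed by \eqref{item:iteration_prep:estimate_bad_set} in the $\delta^{-1}\kappa$ form and the stated smallness of $\kappa$ and $\phi_4(2\varrho)$.

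For \eqref{item:iteration_prep:graph_tilt} I would split $\int_{\oball{c}{\varrho}}|AF\aplus(-\sigma)|^2\ud\mathscr{L}^\vdim$ over $X_1$ and its complement. On $X_1$ the identification $\Tan^\vdim(\|V\|,(x,y))=\Tan(\graph_Q\ap Af(x),(x,y))$ from \ref{app:lemma:lipschitz_approximation}\,\eqref{app:item:item:lipschitz_approximation:misc:apf0}, valid for $\|V\|$ almost all graph points, combined with \ref{miniremark:projections}, \ref{app:miniremark:tilt} and the area formula (pushing forward by $\pp$), bounds the $X_1$ contribution by a multiple of $\varrho^\vdim\phi_2(2\varrho,R)^2$; on $\cball{c}{\varrho}\without X_1\subset C_{a,\varrho}$ the integrand is bounded by a constant depending only on $\adim$, $Q$, $\delta$ (using $L\leq\delta_4/8\leq1/8$, $\|\sigma\|\leq\adim^{-1/2}\delta/2$ and \ref{app:thm:bilip_embedding}) while $\mathscr{L}^\vdim(\cball{c}{\varrho}\without X_1)$ is controlled through \eqref{item:iteration_prep:estimate_bad_set}, so both contributions are $\lesssim\varrho^\vdim(\phi_2(2\varrho,R)^2+\phi_4(2\varrho))$. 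Finally \eqref{item:iteration_prep:ch_planes}, which I expect to be the main obstacle: from $\varrho/8\leq t\leq\varrho$ one gets $\phi_2(t,R)^2\leq8^\vdim\phi_2(\varrho,R)^2$ for every $R$, so the $\|V\|$ averages of $|\project{S}-\project{T_\varrho}|^2$ and, by minimality of $T_t$, of $|\project{S}-\project{T_t}|^2$ over $\cylinder{T}{a}{t}{\delta_4 t}$ are both $\lesssim\phi_2(\varrho,T_\varrho)^2$; since $\|V\|(\cylinder{T}{a}{s}{\delta_4 s})\geq\lambda\unitmeasure{\vdim}s^\vdim$ with $s\leq t$, Chebyshev's inequality yields a point $(z,S)$ at which $|\project{S}-\project{T_\varrho}|$ and $|\project{S}-\project{T_t}|$ are both $\lesssim\lambda^{-1/2}\phi_2(\varrho,T_\varrho)$, whence $\|\project{T_\varrho}-\project{T_t}\|\lesssim\lambda^{-1/2}\phi_2(\varrho,T_\varrho)$. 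Together with $\|\project{T_\varrho}-\project{T}\|\leq\|\sigma_\varrho\|\leq\adim^{-1/2}/4$ (first inequality of \ref{miniremark:projections}) and the threshold $\phi_2(\varrho,T_\varrho)\leq\lambda^{1/2}2^{-2\vdim-3}\unitmeasure{\vdim}^{1/2}$ this forces $\|\project{T_t}-\project{T}\|<1$, so $T_t$ is a graph over $\rel^\vdim$ and $t\in J_1$, whereupon the second inequality of \ref{miniremark:projections} (with $S$, $S_1$, $S_2$ replaced by $T$, $T_\varrho$, $T_t$, using $1-\|\project{T_\varrho}-\project{T}\|^2\geq15/16$ and the resulting bound on $\|\sigma_t\|$) converts the bound on $\|\project{T_\varrho}-\project{T_t}\|$ into the asserted bound on $\|\sigma_\varrho-\sigma_t\|$; the delicate point is verifying that precisely this quantitative threshold on $\phi_2(\varrho,T_\varrho)$ suffices to keep the factors $(1-\|\cdot\|^2)^{-1/2}$ bounded throughout and to close with the stated constant $\lambda^{-1/2}2^{2\vdim+2}\unitmeasure{\vdim}^{-1/2}$.

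The main difficulty overall is less any single estimate than the compatible choice of the smallness functions $\varepsilon_{\eqref{item:iteration_prep:estimate_bad_set}}$, $\varepsilon_{\eqref{item:iteration_prep:H_is_H}}$, $\varepsilon_{\eqref{item:iteration_prep:lower_mass_bound}}$, $\varepsilon_{\eqref{item:iteration_prep:side_conditions}}$ and of the various thresholds imposed on $\kappa$ and on $\phi_4(2\varrho)$ in \eqref{item:iteration_prep:estimate_bad_set}, \eqref{item:iteration_prep:side_conditions}, \eqref{item:iteration_prep:graph_tilt} and \eqref{item:iteration_prep:ch_planes}, which must be arranged — respecting the dependences on the parameters asserted in the statement — so that the resulting inequalities can be fed without loss into the simultaneous iteration carried out in \ref{lemma:iteration}. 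For this reason I would present the argument with the side conditions of each conclusion spelled out explicitly rather than tracking implicit constants.
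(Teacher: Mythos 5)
Your plan for \eqref{item:iteration_prep:estimate_bad_set}, \eqref{item:iteration_prep:H_is_H}, \eqref{item:iteration_prep:lower_mass_bound}, \eqref{item:item:iteration_prep:side_conditions:upper_bound}, \eqref{item:item:iteration_prep:side_conditions:estimate}, \eqref{item:iteration_prep:graph_tilt} and \eqref{item:iteration_prep:ch_planes} follows the paper's proof in all essentials: the same Besicovitch covering of $B_{a,\varrho}$ with the witnessing scales confined by \ref{app:lemma:good_point} and membership in $J_2\cap J_3$ at the enclosing radii, the same reduction of $H$ to its third defining condition, the same application of \ref{lemma:lower_mass_bound}, the same split of the integral in \eqref{item:iteration_prep:graph_tilt} over $X_1$ and $C_{a,\varrho}$, and, for \eqref{item:iteration_prep:ch_planes}, a Chebyshev variant of the paper's $\Lp{2}$ triangle-inequality comparison of $\eqproject{T_t}$ and $\eqproject{T_\varrho}$ on the common cuboid $\cylinder{T}{a}{s}{\delta_4 s}$, followed by the same use of \ref{miniremark:projections}.

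The one step that fails as written is \eqref{item:item:iteration_prep:side_conditions:inclusion}. The Remark following \ref{lemma:lipschitz_approximation} obtains $\graph_Q f|\cball{\pp(a)}{\varrho}\subset\cylinder{T}{a}{\varrho}{\delta_4\varrho/2}$ from the hypotheses $a\in A$ and $\density^\vdim(\|V\|,a)=Q$: these give, via \ref{lemma:lipschitz_approximation}\,\eqref{item:lipschitz_approximation:para}, that $\pp(a)\in X_1$ and $f(\pp(a))=Q\Lbrack\qq(a)\Rbrack$, and only this anchoring of the graph at $\qq(a)$ lets $\Lip f\leq L$ localise it. In the present lemma $a$ is an arbitrary point of $\Int\cylinder{T}{0}{r}{3r}$ -- it need not lie in $A$, nor even in $\spt\|V\|$ -- so the Remark's hypotheses are not available, and $\varrho\in J_5$ cannot ``supply'' them: the mass lower bound is what the Remark \emph{produces}, whereas here it is the \emph{input} from which the graph inclusion must be extracted, so the implication you need runs in the opposite direction. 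The correct argument (structurally the same coarea computation you already invoke for \eqref{item:item:iteration_prep:side_conditions:upper_bound}) is by contradiction: if some sheet of $f$ over $\cball{c}{\varrho}$ escaped $\cylinder{T}{a}{\varrho}{\delta_4\varrho/2}$, then $\Lip f\leq L\leq\delta_4/8$ would force, for every $x\in X_1\cap\cball{c}{\varrho}$, at least one point of $\spt f(x)$ outside $\cball{\qq(a)}{\delta_4\varrho/4}$, so that the multiplicity of $f(x)$ inside is at most $Q-1$; the coarea formula then bounds $\int_{\cylinder{T}{a}{\varrho}{\delta_4\varrho/4}\cap A}\lVert{\textstyle\bigwedge_\vdim}(\pp|S)\rVert\ud V(z,S)$ by $(Q-1)\unitmeasure{\vdim}\varrho^\vdim$, and combining this with \ref{app:miniremark:tilt}, $\varrho\in J_3$ and the bad-set bound \eqref{item:iteration_prep:estimate_bad_set} yields $\|V\|(\cylinder{T}{a}{\varrho}{\delta_4\varrho/4})\leq(Q-1/2)\unitmeasure{\vdim}\varrho^\vdim$, contradicting $\varrho\in J_5$. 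With this repair the remainder of your outline goes through.
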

\begin{proof} [Proof of \eqref{item:iteration_prep:estimate_bad_set}]
	Let
	\begin{gather*}
		\varepsilon_{\eqref{item:iteration_prep:estimate_bad_set}} =
		\inf \big \{ (1/2) ( 4 \isoperimetric{\vdim} \vdim )^{1-\vdim}
		( \delta_4)^{\vdim-1} \delta, ( 4 \isoperimetric{\vdim} \vdim
		)^{-\vdim} ( \delta_4)^\vdim \delta \big \}.
	\end{gather*}

	Define the sets $B_{a,\varrho}'$ and $B_{a,\varrho}''$ by
	\begin{align*}
		B_{a,\varrho}' & =
		\classification{B_{a,\varrho}}{z}{\text{$\measureball{\|
		\delta V \|}{\cball{z}{t}} > \delta \, \| V \| ( \cball{z}{t}
		)^{1-1/\vdim}$ for some $0 < t < 2r$}}, \\
		B_{a,\varrho}'' & = B_{a,\varrho} \without B_{a,\varrho}'
	\end{align*}
	and $D$ to be the set of all $z \in \spt \| V \|$ such that
	\begin{gather*}
		\limsup_{t \pluslim{0}} \frac{\measureball{\| \delta V
		\|}{\cball{z}{t}}}{\| V \| ( \cball{z}{t} )^{1-1/\vdim}} > 0.
	\end{gather*}
	Note $\| V \| ( D ) = 0$ by \cite[2.9.5]{MR41:1976} or
	\cite[4.7]{MR87a:49001}.

	First, the following assertion will be shown. \emph{If $\vdim = 1$
	then $B_{a,\varrho}' \without D = \emptyset$ and if $\vdim > 1$ then
	for $z \in B_{a,\varrho}' \without D$ there exists $0 < t < \delta_4
	\varrho$ such that
	\begin{gather*}
		\measureball{\| V \|}{\cball{z}{t}} \leq \delta^{-\vdim
		p/(\vdim-p)} \psi ( \cball{z}{t} )^{\vdim/(\vdim-p)}.
	\end{gather*}}
	For this purpose assume $z \in B_{a,\varrho}' \without D$ and define
	\begin{gather*}
		t = \inf \big \{ s \with \measureball{\| \delta V
		\|}{\cball{z}{s}} > \delta \, \| V \| ( \cball{z}{s}
		)^{1-1/\vdim} \big \}.
	\end{gather*}
	One infers $0 < t < 2r$ and
	\begin{gather*}
		\measureball{\| \delta V \|}{\cball{z}{t}} \geq \delta \, \| V
		\| ( \cball{z}{t} )^{1-1/\vdim} \geq ( \delta / \Delta_1 )
		t^{\vdim - 1}
	\end{gather*}
	by \ref{app:lemma:good_point} where $\Delta_1 = ( 2
	\isoperimetric{\vdim} \vdim )^{\vdim-1}$ since $\delta \leq ( 2
	\isoperimetric{\vdim} )^{-1}$. Noting
	\begin{gather*}
		\varrho + t / \delta_4 \in J_2, \quad \cball{z}{t} \subset U
		\cap \cylinder{T}{a}{\varrho+t/\delta_4}{\delta_4 ( \varrho +
		t / \delta_ 4 )},
	\end{gather*}
	one obtains
	\begin{gather*}
		( \delta/ \Delta_1 ) t^{\vdim-1} \leq \kappa ( \varrho + t /
		\delta_4 )^{\vdim-1}, \quad \vdim > 1, \\
		t \leq ( \varrho + t / \delta_4 ) ( \kappa \Delta_1/ \delta
		)^{1/(\vdim-1)} < ( \varrho + t / \delta_4 ) \delta_4 / 2,
		\quad t < \delta_4 \varrho.
	\end{gather*}
	The assertion now follows from the definition of $t$ in conjunction
	with H\"older's inequality.
	
	The preceding assertion yields
	\begin{align*}
		\| V \| ( B_{a,\varrho}' ) & = 0 \quad \text{if $\vdim = 1$},
		\\
		\| V \| ( B_{a,\varrho}' ) & \leq \delta^{-\vdim p/(\vdim-p)}
		\besicovitch{\adim} \psi ( U \cap
		\cylinder{T}{a}{2\varrho}{2\delta_4\varrho})^{\vdim/(\vdim-p)}
		\quad \text{if $\vdim > 1$};
	\end{align*}
	in fact if $\vdim > 1$ there exist countable disjointed families
	$F_1, \ldots, F_{\besicovitch{\adim}}$ of closed balls such that
	\begin{gather*}
		B_{a,\varrho}' \without D \subset {\textstyle\bigcup\bigcup}
		\{F_i \with i = 1, \ldots, \besicovitch{\adim} \}, \\
		\| V \| (S) \leq \Delta_2 \psi (S)^{\vdim/ (\vdim-p)}, \quad S
		\subset U \cap \cylinder{T}{a}{2\varrho}{2\delta_4\varrho}
	\end{gather*}
	whenever $S \in \bigcup \{ F_i \with i = 1, \ldots,
	\besicovitch{\adim} \}$ where $\Delta_2 = \delta^{-\vdim p/ ( \vdim-p
	)}$, hence
	\begin{gather*}
		\| V \| ( B_{a,\varrho}' ) = \| V \| ( B_{a,\varrho}' \without
		D ) \leq \Delta_2 {\textstyle\sum_{i=1}^{\besicovitch{\adim}}}
		{\textstyle\sum_{S \in F_i}} \psi (S)^{\vdim/ (\vdim-p)} \\
		\leq \Delta_2 {\textstyle\sum_{i=1}^{\besicovitch{\adim}}}
		\big ( {\textstyle\sum_{S \in F_i}} \psi (S) \big )^{\vdim /
		(\vdim-p)} \leq \Delta_2 \besicovitch{\adim} \psi ( U \cap
		\cylinder{T}{a}{2\varrho}{2\delta_4\varrho} )^{\vdim / (
		\vdim-p )}.
	\end{gather*}

	Next, it will be shown that \emph{for $z \in B_{a,\varrho}''$
	there exists $0 < t \leq \delta_4 \varrho$ such that
	\begin{align*}
		\measureball{\| V \|}{\cball{z}{t}} & \leq 4 \delta^{-2}
		{\textstyle\int_{\cball{z}{t} \times \grass{\adim}{\vdim}}} |
		\project{S} - \project{R} |^2 \ud V (z,S), \\
		\measureball{\| V \|}{\cball{z}{t}} & < \delta^{-1}
		{\textstyle\int_{\cball{z}{t} \times \grass{\adim}{\vdim}}} |
		\project{S} - \project{T} | \ud V (z,S).
	\end{align*}}
	In fact, one can take any $0 < t < 2r$ satisfying the last inequality
	since this firstly implies, using \ref{app:lemma:good_point},
	$\delta \leq ( 2 \isoperimetric{\vdim} )^{-1}$ and $\varrho + t/
	\delta_4 \in J_3$,
	\begin{gather*}
		( 2 \isoperimetric{\vdim}\vdim )^{-\vdim} t^\vdim \leq
		\measureball{\| V \|}{\cball{z}{t}} < \delta^{-1}
		{\textstyle\int_{\cball{z}{t} \times \grass{\adim}{\vdim}}} |
		\project{S} - \project{T} | \ud V (z,S) \\
		\leq \delta^{-1} {\textstyle\int_{(U \cap
		\cylinder{T}{a}{\varrho+t/\delta_4}{\delta_4
		(\varrho+t/\delta_4)}) \times \grass{\adim}{\vdim}}} |
		\project{S} - \project{T} | \ud V (z,S) \leq ( \kappa / \delta
		) ( \varrho + t / \delta_4)^\vdim, \\
		t \leq ( 2 \isoperimetric{\vdim} \vdim ) ( \kappa / \delta
		)^{1/\vdim} ( \varrho + t / \delta_4 ) \leq ( \varrho + t /
		\delta_4 ) \delta_4 / 2, \quad t \leq \delta_4 \varrho,
	\end{gather*}
	and secondly, using $| \project{R} - \project{T} | \leq \delta/2$ and
	H\"older's inequality,
	\begin{align*}
		\measureball{\| V \|}{\cball{z}{t}} & \leq 2 \delta^{-1}
		{\textstyle\int_{\cball{z}{t} \times \grass{\adim}{\vdim}}} |
		\project{S} - \project{R} | \ud V (z,S), \\
		\measureball{\| V \|}{\cball{z}{t}} & \leq 4 \delta^{-2}
		{\textstyle\int_{\cball{z}{t} \times \grass{\adim}{\vdim}}} |
		\project{S} - \project{R} |^2 \ud V (z,S).
	\end{align*}

	Since $2 \varrho \in J_3$ and
	\begin{gather*}
		\cball{z}{t} \subset U \cap
		\cylinder{T}{a}{2\varrho}{2\delta_4\varrho} \quad
		\text{whenever $z \in B_{a,\varrho}''$, $0 < t \leq \delta_4
		\varrho$},
	\end{gather*}
	the assertion implies
	\begin{align*}
		\| V \| ( B_{a,\varrho}'' ) & \leq 4 \delta^{-2}
		\besicovitch{\adim} {\textstyle\int_{( U \cap
		\cylinder{T}{a}{2\varrho}{2\delta_4\varrho} ) \times
		\grass{\adim}{\vdim}}} | \project{S} - \project{R} |^2 \ud V
		(z,S), \\
		\| V \| ( B_{a,\varrho}'' ) & \leq \besicovitch{\adim}
		\delta^{-1} \kappa ( 2 \varrho )^\vdim.
	\end{align*}
	and the conclusion follows.
\end{proof}
\begin{proof} [Proof of \eqref{item:iteration_prep:H_is_H}]
	Defining
	\begin{gather*}
		\varepsilon_{\eqref{item:iteration_prep:H_is_H}} = \varepsilon
		\inf \{ 4^{1-\vdim} ( \delta_4 )^{\vdim-1} ( \delta_5
		\unitmeasure{\vdim} )^{1-1/\vdim}, 4^{-\vdim} ( \delta_4
		)^\vdim \delta_5 \unitmeasure{\vdim} \},
	\end{gather*}
	one estimates for $z \in \cylinder{T}{0}{r}{3r}$
	\begin{gather*}
		\begin{aligned}
			\measureball{\| \delta V \|}{\oball{z}{2r}} & \leq \|
			\delta V \| ( U \cap \cylinder{T}{a}{4r}{8r} ) \\
			& \leq \kappa ( 8r/ \delta_4)^{\vdim -1} \leq
			\varepsilon \big ( \delta_5 \unitmeasure{\vdim} ( 2r
			)^\vdim \big )^{1-1/\vdim},
		\end{aligned} \\
		\begin{aligned}
			{\textstyle\int_{\oball{z}{2r} \times
			\grass{\adim}{\vdim}}} | \project{S} - \project{T} |
			\ud V (z,S) & \leq {\textstyle\int_{( U \cap
			\cylinder{T}{a}{4r}{8r} ) \times
			\grass{\adim}{\vdim}}} | \project{S} - \project{T} |
			\ud V ( z,S ) \\
			& \leq \kappa ( 8r/\delta_4 )^\vdim \leq \varepsilon
			\delta_5 \unitmeasure{\vdim} ( 2r )^\vdim
		\end{aligned}
	\end{gather*}
	and the conclusion follows.
\end{proof}
\begin{proof} [Proof of \eqref{item:iteration_prep:lower_mass_bound}]
	Defining
	$\varepsilon_{\eqref{item:iteration_prep:lower_mass_bound}} = (
	\delta_4 )^\adim \varepsilon_{\ref{lemma:lower_mass_bound}} ( \adim,
	Q, \alpha, p, \inf \{ \delta_6, \delta_4/4 \} )$ and noting
	\begin{gather*}
		\begin{aligned}
			& \psi ( \classification{\cball{a}{t}}{z}{\dist (z-a,
			T ) < \delta_4 \varrho / 4} )^{1/p} \leq \psi (
			\cylinder{T}{a}{t}{\delta_4 \inf \{t/\delta_4,
			\varrho/4\}})^{1/p} \\
			& \qquad \leq
			\varepsilon_{\eqref{item:iteration_prep:lower_mass_bound}}
			( t / \delta_4 )^{\vdim/p+\alpha - 1}
			\varrho^{-\alpha} \leq
			\varepsilon_{\eqref{item:iteration_prep:lower_mass_bound}}
			( \delta_4 )^{-\vdim / p} t^{\vdim/p+\alpha-1}
			\varrho^{-\alpha}
		\end{aligned}
	\end{gather*}
	for $0 < t < \varrho$, the assertion follows from
	\ref{lemma:lower_mass_bound} with $\delta$, $r$ replaced by $\inf \{
	\delta_6, \delta_4 / 4 \}$, $\varrho$.
\end{proof}
\begin{proof} [Proof of \eqref{item:iteration_prep:side_conditions}]
	Define $\varepsilon_{\eqref{item:iteration_prep:side_conditions}}$ to
	be the infimum of all numbers
	\begin{gather*}
		\inf \big \{ 2^{-\adim} \besicovitch{\adim}^{-1}
		\unitmeasure{i} (1/8) \delta, 2^{-3} \adim^{-1}
		\unitmeasure{i},
		\varepsilon_{\eqref{item:iteration_prep:estimate_bad_set}} (
		i, \delta_4, \delta ) \big \}
	\end{gather*}
	corresponding to $\adim > i \in \nat$.
	
	If the conclusion of
	\eqref{item:item:iteration_prep:side_conditions:inclusion} were not
	true, one would infer
	\begin{gather*}
		\spt f ( x ) \without \cball{\qq(a)}{\delta_4\varrho/4}
		\neq \emptyset, \\
		{\textstyle\sum_{y \in
		\cball{\qq(a)}{\delta_4 \varrho/ 4} \cap \spt f ( x )}}
		\density^0 ( \| f ( x ) \|, y ) \leq Q-1
	\end{gather*}
	whenever $x \in \dmn f | \cball{c}{\varrho}$ by
	\eqref{item:iteration_prep:initial} and
	\ref{lemma:lipschitz_approximation}\,\eqref{item:lipschitz_approximation:lip}
	and therefore by
	\ref{lemma:lipschitz_approximation}\,\eqref{item:lipschitz_approximation:yz}\,\eqref{item:lipschitz_approximation:ab}
	and the coarea formula, see e.g.~\cite[3.2.22\,(3)]{MR41:1976} or
	\cite[12.7]{MR87a:49001}, one would obtain
	\begin{gather*}
		{\textstyle\int_{\cylinder{T}{a}{\varrho}{\delta_4 \varrho/4}
		\cap A} \| \bigwedge_\vdim ( \pp | S ) \| \ud V (z,S) \leq (
		Q-1 ) \unitmeasure{\vdim} \varrho^\vdim},
	\end{gather*}
	hence by \ref{app:miniremark:tilt} and
	\eqref{item:iteration_prep:estimate_bad_set} with $R$ replaced by $T$,
	noting $\varrho \in J_4 \subset J_3$,
	\begin{gather*}
		\begin{aligned}
			& \| V \| (
			\cylinder{T}{a}{\varrho}{\delta_4\varrho/4} ) - ( Q-1
			) \unitmeasure{\vdim} \varrho^\vdim \\
			& \qquad \leq \| V \| ( B_{a,\varrho} ) + 2\vdim
			{\textstyle\int_{\cylinder{T}{a}{\varrho}{\delta_4\varrho/4}}}
			| \project{S} - \project{T} | \ud V (z,S) \leq ( 1/2 )
			\unitmeasure{\vdim} \varrho^\vdim
		\end{aligned}
	\end{gather*}
	in contradiction to $\varrho \in J_5$.

	Using similarly
	\begin{gather*}
		\tsum{y \in A(x)}{} \density^0 ( \| V \| , (x,y) ) \leq Q \quad
		\text{for $x \in X_1 \cup X_2$},
	\end{gather*}
	one obtains
	\eqref{item:item:iteration_prep:side_conditions:upper_bound}.

	To prove \eqref{item:item:iteration_prep:side_conditions:estimate},
	one estimates with
	\ref{lemma:lipschitz_approximation}\,\eqref{item:lipschitz_approximation:estimate_b}
	and \eqref{item:iteration_prep:estimate_bad_set} with $R$ replaced by
	$T$
	\begin{gather*}
		\mathscr{L}^\vdim ( C_{a,\varrho} ) \leq
		\Gamma_{\ref{lemma:lipschitz_approximation}\eqref{item:lipschitz_approximation:estimate_b}}
		( Q, m ) \| V \| ( B_{a,\varrho} ) \leq \lambda
		\unitmeasure{\vdim} \varrho^\vdim. \qedhere
	\end{gather*}
\end{proof}
\begin{proof} [Proof of \eqref{item:iteration_prep:graph_tilt}]
	Denote by $X_1'$ the set of all $x \in X_1$ such that
	\ref{lemma:lipschitz_approximation}\,\eqref{item:lipschitz_approximation:misc}
	is true for $x$ and note $\mathscr{L}^\vdim ( X_1 \without X_1' ) =
	0$. Since
	\begin{gather*}
		| \ap AF (x) \aplus (-\sigma) | \leq ( 1 + \Lip F ) ( Q \vdim
		)^{1/2} \leq ( 1 +
		\Gamma_{\eqref{item:iteration_prep:extension}} ( \codim, Q ) )
		( Q \vdim )^{1/2}
	\end{gather*}
	for $x \in \dmn \ap AF$, one may assume
	\begin{gather*}
		\phi_4 ( 2 \varrho ) \leq 2^{-\vdim} \besicovitch{\adim}^{-1}
		\unitmeasure{\vdim} (1/8).
	\end{gather*}

	Next, it will shown with $G = \graph_Q f$
	\begin{gather*}
		\cball{c}{\varrho} \cap \classification{X_1'}{x}{|\ap Af(x)
		\aplus (-\sigma) | > \gamma} \\
		\subset \pp \biglIm \cylinder{T}{a}{\varrho}{\delta_4\varrho}
		\cap \classification{G}{z}{| \project{\Tan^\vdim (
		\| V \|, z)} - \project{R} | > 2^{-1} (Q\vdim)^{-1/2} \gamma}
		\bigrIm
	\end{gather*}
	whenever $0 < \gamma < \infty$. In fact, if $x$ is a member of the
	first set there exist $y \in \spt f (x)$ and $\tau \in \Hom (
	\rel^\vdim, \rel^\codim )$ such that
	\begin{gather*}
		\tau = \Tan^\vdim ( \| V \|, (x,y) ), \quad |\tau-\sigma| >
		Q^{-1/2} \gamma,
	\end{gather*}
	hence, noting $\| \sigma \| \leq 1$ and $\big \| \project{\Tan^\vdim
	( \| V \|, (x,y) )} - \project{T} \big \| \leq \| \tau \| \leq L \leq
	1/2$ by \ref{miniremark:projections},
	\begin{gather*}
		\| \sigma - \tau \| \leq 2 \, \big \| \project{\Tan^\vdim ( \|
		V \|, (x,y) )} - \project{R} \big \|
	\end{gather*}
	by \ref{miniremark:projections}, and the inclusion follows, since
	$(x,y) \in \cylinder{T}{a}{\varrho}{\delta_4\varrho}$ by
	\eqref{item:item:iteration_prep:side_conditions:inclusion}. Therefore,
	since $\density^\vdim ( \| V \|, z ) \geq 1$ for $z \in G$,
	\begin{align*}
		& \| V \| (
		\classification{\cylinder{T}{a}{\varrho}{\delta_4\varrho}}{z}{|
		\project{\Tan^\vdim ( \| V \|, z )} - \project{R}| > 2^{-1} (Q
		\vdim)^{-1/2} \gamma} ) \\
		& \quad \geq \mathscr{H}^\vdim (
		\cylinder{T}{a}{\varrho}{\delta_4 \varrho} \cap
		\classification{G}{z}{| \project{\Tan^\vdim ( \| V
		\|, z)} - \project{R} | > 2^{-1} (Q\vdim)^{-1/2} \gamma} ) \\
		& \quad \geq \mathscr{L}^\vdim ( \cball{c}{\varrho} \cap
		\classification{X_1}{x}{| \ap A f(x) \aplus ( - \sigma ) | >
		\gamma} )
	\end{align*}
	and one obtains
	\begin{gather*}
		\varrho^{-\vdim} {\textstyle\int_{\oball{c}{\varrho} \cap
		X_1}} | \ap Af (x) \aplus (-\sigma) |^2 \ud \mathscr{L}^\vdim
		\leq 2^{\vdim+2} Q \vdim \, \phi_2 ( 2\varrho, R )^2.
	\end{gather*}

	Recalling the first paragraph of the proof, and noting
	\begin{gather*}
		| \project{R} - \project{T} | \leq \adim^{1/2} \| \project{R}
		- \project{T} \| \leq \adim^{1/2} \| \sigma \| \leq \delta/2
	\end{gather*}
	by \ref{miniremark:projections} and $ \oball{c}{\varrho} \without
	X_1 \subset C_{a,\varrho}$, the conclusion follows combining
	\eqref{item:item:iteration_prep:side_conditions:inclusion},
	\eqref{item:iteration_prep:estimate_bad_set} and
	\ref{lemma:lipschitz_approximation}\,\eqref{item:lipschitz_approximation:estimate_b}.
\end{proof}
\begin{proof} [Proof of \eqref{item:iteration_prep:ch_planes}]
	Using H\"older's inequality, one obtains
	\begin{gather*}
		\begin{aligned}
			| \eqproject{T_t} -\eqproject{T_\varrho} | & \leq \| V
			\| ( \cylinder{T}{a}{s}{\delta_4s})^{-1/2} \big (
			t^{\vdim/2} \phi_2 ( t, T_t) + \varrho^{\vdim/2}
			\phi_2 ( \varrho, T_\varrho ) \big ) \\
			& \leq \lambda^{-1/2} 2^{2\vdim+1}
			\unitmeasure{\vdim}^{-1/2} \phi_2 ( \varrho,
			T_\varrho),
		\end{aligned}
	\end{gather*}
	since $t^{\vdim/2} \phi_2 ( t , T_t ) \leq \varrho^{\vdim/2} \phi_2 (
	\varrho, T_\varrho)$. Noting by \ref{miniremark:projections}
	\begin{gather*}
		\begin{aligned}
			| \eqproject{T_t} - \project{T} | & \leq |
			\eqproject{T_t} - \eqproject{T_\varrho}  | +
			| \eqproject{T_\varrho} - \project{T}| \\
			& \leq \lambda^{-1/2} 2^{2\vdim+1}
			\unitmeasure{\vdim}^{-1/2} \phi_2 ( \varrho, T_\varrho
			) + \adim^{1/2} \| \sigma_\varrho \| \leq 1/2,
		\end{aligned} \\
		\| \eqproject{T_t} - \project{T} \| \leq 1/2, \quad
		T_t \cap \ker \pp = \{ 0 \}, \quad t \in J_1,
	\end{gather*}
	one applies \ref{miniremark:projections} with $S$, $S_1$, $S_2$
	replaced by $T$, $T$, $T_t$ to infer
	\begin{gather*}
		\| \sigma_{t} \|^2 \leq ( 1 + \| \sigma_{t}
		\|^2 ) \| \eqproject{T_{t}} - \project{T} \|^2, \\
		\| \sigma_{t} \|^2 \leq \| \eqproject{T_{t}} -
		\project{T}
		\|^2 / ( 1 - \| \eqproject{T_{t}} - \project{T} \|^2 )
		\leq 2 \| \eqproject{T_{t}} - \project{T} \|^2 \leq
		1/2,
	\end{gather*}
	Now, \ref{miniremark:projections} with $S$, $S_1$, $S_2$ replaced
	by $T$, $T_t$, $T_\varrho$ implies
	\begin{gather*}
		\| \sigma_t - \sigma_\varrho \| \leq 2 | \eqproject{T_t} -
		\eqproject{T_\varrho} |. \qedhere
	\end{gather*}
\end{proof}
\begin{lemma} \label{lemma:iteration}
	Suppose $\vdim$, $\adim$, $Q$, $L$, $M$, $\delta_1$, $\delta_2$,
	$\delta_3$, $\delta_4$, $\delta_5$, $\varepsilon$, $r$, $T$, $U$, $V$,
	$\delta$, $X_1$, $f$, $a$, $c$, $\kappa$, $F$, $p$, $\psi$, $J$,
	$\phi_2$, $\phi_3$, $\phi_4$, $T_\varrho$, $J_0$, $J_1$, $J_2$, $J_3$,
	$J_4$, $J_5$, and $\sigma_\varrho$ are as in
	\ref{lemma:iteration_prep}.  Suppose additionally:
	\begin{enumerate}
		\item Suppose $\Psi$ and $C$ are as in
		\ref{miniremark:function_C}.
		\item Whenever $\varrho \in J_1$ suppose $u_\varrho$ denotes
		the unique analytic function in $\Sob{}{1}{2} (
		\oball{c}{\varrho}, \rel^\codim )$ such that
		\begin{gather*}
			\left < D^2 u_\varrho (x), C ( \sigma_\varrho ) \right
			> = 0 \quad \text{for $x \in \oball{c}{\varrho}$}, \\
			u_\varrho - g \in \Sob{0}{1}{2} ( \oball{c}{\varrho},
			\rel^\codim ),
		\end{gather*}
		see \ref{miniremark:function_C}--\ref{remark:standard_w012}
		and \cite[5.1.2,\,10]{MR41:1976}.
		\item \label{item:iteration:leading_quantity} Define the
		function $\phi_1 : J_1 \to \rel$ by $\phi_1 ( \varrho ) =
		\norm{D^2u_\varrho}{\infty}{c,\varrho/2}$ for $\varrho \in
		J_1$.
		\item Suppose $0 < \tau \leq 1$ and $\tau = 1$ if $\vdim = 1$,
		$p/2 \leq \tau < \frac{\vdim p}{2 (\vdim-p)}$ if $\vdim =
		2$ and $\tau = \frac{\vdim p}{2 (\vdim-p)}$ if $\vdim > 2$.
		\intertextenum{Then the following seven conclusions hold:}
		\item \label{item:iteration:coerciveness}
		There exists a positive, finite number
		$\Gamma_{\eqref{item:iteration:coerciveness}}$ depending
		only on $\adim$ such that
		\begin{gather*}
			\text{$D^2 \Psi_0^\S ( \sigma )$ is strongly elliptic
			with ellipticity bound
			$(\Gamma_{\eqref{item:iteration:coerciveness}})^{-1}$},
			\\
			\| D^2 \Psi_0^\S ( \sigma ) \| \leq
			\Gamma_{\eqref{item:iteration:coerciveness}}
		\end{gather*}
		whenever $\sigma \in \Hom ( \rel^\vdim, \rel^\codim )$ with
		$\| \sigma \| \leq 1$.
		\item \label{item:iteration:starting_control} If $\varrho
		\in J_4 \cap J_5$, $2 \varrho \in J_0 \cap J_1$, $\|
		\sigma_{2\varrho} \| \leq \adim^{-1/2} \inf \{ \delta /2 , 1/4
		\}$, and
		\begin{gather*}
			\phi_2 ( 2 \varrho, T_{2\varrho} ) \leq 2^{-2\vdim-4}
			\unitmeasure{\vdim}^{1/2}, \\
			\kappa \leq \inf \{
			\varepsilon_{\ref{lemma:iteration_prep}\eqref{item:iteration_prep:estimate_bad_set}}
			( \vdim, \delta_4, \delta ),
			\varepsilon_{\ref{lemma:iteration_prep}\eqref{item:iteration_prep:side_conditions}}
			( \adim, \delta_4, \delta ) \},
		\end{gather*}
		then
		\begin{gather*}
			\phi_1 ( \varrho ) \leq
			\Gamma_{\eqref{item:iteration:starting_control}}
			\varrho^{-1} \big ( \phi_2 ( 2\varrho, T_{2\varrho} ) +
			\phi_4 ( 2 \varrho )^{1/2} \big )
		\end{gather*}
		where $\Gamma_{\eqref{item:iteration:starting_control}}$
		is a positive, finite number depending only on $\adim$, $Q$,
		and $\delta$.
		\item \label{item:iteration:l1_estimate}
		If $\varrho \in J_1 \cap J_4 \cap J_5$, $\| \sigma_\varrho
		\|\leq 1$, $2 \varrho \in J_1$, $\|
		\sigma_{2\varrho} \| \leq \adim^{-1/2} \delta/2$,
		\begin{gather*}
			\kappa \leq \inf \{
			\varepsilon_{\ref{lemma:iteration_prep}\eqref{item:iteration_prep:estimate_bad_set}}
			( \vdim, \delta_4, \delta ),
			\varepsilon_{\ref{lemma:iteration_prep}\eqref{item:iteration_prep:side_conditions}}
			( \adim, \delta_4, \delta ) \}, \\
			\phi_4 ( 2 \varrho ) \leq 2^{-\vdim}
			\besicovitch{\adim}^{-1} \unitmeasure{\vdim} ( 1/8 ),
		\end{gather*}
		then
		\begin{gather*}
			\varrho^{-\vdim-1} \norm{u_\varrho- g}{1}{c,\varrho}
			\leq \Gamma_{\eqref{item:iteration:l1_estimate}}
			\big ( \phi_2 ( 2 \varrho, T_{2\varrho})^2 + \phi_3 (
			2 \varrho ) \big )
		\end{gather*}
		where $\Gamma_{\eqref{item:iteration:l1_estimate}}$ is a
		positive, finite number depending only on $\vdim$, $\adim$, $Q$,
		$\delta_4$, $\delta$, and $p$.
		\item \label{item:iteration:key_estimate} There exists a
		positive, finite number
		$\varepsilon_{\eqref{item:iteration:key_estimate}}$
		depending only on $\adim$, $\delta_4$, and $\delta$ with the
		following property.
		
		If $\varrho \in J$, $2 \varrho \in J_0 \cap J_1$, $\|
		\sigma_{2\varrho} \| \leq \adim^{-1/2} \delta/ 4$, $\kappa
		\leq \varepsilon_{\eqref{item:iteration:key_estimate}}$, and
		for $s \in \{ \varrho/4, \varrho \}$
		\begin{gather*}
			s \in J_4 \cap J_5, \quad \phi_4 ( 2 s ) \leq
			2^{-\vdim} \besicovitch{\adim}^{-1}
			\unitmeasure{\vdim} ( 1/8 ),
		\end{gather*}
		then
		\begin{gather*}
			\phi_1 ( \varrho / 4 ) \leq \phi_1 ( \varrho ) +
			\Gamma_{\eqref{item:iteration:key_estimate}} \big
			( \phi_1 ( \varrho ) \phi_2 ( \varrho, T_\varrho ) +
			\varrho^{-1} ( \phi_2 ( 2 \varrho, T_{2\varrho})^2 +
			\phi_3 ( 2\varrho ) ) \big )
		\end{gather*}
		where $\Gamma_{\eqref{item:iteration:key_estimate}}$ is
		a positive, finite number depending only on $\vdim$, $\adim$,
		$Q$, $\delta_4$, $\delta$ and $p$.
		\item \label{item:iteration:prep_tilt} There exists a
		positive, finite number
		$\varepsilon_{\eqref{item:iteration:prep_tilt}}$ depending
		only on $\vdim$, $\adim$, $Q$, $\delta_2$, $\varepsilon$, 
		$\delta$, and $p$ with the following property.

		If $\delta_4 = 1$, $\delta_5 = ( 40 )^{-\vdim} (
		\isoperimetric{\vdim} \vdim )^{-\vdim} / \unitmeasure{\vdim}$,
		$0 < \eta < 2^{-\vdim}$, $P : \rel^\vdim \to \rel^\codim$ is
		affine, $\Lip P \leq \adim^{-1/2} \delta/2$, $R = \im D (
		\pp^\ast  + \qq^\ast \circ P ) (0)$, $\varrho \in J$, $X$ is
		an $\mathscr{L}^\vdim$ measurable subset of
		$\oball{c}{\varrho/2} \cap X_1$,
		\begin{gather*}
			\mu = 1/2 \quad \text{if $\vdim = 1$}, \quad \mu
			=1/\vdim \quad \text{if $\vdim > 1$}, \\
			\varrho/2 \in J_4 \cap J_5, \quad 8r \in J_2 \cap J_3,
			\quad \varrho \in J_1, \quad \| \sigma_\varrho \| \leq
			\adim^{-1/2} \delta/2, \\
			\kappa \leq
			\varepsilon_{\eqref{item:iteration:prep_tilt}}, \quad
			\phi_3 ( \varrho ) \leq
			\varepsilon_{\eqref{item:iteration:prep_tilt}}, \quad
			\mathscr{L}^\vdim ( \oball{c}{\varrho/2} \without X  )
			\leq \eta \unitmeasure{\vdim} ( \varrho/2 )^\vdim,
		\end{gather*}
		then for $0 < \lambda \leq 1$
		\begin{multline*}
			\phi_2 ( \varrho/4, R ) \leq
			\Gamma_{\eqref{item:iteration:prep_tilt}} \Big ( \big
			( \lambda + \phi_2 ( \varrho, T_\varrho)^{2/\vdim}
			\big) \phi_2 ( \varrho, T_\varrho ) + ( \lambda +
			\eta^\mu ) \phi_2 ( \varrho, R ) \\
			+ \eta^{-1} \varrho^{-\vdim-1} \norm{f \aplus
			(-P)}{1}{X} + \lambda^{-\tau} \phi_3 ( \varrho )^\tau
			\Big)
		\end{multline*}
		where $\Gamma_{\eqref{item:iteration:prep_tilt}}$ is a
		positive, finite number depending only on $\vdim$, $\adim$,
		$Q$, $\delta$, $p$, and $\tau$.
		\item \label{item:iteration:tilt_estimate} There exists a
		positive, finite number
		$\varepsilon_{\eqref{item:iteration:tilt_estimate}}$ depending
		only on $\vdim$, $\adim$, $Q$, $\delta_2$, $\varepsilon$,
		$\delta$, and $p$ with the following property.

		If $\delta_4 = 1$, $\delta_5 = ( 40 )^{-\vdim} (
		\isoperimetric{\vdim} \vdim )^{-\vdim} / \unitmeasure{\vdim}$,
		$0 < \eta < 2^{-\vdim}$, $\varrho \in J$,
		\begin{gather*}
			\mu = 1/2 \quad \text{if $\vdim = 1$}, \quad \mu
			=1/\vdim \quad \text{if $\vdim > 1$}, \\
			\{ \varrho/2, \varrho \} \subset J_4 \cap J_5, \quad
			2\varrho \in J_0 \cap J_1, \quad \| \sigma_{2\varrho}
			\| \leq \adim^{-1/2} \delta/4, \\
			8r \in J_2 \cap J_3, \quad \kappa \leq
			\varepsilon_{\eqref{item:iteration:tilt_estimate}}
			, \quad \phi_3 ( 2 \varrho ) \leq
			\varepsilon_{\eqref{item:iteration:tilt_estimate}},
			\\
			\mathscr{L}^\vdim ( \oball{c}{\varrho/2} \without \{ x
			\with \density^0 ( \| f (x) \|, g (x) ) = Q \} ) \leq
			\eta \unitmeasure{\vdim} ( \varrho/2 )^\vdim,
		\end{gather*}
		then for $0 < \lambda \leq 1$
		\begin{multline*}
			\phi_2 ( \varrho/4, T_{\varrho/4} ) \leq
			\Gamma_{\eqref{item:iteration:tilt_estimate}}
			\Big ( \big ( \lambda + \eta^\mu + \eta^{-1} \phi_2 (
			2\varrho, T_{2\varrho} )^{\inf \{ 1, 2/ \vdim \}} \big
			) \phi_2 ( 2\varrho, T_{2\varrho} ) \\
			+ \eta^{-1} \varrho \phi_1 ( \varrho ) + ( \eta^{-1} +
			\lambda^{-\tau} ) \phi_3 ( 2\varrho )^\tau \Big)
		\end{multline*}
		where $\Gamma_{\eqref{item:iteration:tilt_estimate}}$ is a
		positive, finite number depending only on $\vdim$, $\adim$,
		$Q$, $\delta$, $p$, and $\tau$.
		\item \label{item:iteration:iteration} Let $\delta_4 = 1$,
		$\delta_5 = ( 40 )^{-\vdim} ( \isoperimetric{\vdim} \vdim
		)^{-\vdim} / \unitmeasure{\vdim}$, $\delta = \inf \{ 1,
		\varepsilon, ( 2 \isoperimetric{\vdim} )^{-1} \}$, $0 < \alpha
		\leq 1$, and $0 < \delta_6 \leq 1$.
		
		Then there positive, finite numbers $\gamma_i$ for $i \in \{
		1,2,3 \}$ and a positive, finite number
		$\varepsilon_{\eqref{item:iteration:iteration}}$ both
		depending only on $\vdim$, $\adim$, $Q$, $L$, $M$, $\delta_1$,
		$\delta_2$, $\delta_3$, $p$, $\tau$, $\alpha$, and $\delta_6$
		with the following property.

		If $a \in \cylinder{T}{0}{r/2}{2r}$, $\density^{\ast\vdim} (
		\| V \|, a ) \geq Q-1+\delta_6$, $0 < t \leq \frac{r}{64}$, $0
		< \gamma \leq 1$,
		\begin{gather*}
			\phi_2 ( 8r, T ) \leq
			\varepsilon_{\eqref{item:iteration:iteration}},
			\quad \phi_2 ( 8r, T_{8r} ) \leq
			\varepsilon_{\eqref{item:iteration:iteration}}
			\gamma, \\
			\| V \| (
			\classification{\cylinder{T}{a}{\varrho}{\varrho}} {z}
			{\density^\vdim ( \| V \|, z ) \leq Q-1} ) \leq
			\varepsilon_{\eqref{item:iteration:iteration}}
			\unitmeasure{\vdim} \varrho^\vdim
		\end{gather*}
		whenever $t \leq \varrho \leq r/8$, and
		\begin{gather*}
			\phi_3 ( \varrho )^\tau \leq \gamma \gamma_3 ( \varrho
			/ r )^{\alpha \tau} \quad \text{whenever $0 < \varrho
			\leq 8r$},
		\end{gather*}
		then, in case $\alpha \tau < 1$,
		\begin{gather*}
			\varrho \in J_1 \quad \text{and} \quad \varrho \phi_1
			( \varrho ) \leq \gamma \gamma_1 ( \varrho / r
			)^{\alpha \tau} \quad \text{for $t \leq \varrho \leq
			r/4$}, \\
			\phi_2 ( \varrho, T_\varrho ) \leq \gamma \gamma_2 (
			\varrho  / r )^{\alpha \tau} \quad \text{for $t \leq
			\varrho \leq r$}
		\end{gather*}
		and, in case $\alpha \tau = 1$,
		\begin{gather*}
			\varrho \in J_1 \quad \text{and} \quad \varrho \phi_1
			( \varrho ) \leq \gamma \gamma_1 ( \varrho / r
			) ( 1 + \log ( r/\varrho ) ) \quad \text{for $t \leq
			\varrho \leq r/4$}, \\
			\phi_2 ( \varrho, T_\varrho ) \leq \gamma \gamma_2 (
			\varrho  / r ) ( 1 + \log ( r/\varrho )) \quad
			\text{for $t \leq \varrho \leq r$}.
		\end{gather*}
	\end{enumerate}
\end{lemma}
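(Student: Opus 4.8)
The plan is to establish the seven conclusions \eqref{item:iteration:coerciveness}--\eqref{item:iteration:iteration} in turn, following the template of the model case \ref{thm:example_decay} with the Laplace operator there replaced by the linearised nonparametric area operator $x \mapsto \left < \weakD^2 u (x), C ( \sigma_\varrho ) \right >$ of \ref{miniremark:function_C} and the weakly differentiable comparison function there replaced by the ``average'' $g = \boldsymbol{\eta}_Q \circ F$ of the $\qspace_Q ( \rel^\codim )$ valued approximation. Conclusion \eqref{item:iteration:coerciveness} concerns only the area integrand: from the formula for $\Psi_0^\S$ in \ref{miniremark:first_variation} the Hessian $D^2 \Psi_0^\S$ is continuous with $D^2 \Psi_0^\S ( 0 )$ the identity quadratic form, so that $C ( 0 )$ is the Laplacian, which is strongly elliptic with a dimensional bound; both strong ellipticity and the bound on the operator norm persist over the compact set $\{ \sigma \with \| \sigma \| \leq 1 \}$, and the explicit $\Gamma_{\eqref{item:iteration:coerciveness}}$ is read off by a routine estimate.

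For the $\phi_1$ side, conclusion \eqref{item:iteration:starting_control} is obtained by applying \ref{lemma:standard_w012} (with right hand side zero and $v = g$), Poincar\'e's inequality \ref{lemma:poincare} and the interior estimate \ref{lemma:interior_c2a_estimate} to $u_\varrho$ on $\oball{c}{\varrho}$ — legitimate once $\| \sigma_\varrho \| \leq 1$ is secured through \ref{lemma:iteration_prep}\,\eqref{item:iteration_prep:ch_planes} so that \eqref{item:iteration:coerciveness} supplies the ellipticity — giving $\phi_1 ( \varrho ) \leq \Gamma \varrho^{-1-\vdim/2} \norm{\weakD g}{2}{c,\varrho}$ up to affine modifications, and then bounding $\norm{\weakD g - \sigma_\varrho}{2}{c,\varrho}$ by the tilt-excess of $\graph_Q f$ through \ref{lemma:iteration_prep}\,\eqref{item:iteration_prep:graph_tilt}, \ref{miniremark:f_i} and $\Lip \boldsymbol{\eta}_Q = Q^{-1/2}$; the side conditions of \eqref{item:iteration_prep:graph_tilt} and \ref{lemma:iteration_prep}\,\eqref{item:iteration_prep:side_conditions} are exactly those assumed in \eqref{item:iteration:starting_control}. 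Conclusion \eqref{item:iteration:l1_estimate}, the analogue of \ref{thm:example_decay}\,\eqref{eqn:diff_estimate}, results from expressing $Q \int \left < D\theta, D\Psi_0^\S ( Dg ) \right >$ via \ref{lemma:lipschitz_approximation}\,\eqref{item:lipschitz_approximation:pde} as $\delta V$ applied to a cut-off of $\qq^\ast \circ \theta \circ \pp$ plus error terms supported on $C_{a,\varrho}$ and $D_{a,\varrho}$, whose sizes are controlled by \ref{lemma:iteration_prep}\,\eqref{item:iteration_prep:side_conditions}\,\eqref{item:item:iteration_prep:side_conditions:estimate}, \ref{lemma:lipschitz_approximation}\,\eqref{item:lipschitz_approximation:estimate_b} and \ref{lemma:iteration_prep}\,\eqref{item:iteration_prep:estimate_bad_set}, while subtracting the linear part $\left < D\theta \odot Dg, D^2 \Psi_0^\S ( \sigma_\varrho ) \right >$ produces a term quadratic in $| Dg - \sigma_\varrho |$; feeding the resulting bound on the relevant weak norm into \ref{lemma:l1_estimate}, applied to $u_\varrho - g$, and using H\"older's inequality gives the claim. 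The first iteration inequality \eqref{item:iteration:key_estimate} then follows as in \ref{thm:example_decay}\,\eqref{eqn:first_iteration} by applying \ref{lemma:interior_c2a_estimate} to $u_\varrho - u_{\varrho/4}$, which solves $C ( \sigma_{\varrho/4} )$ with right hand side $\left < \weakD^2 u_\varrho, C ( \sigma_{\varrho/4} ) - C ( \sigma_\varrho ) \right >$ — quadratic in $| \sigma_{\varrho/4} - \sigma_\varrho |$, estimated by $\phi_2 ( \varrho, T_\varrho )$ via \ref{lemma:iteration_prep}\,\eqref{item:iteration_prep:ch_planes} — and with $L^1$ norm controlled by $\norm{u_\varrho - g}{1}{c,\varrho} + \norm{u_{\varrho/4} - g}{1}{c,\varrho/4}$ from \eqref{item:iteration:l1_estimate}; crucially the coefficient of $\phi_1 ( \varrho )$ on the right is $1$, which is why $\phi_1$ is the leading iteration quantity.

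The technical core is \eqref{item:iteration:prep_tilt}. I would first apply \ref{lemma:coercive_estimate_rect} with a cut-off $\phi$ adapted to $\cylinder{T}{a}{\varrho/4}{\varrho/4}$ and exponent $q = 2\vdim / ( \vdim-2 )$ — so that $1/q = 1/2 - 1/\vdim$ and, by \ref{remark:coercive_estimate_rect}, the exponent of $\psi ( K )^{1/p}$ becomes $\vdim p / ( \vdim-p )$, matching the definition of $\phi_4$ — reducing the bound on $\phi_2 ( \varrho/4, R )$ to a bound on the $q$-height of $\| V \| \restrict H$ together with a boundary term involving $| D\phi |$; the characterisation of $H$ needed here is \ref{lemma:iteration_prep}\,\eqref{item:iteration_prep:H_is_H}. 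The $q$-height on $H$ is then controlled by \ref{lemma:lipschitz_approximation}\,\eqref{item:lipschitz_approximation:poincare}, taking the set there to be $\graph P$, whose right hand side is bounded by $\eta^{-1} \varrho^{-\vdim-1} \norm{f \aplus (-P)}{1}{X}$ using the interpolation inequality \ref{lemma:Q_function_with_holes} applied to $f \aplus (-P)$ — with its Dirichlet-type term bounded by \ref{lemma:iteration_prep}\,\eqref{item:iteration_prep:graph_tilt} — together with H\"older's inequality in the exponent $\tau$; the boundary terms are absorbed by the arithmetic--geometric mean trick of \ref{remark:coercive_estimate_rect} with parameter $\lambda$, producing the summand $\lambda^{-\tau} \phi_3 ( \varrho )^\tau$. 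Conclusion \eqref{item:iteration:tilt_estimate}, the analogue of \ref{thm:example_decay}\,\eqref{eqn:second_iteration}, is then obtained by invoking \eqref{item:iteration:prep_tilt} with $X = \classification{\rel^\vdim}{x}{\density^0 ( \| f (x) \|, g (x) ) = Q}$ and $P = P_\varrho$ the first-order Taylor polynomial of $u_\varrho$ at $c$: on $X$ one has $f = Q \Lbrack g \Rbrack$, hence $\norm{f \aplus (-P_\varrho)}{1}{X} \leq Q ( \norm{u_\varrho - P_\varrho}{1}{c,\varrho} + \norm{g - u_\varrho}{1}{c,\varrho} )$ with the first term $\leq \Gamma \varrho^{\vdim+2} \phi_1 ( \varrho )$ by Taylor's formula and the second controlled by \eqref{item:iteration:l1_estimate}; replacing $R$ and $T_\varrho$ by $T_{2\varrho}$ through \ref{lemma:iteration_prep}\,\eqref{item:iteration_prep:ch_planes} and minimising over $R$ yields the inequality, the exponents $\inf \{ 1, 2/\vdim \}$ arising from the $\phi_2^{2/\vdim}$ factor in \eqref{item:iteration:prep_tilt}.

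Finally, \eqref{item:iteration:iteration} is the simultaneous iteration of \eqref{item:iteration:key_estimate} and \eqref{item:iteration:tilt_estimate}, carried out along the lines of \ref{thm:example_decay}\,\eqref{eqn:iteration}. I would fix $\gamma_1, \gamma_2, \gamma_3$ and $\varepsilon_{\eqref{item:iteration:iteration}}$ by a descending chain of inequalities so that the asserted bounds hold on the initial scales $r/8 \leq \varrho \leq r/4$ (resp. $\leq r$) for $\phi_1$ (resp. $\phi_2$) by \eqref{item:iteration:starting_control}, and so that, assuming them at scale $\varrho$, \emph{all} side conditions of \eqref{item:iteration:key_estimate} and \eqref{item:iteration:tilt_estimate} hold at that scale — membership in $J_1$ via \ref{lemma:iteration_prep}\,\eqref{item:iteration_prep:ch_planes}; membership in $J_2 \cap J_3$, hence $J_4$, because $\phi_2 ( 8r, T )$ and $\kappa$ are small; membership in $J_5$ via \ref{lemma:iteration_prep}\,\eqref{item:iteration_prep:lower_mass_bound} using $\density^{\ast \vdim} ( \| V \|, a ) \geq Q-1+\delta_6$ and the decay hypothesis on $\phi_3$; smallness of $\| \sigma_{2\varrho} \|$ from the inductive bound on $\phi_2$; smallness of $\phi_4 ( 2\varrho )$, a power of $\phi_3 ( 2\varrho )$, from the decay hypothesis; and the density-defect bound $\mathscr{L}^\vdim ( \oball{c}{\varrho/2} \without \classification{\rel^\vdim}{x}{\density^0 ( \| f (x) \|, g (x) ) = Q} ) \leq \eta \unitmeasure{\vdim} ( \varrho/2 )^\vdim$ inherited from $\| V \| ( \classification{\cylinder{T}{a}{\varrho}{\varrho}}{z}{\density^\vdim ( \| V \|, z ) \leq Q-1} ) \leq \varepsilon_{\eqref{item:iteration:iteration}} \unitmeasure{\vdim} \varrho^\vdim$ via \ref{lemma:lipschitz_approximation}\,\eqref{item:lipschitz_approximation:estimate_b} and \ref{lemma:iteration_prep}\,\eqref{item:iteration_prep:estimate_bad_set} — then insert these into the two iteration inequalities and carry out the short computations of \ref{thm:example_decay}\,\eqref{eqn:iteration}, using $4^{\alpha\tau-1} < 1$ if $\alpha\tau < 1$ and the logarithmic absorption if $\alpha\tau = 1$, to conclude the bounds at scale $\varrho/4$; downward induction to $t$ finishes. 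The main obstacle I expect is precisely this bookkeeping: there are roughly a dozen side conditions to verify at each step, each requiring a specific smallness of $\kappa$, of $\phi_2 ( 8r, \cdot )$, of $\eta$, or of $\gamma_3$, while $\gamma_1, \gamma_2$ must be large enough to close \emph{both} iteration inequalities simultaneously and $\varepsilon_{\eqref{item:iteration:iteration}}, \gamma_3$ small enough relative to them, so the order in which the constants are fixed must be spelled out with care. A secondary difficulty, in \eqref{item:iteration:prep_tilt}, is keeping track of which of $T_\varrho$, $T_{2\varrho}$, $R$ enters each estimate and how the $2/\vdim$ exponents propagate through \ref{lemma:coercive_estimate_rect} and \ref{lemma:lipschitz_approximation}\,\eqref{item:lipschitz_approximation:poincare}.
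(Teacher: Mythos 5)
Your outline follows the paper's proof essentially lemma for lemma: coercivity from \cite[5.1.2,\,10]{MR41:1976}, the $\phi_1$--side from \ref{lemma:standard_w012}, interior Schauder and \ref{lemma:iteration_prep}\,\eqref{item:iteration_prep:graph_tilt}, the $L^1$--estimate from \ref{lemma:lipschitz_approximation}\,\eqref{item:lipschitz_approximation:pde} and \ref{lemma:l1_estimate}, the tilt--side from \ref{lemma:coercive_estimate_rect}, \ref{lemma:lipschitz_approximation}\,\eqref{item:lipschitz_approximation:poincare} and \ref{lemma:Q_function_with_holes}, and a simultaneous downward induction for \eqref{item:iteration:iteration}. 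Two small imprecisions worth noting: in \eqref{item:iteration:key_estimate} the right hand side $\left < \weakD^2 u_\varrho, C ( \sigma_{\varrho/4} ) - C ( \sigma_\varrho ) \right >$ is \emph{linear}, not quadratic, in $\| \sigma_{\varrho/4} - \sigma_\varrho \|$ (the bilinear term $\phi_1 \phi_2$ in the conclusion arises only after pairing with the Hölder seminorm of $D^2 u_\varrho$, which is again $\sim \phi_1$), and your choice $q = 2\vdim/(\vdim-2)$ is the correct one only when $\vdim > 2$ — for $\vdim = 2$ the paper takes $q = (1/(2\tau) + 1/2 - 1/p)^{-1}$ since $\tau$ there ranges over an interval; neither affects the structure of the argument.
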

\begin{proof} [Proof of \eqref{item:iteration:coerciveness}]
	This follows from \cite[5.1.2,\,10]{MR41:1976}.
\end{proof}
\begin{proof} [Proof of \eqref{item:iteration:starting_control}]
	Note by \ref{lemma:iteration_prep}\,\eqref{item:iteration_prep:ch_planes} applied with $\varrho$,
	$s$, $t$, $\lambda$ replaced by $2\varrho$, $\varrho$, $\varrho$,
	$1/2$
	\begin{gather*}
		\varrho \in J_1, \quad \| \sigma_\varrho \| \leq \|
		\sigma_{2\varrho} \| + 2^{2\vdim+3} \unitmeasure{\vdim}^{-1/2}
		\phi_2 ( 2\varrho, T_{2\varrho} ) \leq 1.
	\end{gather*}
	Since $u_\varrho - \sigma_{2\varrho}$ is $D^2 \Psi_0^\S (
	\sigma_\varrho )$ harmonic, applying \cite[5.2.5]{MR41:1976} yields,
	noting \eqref{item:iteration:coerciveness},
	\begin{gather*}
		\norm{D^2 u_\varrho}{\infty}{c,\varrho/2} \leq \Delta_1
		\varrho^{-1-\vdim/2} \norm{D ( u_\varrho - \sigma_{2\varrho}
		)}{2}{c,\varrho}
	\end{gather*}
	where $\Delta_1 = 2^{\adim+5} \adim^{\adim+1}
	\Gamma_{\eqref{item:iteration:coerciveness}}  ( \adim )^\adim
	\sup \{ \unitmeasure{i}^{-1/2} \with \adim > i \in \nat \}$. Using
	\ref{lemma:standard_w012}, one obtains
	\begin{gather*}
		\norm{D ( u_\varrho - \sigma_{2\varrho} )}{2}{c,\varrho} \leq
		\norm{D ( u_\varrho - g )}{2}{c,\varrho} + \norm{D (
		g-\sigma_{2\varrho} )}{2}{c,\varrho} \leq \Delta_2 \norm{D (
		g-\sigma_{2\varrho} )}{2}{c,\varrho}
	\end{gather*}
	where $\Delta_2 = 1 +
	\Gamma_{\eqref{item:iteration:coerciveness}} ( \adim )^2$. Taking
	$\Gamma_{\eqref{item:iteration:starting_control}} =
	\Delta_1 \Delta_2
	\Gamma_{\ref{lemma:iteration_prep}\eqref{item:iteration_prep:graph_tilt}} ( \adim, Q,
	\delta)^{1/2}$, the conclusion now follows from
	\ref{lemma:iteration_prep}\,\eqref{item:iteration_prep:graph_tilt}
	with $\sigma$ replaced by $\sigma_{2\varrho}$.
\end{proof}
\begin{proof} [Proof of \eqref{item:iteration:l1_estimate}]
	Suppose $B$, and $B_{a,t}$, $C_{a,t}$ for $t \in J_0$ are as in
	\ref{lemma:iteration_prep}. Define $S, R \in \mathscr{D}' (
	\oball{c}{\varrho}, \rel^\codim )$ by
	\begin{gather*}
		\begin{aligned}
			S ( \theta ) & = - \tint{\oball{c}{\varrho}}{}
			\leftB D \theta (x), D \Psi_0^\S ( D g (x) ) \rightB
			\ud \mathscr{L}^\vdim x, \\
			R ( \theta ) & = - \tint{\oball{c}{\varrho}}{}
			\leftB D \theta (x) \odot D g (x), D^2 \Psi_0^\S (
			\sigma_\varrho ) \rightB \ud \mathscr{L}^\vdim x
		\end{aligned}
	\end{gather*}
	whenever $\theta \in \mathscr{D} ( \oball{c}{\varrho}, \rel^\codim )$.
	Since $u_\varrho$ is $D^2 \Psi_0^\S ( \sigma_\varrho)$ harmonic,
	\begin{gather} \label{eqn:l1_estimate:4}
		\norm{u_\varrho - g}{1}{c,\varrho} \leq \Delta_1 \varrho
		\dnorm{R}{1}{c,\varrho}
	\end{gather}
	by \ref{lemma:l1_estimate} and \eqref{item:iteration:coerciveness}
	where $\Delta_1 = \Gamma_{\ref{lemma:l1_estimate}} ( \adim,
	\Gamma_{\eqref{item:iteration:coerciveness}} ( \adim)^{-1},
	\Gamma_{\eqref{item:iteration:coerciveness}} ( \adim ) )$.  One
	computes for $x \in \dmn D g$
	\begin{gather*}
		D \Psi_0^\S ( D g (x) ) - D \Psi_0^\S ( \sigma_\varrho ) - ( D
		g (x)
		- \sigma_\varrho ) \mathop{\lrcorner} D^2 \Psi_0^\S (
		  \sigma_\varrho ) \\
		= ( D g (x) - \sigma_\varrho ) \mathop{\lrcorner} \tint{0}{1}
		D^2 \Psi_0^\S ( t D g (x) + (1-t) \sigma_\varrho ) - D^2
		\Psi_0^\S ( \sigma_\varrho ) \ud \mathscr{L}^1 t, \\
		\begin{aligned}
			& \| D^2 \Psi_0^\S ( t D g (x) + (1-t) \sigma_\varrho
			) - D^2 \Psi_0^\S ( \sigma_\varrho ) \| \\
			& \qquad \leq \Lip ( D^2 \Psi_0^\S |
			\cball{0}{\gamma}) \, t | D g (x) - \sigma_\varrho |
			\qquad \text{for $0 \leq t \leq 1$}
		\end{aligned}
	\end{gather*}
	where $\gamma = \vdim^{1/2} \sup
	\{1,\Gamma_{\eqref{item:iteration_prep:extension}} ( \codim, Q ) \}$,
	hence, since
	\begin{gather*}
		\tint{\oball{c}{\varrho}}{} \left < D \theta (x), \beta \right >
		\ud \mathscr{L}^\vdim x = 0
	\end{gather*}
	for $\theta \in \mathscr{D} ( \oball{c}{\varrho}, \rel^\codim )$ and
	$\beta \in \{ D \Psi_0^\S ( \sigma_\varrho), \sigma_\varrho
	\mathop{\lrcorner} D^2 \Psi_0^\S ( \sigma_\varrho ) \}$,
	\begin{gather*}
		\varrho^{-\vdim} \dnorm{S - R}{1}{c,\varrho}
		\leq \Delta_2 \varrho^{-\vdim} \tint{\oball{c}{\varrho}}{} | D
		g (x) - \sigma_\varrho |^2 \ud \mathscr{L}^\vdim x
	\end{gather*}
	where $\Delta_2$ is a positive, finite number depending only on
	$\adim$ and $Q$. Therefore by
	\ref{lemma:iteration_prep}\,\eqref{item:iteration_prep:graph_tilt}
	with $\sigma$ replaced by $\sigma_{2\varrho}$
	\begin{gather} \label{eqn:l1_estimate:5}
		\varrho^{-\vdim}\dnorm{S - R}{1}{c,\varrho}
		\leq \Delta_3 \big ( \phi_2 ( 2\varrho, T_{2\varrho} )^2 +
		\phi_4 ( 2\varrho ) \big )
	\end{gather}
	where $\Delta_3 = \Delta_2
	\Gamma_{\ref{lemma:iteration_prep}\eqref{item:iteration_prep:graph_tilt}} ( \adim, Q, \delta )$.

	Let $\theta \in \mathscr{D} ( \oball{c}{\varrho}, \rel^\codim )$ with
	$\norm{D\theta}{\infty}{c,\varrho} \leq 1$ and $\eta \in \mathscr{D}^0
	( \rel^\codim )$ with
	\begin{gather*}
		\spt \eta \subset \oball{\qq (a)}{\delta_4 \varrho}, \quad
		\cball{\qq(a)}{\delta_4 \varrho/2} \subset \Int (
		\classification{\rel^\codim}{y}{\eta (y) = 1} ), \\
		0 \leq \eta (y) \leq 1, \quad | D \eta (y) | \leq 4 (
		\delta_4)^{-1} \varrho^{-1} \qquad \text{for $y \in
		\rel^\codim$}.
	\end{gather*}
	From
	\ref{lemma:lipschitz_approximation}\,\eqref{item:lipschitz_approximation:pde}
	with $\tau$ replaced by $\sigma_{2\varrho}$
	one infers with $D_{a,\varrho} =
	\cylinder{T}{a}{\varrho}{\delta_4\varrho} \cap \pp^{-1} \lIm
	C_{a,\varrho} \rIm$, noting
	\ref{lemma:iteration_prep}\,\eqref{item:item:iteration_prep:side_conditions:inclusion}
	and $\norm{\theta}{\infty}{c,\varrho} \leq \varrho$,
	\begin{gather*}
		\begin{aligned}
			& | Q S ( \theta ) + ( \delta V ) ( ( \eta
			\circ \qq) \cdot ( \qq^\ast \circ \theta \circ \pp ) )
			| \\
			& \qquad \leq \Delta_4 \big ( \mathscr{L}^\vdim (
			C_{a,\varrho} ) + \tint{\oball{c}{\varrho}}{} | AF (x)
			\aplus ( - \sigma_{2\varrho} ) |^2 \ud
			\mathscr{L}^\vdim x + \| V \| (
			D_{a,\varrho} ) \big )
		\end{aligned}
	\end{gather*}
	where $\Delta_4$ is a positive, finite number depending only on
	$\adim$, $Q$, and $\delta_4$. By
	\ref{lemma:lipschitz_approximation}\,\eqref{item:lipschitz_approximation:estimate_b},
	noting \ref{lemma:iteration_prep}\,\eqref{item:item:iteration_prep:side_conditions:inclusion}, and
	\ref{lemma:iteration_prep}\,\eqref{item:iteration_prep:graph_tilt}
	with $\sigma$ replaced by $\sigma_{2\varrho}$
	\begin{gather*}
		\begin{aligned}
			& \quad \varrho^{-\vdim} | Q S ( \theta ) + ( \delta
			V ) ( ( \eta \circ \qq ) \cdot ( \qq^\ast \circ \theta
			\circ \pp ) ) | \\
			& \leq \Delta_4
			\Gamma_{\ref{lemma:lipschitz_approximation}\eqref{item:lipschitz_approximation:estimate_b}}
			( Q, \vdim ) \varrho^{-\vdim} \| V \| ( B_{a,\varrho}
			) + \Delta_4
			\Gamma_{\ref{lemma:iteration_prep}\eqref{item:iteration_prep:graph_tilt}} (
			\adim, Q, \delta ) \big ( \phi_2 ( 2\varrho,
			T_{2\varrho} )^2  + \phi_4 (2\varrho) \big )
		\end{aligned}
	\end{gather*}
	Therefore one obtains in view of
	\ref{lemma:iteration_prep}\,\eqref{item:iteration_prep:estimate_bad_set}, since $|
	\eqproject{T_\varrho} - \project{T} | \leq \adim^{1/2} \|
	\eqproject{T_{2\varrho}} - \project{T} \| \leq \adim^{1/2} \|
	\sigma_{2\varrho} \| \leq \delta/2$ by \ref{miniremark:projections},
	\begin{gather} \label{eqn:l1_estimate:6}
		\varrho^{-\vdim} | Q S ( \theta ) + ( \delta V ) ( (
		\eta \circ \qq ) \cdot ( \qq^\ast \circ \theta \circ \pp ) ) |
		\leq \Delta_5 \big ( \phi_2 ( 2\varrho, T_{2\varrho} )^2 +
		\phi_4 (2\varrho) \big )
	\end{gather}
	where $\Delta_5$ is a positive, finite number depending only on
	$\adim$, $Q$, $\delta_4$, and $\delta$. Also, using
	\ref{lemma:iteration_prep}\,\eqref{item:item:iteration_prep:side_conditions:upper_bound} and
	H\"older's inequality, recalling $\norm{\theta}{\infty}{c,\varrho}
	\leq \varrho$,
	\begin{gather} \label{eqn:l1_estimate:7}
		\varrho^{-\vdim} | ( \delta V ) ( ( \eta \circ \qq ) \cdot (
		\qq^\ast \circ \theta \circ \pp ) ) | \leq (
		\unitmeasure{\vdim} (Q+1/2))^{1-1/p} \phi_3 (\varrho).
	\end{gather}
	Finally, noting
	\begin{gather*}
		\phi_3 ( 2 \varrho ) = \delta \phi_4 ( 2 \varrho
		)^{\frac{\vdim-p}{\vdim p}} \leq \delta \big ( 2^{-\vdim}
		\besicovitch{\adim}^{-1} \unitmeasure{\vdim} (1/8) \big
		)^{\frac{\vdim-p}{\vdim p}} \quad \text{if $\vdim > 1$}, \\
		\phi_4 (2\varrho) \leq \Delta_6 \phi_3 ( 2 \varrho )
	\end{gather*}
	where $\Delta_6 = \delta^{-1} \big ( 2^{-\vdim}
	\besicovitch{\adim}^{-1} \unitmeasure{\vdim} ( 1/8 ) \big
	)^{1-\frac{\vdim-p}{\vdim p}}$, the conclusion may be obtained by
	combining \eqref{eqn:l1_estimate:4}, \eqref{eqn:l1_estimate:5},
	\eqref{eqn:l1_estimate:6} and \eqref{eqn:l1_estimate:7}.
\end{proof}
\begin{proof} [Proof of \eqref{item:iteration:key_estimate}]
	Define $\varepsilon_{\eqref{item:iteration:key_estimate}}$ to be
	the infimum of all numbers
	\begin{gather*}
		\inf \big \{
		\varepsilon_{\ref{lemma:iteration_prep}\eqref{item:iteration_prep:estimate_bad_set}} (
		i, \delta_4, \delta ),
		\varepsilon_{\ref{lemma:iteration_prep}\eqref{item:iteration_prep:side_conditions}} (
		\adim, \delta_4, \delta ), 2^{-4\adim-5} \adim^{-2}
		\unitmeasure{i} \delta^2 \big \}
	\end{gather*}
	corresponding to $\adim > i \in \nat$.

	Noting
	\begin{gather*}
		\phi_1 ( \varrho/4 ) \leq \phi_1 ( \varrho ) + \norm{D^2 (
		u_{\varrho/4} - u_\varrho )}{\infty}{c,\varrho/8},
	\end{gather*}
	only $\norm{D^2 (u_{\varrho/4}-u_\varrho)}{\infty}{c,\varrho/8}$ needs
	to be estimated. Since $\varrho < 2r$ as $2\varrho \in J_0$ and
	$\varrho \in J_4$, one notes
	\begin{gather*}
		2 \varrho \in J_3, \quad \phi_2 ( 2 \varrho, T_{2\varrho} )
		\leq \phi_2 ( 2 \varrho, T) \leq ( 2 \vdim^{1/2} \kappa
		)^{1/2}.
	\end{gather*}
	Therefore one may apply \ref{lemma:iteration_prep}\,\eqref{item:iteration_prep:ch_planes} for each
	$t \in \{ \varrho/4, \varrho/2, \varrho \}$ with $\varrho$, $s$,
	$\lambda$ replaced by $2\varrho$, $\varrho/4$, $1/2$ to obtain $\{
	\varrho/4,\varrho/2,\varrho\} \subset J_1$ and
	\begin{gather*}
		\sup \{ \| \sigma_{\varrho/4} \|, \| \sigma_{\varrho/2} \|, \|
		\sigma_\varrho \| \} \leq \| \sigma_{2\varrho} \| + 2^{2\vdim
		+ 3} \unitmeasure{\vdim}^{-1/2} \phi_2 ( 2\varrho,
		T_{2\varrho} ) \leq \adim^{-1/2} \delta /2.
	\end{gather*}
	Computing for $x \in \oball{c}{\varrho/4}$
	\begin{gather*}
		\left < D^2 ( u_\varrho - u_{\varrho/4} )(x), C (
		\sigma_{\varrho/4} ) \right > = \left < D^2 u_\varrho (x), C (
		\sigma_{\varrho/4} ) - C ( \sigma_\varrho ) \right >,
	\end{gather*}
	one infers from \ref{lemma:interior_c2a_estimate} with $c$, $M$,
	$\Upsilon$, $\alpha$, $a$, $r$, and $u$ replaced by
	$\Gamma_{\eqref{item:iteration:coerciveness}} ( \adim )^{-1}$,
	$\Gamma_{\eqref{item:iteration:coerciveness}} ( \adim )$ $D^2
	\Psi_0^\S ( \sigma_{\varrho/4} )$, $1/2$, $c$, $\varrho/4$, and
	$u_\varrho-u_{\varrho/4}$ that
	\begin{align*}
		& \norm{D^2 (u_\varrho-u_{\varrho/4})}{\infty}{c,\varrho/8} \\
		& \qquad \leq \Delta_1 \big ( \varrho^{-2-\vdim}
		\norm{u_\varrho - u_{\varrho/4}}{1}{c,\varrho/4} +
		\varrho^{1/2} \hoelder{1/2}{D^2 u_\varrho |
		\cball{c}{\varrho/4}} \| \sigma_{\varrho/4} - \sigma_\varrho
		\| \big )
	\end{align*}
	where $\Delta_1$ is a positive, finite number depending only on
	$\adim$. Since
	\begin{gather*}
		\varrho^{1/2} \hoelder{1/2}{D^2
		u_\varrho|\cball{c}{\varrho/4}} \leq \Delta_2 \phi_1 ( \varrho
		)
	\end{gather*}
	by \cite[5.2.5]{MR41:1976} and
	\eqref{item:iteration:coerciveness} for some positive, finite
	number $\Delta_2$ depending only on $\adim$, the conclusion now
	follows, noting
	\ref{lemma:iteration_prep}\,\eqref{item:iteration_prep:ch_planes}, by
	applying \eqref{item:iteration:l1_estimate} twice, once with
	$\varrho$ as given and once with $\varrho$ replaced by $\varrho/4$.
\end{proof} \setcounter{equation}{0}
\begin{proof} [Proof of \eqref{item:iteration:prep_tilt}]
	Define $q= \infty$ if $\vdim = 1$ and $q = ( \frac{1}{2\tau} +
	\frac{1}{2} - \frac{1}{p} )^{-1}$ if $\vdim > 1$ and note $2 \leq q <
	\infty$ if $\vdim = 2$ and $q = 2 \vdim / ( \vdim-2 )$ if $\vdim > 2$
	and
	\begin{gather*}
		1/p + 1/q \geq 1, \quad \tau = ( 2 ( 1/p + 1/q) - 1 )^{-1}.
	\end{gather*}
	With $\delta_4 = 1$ and $\delta_5 = ( 40 )^{-\vdim} (
	\isoperimetric{\vdim} \vdim )^{-\vdim} / \unitmeasure{\vdim}$ define
	\begin{gather*}
		\Delta_1 = \inf \big \{
		\begin{aligned}[t]
			&
			\varepsilon_{\ref{lemma:iteration_prep}\eqref{item:iteration_prep:estimate_bad_set}}
			( \vdim, \delta_4, \delta ),
			\varepsilon_{\ref{lemma:iteration_prep}\eqref{item:iteration_prep:H_is_H}}
			( \vdim, \delta_4, \delta_5, \varepsilon ),
			\varepsilon_{\ref{lemma:iteration_prep}\eqref{item:iteration_prep:side_conditions}}
			( \adim, \delta_4, \delta ), \\
			& 2^{-\vdim} \besicovitch{\adim}^{-1}
			\unitmeasure{\vdim}
			\varepsilon_{\ref{lemma:lipschitz_approximation}\eqref{item:lipschitz_approximation:poincare}}
			( \vdim, \delta_2, \delta_4 ) ( 2
			\Gamma_{\ref{lemma:lipschitz_approximation}\eqref{item:lipschitz_approximation:estimate_b}}
			( Q, \vdim ) )^{-1} \delta \big \},
		\end{aligned} \\
		\Delta_2 = \inf \big \{ 1, ( 2 \isoperimetric{1} )^{-1} \big
		\}, \\
		\Delta_3 = \inf \big \{ 1, 2^{-\vdim} \besicovitch{\adim}^{-1}
		\unitmeasure{\vdim} \inf \{
		\varepsilon_{\ref{lemma:lipschitz_approximation}\eqref{item:lipschitz_approximation:poincare}}
		( \vdim, \delta_2, \delta_4 ) ( 2
		\Gamma_{\ref{lemma:lipschitz_approximation}\eqref{item:lipschitz_approximation:estimate_b}}
		( Q, \vdim ))^{-1}, 1/ 8 \} \big \}, \\
		\varepsilon_{\eqref{item:iteration:prep_tilt}} = \inf \big \{
		\Delta_1, 2^{-1} \vdim^{-1/2}, \Delta_2, \delta ( \Delta_3
		)^{1/p-1/\vdim} \big \}, \quad \Delta_4 = \sup \{ 2^\vdim
		\Gamma_{\ref{lemma:Q_function_with_holes}} ( \adim, Q, q ), 1 \},
		\\
		\Delta_5 = \sup \big \{ 2
		\Gamma_{\ref{lemma:Q_function_with_holes}} ( \adim, Q, \infty ),
		2^\vdim \Gamma_{\ref{lemma:Q_function_with_holes}} ( \adim, Q, 2),
		1 \big \}, \quad \Delta_6  =
		\Gamma_{\ref{lemma:iteration_prep}\eqref{item:iteration_prep:graph_tilt}}
		( \adim, Q, \delta )^{1/2} \delta^{-\tau}, \\
		\Delta_7 = \sup \{ Q
		\Gamma_{\ref{lemma:lipschitz_approximation}\eqref{item:lipschitz_approximation:poincare}}
		( \vdim ) , 1 \}, \quad \Delta_8 = 2^{\vdim+2} \delta^{-2}
		\besicovitch{\adim}, \\
		\Delta_9 = 19/(2^{1/2} \cdot 40 + 19), \quad \Delta_{10} =
		\Gamma_{\ref{lemma:coercive_estimate}} ( \vdim, p, q ) \quad
		\text{if $\vdim = 1$}, \\
		\Delta_{10} = \Gamma_{\ref{lemma:coercive_estimate_rect}} (
		\vdim, p, q ) \quad \text{if $\vdim > 1$}, \\
		\Delta_{11} = 2^\vdim \sup \big \{ 2 ( \Delta_{10})^{1/2}, 2 (
		16 + 4 \vdim )^{1/2} | \Delta_9 - 1/4 |^{-1} \big \}, \\
		\Delta_{12}  = \big ( 4 ( \Delta_4 + \Delta_5 )
		\Delta_7 ( \Delta_8 )^2 \delta^{-\tau} + 1 \big )
		\Delta_{11}, \quad \Gamma_{\eqref{item:iteration:prep_tilt}} =
		\Delta_{12} ( 2 + \Delta_6 ).
	\end{gather*}
	It will be shown that
	$\varepsilon_{\eqref{item:iteration:prep_tilt}}$ and
	$\Gamma_{\eqref{item:iteration:prep_tilt}}$ have the asserted
	property.

	Suppose $B$, $A$, $B_{a,t}$, $C_{a,t}$, and $H$ for $t \in J_0$ are as
	in \ref{lemma:iteration_prep}.

	Since $\varrho/2 \in J_0 \cap J_4$, it follows
	\begin{gather*}
		\varrho/2 < 2r, \quad \varrho \in J_3, \quad \phi_2 ( \varrho,
		T_\varrho ) \leq \phi_2 ( \varrho, T ) \leq ( 2 \vdim^{1/2}
		\kappa )^{1/2}.
	\end{gather*}
	One therefore obtains
	\begin{gather} \label{eqn:prep_tilt:1}
		\kappa \leq \Delta_1, \quad \phi_2 ( \varrho, T_\varrho ) \leq
		1, \quad \phi_3 ( \varrho) \leq \Delta_2, \quad \phi_4 (
		\varrho ) \leq \Delta_3.
	\end{gather}

	Applying \ref{lemma:Q_function_with_holes} with $a$, $r$, $f$, and $A$
	replaced by $c$, $\varrho/2$, $F \aplus (-P)| \oball{c}{\varrho/2}$,
	and $X$, noting
	\ref{lemma:lipschitz_approximation}\,\eqref{item:lipschitz_approximation:lip},
	one obtains
	\begin{gather} \label{eqn:prep_tilt:7}
		\begin{split}
			& \varrho^{-1-\vdim/q} \norm{F \aplus
			(-P)}{q}{c,\varrho/2} \\
			& \qquad \leq \Delta_4 \big ( \varrho^{-\vdim/2}
			\norm{A ( F \aplus (-P))}{2}{c,\varrho/2} +
			\eta^{1/q-1} \varrho^{-\vdim-1} \norm{f \aplus
			(-P)}{1}{X} \big ).
		\end{split}
	\end{gather}
	Similarly, one obtains
	\begin{gather} \label{eqn:prep_tilt:8}
		\begin{split}
			& \varrho^{-1-\vdim/2} \norm{F \aplus
			(-P)}{2}{c,\varrho/2} \\
			& \qquad \leq \Delta_5 \big ( \eta^\mu
			\varrho^{-\vdim/2} \norm{A ( F \aplus (-P)
			)}{2}{c,\varrho/2} + \eta^{-1} \varrho^{-\vdim-1}
			\norm{f \aplus (-P)}{1}{X} \big ).
		\end{split}
	\end{gather}
	Applying
	\ref{lemma:iteration_prep}\,\eqref{item:iteration_prep:graph_tilt}
	applied with $\varrho$, $\sigma$ replaced by $\varrho/2$, $DP(0)$
	yields, noting $\phi_4 ( \varrho ) \leq 1$ by \eqref{eqn:prep_tilt:1}
	and $1/2 \geq \tau ( 1/p-1/\vdim )$,
	\begin{gather} \label{eqn:prep_tilt:9}
		\varrho^{-\vdim/2} \norm{A ( F \aplus (-P))}{2}{c,\varrho/2}
		\leq \Delta_6 \big ( \phi_2 ( \varrho, R ) + \phi_3 (
		\varrho )^\tau \big ).
	\end{gather}

	Define $d : \rel^\adim \to \rel$ by
	\begin{gather*}
		d (z) =  \inf \{ ( | \pp ( z - \xi ) |^2 + | \qq ( z - \xi )
		|^2)^{1/2} \with \text{$\xi \in \rel^\adim$, $P ( \pp ( \xi )
		) = \qq ( \xi )$} \}
	\end{gather*}
	whenever $z \in \rel^\adim$ and note, taking $\xi = ( \pp^\ast +
	\qq^\ast \circ P ) ( \pp ( z ) )$,
	\begin{gather*}
		d (z) \leq | P ( \pp (z) ) - \qq (z) | \quad \text{for $z \in
		\rel^\adim$}.
	\end{gather*}
	Hence, defining
	$d_{\ref{lemma:lipschitz_approximation}\eqref{item:lipschitz_approximation:poincare}}$
	and 
	$g_{\ref{lemma:lipschitz_approximation}\eqref{item:lipschitz_approximation:poincare}}$
	to be the functions defined in
	\ref{lemma:lipschitz_approximation}\,\eqref{item:lipschitz_approximation:poincare}
	under the names ``$d$'' and ``$g$'' with
	\begin{gather*}
		\text{$\varrho$, $P$  replaced by $\varrho/2$,
		$\classification{\cylind{T}{\pp^\ast (c)}{\varrho/2}}{z}{ P (
		\pp (z)) = \qq (z)}$},
	\end{gather*}
	one infers
	\begin{gather*}
		d | \cylinder{T}{\pp^\ast (c)}{\varrho/2}{3r} \leq
		d_{\ref{lemma:lipschitz_approximation}\eqref{item:lipschitz_approximation:poincare}}, \\
		g_{\ref{lemma:lipschitz_approximation}\eqref{item:lipschitz_approximation:poincare}}
		(x) \leq \mathscr{G} ( f (x), Q \Lbrack P (x) \Rbrack ) =
		\mathscr{G} ( ( f \aplus (-P) ) (x), Q \Lbrack 0 \Rbrack )
	\end{gather*}
	for $x \in X_1 \cap \cball{c}{\varrho/2}$. Therefore
	\ref{lemma:lipschitz_approximation}\,\eqref{item:lipschitz_approximation:poincare}
	with $\varrho$, $P$ replaced as in the definition of
	$d_{\ref{lemma:lipschitz_approximation}\eqref{item:lipschitz_approximation:poincare}}$
	and
	$g_{\ref{lemma:lipschitz_approximation}\eqref{item:lipschitz_approximation:poincare}}$
	yields, noting
	\begin{gather*}
		\mathscr{L}^\vdim ( \cball{c}{\varrho/2} \without X_1 ) \leq
		\mathscr{L}^\vdim ( C_{a,\varrho/2} ) \leq
		\varepsilon_{\ref{lemma:lipschitz_approximation}\eqref{item:lipschitz_approximation:poincare}}
		( \vdim, \delta_2, \delta_4 ) \unitmeasure{\vdim} ( \varrho/2
		)^\vdim
	\end{gather*}
	by \ref{lemma:iteration_prep}\,\eqref{item:item:iteration_prep:side_conditions:estimate} with
	$\varrho$ replaced by $\varrho/2$ and \eqref{eqn:prep_tilt:1},
	\begin{gather} \label{eqn:prep_tilt:10}
		\begin{split}
			& \eqLpnorm{\| V \| \restrict H \cap
			\cylinder{T}{\pp^\ast (c)}{\varrho/2}{3r}}{s}{d} \\
			& \qquad \leq \Delta_7 \big ( \norm{F \aplus
			(-P)}{s}{c,\varrho/2} + \mathscr{L}^\vdim (
			\cball{c}{\varrho/2} \without X_1 )^{1/s+1/\vdim} \big
			)
		\end{split}
	\end{gather}
	whenever $1 \leq s \leq \infty$. Using
	\ref{lemma:lipschitz_approximation}\,\eqref{item:lipschitz_approximation:estimate_b}
	with $\varrho$ replaced by $\varrho/2$
	in conjunction with
	\ref{lemma:iteration_prep}\,\eqref{item:item:iteration_prep:side_conditions:inclusion} with
	$\varrho$ replaced by $\varrho/2$, one
	estimates
	\begin{gather*}
		\mathscr{L}^\vdim ( \cball{c}{\varrho/2} \without X_1 ) \leq
		\mathscr{L}^\vdim ( C_{a,\varrho/2} ) \leq
		\Gamma_{\ref{lemma:lipschitz_approximation}\eqref{item:lipschitz_approximation:estimate_b}}
		( Q, \vdim ) \| V \| ( B_{a,\varrho/2} ),
	\end{gather*}
	hence by \ref{lemma:iteration_prep}\,\eqref{item:iteration_prep:estimate_bad_set} with $\varrho$
	and $R$ replaced by $\varrho/2$ and $T_\varrho$, noting
	\eqref{eqn:prep_tilt:1} and $| \eqproject{T_\varrho} - \project{T}
	| \leq \adim^{1/2} \| \eqproject{T_\varrho} - \project{T} \| \leq
	\adim^{1/2} \| \sigma_\varrho \| \leq \delta/2$ by
	\ref{miniremark:projections},
	\begin{gather} \label{eqn:prep_tilt:11}
		\varrho^{-\vdim} \mathscr{L}^\vdim ( \cball{c}{\varrho/2}
		\without X_1 ) \leq \Delta_8 \big ( \phi_2 ( \varrho,
		T_\varrho )^2 + \phi_4 ( \varrho ) \big ).
	\end{gather}
		
	In order to apply \ref{lemma:coercive_estimate_rect}, first define $K
	= \cylinder{T}{\pp^\ast (c)}{\varrho}{\varrho}$ and
	$H_{\ref{lemma:coercive_estimate_rect}}$ to be the set defined in
	\ref{lemma:coercive_estimate_rect} under the name ``$H$'', i.e.~the
	set of all $z \in \spt \|V\|$ such that
	\begin{gather*}
		\measureball{\| V \|}{\cball{z}{t}} \geq ( 40 )^{-\vdim} (
		\isoperimetric{\vdim}\vdim )^{-\vdim} t^\vdim \quad
		\text{whenever $0 < t < \infty$, $\cball{z}{t} \subset K$}.
	\end{gather*}
	One infers that
	\begin{gather*}
		\cylinder{T}{a}{\varrho}{\varrho} \cap \spt \| V \| \subset
		H_{\ref{lemma:coercive_estimate_rect}} \quad \text{if $\vdim =
		1$}, \\
		H_{\ref{lemma:coercive_estimate_rect}} \cap
		\cylinder{T}{a}{\Delta_9 \varrho}{\Delta_9 \varrho} \subset H;
	\end{gather*}
	in fact the first inclusion follows by
	\ref{app:lemma:good_point} and \eqref{eqn:prep_tilt:1} whereas
	concerning the second inclusion $\eta < 2^{-\vdim}$ implies by
	\ref{lemma:iteration_prep}\,\eqref{item:item:iteration_prep:side_conditions:inclusion}
	with $\varrho$ replaced by $\varrho/2$ the existence of $\xi \in A
	\cap \cylinder{T}{a}{\varrho/4}{\varrho/4}$ hence, verifying $1/4 <
	\Delta_9 < 1/2$ and $2^{3/2} \Delta_9/(1-\Delta_9) \leq
	\frac{19}{20}$, one obtains for $z \in \cylinder{T}{a}{\Delta_9
	\varrho}{\Delta_9 \varrho}$, $(1-\Delta_9) \varrho < t < 2r$
	\begin{gather*}
		| \xi - z | \leq 2^{3/2} \Delta_9 \varrho \leq 2^{3/2}
		\Delta_9 t / (1-\Delta_9 ) \leq {\textstyle\frac{19}{20}}
		t, \quad \cball{z}{t} \supset \cball{\xi}{t/(20)}, \\
		\measureball{\| V \|}{\cball{z}{t}} \geq \measureball{\| V
		\|}{\cball{\xi}{t/(20)}} \geq ( 40 )^{-\vdim} (
		\isoperimetric{\vdim} \vdim )^{-\vdim} t^\vdim = \delta_5
		\unitmeasure{\vdim} t^\vdim
	\end{gather*}
	by \ref{app:lemma:good_point} since $\delta \leq ( 2
	\isoperimetric{\vdim} )^{-1}$ and, noting \eqref{eqn:prep_tilt:1},
	the inclusion follows from \ref{lemma:iteration_prep}\,\eqref{item:iteration_prep:H_is_H} as
	$\cball{z}{(1-\Delta_9) \varrho} \subset K$. Choose $\phi \in
	\mathscr{D}^0 ( U )$ such that
	\begin{gather*}
		0 \leq \phi (x) \leq 1 \quad \text{and} \quad | D \phi (x) |
		\leq 2 \cdot ( \Delta_9 - 1/4 )^{-1} \varrho^{-1} \qquad
		\text{for $x \in U$}, \\
		\phi (x) = 1 \quad \text{for $x \in \cylinder{T}{a}
		{\varrho/4}{\varrho/4}$}, \\
		\spt \phi \subset \cylinder{T}{a}{\Delta_9 \varrho}{\Delta_9
		\varrho} \subset K \cap \Int
		\cylinder{T}{a}{\varrho/2}{\varrho/2}.
	\end{gather*}
	One now applies \ref{lemma:coercive_estimate} if $\vdim = 1$ and
	\ref{lemma:coercive_estimate_rect} if $\vdim > 1$ both with $a$ and
	$T$ replaced by $( \pp^\ast + \qq^\ast \circ P ) (0)$ and $\im D (
	\pp^\ast + \qq^\ast \circ P ) (0)$ to obtain with $\alpha_\vdim = 0$
	if $\vdim = 1$ and $\alpha_\vdim = ( \varrho^{1-\vdim/p}
	\alpha)^{\frac{\vdim p}{\vdim-p}}$ if $\vdim > 1$
	\begin{gather*}
		\varrho^{-\vdim} \beta^2 \leq \Delta_{10} \big ( \alpha_\vdim
		+ ( \varrho^{1-\vdim/p} \alpha \varrho^{-1-\vdim/q} \gamma
		)^{1/(1/p+1/q)} \big ) + (16+4\vdim) \varrho^{-\vdim} \xi^2;
	\end{gather*}
	here $\alpha$, $\beta$, $\gamma$, and $\xi$ are as in
	\ref{lemma:coercive_estimate_rect} and \ref{lemma:coercive_estimate}
	respectively. Noting $(\alpha_\vdim)^{1/2} \leq \phi_3 ( \varrho
	)^\tau$, since $\phi_3 ( \varrho ) \leq 1$ by
	\eqref{eqn:prep_tilt:1}, and using the inequality relating
	arithmetic and geometric means as in
	\ref{remark:coercive_estimate_rect}, one infers
	\begin{gather} \label{eqn:prep_tilt:12}
		\begin{split}
			& \phi_2 ( \varrho/4, R ) \leq \Delta_{11} \big (
			\lambda \varrho^{-1-\vdim/q} \eqLpnorm{\| V \|
			\restrict H \cap \cylinder{T}{\pp^\ast
			(c)}{\varrho/2}{3r}}{q}{d} \\
			& \qquad \quad + \lambda^{-\tau} \phi_3 (
			\varrho )^\tau + \varrho^{-1-\vdim/2} \eqLpnorm{\| V
			\| \restrict H \cap
			\cylinder{T}{\pp^\ast(c)}{\varrho/2}{3r}}{2}{d} \big
			).
		\end{split}
	\end{gather}

	Finally, the estimates
	\eqref{eqn:prep_tilt:7}--\eqref{eqn:prep_tilt:12} are combined as
	follows: Firstly,
	\begin{gather*}
		\begin{aligned}
			& \phi_2 ( \varrho/ 4, R ) \leq \Delta_{11}
			\lambda^{-\tau} \phi_3 ( \varrho )^\tau \\
			& \qquad + \Delta_7 \Delta_{11} \lambda
			\varrho^{-1-\vdim/q} \big ( \norm{F \aplus
			(-P)}{q}{c,\varrho/2} + \mathscr{L}^\vdim (
			\cball{c}{\varrho/2} \without X_1 )^{1/q+1/\vdim} \big
			) \\
			& \qquad + \Delta_7 \Delta_{11} \varrho^{-1-\vdim/2}
			\big ( \norm{F \aplus (-P)}{2}{c,\varrho/2} +
			\mathscr{L}^\vdim ( \cball{c}{\varrho/2} \without X_1
			)^{1/2+1/\vdim} \big )
		\end{aligned}
	\end{gather*}
	by \eqref{eqn:prep_tilt:12} and \eqref{eqn:prep_tilt:10}. Then,
	by \eqref{eqn:prep_tilt:7}, \eqref{eqn:prep_tilt:8}, and
	\eqref{eqn:prep_tilt:11}
	\begin{gather*}
		\begin{aligned}
			& \phi_2 ( \varrho / 4, R ) \leq
			\Delta_{11} \lambda^{-\tau} \phi_3 ( \varrho )^\tau \\
			& \qquad + \Delta_7 \Delta_{11} ( \Delta_4 +
			\Delta_5) ( \lambda + \eta^\mu ) \varrho^{-\vdim/2}
			\norm{A ( F \aplus (-P))}{2}{c,\varrho/2} \\
			& \qquad + \Delta_7 \Delta_{11} ( \Delta_4 +
			\Delta_5) ( \eta^{1/q-1} + \eta^{-1} )
			\varrho^{-1-\vdim} \norm{f \aplus
			(-P)}{1}{X} \\
			& \qquad + 2 \Delta_7 ( \Delta_8 )^{1/q+1/\vdim}
			\Delta_{11} \lambda \big ( \phi_2 ( \varrho, T_\varrho
			)^{2/q + 2/\vdim} + \phi_4 ( \varrho )^{1/q + 1/\vdim}
			\big ) \\
			& \qquad + 2 \Delta_7 ( \Delta_8 )^{1/2+1/\vdim}
			\Delta_{11} \big ( \phi_2 ( \varrho, T_\varrho
			)^{1+2/\vdim} + \phi_4 ( \varrho)^{1/2+1/\vdim} \big
			).
		\end{aligned}
	\end{gather*}
	Finally, using $\phi_2 ( \varrho, T_\varrho ) \leq 1$ and $\phi_4 (
	\varrho) \leq 1$ by \eqref{eqn:prep_tilt:1}, $q \geq 2$, and
	$\tau \leq \frac{\vdim p}{2 ( \vdim-p )} \leq \big ( \frac{1}{q} +
	\frac{1}{\vdim} \big ) \frac{\vdim p }{\vdim-p}$ if $\vdim > 1$ this
	simplifies to
	\begin{multline*}
		\phi_2 ( \varrho/ 4, R ) \leq \Delta_{12} \Big (
		\lambda^{-\tau} \phi_3 ( \varrho)^\tau + \big ( \lambda +
		\phi_2 ( \varrho, T_\varrho )^{2/\vdim} \big ) \phi_2 (
		\varrho, T_\varrho ) \\
		+ ( \lambda + \eta^\mu ) \varrho^{-\vdim/2} \norm{A ( F \aplus
		(-P))}{2}{c,\varrho/2} + \eta^{-1} \varrho^{-\vdim-1}
		\norm{f \aplus (-P)}{1}{X} \Big )
	\end{multline*}
	and the conclusion is a consequence of \eqref{eqn:prep_tilt:9}.
\end{proof} \setcounter{equation}{0}
\begin{proof} [Proof of \eqref{item:iteration:tilt_estimate}]
	With $\delta_4 = 1$ and $\delta_5 = ( 40 )^{-\vdim} (
	\isoperimetric{\vdim} \vdim )^{-\vdim} / \unitmeasure{\vdim}$ define
	\begin{gather*}
		\Delta_1  = \inf \{
		\varepsilon_{\ref{lemma:iteration_prep}\eqref{item:iteration_prep:estimate_bad_set}}
		( \vdim, \delta_4, \delta ),
		\varepsilon_{\ref{lemma:iteration_prep}\eqref{item:iteration_prep:side_conditions}}
		( \adim, \delta_4,  \delta ),
		\varepsilon_{\eqref{item:iteration:prep_tilt}} ( \vdim, \adim,
		Q, \delta_2, \varepsilon, \delta, p ) \}, \\ \displaybreak[0]
		\Delta_2  = 6 ( 2 \vdim
		\Gamma_{\eqref{item:iteration:coerciveness}} (
		\adim))^{\vdim+1} \unitmeasure{\vdim}^{-1/2}, \quad \Delta_3 =
		\Delta_2 \big ( \Gamma_{\eqref{item:iteration:coerciveness}}
		(\adim)^2 + 1 \big ), \\ \displaybreak[0] \Delta_4  = \Delta_3
		\Gamma_{\ref{lemma:iteration_prep}\eqref{item:iteration_prep:graph_tilt}}
		( \adim, Q, \delta )^{1/2}, \\ \displaybreak[0] \Delta_5  =
		\inf \big \{ 2^{-2\vdim-5} \unitmeasure{\vdim} \adim^{-1/2}
		\delta, ( \Delta_4 )^{-1} \adim^{-1/2} \delta/4, 1 \big \}, \\
		\displaybreak[0] \Delta_6  = \inf \big \{ 1, 2^{-\vdim}
		\varepsilon_{\eqref{item:iteration:prep_tilt}} ( \vdim, \adim,
		Q, \delta_2, \varepsilon, \delta, p ) \big \}, \\
		\displaybreak[0] \Delta_7  = \inf \big \{ ( \Delta_4 )^{-2}
		\adim^{-1} \delta^2 2^{-4} , 2^{-\vdim}
		\besicovitch{\adim}^{-1} \unitmeasure{\vdim} (1/8), 2^{-\vdim}
		\big \}, \\ \displaybreak[0]
		\varepsilon_{\eqref{item:iteration:tilt_estimate}}  = \inf
		\big \{ \Delta_1, 2^{-1} \vdim^{-1/2} ( \Delta_5 )^2,
		\Delta_6, \delta ( \Delta_7 )^{1/p-1/\vdim} \big \}.
	\end{gather*}
	Moreover, define
	\begin{gather*}
		\Delta_8  = \Gamma_{\eqref{item:iteration:l1_estimate}} (
		\vdim, \adim, Q, \delta_4, \delta, p ), \quad
		\Delta_9  = \Gamma_{\ref{lemma:poincare}} (
		\adim ) \unitmeasure{\vdim}^{1/2}, \\
		\Delta_{10}  = \Delta_9
		\Gamma_{\ref{lemma:iteration_prep}\eqref{item:iteration_prep:graph_tilt}}
		( n, Q, \delta)^{1/2}, \quad
		\Delta_{11}  =
		2^{\vdim+1} \Gamma_{\ref{lemma:simple_interpolation}} ( 2, \adim
		), \\
		\Delta_{12}  = \Delta_{11} \sup \{
		\unitmeasure{\vdim}, 
		\Delta_8 + 2^\vdim \Delta_{10} \delta^{-\tau} \}, \\
		\Delta_{13}  = ( Q + 1 )^{1/2} \unitmeasure{\vdim}^{1/2}
		\Delta_{12} \adim^{1/2} + 2^\vdim, \quad
		\Delta_{14}  = Q^{1/2} \sup \{ \unitmeasure{\vdim}, \Delta_8
		\}, \\
		\Gamma_{\eqref{item:iteration:tilt_estimate}}  =
		\Gamma_{\eqref{item:iteration:prep_tilt}} ( \vdim, \adim, Q,
		\delta, p, \tau ) ( 2^{\vdim+1} + 2 \Delta_{13} + \Delta_{14}
		).
	\end{gather*}
	It will be shown that
	$\varepsilon_{\eqref{item:iteration:tilt_estimate}}$ and
	$\Gamma_{\eqref{item:iteration:tilt_estimate}}$ have the asserted
	property.

	Since $\varrho \in J_4$ and $2\varrho \in J_0$, it follows
	\begin{gather*}
		\varrho < 2r, \quad 2 \varrho \in J_3, \quad \phi_2 (
		2\varrho, T ) \leq ( 2 \vdim^{1/2} \kappa )^{1/2}.
	\end{gather*}
	One therefore obtains
	\begin{gather} \label{eqn:tilt_estimate:1}
		\begin{split}
			& \kappa \leq \Delta_1, \quad \phi_2 ( 2\varrho, T )
			\leq \Delta_5, \quad \phi_3 ( 2 \varrho) \leq
			\Delta_6, \quad \phi_4 ( 2 \varrho ) \leq \Delta_7, \\
			& \qquad \qquad \qquad \varrho \in J_1, \quad \|
			\sigma_\varrho \| \leq \adim^{-1/2} \delta/2;
		\end{split}
	\end{gather}
	in fact the first four inequalities are directly implied by the
	definition of $\varepsilon_{\eqref{item:iteration:tilt_estimate}}$ and
	the last two statements follow from
	\ref{lemma:iteration_prep}\,\eqref{item:iteration_prep:ch_planes}
	applied with $\varrho$, $s$, $t$, $\lambda$ replaced by $2 \varrho$,
	$\varrho$, $\varrho$, $1/2$ since $\phi_2 ( 2\varrho, T_{2\varrho} )
	\leq 2^{-2\vdim-5} \unitmeasure{\vdim} \adim^{-1/2} \delta$ by the
	second inequality.

	Define $P : \rel^\vdim \to \rel^\codim$ by $P (x) = u_\varrho (c) +
	\left < x-c, Du_\varrho (c) \right >$ for $x \in \rel^\vdim$. One
	verifies
	\begin{gather} \label{eqn:tilt_estimate:2}
		\Lip P = \| D P (0) \| \leq \adim^{-1/2} \delta/2;
	\end{gather}
	in fact using \cite[5.2.5]{MR41:1976}, \ref{lemma:standard_w012},
	\ref{lemma:iteration_prep}\,\eqref{item:iteration_prep:graph_tilt}
	with $\sigma$ replaced by $0$, and \eqref{eqn:tilt_estimate:1}
	\begin{gather*}
		\begin{aligned}
			\| D P (0) \| & = \| Du_\varrho (c) \| \leq \Delta_2
			\varrho^{-\vdim/2} \norm{Du_\varrho}{2}{c,\varrho} \\
			& \leq \Delta_2 \varrho^{-\vdim/2} \big ( \norm{D
			(u_\varrho-g)}{2}{c,\varrho} + \norm{Dg}{2}{c,\varrho}
			\big ) \\
			& \leq \Delta_3 \varrho^{-\vdim/2}
			\norm{Dg}{2}{c,\varrho} \leq \Delta_4 \big ( \phi_2 (
			2\varrho, T ) + \phi_4 ( 2 \varrho )^{1/2} \big ) \leq
			\adim^{-1/2} \delta/2.
		\end{aligned}
	\end{gather*}

	Taylor's expansion yields
	\begin{gather} \label{eqn:tilt_estimate:3}
		\varrho^{-\vdim-1} \norm{u_\varrho-P}{1}{c,\varrho/2} \leq
		\unitmeasure{\vdim} \varrho \norm{D^2
		u_\varrho}{\infty}{c,\varrho/2}.
	\end{gather}
	Noting \eqref{eqn:tilt_estimate:1}, one obtains from
	\eqref{item:iteration:l1_estimate} that
	\begin{gather} \label{eqn:tilt_estimate:4}
		\varrho^{-\vdim-1} \norm{u_\varrho - g}{1}{c,\varrho} \leq
		\Delta_8 \big ( \phi_2 ( 2\varrho, T_{2\varrho} )^2 + \phi_3 (
		2\varrho)^\tau \big ).
	\end{gather}
	By \ref{lemma:poincare} with $a$, $r$, $u$ replaced by $c$,
	$\varrho/2$, $(g-\sigma_\varrho) | \oball{c}{\varrho/2}$ there exists
	an affine function $R : \rel^\vdim \to \rel^\codim$ with $DR (0) =
	\sigma_\varrho$ such that
	\begin{gather*}
		\varrho^{-\vdim-1} \norm{g-R}{1}{c,\varrho/2} \leq \Delta_9
		\varrho^{-\vdim/2} \norm{D(g-R)}{2}{c,\varrho/2},
	\end{gather*}
	hence by \ref{lemma:iteration_prep}\,\eqref{item:iteration_prep:graph_tilt} with $\varrho$,
	$\sigma$ replaced by $\varrho/2$, $\sigma_\varrho$, noting
	\eqref{eqn:tilt_estimate:1},
	\begin{gather} \label{eqn:tilt_estimate:5}
		\varrho^{-\vdim-1} \norm{g-R}{1}{c,\varrho/2} \leq \Delta_{10}
		\big ( \phi_2 ( \varrho, T_\varrho ) + \phi_4 ( \varrho
		)^{1/2} \big ).
	\end{gather}
	Since by \ref{lemma:simple_interpolation} with $k$, $a$, $r$, $u$
	replaced by $2$, $c$, $\varrho/2$, $P-R$
	\begin{gather*}
		\begin{aligned}
			| D P (0) - \sigma_\varrho | & = | D ( P-R ) (0) |
			\leq \Delta_{11} \varrho^{-1-\vdim}
			\norm{P-R}{1}{c,\varrho/2} \\
			& \leq \Delta_{11} \varrho^{-1-\vdim} \big (
			\norm{P-u_\varrho}{1}{c,\varrho/2} +
			\norm{u_\varrho-g}{1}{c,\varrho/2} +
			\norm{g-R}{1}{c,\varrho/2} \big ),
		\end{aligned}
	\end{gather*}
	one obtains from
	\eqref{eqn:tilt_estimate:3}--\eqref{eqn:tilt_estimate:5}, noting $\sup
	\{ \phi_2 ( 2\varrho, T_{2\varrho} ), \phi_3 ( 2 \varrho ), \phi_4 (
	\varrho ) \} \leq 1$ by \eqref{eqn:tilt_estimate:1} and $1/2 \geq \tau
	(1/p - 1/\vdim)$,
	\begin{gather*}
		| D P (0) - \sigma_\varrho | \leq \Delta_{12} \big ( \varrho
		\phi_1 ( \varrho ) + \phi_2 ( 2\varrho, T_{2\varrho} ) +
		\phi_3 ( 2\varrho )^\tau \big ),
	\end{gather*}
	hence using
	\ref{lemma:iteration_prep}\,\eqref{item:item:iteration_prep:side_conditions:upper_bound}
	and \ref{miniremark:projections}
	\begin{gather} \label{eqn:tilt_estimate:6}
		\phi_2 ( \varrho, S ) \leq \Delta_{13} \big ( \varrho \phi_1 (
		\varrho ) + \phi_2 ( 2 \varrho, T_{2\varrho} ) + \phi_3 ( 2
		\varrho )^\tau \big )
	\end{gather}
	where $S = \im D ( \pp^\ast + \qq^\ast \circ P ) (0)$.

	Define $X = \classification{\oball{c}{\varrho/2} \cap X_1}{x}{
	\density^0 ( \| f (x) \|, g (x) ) = Q }$ and note
	\begin{gather*}
		\norm{f \aplus (-P)}{1}{X} \leq Q^{1/2} (
		\norm{g-u_\varrho}{1}{c,\varrho} +
		\norm{u_\varrho-P}{1}{c,\varrho/2} ).
	\end{gather*}
	Combining this with \eqref{eqn:tilt_estimate:3} and
	\eqref{eqn:tilt_estimate:4} yields
	\begin{gather*}
		\varrho^{-1-\vdim} \norm{f \aplus (-P)}{1}{X} \leq \Delta_{14}
		\big ( \varrho \phi_1 ( \varrho ) + \phi_2 ( 2 \varrho,
		T_{2\varrho} )^2 + \phi_3 ( 2 \varrho )^\tau \big ).
	\end{gather*}
	Therefore noting \eqref{eqn:tilt_estimate:1},
	\eqref{eqn:tilt_estimate:2} and
	\ref{lemma:lipschitz_approximation}\,\eqref{item:lipschitz_approximation:yz}
	and applying \eqref{item:iteration:prep_tilt} with $R$ replaced by
	$S$, one obtains in conjunction with \eqref{eqn:tilt_estimate:6} the
	conclusion.
\end{proof} \setcounter{equation}{0}
\begin{proof} [Proof of \eqref{item:iteration:iteration}]
	As the assertion does not involve $\kappa$ it may be restricted to a
	specific value. One defines
	\begin{align*}
		\Delta_1 & = \sup \{
		\Gamma_{\eqref{item:iteration:key_estimate}} ( \vdim, \adim, Q,
		\delta_4, \delta, p ),
		\Gamma_{\eqref{item:iteration:tilt_estimate}} ( \vdim, \adim, Q,
		\delta, p, \tau ) , 1 \} , \\
		\displaybreak[0] \eta & = \inf \big \{ ( 48 \Delta_1
		)^{-\adim}, 2^{-\adim} \big \}, \\
		\displaybreak[0] \kappa & =
		\begin{aligned}[t]
			\inf \big \{ &
			\varepsilon_{\ref{lemma:iteration_prep}\eqref{item:iteration_prep:estimate_bad_set}}
			( \vdim, \delta_4, \delta ),
			\varepsilon_{\ref{lemma:iteration_prep}\eqref{item:iteration_prep:lower_mass_bound}}
			( \adim, Q, \delta_4, p, \alpha, \delta_6 ),
			\varepsilon_{\ref{lemma:iteration_prep}\eqref{item:iteration_prep:side_conditions}}
			( \adim, \delta_4, \delta ), \\
			& \varepsilon_{\eqref{item:iteration:key_estimate}}
			( \adim, \delta_4, \delta ),
			2^{-\vdim-2} \besicovitch{\adim}^{-1}
			\unitmeasure{\vdim} \eta
			\Gamma_{\ref{lemma:lipschitz_approximation}\eqref{item:lipschitz_approximation:estimate_b}}
			( Q, \vdim )^{-1}, \\
			& \varepsilon_{\eqref{item:iteration:tilt_estimate}}
			( \vdim, \adim, Q, \delta_2, \varepsilon, \delta, p )
			\big \},
		\end{aligned} \\
		\displaybreak[0] \Delta_2 & =  \inf \big \{ 1, 2^{-\vdim}
		\besicovitch{\adim}^{-1} \unitmeasure{\vdim} \inf \{ \eta ( 4
		\Gamma_{\ref{lemma:lipschitz_approximation}\eqref{item:lipschitz_approximation:estimate_b}}
		( Q, \vdim ))^{-1}, 1/8 \} \big \}, \\
		\displaybreak[0] \Delta_3 & =
		\begin{aligned}[t]
			\inf \big \{ & 2^{-2\vdim} \sup \{ ( Q+1)
			\unitmeasure{\vdim}, 1 \}^{-1} \kappa, 1,
			\varepsilon_{\eqref{item:iteration:tilt_estimate}}
			( \vdim, \adim, Q, \delta_2, \varepsilon, \delta, p ),
			\\
			& ( \Delta_2 )^{1/p-1/\vdim} \delta, 2^{-9\vdim} \sup
			\{ M \unitmeasure{\vdim}, 1 \}^{-1} \kappa \big \},
		\end{aligned} \\
		\displaybreak[0] \Delta_4 & =
		\begin{aligned}[t]
			\inf \big \{ & ( \Delta_3 / 8 )^\tau,
			\varepsilon_{\ref{lemma:iteration_prep}\eqref{item:iteration_prep:lower_mass_bound}}
			( \adim, Q, \delta_4, p, \alpha , \delta_6 )^\tau , \\
			& \big ( \alpha p \unitmeasure{\vdim}^{1/p} ( (
			Q-1+\delta_6 )^{1/p} - ( Q-1+\delta_6/2)^{1/p} ) \big
			)^\tau \big \},
		\end{aligned} \\
		\displaybreak[0] \Delta_5 & = \inf \big \{ 2^{-2\vdim} ( Q + 1
		)^{-1/2} \unitmeasure{\vdim}^{-1/2} \kappa, 2^{-\vdim-2}
		\unitmeasure{\vdim}^{1/2} \big \}, \\
		\displaybreak[0] \Delta_6 & = n^{-1/2} \inf \big \{ \delta/4,
		2^{-\vdim-1} \sup \{ (Q+1) \unitmeasure{\vdim}, 1 \}^{-1}
		\Delta_5 \big \}, \\
		\displaybreak[0] \Delta_7 & = \inf \big \{ \adim^{-1/2} \inf
		\{ \delta/2, 1/4 \}, \Delta_6/2 \big \}, \\
		\displaybreak[0] \Delta_8 & = 1 - 4^{\alpha \tau-1} \quad
		\text{if $\alpha \tau < 1$}, \\
		\Delta_8 & = \log 4 \quad \text{if $\alpha \tau = 1$}, \\
		\displaybreak[0] \Delta_9 & =
		\begin{aligned}[t]
			\inf \big \{ & 2^{-2\vdim-4}
			\unitmeasure{\vdim}^{1/2}, 2^{-2\vdim-4}
			\unitmeasure{\vdim}^{1/2} ( 1- 2^{-\alpha \tau} )
			\Delta_6, 2^{-\vdim-1} \Delta_5, 1, \\
			& ( 3 \Delta_1 )^{-1} \Delta_8,
			{\textstyle\frac{1}{576}}  ( \Delta_1 )^{-2} \eta
			\Delta_8, ( 48 \Delta_1 )^{-\adim} \eta^\adim
			\big \},
		\end{aligned} \\
		\displaybreak[0] \Delta_{10} & =
		\Gamma_{\eqref{item:iteration:starting_control}} ( \adim,
		Q, \delta ), \\ \displaybreak[0] \Delta_{11} & = \inf \big \{
		\delta^\tau 2^{-7} ( \Delta_{10})^{-1}, {\textstyle\frac{1}{24}}
		\Delta_8 ( \Delta_1 )^{-1} \big \}, \\
		\displaybreak[0] \lambda & = ( 48 \Delta_1 )^{-1}, \\
		\displaybreak[0]
		\Delta_{12} & = \big ( 24 \Delta_1 ( \eta^{-1} +
		\lambda^{-\tau} ) \big )^{-1}, \\ \displaybreak[0]
		\gamma_2 & = (e/4) \Delta_9, \\ \displaybreak[0]
		\gamma_1 & = \eta ( 24 \Delta_1 )^{-1} \gamma_2, \\
		\displaybreak[0] \gamma_3 & = \inf \{ \Delta_4 , \Delta_{11}
		\gamma_1, \Delta_{12} \gamma_2 \}, \\ \displaybreak[0]
		\varepsilon_{\eqref{item:iteration:iteration}} & =
		\begin{aligned}[t]
			\inf \big \{ & 2^{-8\vdim} \sup \{ M
			\unitmeasure{\vdim}, 1 \}^{-1} \kappa, 2^{-6\vdim-4}
			\unitmeasure{\vdim}^{1/2}, \\
			& 2^{-5\vdim-3} \unitmeasure{\vdim}^{1/2} \Delta_7,
			2^{-5 \vdim} \Delta_5,
			2^{-4\vdim-7} (\Delta_{10} )^{-1} \gamma_1, 2^{-5\vdim-6}
			\gamma_2 , \eta/2 \big \};
		\end{aligned}
	\end{align*}
	here $e$ denotes Euler's number. It will be shown that $\gamma_i$ and
	$\varepsilon_{\eqref{item:iteration:iteration}}$ have the
	asserted property.

	Suppose $C_{a,t}$ for $t \in J_0$ is as in \ref{lemma:iteration_prep}.

	First, note that
	\begin{gather} \label{eqn:iteration:1}
		\phi_3 ( \varrho )^\tau \leq \gamma \gamma_3 ( \varrho/r
		)^{\alpha \tau} \quad \text{for $0 < t \leq 8r$}
	\end{gather}
	implies, noting $\gamma_3 \leq \Delta_4$,
	\begin{gather} \label{eqn:iteration:1'}
		\phi_3 ( \varrho ) \leq \Delta_3 \quad \text{and} \quad \phi_4
		( \varrho ) \leq \Delta_2 \quad \text{whenever $0 < \varrho
		\leq 8r$}. \tag{\ref{eqn:iteration:1}'}
	\end{gather}

	Next, some auxiliary assertions will be shown:
	\begin{gather}
		\classification{\rel}{\varrho}{0 < \varrho \leq r/2} \subset
		J_0, \label{eqn:iteration:2} \\
		\displaybreak[0]
		\classification{\rel}{\varrho}{{\textstyle\frac{r}{64}} \leq
		\varrho \leq r} \subset J_1, \label{eqn:iteration:3} \\
		\displaybreak[0]
		\classification{\rel}{\varrho}{{\textstyle\frac{r}{64}} \leq
		\varrho \leq 8r } \subset J_2 \cap J_3,
		\label{eqn:iteration:4} \\
		\displaybreak[0]
		\classification{\rel}{\varrho}{{\textstyle\frac{r}{64}} \leq
		\varrho \leq 4r} \subset J_4, \label{eqn:iteration:5} \\
		\displaybreak[0]
		\classification{\rel}{\varrho}{{\textstyle\frac{r}{64}} \leq
		\varrho \leq r/2} \subset J_5, \label{eqn:iteration:6} \\
		\displaybreak[0] \| V \| ( \cylinder{T}{a}{\varrho}{\varrho} )
		\geq ( Q-1+\delta_4/2) \unitmeasure{\vdim} \varrho^\vdim \quad
		\text{whenever $0 < \varrho \leq r/2$},
		\label{eqn:iteration:aux_j6} \\
		\displaybreak[0] \| \sigma_\varrho \| \leq \Delta_7 \quad
		\text{whenever ${\textstyle\frac{r}{64}} \leq \varrho \leq
		r$}, \label{eqn:iteration:7}
	\end{gather}

	\paragraph{Proof of \eqref{eqn:iteration:2}.} This follows from $a \in
	\cylinder{T}{0}{r/2}{2r}$.

	\paragraph{Proof of \eqref{eqn:iteration:4}.} For $\frac{r}{64} \leq
	\varrho \leq 8r$ one computes, using H\"older's inequality and
	\eqref{eqn:iteration:1'},
	\begin{gather*}
		\begin{aligned}
			& \| \delta V \| ( U \cap
			\cylinder{T}{a}{\varrho}{\varrho} ) \leq \| V \| (
			U)^{1-1/p} \psi ( U \cap
			\cylinder{T}{a}{8r}{8r})^{1/p} \\
			& \qquad \leq \sup \{ M \unitmeasure{\vdim}, 1 \}
			r^{\vdim-\vdim/p} (8r)^{\vdim/p-1} \phi_3 ( 8r ) \\
			& \qquad \leq \Delta_3 \sup \{ M \unitmeasure{\vdim} ,
			1 \} 2^{9\vdim} ( {\textstyle\frac{r}{64}})^{\vdim-1}
			\leq \kappa \varrho^{\vdim-1},
		\end{aligned} \\
		\begin{aligned}
			& \tint{(U \cap \cylinder{T}{a}{\varrho}{\varrho})
			\times \grass{\adim}{\vdim}}{} | \project{S} -
			\project{T} | \ud V (z,S) \leq \| V \| ( U )^{1/2}
			(8r)^{\vdim/2} \phi_2 ( 8r, T) \\
			& \qquad \leq \sup \{ M \unitmeasure{\vdim}, 1 \}
			2^{8\vdim}
			\varepsilon_{\eqref{item:iteration:iteration}} (
			{\textstyle\frac{r}{64}} )^\vdim \leq \kappa
			\varrho^\vdim.
		\end{aligned}
	\end{gather*}
	
	\paragraph{Proof of \eqref{eqn:iteration:5}.} This follows from
	\eqref{eqn:iteration:4}.

	\paragraph{Proof of \eqref{eqn:iteration:6}.} Let $\frac{r}{64} \leq
	\varrho \leq r/2$. One computes for $0 < t < \varrho$,
	\eqref{eqn:iteration:1} and $\gamma_3 \leq \Delta_4$,
	\begin{gather*}
		\phi_3 ( t ) \leq ( \Delta_4 )^{1/\tau} (t/r)^\alpha \leq
		\varepsilon_{\ref{lemma:iteration_prep}\eqref{item:iteration_prep:lower_mass_bound}}
		( \adim, Q, \delta_4, p, \alpha , \delta_6 )
		(t/\varrho)^\alpha.
	\end{gather*}
	Therefore, noting \eqref{eqn:iteration:2} and \eqref{eqn:iteration:4},
	\eqref{eqn:iteration:6} is implied by
	\ref{lemma:iteration_prep}\,\eqref{item:iteration_prep:lower_mass_bound}.
	
	\paragraph{Proof of \eqref{eqn:iteration:aux_j6}.} Applying
	\ref{lemma:aux_monotonicity} with $r$, $M$, $\varrho$ replaced by
	$\varrho$, $(\Delta_4)^{1/\tau}$, $\varrho$ in conjunction with
	H\"older's inequality, noting
	\eqref{eqn:iteration:1} and $\gamma_3 \leq \Delta_4$, yields
	\begin{gather*}
		\begin{aligned}
			\big ( \varrho^{-\vdim} \| V \| (
			\cylinder{T}{a}{\varrho}{\varrho} ) \big )^{1/p} &
			\geq ( ( Q-1+\delta_6 ) \unitmeasure{\vdim} )^{1/p} -
			( \Delta_4)^{1/\tau} \alpha^{-1} p^{-1} \\
			& \geq ( ( Q - 1 +\delta_6/2 ) \unitmeasure{\vdim}
			)^{1/p}.
		\end{aligned}
	\end{gather*}

	\paragraph{Proof of \eqref{eqn:iteration:3} and
	\eqref{eqn:iteration:7}.} Let $\frac{r}{64} \leq \varrho \leq r$.
	Using H\"older's inequality and $\varrho/2 \leq \inf \{ \varrho, r/2
	\} \in J_5$ by \eqref{eqn:iteration:6}, one estimates
	\begin{gather*}
		\begin{aligned}
			& \| \eqproject{T_\varrho} - \project{T} \| \leq \|
			V \| ( U \cap \cylinder{T}{a}{\varrho}{\varrho}
			)^{-1/2} \varrho^{\vdim/2} \big ( \phi_2 ( \varrho,
			T_\varrho ) + \phi_2 ( \varrho, T ) \big ) \\
			& \qquad \leq \unitmeasure{\vdim}^{-1/2}
			2^{\vdim/2+3/2} \phi_2 ( \varrho, T ) \leq
			\unitmeasure{\vdim}^{-1/2} 2^{5\vdim+2} \phi_2 ( 8r, T
			) \\
			& \qquad \leq \unitmeasure{\vdim}^{-1/2} 2^{5\vdim+2}
			\varepsilon_{\eqref{item:iteration:iteration}}
			\leq 1/2,
		\end{aligned}
	\end{gather*}
	hence $T_\varrho \cap \ker \pp = \{ 0 \}$ and $\varrho \in J_1$,
	i.e.~\eqref{eqn:iteration:3}. Now, \ref{miniremark:projections}
	applied with $S$, $S_1$, $S_2$ replaced by $T$, $T$, $T_\varrho$
	yields
	\begin{gather*}
		\| \sigma_\varrho \|^2 \leq ( 1 + \| \sigma_\varrho \|^2 ) \|
		\eqproject{T_\varrho} - \project{T} \|^2, \\
		\| \sigma_\varrho \|^2 \leq \| \eqproject{T_\varrho} -
		\project{T} \|^2 / ( 1 - \| \eqproject{T_\varrho} -
		\project{T} \|^2 ) \leq 2 \| \eqproject{T_\varrho} -
		\project{T} \|^2, \\
		\| \sigma_\varrho \| \leq 2 \| \eqproject{T_\varrho} -
		\project{T} \| \leq \unitmeasure{\vdim}^{-1/2} 2^{5\vdim+3}
		\varepsilon_{\eqref{item:iteration:iteration}} \leq
		\Delta_7.
	\end{gather*}

	Having shown the auxiliary assertions
	\eqref{eqn:iteration:2}--\eqref{eqn:iteration:7}, one chooses $j \in
	\nat$ such that $\frac{r}{64} < 4^j t \leq \frac{r}{16}$ and defines
	$t_i = 4^{j+1-i} t$ whenever $i \in \nat$, $i \leq j+1$ in order to
	show inductively the following assertions whenever $i \in \nat$, $i
	\leq j+1$:
	\begin{align}
		& \classification{\rel}{\varrho}{t_i \leq \varrho \leq r}
		\subset J_4 \label{eqn:iteration:ind_j4} \\
		& \classification{\rel}{\varrho}{t_i \leq \varrho \leq r/2}
		\subset J_5, \label{eqn:iteration:ind_j5} \\
		& \classification{\rel}{\varrho}{t_i \leq \varrho \leq r}
		\subset J_1, \label{eqn:iteration:ind_j1} \\
		& \| \sigma_\varrho \| \leq \Delta_6 \quad \text{for $t_i \leq
		\varrho \leq r$}, \label{eqn:iteration:9} \\
		& \phi_2 ( \varrho, T ) \leq \Delta_5 \quad \text{for $t_i
		\leq \varrho \leq r$}, \label{eqn:iteration:10} \\
		\begin{split}
			& \phi_1 ( \varrho ) \leq \gamma \gamma_1
			\varrho^{-1+\alpha \tau} r^{-\alpha \tau} \quad
			\text{whenever $t_i \leq \varrho \leq r/4$, $\alpha
			\tau < 1$}, \\
			& \phi_1 ( \varrho ) \leq \gamma \gamma_1 r^{-1} ( 1 +
			\log ( r/ \varrho ) ) \quad \text{whenever $t_i
			\leq \varrho \leq r/4$, $\alpha \tau = 1$},
		\end{split} \label{eqn:iteration:11} \\
		\begin{split}
			& \phi_2 ( \varrho, T_\varrho ) \leq \gamma \gamma_2 (
			\varrho / r )^{\alpha \tau} \quad \text{whenever $t_i
			\leq \varrho \leq r$, $\alpha \tau < 1$}, \\
			& \phi_2 ( \varrho, T_\varrho ) \leq \gamma \gamma_2
			( \varrho/r ) ( 1 + \log ( r/\varrho ) ) \quad
			\text{whenever $t_i \leq \varrho \leq r$, $\alpha \tau
			= 1$}.
		\end{split} \label{eqn:iteration:12}
	\end{align}
	One verifies that $\eqref{eqn:iteration:12}_i$ implies
	\begin{gather}
		\phi_2 ( \varrho, T_\varrho ) \leq \Delta_9 ( \varrho/r
		)^{\alpha \tau/2} \quad \text{whenever $t_i \leq \varrho \leq
		r$} \tag{\ref{eqn:iteration:12}'} \label{eqn:iteration:12'},
		\\
		\phi_2 ( \varrho, T_\varrho )  \leq \Delta_9 ( 1 + \log (
		r/\varrho ) )^{-1} \quad \text{whenever $t_i \leq \varrho \leq
		r$, $\alpha \tau = 1$} \tag{\ref{eqn:iteration:12}''}
		\label{eqn:iteration:12''};
	\end{gather}
	here and in the remaining proof references to equations involving the
	inductive parameter will be supplemented by the value of this
	parameter as index.

	\paragraph{Proof of
	$\eqref{eqn:iteration:ind_j4}_1$, $\eqref{eqn:iteration:ind_j5}_1$
	and $\eqref{eqn:iteration:ind_j1}_1$.}
	Since $t_1 = 4^j t \geq \frac{r}{64}$ the assertions follow from
	\eqref{eqn:iteration:5}, \eqref{eqn:iteration:3} and
	\eqref{eqn:iteration:6}.

	\paragraph{Proof of $\eqref{eqn:iteration:9}_1$.} Since $t_1 \geq
	\frac{r}{64}$ and $\Delta_7 \leq \Delta_6$, this follows from
	\eqref{eqn:iteration:7}.
	
	\paragraph{Proof of $\eqref{eqn:iteration:10}_1$.} For $t_1 \leq
	\varrho \leq r$
	\begin{gather*}
		\phi_2 ( \varrho, T ) \leq 2^{5\vdim} \phi_2 ( 8r, T ) \leq
		2^{5 \vdim}
		\varepsilon_{\eqref{item:iteration:iteration}} \leq
		\Delta_5.
	\end{gather*}

	\paragraph{Proof of $\eqref{eqn:iteration:11}_1$.} Let $\frac{r}{64}
	\leq \varrho \leq r/4$ and note
	\begin{gather*}
		\varrho \in J_4 \cap J_5 \quad \text{by
		\eqref{eqn:iteration:5} and \eqref{eqn:iteration:6}}, \quad
		2 \varrho \in J_0 \cap J_1 \quad \text{by
		\eqref{eqn:iteration:2} and \eqref{eqn:iteration:3}}, \\
		\| \sigma_{2\varrho} \| \leq \adim^{-1/2} \inf \{ \delta/2,
		1/4 \} \quad \text{by \eqref{eqn:iteration:7}}, \\
		\phi_2 ( 2 \varrho, T_{2\varrho} ) \leq 2^{4\vdim} \phi_2 (
		8r, T) \leq 2^{4 \vdim}
		\varepsilon_{\eqref{item:iteration:iteration}} \leq
		2^{-2\vdim-4} \unitmeasure{\vdim}^{1/2}.
	\end{gather*}
	Therefore by \eqref{item:iteration:starting_control}, using
	$\phi_4 ( 2 \varrho ) \leq 1$ by \eqref{eqn:iteration:1'}, $1/2 \geq
	\tau ( 1/p-1/\vdim )$, \eqref{eqn:iteration:1} and $\gamma_3 \leq
	\Delta_{11} \gamma_1$,
	\begin{gather*}
		\begin{aligned}
			\varrho \phi_1 ( \varrho ) & \leq \Delta_{10} \big (
			\phi_2 ( 2 \varrho, T_{2\varrho} ) + \phi_4 ( 2
			\varrho )^{1/2} \big ) \leq \Delta_{10} \big ( 2^{4\vdim}
			\phi_2 ( 8r , T_{8r}) + \delta^{-\tau} \phi_3 ( 2
			\varrho )^\tau \big ) \\
			& \leq \gamma \Delta_{10} \big ( 2^{4\vdim}
			\varepsilon_{\eqref{item:iteration:iteration}} +
			\delta^{-\tau} \Delta_{11} \gamma_1 \big ) \leq
			\gamma \gamma_1 {\textstyle\frac{1}{64}} \leq \gamma
			\gamma_1 ( \varrho / r )^{\alpha \tau}.
		\end{aligned}
	\end{gather*}

	\paragraph{Proof of $\eqref{eqn:iteration:12}_1$.} For $\frac{r}{64}
	\leq \varrho \leq r$ one estimates
	\begin{gather*}
		\phi_2 ( \varrho, T_{\varrho} ) \leq 2^{5\vdim} \phi_2 ( 8r,
		T_{8r}) \leq 2^{5\vdim}
		\varepsilon_{\eqref{item:iteration:iteration}} \gamma
		\leq \gamma \gamma_2 {\textstyle\frac{1}{64}} \leq \gamma
		\gamma_2 ( \varrho / r )^{\alpha \tau}.
	\end{gather*}
	
	Therefore the assertions
	$\eqref{eqn:iteration:ind_j4}_1$--$\eqref{eqn:iteration:12}_1$ are
	proven in the case $i=1$. Suppose now that the assertions
	$\eqref{eqn:iteration:ind_j4}_i$--$\eqref{eqn:iteration:12}_i$ hold
	for some $i \in \nat$ with $i \leq j$. Note $t_i \leq t_1 = 4^j t \leq
	\frac{r}{16}$. Since $t_i \in J_0 \cap J_4$ by \eqref{eqn:iteration:2}
	and $\eqref{eqn:iteration:ind_j4}_i$ and
	\begin{gather*}
		\phi_4 ( 2 t_i ) \leq \Delta_2 \leq 2^{-\vdim}
		\besicovitch{\adim}^{-1} \unitmeasure{\vdim} (1/8)
	\end{gather*}
	by \eqref{eqn:iteration:1'},
	\ref{lemma:iteration_prep}\,\eqref{item:item:iteration_prep:side_conditions:upper_bound} with
	$\varrho$ replaced by $t_i$ implies
	\begin{gather} \label{eqn:iteration:13}
		\| V \| ( \cylinder{T}{a}{\varrho}{\varrho} ) \leq ( Q + 1 )
		\unitmeasure{\vdim} 4^\vdim \varrho^\vdim \quad \text{for
		$t_{i+1} \leq \varrho \leq t_i$}.
	\end{gather}

	\paragraph{Proof of $\eqref{eqn:iteration:ind_j4}_{i+1}$,
	$\eqref{eqn:iteration:ind_j5}_{i+1}$ and
	$\eqref{eqn:iteration:ind_j1}_{i+1}$.} Let $t_{i+1} \leq
	\varrho \leq t_i$. Note $\varrho \in J_0$ by \eqref{eqn:iteration:2}.
	One estimates, using H\"older's inequality,
	\eqref{eqn:iteration:13} and \eqref{eqn:iteration:1'},
	\begin{gather*}
		\begin{aligned}
			\| \delta V \| ( \cylinder{T}{a}{\varrho}{\varrho} ) &
			\leq \| V \| ( \cylinder{T}{a}{\varrho}{\varrho}
			)^{1-1/p} \psi ( \cylinder{T}{a}{t_i}{t_i} )^{1/p} \\
			& \leq \sup \{ ( Q + 1 ) \unitmeasure{\vdim}, 1 \}
			4^\vdim \varrho^{\vdim-1} \Delta_3 \leq \kappa
			\varrho^{\vdim-1},
		\end{aligned}
	\end{gather*}
	hence $\varrho \in J_2$. Similarly, using $\eqref{eqn:iteration:10}_i$,
	\begin{gather*}
		\begin{aligned}
			& \tint{\cylinder{T}{a}{\varrho}{\varrho} \times
			\grass{\adim}{\vdim}}{} | \project{S} - \project{T} |
			\ud V ( z, S ) \\
			& \qquad \leq \| V \| (
			\cylinder{T}{a}{\varrho}{\varrho})^{1/2} \big (
			\tint{\cylinder{T}{a}{t_i}{t_i} \times
			\grass{\adim}{\vdim}}{} | \project{S} - \project{T}
			|^2 \ud V ( z,S ) \big )^{1/2} \\
			& \qquad \leq ( Q + 1 )^{1/2}
			\unitmeasure{\vdim}^{1/2} 4^\vdim \varrho^\vdim
			\Delta_5 \leq \kappa \varrho^\vdim
		\end{aligned}
	\end{gather*}
	and $\varrho \in J_3$. Together with $\eqref{eqn:iteration:ind_j4}_i$
	this implies
	\begin{gather*}
		\classification{\rel}{s}{t_{i+1} \leq s < 2r }
		\subset J_2 \cap J_3, \quad
		\classification{\rel}{s}{t_{i+1} \leq s \leq r }
		\subset J_4,
	\end{gather*}
	hence $\eqref{eqn:iteration:ind_j4}_{i+1}$. One computes for $0 < t <
	\varrho$, using \eqref{eqn:iteration:2}, \eqref{eqn:iteration:1} and
	$\gamma_3 \leq \Delta_4$,
	\begin{gather*}
		\phi_3 ( t ) \leq ( \Delta_4 )^{1/\tau} ( t/r )^\alpha \leq
			\varepsilon_{\ref{lemma:iteration_prep}\eqref{item:iteration_prep:lower_mass_bound}}
			( \adim, Q, \delta_4, p, \alpha , \delta_6)
			(t/\varrho)^\alpha.
	\end{gather*}
	Therefore, noting \eqref{eqn:iteration:2} and
	$\eqref{eqn:iteration:ind_j4}_{i+1}$,
	\ref{lemma:iteration_prep}\,\eqref{item:iteration_prep:lower_mass_bound}
	implies
	$\eqref{eqn:iteration:ind_j5}_{i+1}$. To prove $\varrho \in J_1$, one
	estimates
	\begin{gather*}
		\begin{aligned}
			& \| \eqproject{T_\varrho} - \project{T} \| \leq \| V
			\| ( \cylinder{T}{a}{\varrho}{\varrho} )^{-1/2}
			\varrho^{\vdim/2} \big ( \phi_2 ( \varrho, T_\varrho )
			+ \phi_2 ( \varrho, T ) \big ) \\
			& \qquad \leq \| V \| (
			\cylinder{T}{a}{t_{i+1}}{t_{i+1}} )^{-1/2} (
			t_i )^{\vdim/2} \big ( \phi_2 ( t_i, T_{t_i} ) +
			\phi_2 ( t_i, T ) \big ) \\
			& \qquad \leq \unitmeasure{\vdim}^{-1/2} 2^\vdim (
			\Delta_9 + \Delta_5 ) \leq 1/2
		\end{aligned}
	\end{gather*}
	by $\eqref{eqn:iteration:ind_j5}_{i+1}$ and
	$\eqref{eqn:iteration:12'}_i$, $\eqref{eqn:iteration:10}_i$, hence
	\begin{gather*}
		T_\varrho \cap \ker \pp = \{ 0 \}, \quad \varrho \in J_1.
	\end{gather*}
	
	\paragraph{Proof of $\eqref{eqn:iteration:9}_{i+1}$.} Let $t_{i+1} \leq
	\varrho \leq t_i$ and define $\varrho_k = 4^{k-1} \varrho$ for $k \in
	\nat$. Since $\varrho \leq t_i \leq r/4$, there exists $l \in \nat$
	such that $\frac{r}{16} < \varrho_l \leq r/4$. Note
	\begin{gather*}
		\varrho_k \in J_1 \cap J_5 \quad \text{for $k = 1,
		\ldots, l$}
	\end{gather*}
	by $\eqref{eqn:iteration:ind_j1}_{i+1}$ and
	$\eqref{eqn:iteration:ind_j5}_{i+1}$. Also, by
	$\eqref{eqn:iteration:9}_i$,
	\begin{gather*}
		\| \sigma_{\varrho_k} \| \leq \adim^{-1/2} / 4
		\quad \text{whenever $k \in \nat$, $2 \leq k \leq l$}
	\end{gather*}
	and, by $\eqref{eqn:iteration:12'}_i$,
	\begin{gather*}
		\phi_2 ( \varrho_k, T_{\varrho_k} ) \leq \Delta_9 \leq
		2^{-2\vdim-4} \unitmeasure{\vdim}^{1/2} \quad \text{whenever
		$k \in \nat$, $2 \leq k \leq l$}.
	\end{gather*}
	Now, applying
	\ref{lemma:iteration_prep}\,\eqref{item:iteration_prep:ch_planes} with
	$\varrho$, $s$, $t$, $\lambda$ replaced by $\varrho_k$,
	$\varrho_{k-1}$, $\varrho_{k-1}$, $1/2$ and using
	$\eqref{eqn:iteration:12'}_i$, one obtains
	\begin{gather*}
		\| \sigma_{\varrho_{k-1}} - \sigma_{\varrho_k} \| \leq
		2^{2\vdim+3} \unitmeasure{\vdim}^{-1/2} \phi_2 ( \varrho_k,
		T_{\varrho_k} ) \leq 2^{2\vdim+3} \unitmeasure{\vdim}^{-1/2}
		\Delta_9 ( \varrho_k / r)^{\alpha \tau/2}
	\end{gather*}
	whenever $k \in \nat$, $2 \leq k \leq l$. Therefore by
	\eqref{eqn:iteration:7}
	\begin{gather*}
		\begin{aligned}
			\| \sigma_\varrho \| & \leq \| \sigma_{\varrho_l} \| +
			{\textstyle\sum_{k=2}^l} \| \sigma_{\varrho_{k-1}} -
			\sigma_{\varrho_k} \| \\
			& \leq \Delta_7 + 2^{2\vdim+3}
			\unitmeasure{\vdim}^{-1/2} \Delta_9 r^{-\alpha \tau/2}
			{\textstyle\sum_{k=2}^l} ( 4^{k-1} \varrho )^{\alpha
			\tau/2} \\
			& \leq \Delta_7 + 2^{2\vdim+3}
			\unitmeasure{\vdim}^{-1/2} \Delta_9 ( 4^{l-1}
			\varrho/r )^{\alpha\tau/2}
			{\textstyle\sum_{k=0}^\infty} 2^{-k\alpha \tau} \\
			& \leq \Delta_7 + 2^{2\vdim+3}
			\unitmeasure{\vdim}^{-1/2} (
			1 - 2^{-\alpha \tau} )^{-1} \Delta_9 \leq \Delta_6.
		\end{aligned}
	\end{gather*}

	\paragraph{Proof of $\eqref{eqn:iteration:10}_{i+1}$.} For $t_{i+1} \leq
	\varrho \leq t_i$, $\varrho \in J_0$ by \eqref{eqn:iteration:2} and
	\begin{gather*}
		\phi_2 ( \varrho, T ) \leq \phi_2 ( \varrho, T_\varrho ) +
		\varrho^{-\vdim/2} \| V \| ( \cylinder{T}{a}{\varrho}{\varrho}
		)^{1/2} | \project{T} - \eqproject{T_\varrho} |
	\end{gather*}
	by H\"older's inequality. By $\eqref{eqn:iteration:12'}_i$ and
	\eqref{eqn:iteration:13}
	\begin{gather*}
		\phi_2 ( \varrho, T ) \leq 2^\vdim \Delta_9 + 2^\vdim \sup
		\{ ( Q + 1 ) \unitmeasure{\vdim}, 1 \} | \project{T} -
		\eqproject{T_\varrho} |.
	\end{gather*}
	Also by \ref{miniremark:projections}, noting $\varrho \in J_1$ by
	$\eqref{eqn:iteration:ind_j1}_{i+1}$ and
	$\eqref{eqn:iteration:9}_{i+1}$,
	\begin{gather*}
		| \project{T} - \eqproject{T_\varrho} | \leq \adim^{1/2} \|
		\project{T} - \eqproject{T_\varrho} \| \leq \adim^{1/2} \|
		\sigma_\varrho \| \leq \adim^{1/2} \Delta_6,
	\end{gather*}
	hence
	\begin{gather*}
		\phi_2 ( \varrho, T  ) \leq 2^{\vdim} \Delta_9 + 2^\vdim
		\sup \{ ( Q + 1 ) \unitmeasure{\vdim}, 1 \} \adim^{1/2}
		\Delta_6 \leq \Delta_5.
	\end{gather*}

	\paragraph{Proof of $\eqref{eqn:iteration:11}_{i+1}$.} Let $t_{i+1} \leq
	\varrho \leq t_i$. It will be shown that the hypotheses of
	\eqref{item:iteration:key_estimate} are satisfied with $\varrho$
	replaced by $4\varrho$; in fact $\varrho \leq t_1 \leq \frac{r}{16}$,
	\begin{gather*}
		8 \varrho \in J_0 \cap J_1 \quad \text{by
		\eqref{eqn:iteration:2} and $\eqref{eqn:iteration:ind_j1}_i$},
		\qquad \| \sigma_{8\varrho} \| \leq \adim^{-1/2} \delta/4
		\quad \text{by $\eqref{eqn:iteration:9}_i$},
	\end{gather*}
	and for $s \in \{ \varrho, 4 \varrho \}$
	\begin{gather*}
		s \in J_4 \cap J_5 \quad \text{by
		$\eqref{eqn:iteration:ind_j4}_{i+1}$ and
		$\eqref{eqn:iteration:ind_j5}_{i+1}$}, \\
		\phi_4 ( 2s ) \leq 2^{-\vdim} \besicovitch{\adim}^{-1}
		\unitmeasure{\vdim} (1/8) \quad \text{by
		\eqref{eqn:iteration:1'}}.
	\end{gather*}
	Therefore, in case $\alpha \tau < 1$,
	\eqref{item:iteration:key_estimate} implies, using
	$\eqref{eqn:iteration:11}_i$, $\eqref{eqn:iteration:12'}_i$,
	$\eqref{eqn:iteration:12}_i$, $\phi_3 (8\varrho) \leq 1$ by
	\eqref{eqn:iteration:1'}, \eqref{eqn:iteration:1} and $\gamma_2 = (24
	\Delta_1 )\eta^{-1} \gamma_1$, $\gamma_3 \leq \Delta_{11} \gamma_1$,
	\begin{gather*}
		\begin{aligned}
			\phi_1 ( \varrho ) & \leq \phi_1 ( 4 \varrho ) +
			\Delta_1 \big ( \phi_1 ( 4 \varrho ) \phi_2 ( 4
			\varrho, T_{4\varrho} ) + \varrho^{-1} ( \phi_2 ( 8
			\varrho, T_{8\varrho} )^2 + \phi_3 ( 8 \varrho ) )
			\big) \\
			& \leq \gamma \varrho^{-1+\alpha \tau} r^{-\alpha
			\tau} \big ( 4^{\alpha \tau-1} \gamma_1 + \Delta_1
			\Delta_9 \gamma_1 + 8 \Delta_1 \Delta_9 \gamma_2 +
			8 \Delta_1 \gamma_3 \big) \\
			& \leq \gamma \gamma_1 \varrho^{-1+\alpha \tau}
			r^{-\alpha \tau} \big ( \Delta_8 + \Delta_1 \Delta_9 +
			192 ( \Delta_1 )^2 \eta^{-1} \Delta_9 + 8 \Delta_1
			\Delta_{11} \big ) \\
			& \leq \gamma \gamma_1 \varrho^{-1+\alpha\tau}
			r^{-\alpha \tau}.
		\end{aligned}
	\end{gather*}
	Similarly, in case $\alpha \tau = 1$,
	\eqref{item:iteration:key_estimate} implies, using
	$\eqref{eqn:iteration:11}_i$, $\eqref{eqn:iteration:12''}_i$,
	$\eqref{eqn:iteration:12}_i$, \eqref{eqn:iteration:1} and $\gamma_2 =
	(24 \Delta_1 )\eta^{-1} \gamma_1$, $\gamma_3 \leq \Delta_{11}
	\gamma_1$,
	\begin{gather*}
		\begin{aligned}
			\phi_1 ( \varrho ) & \leq \phi_1 ( 4 \varrho ) +
			\Delta_1 \big ( \phi_1 ( 4 \varrho ) \phi_2 ( 4
			\varrho, T_{4\varrho} ) + \varrho^{-1} ( \phi_2 ( 8
			\varrho, T_{8\varrho} )^2 + \phi_3 ( 8 \varrho ) )
			\big) \\
			& \leq \gamma r^{-1} \big ( ( 1 + \log ( r/\varrho ) -
			\log 4 ) \gamma_1 + \Delta_1
			\Delta_9 \gamma_1 + 8 \Delta_1 \Delta_9 \gamma_2 +
			8 \Delta_1 \gamma_3 \big) \\
			& \leq \gamma \gamma_1 r^{-1} \big ( ( 1 + \log (
			r/\varrho ) - \Delta_8 ) + 
			\Delta_1 \Delta_9 + 192 ( \Delta_1 )^2 \eta^{-1}
			\Delta_9 + 8 \Delta_1 \Delta_{11} \big ) \\
			& \leq \gamma \gamma_1 r^{-1} ( 1 + \log ( r/\varrho )
			).
		\end{aligned}
	\end{gather*}

	\paragraph{Proof of $\eqref{eqn:iteration:12}_{i+1}$.} Let $t_{i+1}
	\leq \varrho \leq t_i$. First, it will be shown that the hypotheses of
	\ref{lemma:iteration_prep}\,\eqref{item:item:iteration_prep:side_conditions:inclusion} and
	\ref{lemma:iteration_prep}\,\eqref{item:item:iteration_prep:side_conditions:estimate} are
	satisfied with $\varrho$, $\lambda$ replaced by $2 \varrho$, $\eta/2$;
	in fact
	\begin{gather*}
		2 \varrho \in J_4 \cap J_5 \quad \text{by
		$\eqref{eqn:iteration:ind_j4}_{i+1}$ and
		$\eqref{eqn:iteration:ind_j5}_{i+1}$}, \\
		\phi_4 ( 4 \varrho ) \leq 2^{-\vdim} \besicovitch{\adim}^{-1}
		\unitmeasure{\vdim} \inf \big \{ \eta ( 4
		\Gamma_{\ref{lemma:lipschitz_approximation}\eqref{item:lipschitz_approximation:estimate_b}}
		( Q,\vdim ) )^{-1}, 1/8 \big \} \quad \text{by
		\eqref{eqn:iteration:1'}}.
	\end{gather*}
	Next, it will be shown that the hypotheses of
	\eqref{item:iteration:tilt_estimate} are satisfied with $\varrho$
	replaced by $4 \varrho$; in fact, noting $t \leq \varrho \leq
	\frac{r}{16}$,
	\begin{gather*}
		\{ 2 \varrho, 4 \varrho \} \subset J_4 \cap J_5
		\quad \text{by $\eqref{eqn:iteration:ind_j4}_{i+1}$, and
		$\eqref{eqn:iteration:ind_j5}_{i+1}$}, \\
		8\varrho \in J_0 \cap J_1 \quad \text{by
		\eqref{eqn:iteration:2} and $\eqref{eqn:iteration:ind_j1}_i$},
		\qquad \| \sigma_{8\varrho} \| \leq \adim^{-1/2} \delta/4
		\quad \text{by $\eqref{eqn:iteration:9}_i$}, \\
		8r \in J_2 \cap J_3 \quad \text{by \eqref{eqn:iteration:4}},
		\qquad \phi_3 ( 8 \varrho ) \leq
		\varepsilon_{\eqref{item:iteration:tilt_estimate}} (
		\vdim, \adim, Q, \delta_2, \varepsilon, \delta, p ) \quad
		\text{by \eqref{eqn:iteration:1'}}, \\
		\begin{aligned}
			& \oball{c}{2\varrho} \without \{ x \with \density^0 (
			\| f (x) \|, g (x) ) = Q \} \\
			& \qquad \subset C_{a,2\varrho} \cup \pp \biglIm
			\classification{\cylinder{T}{a}{2\varrho}{2\varrho}}{z}
			{Q > \density^\vdim ( \| V \| , z ) \in \nat }
			\bigrIm,
		\end{aligned}
	\end{gather*}
	by \ref{lemma:iteration_prep}\,\eqref{item:item:iteration_prep:side_conditions:inclusion} with
	$\varrho$ replaced by $2\varrho$, hence
	\begin{gather*}
		\begin{aligned}
			& \mathscr{L}^\vdim ( \oball{c}{2\varrho} \without \{
			x \with \density^0 ( \| f(x) \|, g (x) ) = Q \} ) \\
			& \qquad \leq ( \eta/2 ) \unitmeasure{\vdim} ( 2
			\varrho )^\vdim +
			\varepsilon_{\eqref{item:iteration:iteration}}
			\unitmeasure{\vdim} ( 2 \varrho )^\vdim \leq \eta
			\unitmeasure{\vdim} ( 2 \varrho )^\vdim
		\end{aligned}
	\end{gather*}
	by \ref{lemma:iteration_prep}\,\eqref{item:item:iteration_prep:side_conditions:estimate} with
	$\varrho$, $\lambda$ replaced by $2 \varrho$, $\eta/2$. Therefore, in
	case $\alpha \tau < 1$, \eqref{item:iteration:tilt_estimate}
	implies, using $\eqref{eqn:iteration:12'}_i$,
	$\eqref{eqn:iteration:12}_i$, $\eqref{eqn:iteration:11}_i$,
	\eqref{eqn:iteration:1}, and $\gamma_1 = \eta ( 24 \Delta_1 )^{-1}
	\gamma_2$, $\gamma_3 \leq \Delta_{12} \gamma_2$,
	\begin{align*}
		\phi_2 ( \varrho, T_\varrho ) & \leq
		\begin{aligned}[t]
			\Delta_1 \Big ( & \big ( \lambda + \eta^{1/\adim} +
			\eta^{-1} \phi_2 ( 8 \varrho, T_{8\varrho} )^{\inf \{
			1, 2/\vdim \}} \big ) \phi_2 ( 8 \varrho, T_{8\varrho}
			) \\
			& \quad + \eta^{-1} 4 \varrho \phi_1 ( 4 \varrho ) + (
			\eta^{-1} + \lambda^{-\tau} ) \phi_3 ( 8 \varrho)^\tau
			\Big )
		\end{aligned} \\
		& \leq
		\begin{aligned}[t]
			\gamma ( \varrho/ r)^{\alpha \tau} \Big ( & 8 \Delta_1
			\big ( \lambda + \eta^{1/\adim} + \eta^{-1} ( \Delta_9
			)^{1/\adim} \big ) \gamma_2 \\
			& \quad + 4 \Delta_1 \eta^{-1} \gamma_1 + 8 \Delta_1 (
			\eta^{-1} + \lambda^{-\tau}) \gamma_3 \Big )
		\end{aligned} \\
		 & \leq \gamma ( \varrho/r )^{\alpha \tau} \big (
		 {\textstyle\frac{1}{6}} \gamma_2 + 
		 {\textstyle\frac{1}{6}} \gamma_2 +
		 {\textstyle\frac{1}{6}} \gamma_2 +
		 {\textstyle\frac{1}{6}} \gamma_2 +
		 {\textstyle\frac{1}{3}} \gamma_2 \big ) = \gamma \gamma_2 (
		 \varrho / r )^{\alpha \tau}.
	\end{align*}
	Similarly, in case $\alpha \tau =1$,
	\eqref{item:iteration:tilt_estimate} implies, using
	$\eqref{eqn:iteration:12'}_i$, $\eqref{eqn:iteration:12}_i$,
	$\eqref{eqn:iteration:11}_i$, \eqref{eqn:iteration:1}, and $\gamma_1 =
	\eta ( 24 \Delta_1 )^{-1} \gamma_2$, $\gamma_3 \leq \Delta_{12}
	\gamma_2$,
	\begin{align*}
		\phi_2 ( \varrho, T_\varrho ) & \leq
		\begin{aligned}[t]
			\gamma ( \varrho/ r) ( 1 + \log (  r/\varrho ) )  \Big
			( & 8 \Delta_1 \big ( \lambda + \eta^{1/\adim} +
			\eta^{-1} ( \Delta_9)^{1/\adim} \big ) \gamma_2 \\
			& \quad + 4 \Delta_1 \eta^{-1} \gamma_1 + 8
			\Delta_1 ( \eta^{-1} + \lambda^{-\tau}) \gamma_3
			\Big )
		\end{aligned} \\
		& \leq \gamma \gamma_2 ( \varrho / r ) ( 1 + \log ( r/\varrho
		)).
	\end{align*}

	Therefore the assertions
	$\eqref{eqn:iteration:ind_j4}_i$--$\eqref{eqn:iteration:12}_i$ are
	verified whenever $i \in \nat$, $i \leq j +1$. The conclusion now
	follows from $\eqref{eqn:iteration:ind_j1}_{j+1}$,
	$\eqref{eqn:iteration:11}_{j+1}$ and $\eqref{eqn:iteration:12}_{j+1}$.
\end{proof} \setcounter{equation}{0}
\begin{lemma} \label{lemma:aux_c2rekt}
	Suppose $\vdim, \adim, Q \in \nat$, $\vdim < \adim$, either $p = \vdim
	= 1$ or $1 < p < \vdim = 2$ or $1 \leq p < \vdim > 2$ and $\frac{\vdim
	p}{\vdim-p} = 2$, $0 < \delta \leq 1$, and $1 \leq M < \infty$.

	Then there exist positive, finite numbers $\varepsilon$ and $\Gamma$
	with the following property.

	If $a \in \rel^\adim$, $0 < r < \infty$, $V \in \IVar_\vdim (
	\oball{a}{6r} )$, $\psi$ and $p$ are related to $V$ as in
	\ref{miniremark:situation}, $T \in \grass{\adim}{\vdim}$, $Z$ is a $\|
	V \|$ measurable subset of $\cylinder{T}{a}{r}{3r}$,
	\begin{gather*}
		( Q - 1/2 ) \unitmeasure{\vdim} r^\vdim \leq \| V \| (
		\cylinder{T}{a}{r}{3r} ) \leq ( Q + 1/2 ) \unitmeasure{\vdim}
		r^\vdim, \\
		\| V \| ( \cylinder{T}{a}{r}{4r} \without
		\cylinder{T}{a}{r}{r} ) \leq ( 1/2 ) \unitmeasure{\vdim}
		r^\vdim, \\
		\measureball{\| V \|}{\oball{a}{6r}} \leq M
		\unitmeasure{\vdim} r^\vdim, \quad
		\| V \| ( \cylinder{T}{a}{r/2}{r/2} ) \geq ( Q - 1/4 )
		\unitmeasure{\vdim} (r/2)^\vdim, \\
		\| V \| ( \cylinder{T}{a}{r}{3r} \without Z ) \leq \varepsilon
		\unitmeasure{\vdim} r^\vdim, \quad \big ( \tint{}{} |
		\project{S} - \project{T} |^2 \ud V (z,S) \big )^{1/2} \leq
		\varepsilon r^{\vdim/2},
	\end{gather*}
	then
	\begin{align*}
			& \big ( r^{-\vdim} \tint{\cylinder{T}{a}{r/4}{r/4}
			\times \grass{\adim}{\vdim}}{} | \project{S} -
			\project{T} |^2 \ud V (z,S) \big )^{1/2} \\
			& \qquad \leq
			\begin{aligned}[t]
				& \delta \big ( r^{-\vdim}
				\tint{\cylinder{T}{a}{r}{r} \times
				\grass{\adim}{\vdim}}{} | \project{S} -
				\project{T} |^2 \ud V (z,S) \big )^{1/2} \\
				& + \Gamma \big ( r^{-\vdim-1} \tint{Z}{}
				\dist (z-a,T) \ud \| V \| z + r^{1-\vdim/p}
				\psi ( \oball{a}{6r} )^{1/p} \big ).
			\end{aligned}
	\end{align*}
\end{lemma}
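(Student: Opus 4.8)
This lemma is essentially a cleaned-up, self-contained restatement of the single-scale tilt-excess estimate implicit in \ref{lemma:iteration}\,\eqref{item:iteration:prep_tilt} together with \eqref{item:iteration:starting_control} and \eqref{item:iteration:l1_estimate}, but phrased so that all reference to the approximating $\qspace_Q(\rel^\codim)$ valued function $f$, its average $g$, the comparison functions $u_\varrho$ and the planes $\sigma_\varrho$ has been eliminated in favour of the intrinsic quantities $\int |\project{S}-\project{T}|^2 \ud V$, $\int_Z \dist(z-a,T) \ud \| V \|$ and $\psi(\oball{a}{6r})^{1/p}$. The plan is therefore: first apply \ref{lemma:lipschitz_approximation} (and hence \ref{lemma:iteration_prep}, \ref{lemma:iteration}) with a fixed choice of the parameters $L$, $\delta_1,\ldots,\delta_5$ — namely $\delta_4 = 1$, $\delta_5 = (40)^{-\vdim}(\isoperimetric{\vdim}\vdim)^{-\vdim}/\unitmeasure{\vdim}$, $\delta_1 = \delta_2 = 1/2$, $\delta_3 = 1/2$, $L \leq 1/8$, and $\delta = \inf\{1, \varepsilon_{\ref{lemma:lipschitz_approximation}}, (2\isoperimetric{\vdim})^{-1}\}$ — and radius $6r$ rescaled so that the cylinder $\cylinder{T}{a}{r}{3r}$ plays the role of $\cylinder{T}{0}{1}{3}$; the hypotheses on $\| V \|(\cylinder{T}{a}{r}{3r})$, on the ``shell'' $\cylinder{T}{a}{r}{4r}\without\cylinder{T}{a}{r}{r}$, on $\| V \|(\oball{a}{6r})$ and on $\big(\int|\project{S}-\project{T}|^2\ud V\big)^{1/2}\leq\varepsilon r^{\vdim/2}$ are exactly what is needed to meet the structural and smallness hypotheses of those lemmas (after choosing $\varepsilon$ small depending on $\vdim,\adim,Q,p,\delta$ and a $\kappa$ as small as required there, which is legitimate since $\kappa$ does not appear in the conclusion).

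\textbf{Key steps.} First I would fix the point $c = \pp(a)$, set $\varrho = r$, and verify that with the present normalisations $r \in J_0 \cap J_1 \cap J_2 \cap J_3$, $r/2 \in J_4 \cap J_5$ and $8r \in J_2 \cap J_3$ hold, using \ref{lemma:iteration_prep}\,\eqref{item:iteration_prep:ch_planes} to pass from $\| \sigma_r \|$-control to $r \in J_1$ and $\| \sigma_r \| \leq \adim^{-1/2}\delta/2$ exactly as in the opening of the proof of \ref{lemma:iteration}\,\eqref{item:iteration:tilt_estimate}; here the hypothesis $\| V \|(\cylinder{T}{a}{r/2}{r/2}) \geq (Q-1/4)\unitmeasure{\vdim}(r/2)^\vdim$ supplies membership in $J_5$ without invoking a density lower bound. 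Second, I would take $X = Z \cap \oball{c}{r/2} \cap X_1$ in \eqref{item:iteration:prep_tilt} — modulo the matter discussed below — so that $\eta = r^{-\vdim}\mathscr{L}^\vdim(\oball{c}{r/2}\without X)$ is controlled by $\| V \|(\cylinder{T}{a}{r}{3r}\without Z) + \| V \|(B_{a,r/2})$, the latter bounded via \ref{lemma:iteration_prep}\,\eqref{item:iteration_prep:estimate_bad_set} by $\phi_2(r,T)^2 + \phi_4(r)$, which by the hypothesis and by $\phi_3(r) \leq r^{1-\vdim/p}\psi(\oball{a}{6r})^{1/p}$ is $\leq C(\varepsilon^2 + \varepsilon^{\vdim p/(\vdim-p)})$, hence $< 2^{-\vdim}$ as \eqref{item:iteration:prep_tilt} requires. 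Third, with $P = 0$ (so $R = T$) and the chosen $\lambda$, \eqref{item:iteration:prep_tilt} gives
\begin{gather*}
	\phi_2(r/4, T) \leq \Gamma\big( (\lambda + \phi_2(r,T)^{2/\vdim} + \lambda + \eta^\mu)\phi_2(r,T) + \eta^{-1} r^{-\vdim-1}\norm{f}{1}{X} + \lambda^{-\tau}\phi_3(r)^\tau \big),
\end{gather*}
after which: (i) since $\tau$ here equals $\frac{\vdim p}{2(\vdim-p)} = 1$ in the relevant range (or $\tau=1$, $p=1$ for $\vdim=1$), $\lambda^{-\tau}\phi_3(r)^\tau = \lambda^{-1} r^{1-\vdim/p}\psi(\oball{a}{6r})^{1/p}$ up to constants; (ii) $\phi_2(r,T)^{2/\vdim} \leq \varepsilon^{2/\vdim}$ and $\eta^\mu \leq C\varepsilon^{\mu\cdot(\ldots)}$ are absorbed into the $\delta$ on the right of the conclusion by taking $\varepsilon$ small and $\lambda$ small enough; (iii) on $X \subset Z$ one has $g = \boldsymbol\eta_Q\circ f = 0$ would be false, but rather $\norm{f}{1}{X} \leq \int_Z \mathscr{G}(f(\pp z), Q\Lbrack 0 \Rbrack)\ud(\ldots)$, which — using that $\graph_Q f$ coincides with $V$ over $X_1 \cap H$ by \ref{lemma:lipschitz_approximation}\,\eqref{item:lipschitz_approximation:poincare} and that $\mathscr{G}(f(x), Q\Lbrack 0\Rbrack) \leq Q^{1/2}\sup\{|\qq(z-a)| : z \in A(x)\}$ — is dominated by $\int_Z \dist(z-a,T)\ud\| V \| z$ plus the contribution of the excised set $B_{a,r/2}$, the latter again controlled by $\phi_2(r,T)^2$ times $r$.

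\textbf{Main obstacle.} The delicate point is step (iii): translating the abstract quantity $\norm{f\aplus(-P)}{1}{X}$ appearing in \eqref{item:iteration:prep_tilt} back into the intrinsic integral $\int_Z \dist(z-a,T)\ud\| V \|$. One must be careful that $X$ as defined in \eqref{item:iteration:prep_tilt} is required to be a subset of $\oball{c}{\varrho/2}\cap X_1$ whereas $Z$ is an arbitrary $\| V \|$-measurable subset of the full cylinder; so the actual choice is $X = \{x \in \oball{c}{r/2}\cap X_1 : A(x)\subset Z\}$, and one needs the coarea estimate from the proof of \ref{lemma:lipschitz_approximation}\,\eqref{item:lipschitz_approximation:estimate_b} — relating $\mathscr{L}^\vdim(\oball{c}{r/2}\without X)$ to $\| V \|$ of the complement of $Z$ plus $\| V \|(B_{a,r/2})$ — to show $\eta$ is small, and the identity $\pp^{-1}\lIm X_1\rIm \cap H = \graph_Q f$ to identify $\int_X \mathscr{G}(f,Q\Lbrack 0\Rbrack)^1 \ud\mathscr{L}^\vdim$ with an integral of $\dist(z-a,T)$ against $\| V \|$ restricted to the part of $Z$ lying over $X$; here $\qq(a)$ need not be $0$, so one actually bounds by $\int_Z(\dist(z-a,T) + |\qq(z-a)|)$ — but $\dist(z-a,T) = |\perpproject{T}(z-a)|$ by definition of $h$ in \ref{lemma:coercive_estimate_rect}, matching the statement. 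I expect the bookkeeping of which cylinder-shell/excision term gets absorbed where, and the verification that all the side-conditions listed in \eqref{item:iteration:prep_tilt} ($\varrho/2 \in J_4\cap J_5$, $8r\in J_2\cap J_3$, $\varrho\in J_1$, $\| \sigma_\varrho\|$-smallness, $\kappa$- and $\phi_3$-smallness, $\eta$-smallness) are implied by the present hypotheses with a single coherent choice of $\varepsilon$, to be the bulk of the work; none of it is conceptually new, since every ingredient is already contained in \ref{lemma:iteration_prep} and \ref{lemma:iteration}, but it requires assembling the constants in the right order, mirroring the ``Choice of constants'' blocks used throughout Section \ref{sec:iteration}.
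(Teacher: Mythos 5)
Your proposal is correct and follows essentially the same route as the paper: apply \ref{lemma:iteration}\,\eqref{item:iteration:prep_tilt} with $P=0$, $\varrho=r$, $\tau=1$ and the choice $X=\oball{c}{r/2}\cap X_1\without\pp\lIm A\without Z\rIm$, verify the side conditions ($r/2\in J_4\cap J_5$, $8r\in J_2\cap J_3$, $r\in J_1$, $\|\sigma_r\|$ small) from the stated hypotheses, control $\mathscr{L}^\vdim(\oball{c}{r/2}\without X)$ by $\mathscr{L}^\vdim(C_{a,r/2})$ plus $\|V\|(\cylinder{T}{a}{r}{3r}\without Z)$, and convert $\norm{f}{1}{X}$ into $\int_Z\dist(z-a,T)\ud\|V\|z$ by a layer-cake inclusion using $A(x)\subset Z$ and $\density^\vdim(\|V\|,\cdot)\geq 1$ on $A$. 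The only cosmetic difference is that the paper's choice of $X$ makes the final conversion exact with no extra $B_{a,r/2}$-contribution, whereas you allow for one and absorb it; both work.
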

\begin{proof}
	Define
	\begin{gather*}
		L = 1/8, \quad \delta_1 = \delta_2 = \delta_3 = 1/2, \quad
		\delta_4 = 1, \quad \delta_5 = ( 40 )^{-\vdim} (
		\isoperimetric{\vdim} \vdim )^{-\vdim} / \unitmeasure{\vdim},
		\\ \displaybreak[0] \Delta_1 =
		\varepsilon_{\ref{lemma:lipschitz_approximation}} ( \adim, Q,
		L, M, \delta_1, \delta_2, \delta_3, \delta_4, \delta_5 ),
		\quad \Delta_2 = \inf \big \{ 1, ( 2 \isoperimetric{\vdim}
		)^{-1}, \Delta_1 \big \}, \\ \displaybreak[0] \mu = 1/2 \quad
		\text{if $\vdim = 1$}, \quad \mu = 1/\vdim \quad \text{if
		$\vdim > 1$}, \quad \Delta_3 =
		\Gamma_{\ref{lemma:iteration}\eqref{item:iteration:prep_tilt}}
		( \vdim, \adim, Q, \Delta_2, p, 1 ), \\ \displaybreak[0] \eta
		= \inf \big \{ \delta^{1/\mu} ( 4 \Delta_3 )^{-1/\mu},
		2^{-\vdim-1} \big \}, \quad \lambda = \inf \big \{ \delta ( 4
		\Delta_3 )^{-1}, 1 \big \}, \\ \displaybreak[0]
		\begin{aligned}
		\kappa = \inf \big \{ &
			\varepsilon_{\ref{lemma:iteration}\eqref{item:iteration:prep_tilt}}
			( \vdim, \adim, Q, \delta_2, \Delta_1, \Delta_2 , p),
			\varepsilon_{\ref{lemma:iteration_prep}\eqref{item:iteration_prep:side_conditions}}
			( \adim, \delta_4, \Delta_2 ), \\
			& 2^{-\vdim-2} \besicovitch{\adim}^{-1}
			\unitmeasure{\vdim} \eta
			\Gamma_{\ref{lemma:lipschitz_approximation}\eqref{item:lipschitz_approximation:estimate_b}}
			( Q, \vdim )^{-1} \Delta_2 \big \},
		\end{aligned} \\ \displaybreak[0]
		\begin{aligned}
			\Delta_4 = \inf \big \{ & ( M \unitmeasure{\vdim}
			)^{-1/2} 2^{-\vdim} \kappa, \unitmeasure{\vdim}^{1/2}
			2^{-\vdim-4} \adim^{-1/2} \Delta_2, \\
			& ( M \unitmeasure{\vdim} )^{-1/2} \delta^{\vdim/2} (
			4 \Delta_3 )^{-\vdim/2} \big \},
		\end{aligned} \\ \displaybreak[0]
		\varepsilon = \inf \big \{ \Delta_4, 2^{-\vdim-1} \eta \big
		\}, \\ \displaybreak[0]
		\Delta_5 = 2^{-\vdim} \besicovitch{\adim}^{-1}
		\unitmeasure{\vdim} \inf \big \{ \eta
		\Gamma_{\ref{lemma:lipschitz_approximation}\eqref{item:lipschitz_approximation:estimate_b}}
		( Q, \vdim )^{-1}/4, 1/8 \big \}, \\ \displaybreak[0]
		\Delta_6 =
		\begin{aligned} [t]
			\inf \big \{ & ( M \unitmeasure{\vdim} )^{1/p-1}
			2^{1-\vdim} \kappa,
			\varepsilon_{\ref{lemma:iteration}\eqref{item:iteration:prep_tilt}}
			( \vdim, \adim, Q, \delta_2, \Delta_1, \Delta_2 , p),
			\\
			& \Delta_2 ( \Delta_5 )^{1/p-1/\vdim} \big \},
		\end{aligned} \\ \displaybreak[0]
		\Gamma = \sup \big \{ \Delta_3 Q^{1/2} \eta^{-1}, \Delta_3 
		\lambda^{-1}, (4( Q + 1 ) \unitmeasure{\vdim} \vdim )^{1/2}
		(\Delta_6)^{-1} \big \}.
	\end{gather*}
	It will be shown that $\varepsilon$ and $\Gamma$ have the asserted
	property.

	Suppose $a$, $r$, $V$, $\psi$, $p$, $T$, and $Z$ satisfy the
	hypotheses in the body of the lemma.

	By the definition of $\Gamma$ and
	\begin{gather*}
		r^{-\vdim} \tint{\cylinder{T}{a}{r/4}{r/4} \times
		\grass{\adim}{\vdim}}{} | \project{S} - \project{T} |^2 \ud V
		(z,S) \leq 4 ( Q + 1 ) \unitmeasure{\vdim} \vdim
	\end{gather*}
	one may assume that
	\begin{gather*}
		r^{1-\vdim/p} \psi ( \oball{a}{6r} )^{1/p} \leq \Delta_6.
	\end{gather*}
	Additionally, one may assume that $Z$ is a Borel set and that $a = 0$,
	$T = \im \pp^\ast$ using isometries and identifying $\rel^\adim \simeq
	\rel^\vdim \times \rel^\codim$.

	Defining $A$, $X_1$, $f$, $c$, $\phi_2$, $\phi_3$, $\phi_4$,
	$T_\varrho$, $J_1$, $J_2$, $J_3$, $J_4$, $J_5$, $\sigma_\varrho$, and
	$C_{a,\varrho}$ as in \ref{lemma:iteration_prep} and $X =
	\oball{c}{r/2} \cap X_1 \without \pp \lIm A \without Z \rIm$, next,
	the hypotheses of
	\ref{lemma:iteration}\,\eqref{item:iteration:prep_tilt} with $\delta$,
	$P$, $\varrho$ replaced by $\Delta_2$, $0$, $r$ will be verified. The
	$\mathscr{L}^\vdim$ measurability of $X$ is a consequence of
	\ref{lemma:lipschitz_approximation}\,\eqref{item:lipschitz_approximation:ab}
	and \cite[2.2.13]{MR41:1976}. One estimates
	\begin{gather*}
		\tint{}{} | \project{S} - \project{T} | \ud V (z,S) \leq ( M
		\unitmeasure{\vdim} )^{1/2} r^\vdim \Delta_4 \leq \kappa (
		r/2 )^\vdim, \\
		\measureball{\| \delta V \|}{\oball{a}{6r}} \leq ( M
		\unitmeasure{\vdim} )^{1-1/p} r^{\vdim-1} \Delta_6 \leq \kappa
		( r/2 )^{\vdim-1},
	\end{gather*}
	hence $r/2 \in J_4 \cap J_5$ and $8r \in J_2 \cap J_3$. Also
	\begin{gather*}
		\begin{aligned}
			\| \eqproject{T_r} - \project{T} \| & \leq \| V \| (
			\cylinder{T}{a}{r/2}{r/2} )^{-1/2} 2 \phi_2 ( 6r, T )
			( 6r)^{\vdim/2} \\
			& \leq 2^{\vdim+2} \unitmeasure{\vdim}^{-1/2} \Delta_4
			\leq 1/2,
		\end{aligned} \\
		T_r \cap \ker \pp = \{ 0 \}, \quad r \in J_1
	\end{gather*}
	and, using \ref{miniremark:projections} with $S$, $S_1$, $S_2$
	replaced by $T$, $T$, $T_r$,
	\begin{gather*}
		\| \sigma_r \|^2 \leq ( 1 + \| \sigma_r \|^2  ) \|
		\eqproject{T_r} - \project{T} \|^2, \\
		\| \sigma_r \|^2 \leq \| \eqproject{T_r} -
		\project{T} \|^2 / ( 1 - \| \eqproject{T_r} -
		\project{T} \|^2 ) \leq 2 \| \eqproject{T_r} -
		\project{T} \|^2, \\
		\| \sigma_r \| \leq 2 \| \eqproject{T_r} -
		\project{T} \| \leq 2^{\vdim+3} \unitmeasure{\vdim}^{-1/2}
		\Delta_4 \leq \adim^{-1/2} \Delta_2 / 2.
	\end{gather*}
	Noting $\phi_4 ( r ) \leq \Delta_5$, one infers from
	\ref{lemma:iteration_prep}\,\eqref{item:item:iteration_prep:side_conditions:estimate}
	with $\varrho$, $\lambda$ replaced by $r/2$, $\eta/2$ that
	\begin{gather*}
		\mathscr{L}^\vdim ( C_{a,r/2} ) \leq ( \eta/2 )
		\unitmeasure{\vdim} (r/2)^\vdim.
	\end{gather*}
	Combining this with
	\begin{gather*}
		\mathscr{L}^\vdim ( \pp \lIm A \without Z \rIm ) \leq
		\mathscr{H}^\vdim ( A \without Z ) \leq \| V \| (
		\cylinder{T}{a}{r}{3r} \without Z ) \leq ( \eta/2 )
		\unitmeasure{\vdim} ( r/2 )^\vdim, \\
		\oball{c}{r/2} \without X \subset C_{a,r/2} \cup \pp \lIm A
		\without Z \rIm,
	\end{gather*}
	one obtains
	\begin{gather*}
		\mathscr{L}^\vdim ( \oball{c}{r/2} \without X ) \leq \eta
		\unitmeasure{\vdim} ( r/2 )^\vdim.
	\end{gather*}

	Now, applying \ref{lemma:iteration}\,\eqref{item:iteration:prep_tilt}
	with $\delta$, $P$, $\varrho$, and $\tau$ replaced by $\Delta_2$, $0$,
	$r$, and $1$
	yields
	\begin{align*}
		\phi_2 (r/4, T ) & \leq
		\begin{aligned}[t]
			\Delta_3 \Big ( & \big ( \lambda + ( ( M
			\unitmeasure{\vdim} )^{1/2} \Delta_4 )^{2/\vdim} + (
			\lambda + \eta^\mu) \big ) \phi_2 ( r, T ) \\
			& + \eta^{-1} r^{-\vdim-1} \norm{f}{1}{X} +
			\lambda^{-1} \phi_3 (r ) \Big )
		\end{aligned} \\
		& \leq \delta \phi_2 ( r, T ) + \Gamma \big ( Q^{-1/2}
		r^{-\vdim-1} \norm{f}{1}{X} + \phi_3 (r) \big ).
	\end{align*}
	Finally, noting
	\begin{gather*}
		\bigclassification{X}{x}{\mathscr{G} (f(x), Q \Lbrack 0
		\Rbrack ) > Q^{1/2} \gamma } \subset \pp \biglIm
		\classification{A \cap Z}{z}{ \dist (z-a,T) > \gamma } \bigrIm
	\end{gather*}
	for $0 < \gamma < \infty$, one obtains
	\begin{gather*}
		Q^{-1/2} \norm{f}{1}{X} \leq \tint{Z}{} \dist (z-a,T) \ud \| V
		\| z
	\end{gather*}
	and the conclusion follows.
\end{proof}
\section{The pointwise regularity theorem} \label{sec:thm}
Here, after verifying the hypotheses of the approximation by a $\qspace_Q (
\rel^\codim )$ valued function in \ref{lemma:hypo_approx}, the pointwise
regularity theorem is deduced from
\ref{lemma:iteration}\,\eqref{item:iteration:iteration} in
\ref{thm:pointwise_decay}. An example demonstrating the sharpness of the
modulus of continuity obtained in case $\alpha \tau = 1$ and $\vdim > 1$ is
provided in \ref{remark:pointwise_decay}. Finally, a corollary concerning
almost everywhere decay rates is included in \ref{cor:some_decay_rates}.
\begin{lemma} \label{lemma:hypo_approx}
	Suppose $\vdim, \adim, Q \in \nat$, $\vdim < \adim$, either $p = \vdim
	= 1$ or $1 \leq p < \vdim$, $0 < \alpha \leq 1$, $1 \leq M <\infty$,
	$0 < \mu \leq 1$, and $0 < \delta_i \leq 1$ for $i \in \{1,2\}$.

	Then there exists a positive, finite number $\varepsilon$ with the
	following property.

	If $a \in \rel^\adim$, $0 < r < \infty$, $V \in \IVar_\vdim (
	\oball{a}{r} )$, $\psi$ is related to $p$ and $V$ as in
	\ref{miniremark:situation}, $T \in \grass{\adim}{\vdim}$,
	\begin{gather*}
		\Delta = \inf \big \{ \mu, ( 1 + M^2 )^{-1/2} \big ( 1 -
		(1-\delta_1/2)^{1/\vdim} ( 1- \delta_1/4)^{-1/\vdim} \big )
		\big \}, \\
		\density^{\ast \vdim} ( \| V \|, a ) \geq Q-1+\delta_2, \quad
		\measureball{\| V \|}{\oball{a}{r}} \leq ( Q + 1 - \delta_1 )
		\unitmeasure{\vdim} r^\vdim, \\
		\tint{}{} | \project{S} - \project{T} | \ud V (z,S) \leq
		\varepsilon r^\vdim, \\
		\varrho^{1-\vdim/p} \psi ( \cball{a}{\varrho} )^{1/p} \leq
		\varepsilon ( \varrho/r )^\alpha \quad \text{whenever $0 <
		\varrho < r$},
	\end{gather*}
	then with $s = \Delta r$
	\begin{gather*}
		\| V \| ( \cylinder{T}{a}{s}{Ms} \without
		\cylinder{T}{a}{s}{\delta_2 s} ) \leq \delta_2
		\unitmeasure{\vdim} s^\vdim.
	\end{gather*}
\end{lemma}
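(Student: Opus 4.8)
The plan is to argue by contradiction, exploiting Allard's compactness theorem for integral varifolds together with the structure results for planes \ref{app:lemma:planes}, much in the spirit of the proof of \ref{lemma:lower_mass_bound}. Suppose the lemma fails for some fixed $\vdim$, $\adim$, $Q$, $p$, $\alpha$, $M$, $\mu$, $\delta_1$, $\delta_2$. Then there exist a sequence $\varepsilon_i \downarrow 0$ and varifolds $V_i \in \IVar_\vdim ( \oball{a_i}{r_i} )$ satisfying the hypotheses with $\varepsilon = \varepsilon_i$, but for which the asserted conclusion with $s_i = \Delta r_i$ (note that $\Delta$ does not depend on $\varepsilon$) is violated. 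After applying translations and homotheties one may assume $a_i = 0$ and $r_i = 1$ for all $i$, and after passing to a subsequence one may assume $T$ is fixed. Writing $s = \Delta$, we then have $\| V_i \| ( \cylinder{T}{0}{s}{Ms} \without \cylinder{T}{0}{s}{\delta_2 s} ) > \delta_2 \unitmeasure{\vdim} s^\vdim$ for every $i$.

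First I would record the uniform bounds that survive the rescaling. The mass bound $\measureball{\| V_i \|}{\oball{0}{1}} \leq ( Q + 1 - \delta_1 ) \unitmeasure{\vdim}$ holds for all $i$; the tilt bound $\int | \project{S} - \project{T} | \ud V_i (z,S) \leq \varepsilon_i \to 0$; and, since $\psi_i ( \cball{0}{\varrho} )^{1/p} \leq \varepsilon_i \varrho^{\vdim/p + \alpha - 1}$ for $0 < \varrho < 1$, we get (for $p=1$ directly, for $p>1$ via H\"older as in the catenoid-type estimates) that $\| \delta V_i \| ( \oball{0}{\varrho} ) \to 0$ on every fixed ball. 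By Allard's compactness theorem, see \cite[6.4]{MR0307015} or \cite[42.8]{MR87a:49001}, after passing to a further subsequence there is $V \in \IVar_\vdim ( \oball{0}{1} )$ with $\delta V = 0$, $V_i \to V$ in the varifold sense, $S = T$ for $V$ almost all $(z,S)$, and $\| V_i \| \to \| V \|$ as Radon measures. Lower semicontinuity of density under this convergence together with $\density^{\ast \vdim} ( \| V_i \|, 0 ) \geq Q - 1 + \delta_2$ should give, via a short monotonicity argument as in \ref{lemma:aux_monotonicity} and \ref{lemma:lower_mass_bound}, that $\density^\vdim ( \| V \|, 0 ) \geq Q$. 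Since $V$ is stationary with all tangent planes equal to $T$, \ref{app:lemma:planes} applies: $\spt \| V \|$ is a union of planes parallel to $T$ with integer multiplicity, $\perpproject{T} \lIm \spt \| V \| \rIm$ is discrete in $\perpproject{T} \lIm \oball{0}{1} \rIm$, and $\| V \| \restrict R(z) = \density^\vdim ( \| V \|, z ) \mathcal{H}^\vdim \restrict R(z)$ on each such plane.

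Next I would extract the key fact: \emph{the slab $\cylinder{T}{0}{s}{\delta_2 s}$ contains all of $\spt \| V \| \cap \cylinder{T}{0}{s}{Ms}$}, so that $\| V \| ( \cylinder{T}{0}{s}{Ms} \without \cylinder{T}{0}{s}{\delta_2 s} ) = 0$. This is where the precise value of $\Delta$ enters. Because $\density^\vdim ( \| V \|, 0 ) \geq Q$, at least $Q$ of the parallel-plane sheets of $V$ (counted with multiplicity) pass through $0$, hence lie inside the plane $\perpproject{T}^{-1} \lIm \{0\} \rIm$, contributing mass exactly $Q \unitmeasure{\vdim} t^\vdim$ to every cylinder $\cylinder{T}{0}{t}{\cdot}$ of radius $t$. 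Any additional sheet at $\perpproject{T}$-height $h > 0$ would, together with these $Q$ sheets, force $\| V \| ( \cylinder{T}{0}{t}{h'} ) \geq ( Q + 1 ) \unitmeasure{\vdim} t^\vdim$ for radii $t$ with $h' \geq h$; but the total mass bound, when one tracks how it rescales and uses the geometry of the cone over a sphere (this is exactly the computation behind the definition $\Delta = \inf \{ \mu, (1+M^2)^{-1/2} ( 1 - (1-\delta_1/2)^{1/\vdim} (1-\delta_1/4)^{-1/\vdim} ) \}$), shows that on the scale $s = \Delta$ and within $\perpproject{T}$-height $Ms$ there simply is not enough mass available — the factor $(1+M^2)^{-1/2}$ accounts for the aspect ratio $M$ of the cylinder $\cylinder{T}{0}{s}{Ms}$ relative to the ball of radius $r=1$ supplying the mass, and the factor $1 - (1-\delta_1/2)^{1/\vdim}(1-\delta_1/4)^{-1/\vdim}$ is the deficit between $Q+1-\delta_1$ and the next admissible multiplicity. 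Consequently $\spt \| V \| \cap \cylinder{T}{0}{s}{Ms} \subset \cylinder{T}{0}{s}{\delta_2 s}$, and the claimed vanishing follows. But then $\| V_i \| ( \cylinder{T}{0}{s}{Ms} \without \cylinder{T}{0}{s}{\delta_2 s} ) \to \| V \| ( \cylinder{T}{0}{s}{Ms} \without \overline{\cylinder{T}{0}{s}{\delta_2 s}} ) = 0$ (using that the boundary of the closed region in question is $\| V \|$-null, which holds since $\| V \|$ charges only finitely many parallel planes), contradicting the strict lower bound $\delta_2 \unitmeasure{\vdim} s^\vdim > 0$ that held for every $i$.

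\textbf{Main obstacle.} The delicate point is the counting argument pinning down exactly which sheets of the limiting varifold can appear in $\cylinder{T}{0}{s}{Ms}$, and verifying that the explicit constant $\Delta$ is the right threshold so that no sheet outside the thin slab survives. One must be careful that the density lower bound $\density^\vdim (\| V \|, 0) \geq Q$ really does pass to the limit — this requires combining the approximate-continuity hypothesis $\density^{\ast\vdim}(\|V_i\|,0) \geq Q-1+\delta_2$ with the monotonicity formula for the nearly stationary $V_i$ (of the type in \ref{lemma:aux_monotonicity}) and then with the varifold convergence, rather than with a naive semicontinuity statement. A secondary technical care is needed in handling the two cases $p = \vdim = 1$ and $1 \leq p < \vdim$ uniformly when converting the hypothesis on $\psi$ into a statement about $\| \delta V_i \|$; for $p > 1$ this uses H\"older's inequality exactly as elsewhere in the paper, and the resulting bound on $\|\delta V_i\|$ on fixed balls still tends to $0$, which is all that is needed for Allard compactness.
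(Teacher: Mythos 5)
Your proposal is correct in outline, but it takes a genuinely different route from the paper. The paper argues directly and quantitatively: it first applies \ref{lemma:lower_mass_bound} to produce a slab $\{ \xi \with | \perpproject{T} ( \xi - a ) | < \delta_2 s/2 \}$ carrying mass at least $( Q - \delta_1/3 ) \unitmeasure{\vdim} r^\vdim$, then splits $\spt \| V \|$ into the set $A$ of points where $\measureball{\| \delta V \|}{\cball{z}{t}} \leq ( 2 \isoperimetric{\vdim} )^{-1} \| V \| ( \cball{z}{t} )^{1-1/\vdim}$ on all admissible scales and its complement. For $z \in A \cap \cylinder{T}{a}{s}{Ms}$ it applies \ref{app:lemma:good_point} and \ref{app:lemma:little_helper} on $\oball{z}{t}$ with $t = \dist ( z, \rel^\adim \without \oball{a}{r} )$ — this is what the factors $(1+M^2)^{-1/2}$ and $1 - (1-\delta_1/2)^{1/\vdim} ( 1 - \delta_1/4 )^{-1/\vdim}$ in $\Delta$ are actually designed for, namely to guarantee $(t/r)^\vdim \geq ( 1 - \delta_1/2 ) ( 1 - \delta_1/4 )^{-1}$ — forcing the cone around $z$ to meet the slab and hence $| \perpproject{T} ( z - a ) | \leq \delta_2 s$; the complement of $A$ is handled by a Besicovitch covering plus H\"older, giving the explicit bound $( 2 \isoperimetric{\vdim} \varepsilon )^{\vdim p/(\vdim-p)} \besicovitch{\adim} r^\vdim$. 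Your single global compactness argument is a legitimate alternative and all its steps can be made to work: the density upgrade at $a$ via \ref{lemma:aux_monotonicity} is exactly the mechanism inside the paper's proof of \ref{lemma:lower_mass_bound}, and the convergence issue you worry about is handled most simply by upper semicontinuity of mass on the compact closure of $\cylinder{T}{0}{s}{Ms} \without \cylinder{T}{0}{s}{\delta_2 s}$, which is disjoint from the one surviving sheet at height $0$ (no boundary-nullity discussion is needed). What your route costs is twofold: first, it yields no effective $\varepsilon$, whereas the paper's does; second, the one substantive computation you defer — that the stated $\Delta$ really excludes an extra parallel sheet at height $h$ with $0 < h \leq Ms$ — is \emph{not} "exactly the computation behind the definition of $\Delta$". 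It amounts to checking $1 - \big ( (1-\delta_1/2)/(1-\delta_1/4) \big )^{1/\vdim} \leq \big ( 1 - ( 1 - \delta_1 )^{2/\vdim} \big )^{1/2}$, so that a unit-multiplicity plane at height $M\Delta$ still meets $\oball{0}{1}$ in area exceeding $( 1 - \delta_1 ) \unitmeasure{\vdim}$; this elementary inequality does hold for $0 < \delta_1 \leq 1$, but it must be verified separately, since the constant was calibrated for the paper's argument rather than for the sheet-counting in the limit.
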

\begin{proof}
	Define $\Delta$ as in the hypotheses of the body of the lemma,
	$\lambda = \big ( 1 - ( \Delta \delta_2 /4 )^2 \big )^{1/2}$,
	\begin{gather*}
		\Delta_1 = \varepsilon_{\ref{app:lemma:little_helper}} ( \adim,
		\inf \{ ( 2 \isoperimetric{\vdim} \vdim )^{-\vdim} /
		\unitmeasure{\vdim}, \delta_1 / 4 \}, \lambda, 2 ( Q + 1 ) ),
	\end{gather*}
	let $\varepsilon$ be the infimum of the following five numbers
	\begin{gather*}
		\varepsilon_{\ref{lemma:lower_mass_bound}} ( \adim, Q, \alpha,
		p , \inf \{ \delta_1 / 3, \Delta \delta_2 /2 \} ), \quad ( (Q+1)
		\unitmeasure{\vdim})^{1/p-1} ( 4 \isoperimetric{\vdim} \vdim
		)^{1-\vdim} \Delta_1, \\
		( 4 \isoperimetric{\vdim} \vdim )^{-\vdim} \Delta_1, \quad ( 2
		\isoperimetric{\vdim} )^{-1}, \quad ( \delta_2 \Delta^\vdim
		\unitmeasure{\vdim} \besicovitch{\adim}^{-1})^{1/p-1/\vdim} (
		2 \isoperimetric{\vdim} )^{-1}
	\end{gather*}
	and suppose that $\vdim$, $a$, $r$, $V$, $\psi$, $T$ and $s$ satisfy
	the hypotheses in the body of the lemma.

	First, note by \ref{lemma:lower_mass_bound} with $\delta$ replaced by
	$\inf \{ \delta_1 / 3, \Delta \delta_2 / 2 \}$
	\begin{gather*}
		\| V \| ( \classification{\oball{a}{r}}{z}{ | \perpproject{T}
		(z-a) | < \delta_2 s / 2} ) \geq \unitmeasure{\vdim}
		(Q-\delta_1/3) r^\vdim.
	\end{gather*}

	Define $A$ to be set of all $z \in \spt \| V \|$ such that
	\begin{gather*}
		\measureball{\| \delta V \|}{\cball{z}{t}} \leq
		(2\isoperimetric{\vdim})^{-1} \| V \| ( \cball{z}{t}
		)^{1-1/\vdim}
	\end{gather*}
	whenever $0 < t < \infty$ and $\cball{z}{t} \subset \oball{a}{r}$.
	Next, the following assertion will be proven:
	\begin{gather*}
		A \cap \cylinder{T}{a}{s}{Ms} \subset
		\cylinder{T}{a}{s}{\delta_2s}.
	\end{gather*}
	For this purpose suppose $z \in A \cap \spt \| V \| \cap
	\cylinder{T}{a}{s}{Ms}$ and abbreviate $t = \dist (z,\rel^\adim
	\without \oball{a}{r})$. Since $\Delta < ( 1 + M^2 )^{-1/2}$, one
	notes $\cylinder{T}{a}{s}{Ms} \subset \oball{a}{r}$ and $t > 0$. From
	\ref{app:lemma:good_point} one obtains
	\begin{gather*}
		\measureball{\| V \|}{\cball{z}{\varrho}} \geq (
		2\isoperimetric{\vdim}\vdim )^{-\vdim} \varrho^\vdim \quad
		\text{for $0 < \varrho < t$}.
	\end{gather*}
	Therefore, noting
	\begin{gather*}
		t \geq r - ( 1 + M^2 )^{1/2} \Delta r, \quad (t/r)^\vdim \geq
		( 1- \delta_1/2 ) ( 1 - \delta_1/4)^{-1} \geq 2/3, \\
		\measureball{\| V \|}{\oball{z}{t}} \leq \measureball{\| V
		\|}{\oball{a}{r}} \leq ( Q + 1 ) \unitmeasure{\vdim} r^\vdim
		\leq 2 ( Q + 1 ) \unitmeasure{\vdim} t^\vdim, \\
		\begin{aligned}
			& \measureball{\| \delta V \|}{\oball{z}{t}} \leq
			\measureball{\| \delta V \|}{\oball{a}{r}}
			\leq \big ( ( Q + 1 ) \unitmeasure{\vdim}
			\big )^{1-1/p} \varepsilon r^{\vdim-1} \\
			& \qquad \leq \big ( ( Q + 1 ) \unitmeasure{\vdim}
			\big )^{1-1/p} ( 4 \isoperimetric{\vdim} \vdim
			)^{\vdim-1} \varepsilon \| V \| ( \oball{z}{t}
			)^{1-1/\vdim} \\
			& \qquad \leq \Delta_1 \| V \| ( \oball{z}{t}
			)^{1-1/\vdim},
		\end{aligned} \\
		\begin{aligned}
			& \tint{\oball{z}{t} \times \grass{\adim}{\vdim}}{} |
			\project{S} - \project{T} | \ud V (\xi,S) \leq
			\tint{}{} | \project{S} - \project{T} | \ud V (\xi,S)
			\\
			& \qquad \leq \varepsilon r^\vdim \leq \varepsilon ( 4
			\isoperimetric{\vdim} \vdim )^{\vdim} \measureball{\|
			V \|}{\oball{z}{t}} \leq \Delta_1 \measureball{\| V
			\|}{\oball{z}{t}},
		\end{aligned}
	\end{gather*}
	one uses \ref{app:lemma:little_helper} with $\delta$, $M$, $a$, and
	$r$ replaced by $\inf \{ ( 2 \isoperimetric{\vdim} \vdim)^{-\vdim} /
	\unitmeasure{\vdim}, \delta_1 / 4 \}$, $2 ( Q+1 )$, $z$, and $t$ to
	infer
	\begin{gather*}
		\begin{aligned}
			\| V \| ( \classification{\oball{z}{t}}{\xi}{ |
			\project{T} (\xi-z) | > \lambda | \xi-z |} ) & \geq (
			1 - \delta_1/4 ) \unitmeasure{\vdim} t^\vdim \\
			& \geq ( 1 - \delta_1/2 ) \unitmeasure{\vdim} r^\vdim.
		\end{aligned}
	\end{gather*}
	Since $\measureball{\| V \|}{\oball{a}{r}} \leq ( Q + 1 - \delta_1 )
	\unitmeasure{\vdim} r^\vdim$, this implies together with the second
	paragraph that the intersection of
	\begin{gather*}
		\perpproject{T} \lIm \classification{\oball{z}{t}}{\xi}{ |
		\project{T} ( \xi - z ) | > \lambda | \xi - z |} \rIm \quad
		\text{and} \quad \classification{\rel^\adim}{\xi}{ |
		\perpproject{T} ( \xi-a ) | < \delta_2 s / 2}
	\end{gather*}
	cannot be empty. Now, estimating for $\xi \in \oball{z}{t}$ with $|
	\project{T} ( \xi - z ) | > \lambda | \xi - z |$
	\begin{gather*}
		| \perpproject{T} ( \xi - z ) | \leq ( 1- \lambda^2 )^{1/2} |
		\xi - z | \leq 2 ( 1 - \lambda^2 )^{1/2} r = \delta_2 s / 2,
	\end{gather*}
	one obtains $| \perpproject{T} (z-a) | \leq \delta_2 s$ and the
	inclusion follows.

	If $\vdim = 1$ then $A = \spt \| V \|$ and the conclusion is
	evident. Hence suppose $\vdim > 1$. The assertion of the preceding
	paragraph implies with the help of Besicovitch's covering theorem and
	H\"older's inequality the existence of countable disjointed families
	of closed balls $F_1, \ldots, F_{\besicovitch{\adim}}$ such that
	\begin{gather*}
		\spt \| V \| \cap \cylinder{T}{a}{s}{Ms} \without
		\cylinder{T}{a}{s}{\delta_2s} \subset
		{\textstyle\bigcup\bigcup} \{ F_i \with i = 1, \ldots,
		\besicovitch{\adim} \}, \\
		S \subset \oball{a}{r}, \qquad \| V \| ( S ) \leq \Delta_2
		\psi ( S )^{\vdim/(\vdim-p)}
	\end{gather*}
	whenever $S \in \bigcup \{ F_i \with i = 1, \ldots,
	\besicovitch{\adim} \}$ where $\Delta_2 = ( 2 \isoperimetric{\vdim}
	)^{\vdim p/(\vdim-p)}$, hence
	\begin{gather*}
		\begin{aligned}[b]
			& \| V \| ( \cylinder{T}{a}{s}{Ms} \without
			\cylinder{T}{a}{s}{\delta_2s} ) \leq \Delta_2
			\tsum{i=1}{\besicovitch{\adim}} \tsum{S \in F_i}{}
			\psi ( S)^{\vdim/(\vdim-p)} \\
			& \qquad \leq \Delta_2 \tsum{i=1}{\besicovitch{\adim}}
			\big ( \tsum{S \in F_i}{} \psi (S) \big
			)^{\vdim/(\vdim-p)} \leq \Delta_2 \besicovitch{\adim}
			\psi ( \oball{a}{r} )^{\vdim/(\vdim-p)} \\
			& \qquad \leq ( 2 \isoperimetric{\vdim} \varepsilon
			)^{\vdim p/(\vdim-p)} \besicovitch{\adim} r^\vdim \leq
			\delta_2 \unitmeasure{\vdim} s^\vdim.
		\end{aligned} \qedhere
	\end{gather*}
\end{proof}
\begin{theorem} \label{thm:pointwise_decay}
	Suppose $\vdim, \adim, Q \in \nat$, $\vdim < \adim$, either $p = \vdim
	= 1$ or $1 \leq p < \vdim$, $0 < \delta \leq 1$, $0 < \alpha \leq 1$,
	$0 < \tau \leq 1$, and $\tau = 1$ if $\vdim = 1$, $p/2 \leq \tau <
	\frac{\vdim p}{2(\vdim-p)}$ if $\vdim = 2$ and $\tau = \frac{\vdim
	p}{2 ( \vdim-p )}$ if $\vdim > 2$.

	Then there exist positive, finite numbers $\varepsilon$ and $\Gamma$
	with the following property.

	If $a \in \rel^\adim$, $0 < r < \infty$, $V \in \IVar_\vdim (
	\oball{a}{r} )$, $p$ and $\psi$ are related to $V$ as in
	\ref{miniremark:situation}, $T \in \grass{\adim}{\vdim}$, $\omega :
	\classification{\rel}{t}{0 < t \leq 1} \to \rel$ with $\omega (t) =
	t^{\alpha \tau}$ if $\alpha \tau < 1$ and $\omega (t) = t ( 1 + \log
	(1/t))$ if $\alpha \tau = 1$ whenever $0 < t \leq 1$, and $0 < \gamma
	\leq \varepsilon$,
	\begin{gather*}
		\density^{\ast \vdim} ( \| V \|, a ) \geq Q-1+\delta, \quad
		\measureball{\| V \|}{\oball{a}{r}} \leq ( Q + 1 - \delta )
		\unitmeasure{\vdim} r^\vdim, \\
		\big ( r^{-\vdim} \tint{}{}| \project{S} - \project{T} |^2 \ud
		V (z,S) \big )^{1/2} \leq \gamma, \\
		\| V \| (
		\classification{\cball{a}{\varrho}}{z}{\density^\vdim ( \| V
		\|, z ) \leq Q-1} ) \leq \varepsilon \unitmeasure{\vdim}
		\varrho^\vdim \quad \text{for $0 < \varrho < r$}, \\
		\varrho^{1-\vdim/p} \psi ( \cball{a}{\varrho} )^{1/p} \leq
		\gamma^{1/\tau} ( \varrho/r )^\alpha \quad \text{for $0 <
		\varrho < r$},
	\end{gather*}
	then $\density^\vdim ( \| V \|, a ) = Q$, $R = \Tan^\vdim ( \| V \|, a
	) \in \grass{\adim}{\vdim}$ and
	\begin{gather*}
		\big ( \varrho^{-\vdim} \tint{\oball{a}{\varrho} \times
		\grass{\adim}{\vdim}}{} | \project{S} - \project{R} |^2 \ud V
		(z,S) \big )^{1/2} \leq \Gamma \gamma \omega ( \varrho/r )
		\quad \text{whenever $0 < \varrho \leq r$}.
	\end{gather*}
\end{theorem}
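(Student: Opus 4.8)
The plan is to reduce Theorem \ref{thm:pointwise_decay} to an application of the iteration lemma \ref{lemma:iteration}\,\eqref{item:iteration:iteration}, which is phrased relative to a fixed cylinder adapted to a plane $T = \im \pp^\ast$. First I would choose suitable values of the free parameters of \ref{lemma:iteration}: set $\delta_4 = 1$, $\delta_5 = (40)^{-\vdim}(\isoperimetric{\vdim}\vdim)^{-\vdim}/\unitmeasure{\vdim}$, $\delta = \inf\{1,\varepsilon_{\ref{lemma:lipschitz_approximation}},(2\isoperimetric{\vdim})^{-1}\}$ as demanded there, pick $L \leq \delta_4/8$ and $M$ large enough (controlled by $Q$ via the density hypothesis and monotonicity), and fix $\delta_1, \delta_2, \delta_3$ in terms of $\delta$. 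After an isometry I may assume $a \in \rel^\adim$ has $\pp(a)$ near $0$ and $T = \im\pp^\ast$; the tilt hypothesis $(r^{-\vdim}\int|\project{S}-\project{T}|^2 \ud V)^{1/2} \leq \gamma$ together with $\gamma \leq \varepsilon$ small then places us, via \ref{lemma:hypo_approx}, in the geometric situation where the mass is concentrated near the plane: on a slightly smaller scale the cylinder conditions $(Q-1+\delta_1)\unitmeasure{\vdim}r^\vdim \leq \|V\|(\cylinder{T}{0}{r}{3r}) \leq (Q+1-\delta_2)\unitmeasure{\vdim}r^\vdim$ and the boundary-annulus smallness are met. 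This is the step where \ref{lemma:hypo_approx} does the real work: it converts the ball-type hypotheses of the theorem into the cylinder-type hypotheses required by \ref{lemma:iteration_prep} and \ref{lemma:iteration}.

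Next I would translate the remaining hypotheses of the theorem into the hypotheses of \ref{lemma:iteration}\,\eqref{item:iteration:iteration}. The bound $\|V\|(\classification{\cball{a}{\varrho}}{z}{\density^\vdim(\|V\|,z)\leq Q-1}) \leq \varepsilon\unitmeasure{\vdim}\varrho^\vdim$ gives exactly the ``points of low density are rare'' condition appearing there (after checking that low density on the cylinder is controlled by low density on comparable balls, using the tilt smallness so that cylinders and balls are comparable). The curvature decay $\varrho^{1-\vdim/p}\psi(\cball{a}{\varrho})^{1/p} \leq \gamma^{1/\tau}(\varrho/r)^\alpha$ yields, after restricting to cylinders inside the relevant ball, the hypothesis $\phi_3(\varrho)^\tau \leq \gamma\gamma_3(\varrho/r)^{\alpha\tau}$ provided $\varepsilon$ is chosen with $\varepsilon^{1/\tau}$ small relative to $\gamma_3$ (note $\phi_3$ is defined via $\psi$ over cylinders $\cylinder{T}{a}{\varrho}{\varrho}$, which are contained in $\cball{a}{c\varrho}$). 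The tilt hypothesis similarly gives $\phi_2(8r,T) \leq \varepsilon_{\eqref{item:iteration:iteration}}$ and, after choosing the optimal plane, $\phi_2(8r,T_{8r}) \leq \varepsilon_{\eqref{item:iteration:iteration}}\gamma$. Invoking \ref{lemma:iteration}\,\eqref{item:iteration:iteration} then produces, for all $t \leq \varrho \leq r$ (with $t$ arbitrarily small), that $\varrho \in J_1$, $\varrho\phi_1(\varrho) \leq \gamma\gamma_1\omega(\varrho/r)$ and $\phi_2(\varrho,T_\varrho) \leq \gamma\gamma_2\omega(\varrho/r)$, with $\omega$ as in the statement.

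From these two decay estimates I would extract the conclusion exactly as in the model case \ref{thm:example_decay}: the planes $T_\varrho$ form a Cauchy family because $\|\eqproject{T_{\varrho/4}} - \eqproject{T_\varrho}\|$ is controlled by $\phi_2(\varrho,T_\varrho)$ (this is the content of \ref{lemma:iteration_prep}\,\eqref{item:iteration_prep:ch_planes} and \ref{miniremark:projections}), and $\sum_\nu \omega(4^{-\nu}\varrho/r)$ converges since $\alpha\tau \leq 1$; hence there is a limit plane, which must equal $R = \Tan^\vdim(\|V\|,a)$, and summing the tilt bounds gives $\|\eqproject{T_\varrho} - \project{R}\| \leq C\gamma\omega(\varrho/r)$. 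Combined with the mass bound $\|V\|(\cylinder{T}{a}{\varrho}{\varrho}) \leq (Q+1/2)\unitmeasure{\vdim}\varrho^\vdim$ (from \ref{lemma:iteration_prep}\,\eqref{item:item:iteration_prep:side_conditions:upper_bound}) and the lower bound from \ref{lemma:aux_monotonicity}, one gets $\density^\vdim(\|V\|,a) = Q$; the triangle inequality $|\project{S}-\project{R}| \leq |\project{S}-\project{T_\varrho}| + |\eqproject{T_\varrho}-\project{R}|$ integrated over $\oball{a}{\varrho}\times\grass{\adim}{\vdim}$ then yields the desired estimate, after passing from cylinders back to balls using the tilt smallness once more.

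\textbf{Main obstacle.} The technically heaviest part is the bookkeeping: \ref{lemma:iteration}\,\eqref{item:iteration:iteration} is stated with a specific choice of $\delta, \delta_4, \delta_5$ and with hypotheses phrased in terms of $\phi_2, \phi_3$ and cylinders $\cylinder{T}{a}{\varrho}{\varrho}$, whereas Theorem \ref{thm:pointwise_decay} is phrased in terms of balls $\cball{a}{\varrho}$, the plane $T$ rather than the iteration-optimal plane $T_\varrho$, and $\density^{\ast\vdim}$. Verifying that the ball-based hypotheses imply the cylinder-based ones at every scale (including the passage from $T$ to $T_{8r}$ for the initial tilt, and ensuring the low-density-set smallness survives the cylinder-to-ball comparison uniformly) is where care is needed; everything else is either a direct citation of an earlier lemma or a repetition of the model-case argument of \ref{thm:example_decay}.
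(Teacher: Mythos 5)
Your proposal follows essentially the same route as the paper's proof: reduce to \ref{lemma:iteration}\,\eqref{item:iteration:iteration} after using \ref{lemma:aux_monotonicity} for mass bounds and \ref{lemma:hypo_approx} to pass from ball-type to cylinder-type hypotheses on a slightly smaller scale $s=\Delta_1 r$, then recover the limit plane $R$ by summing the tilt decay over dyadic scales and identify it with $\Tan^\vdim(\|V\|,a)$ (the paper does this last identification, and $\density^\vdim(\|V\|,a)=Q$, via Allard's compactness theorem together with \ref{app:lemma:planes}). The points you flag as needing care — the cylinder/ball comparison via $\delta_4=1$ and the rescaling of $\gamma$ against the constants $\gamma_i$ — are exactly the bookkeeping the paper carries out.
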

\begin{proof}
	Define, noting $( \isoperimetric{\vdim} \vdim )^{-\vdim} \leq
	\unitmeasure{\vdim}$,
	\begin{gather*}
		\Delta_1 = \inf \big \{ 1/6, ( 17 )^{-1/2} \big ( 1 - ( 1 -
		\delta/2)^{1/\vdim} ( 1 - \delta/4 )^{-1/\vdim} \big ) \big \},
		\\
		\delta_1 = \delta/2, \quad \delta_2 = \delta/4, \quad \delta_3 =
		1 - \delta/4, \quad \delta_4 = 1, \\
		\delta_5 = ( 40)^{-\vdim} ( \isoperimetric{\vdim} \vdim
		)^{-\vdim} / \unitmeasure{\vdim}, \quad \delta_6 = \delta,
		\quad L = \delta_4 / 8, \quad M = ( \Delta_1 )^{-\vdim} ( Q +
		1 ), \\
		\delta' = \inf \big \{ 1,
		\varepsilon_{\ref{lemma:lipschitz_approximation}} ( \adim, Q,
		L , M, \delta_1, \delta_2, \delta_3, \delta_4, \delta_5 ), ( 2
		\isoperimetric{\vdim})^{-1} \big \}, \\
		\eta = \inf \{ 1, ( Q + 1 - \delta/2 )^{1/\vdim} ( Q + 1 - 3
		\delta/4 )^{-1/\vdim} - 1 \}
	\end{gather*}
	and apply \ref{lemma:iteration}\,\eqref{item:iteration:iteration} with
	$\delta$ replaced by $\delta'$ to obtain $\gamma_i$ for $i \in \{ 2,3
	\}$. Define
	\begin{gather*}
		\Delta_2 =
		\begin{aligned}[t]
			\inf \big \{ & ( Q + 1 - 3 \delta/4 )^{1/p} - ( Q + 1
			- \delta )^{1/p}, \\
			& (Q-1+\delta)^{1/p} - ( Q-1+\delta/2)^{1/p} \big \},
		\end{aligned} \\
		\Delta_3 = \inf \big \{ ( \Delta_1)^{\vdim/2}
		\varepsilon_{\ref{lemma:iteration}\eqref{item:iteration:iteration}}
		( \vdim, \adim, Q, L , M, \delta_1, \delta_2, \delta_3, p,
		\tau, \alpha, \delta_6 ), \gamma_3 \big \}, \\
		\varepsilon =
		\begin{aligned}[t]
			\inf \{ & ( \alpha p \unitmeasure{\vdim}^{1/p}
			\Delta_2 )^\tau, \\
			& ( Q + 1 )^{-1/2} \unitmeasure{\vdim}^{-1/2}
			\varepsilon_{\ref{lemma:hypo_approx}} ( \vdim, \adim,
			Q, p, \alpha, 4, 1/6, \delta, \inf \{ \eta, \delta/4
			\}), \\
			& \varepsilon_{\ref{lemma:hypo_approx}} ( \vdim,
			\adim, Q, p, \alpha, 4, 1/6, \delta, \inf \{ \eta,
			\delta/4 \})^\tau, \Delta_3, 1 \}
		\end{aligned}
	\end{gather*}
	and also
	\begin{gather*}
		\Delta_4 = \sup \big \{ \gamma_2 ( \Delta_1 \Delta_3 )^{-1}, (
		\Delta_1)^{-\vdim/2-1} \big \}, \quad \Delta_5 = (
		1-2^{-\alpha\tau} )^{-1} \quad \text{if $\alpha \tau < 1$}, \\
		\Delta_5 = 2 + 2 \log 2 \quad \text{if $\alpha \tau = 1$},
		\quad \Delta_6 = 2^{\vdim+2} \delta^{-1}
		\unitmeasure{\vdim}^{-1/2} \Delta_4 \Delta_5, \\
		\Gamma = \Delta_4 + ( Q + 1 )^{1/2} \unitmeasure{\vdim}^{1/2}
		\Delta_6.
	\end{gather*}
	
	Suppose $a$, $r$, $V$, $\psi$, $T$, and $\omega$ satisfy the
	hypotheses of the body of the theorem.

	Let $s = \Delta_1 r$. Applying \ref{lemma:aux_monotonicity} twice with
	$M$ replaced by $\varepsilon^\tau$ in conjunction with H\"older's
	inequality, one deduces the \emph{mass bounds}:
	\begin{gather*}
		( Q - 1 + \delta/2 ) \unitmeasure{\vdim} \varrho^\vdim \leq
		\measureball{\| V \|}{\oball{a}{\varrho}} \leq ( Q + 1 -
		3\delta/4 ) \unitmeasure{\vdim} \varrho^\vdim
	\end{gather*}
	for $0 < \varrho \leq r$. From \ref{lemma:hypo_approx} applied with
	$M$, $\mu$, $\delta_1$, $\delta_2$ replaced by $4$, $1/6$, $\delta$,
	$\inf \{ \eta, \delta/4 \}$ one obtains, noting $\int | \project{S} -
	\project{T} | \ud V (z,S) \leq (Q+1)^{1/2} \unitmeasure{\vdim}^{1/2}
	\varepsilon r^\vdim$ by H\"older's inequality,
	\begin{gather*}
		\| V \| ( \cylinder{T}{a}{s}{4s} \without
		\cylinder{T}{a}{s}{\eta s} ) \leq ( \delta/4 )
		\unitmeasure{\vdim} s^\vdim.
	\end{gather*}
	Together this implies, noting $( 1 + \eta ) s \leq r$,
	\begin{gather*}
		\begin{aligned}
			\measureball{\| V \|}{\oball{a}{(1+\eta)s}} & \leq ( Q
			+ 1 - 3 \delta/4 ) \unitmeasure{\vdim} ( 1 + \eta
			)^\vdim s^\vdim \\
			& \leq ( Q + 1 - \delta/2 ) \unitmeasure{\vdim}
			s^\vdim,
		\end{aligned} \\
		\cylinder{T}{a}{s}{3s} \subset ( \cylinder{T}{a}{s}{4s}
		\without \cylinder{T}{a}{s}{\eta s} ) \cup \oball{a}{(1+\eta)
		s} \\
		\| V \| ( \cylinder{T}{a}{s}{3s} ) \leq ( Q + 1 - \delta/4 )
		\unitmeasure{\vdim} s^\vdim, \\
		\| V \| ( \cylinder{T}{a}{s}{3s} ) \geq \measureball{\| V \|}
		{\oball{a}{s}} \geq ( Q - 1 + \delta/2 ) \unitmeasure{\vdim}
		s^\vdim,
	\end{gather*}
	hence, using isometries and identifying $\rel^\adim \simeq \rel^\vdim
	\times \rel^\codim$, one may assume that $a = 0$, and the
	hypotheses of \ref{lemma:iteration_prep} and \ref{lemma:iteration} are
	satisfied with $r$, $\delta$ replaced by $s$, $\delta'$.

	Defining $\phi : ( \classification{\rel}{\varrho}{0 < \varrho \leq r})
	\times \grass{\adim}{\vdim} \to \rel$ by
	\begin{gather*}
		\phi ( \varrho, R ) = \big ( \varrho^{-\vdim}
		\tint{\oball{a}{\varrho} \times \grass{\adim}{\vdim}}{} |
		\project{S} - \project{R} |^2 \ud V (z,S) \big )^{1/2}
	\end{gather*}
	for $0 < \varrho \leq r$, $R \in \grass{\adim}{\vdim}$ and
	choosing $T_\varrho \in \grass{\adim}{\vdim}$ such that
	\begin{gather*}
		\phi ( \varrho, T_\varrho ) \leq \phi ( \varrho, R ) \quad
		\text{whenever $0 < \varrho \leq r$ and $R \in
		\grass{\adim}{\vdim}$}
	\end{gather*}
	and noting $\varepsilon \leq \Delta_3$ and $\Delta_1 \leq 1/4$, one
	obtains from \ref{lemma:iteration}\,\eqref{item:iteration:iteration}
	with $r$, $\delta$ and $\gamma$, replaced by $s$, $\delta'$ and $
	\gamma/\Delta_3$ that
	\begin{gather*}
		\phi ( \varrho, T_\varrho ) \leq ( \gamma / \Delta_3) \gamma_2
		\omega ( \varrho/s) \quad \text{for $0 < \varrho \leq s$}.
	\end{gather*}
	One infers the \emph{tilt estimate}
	\begin{gather*}
		\phi ( \varrho, T_\varrho ) \leq \Delta_4 \gamma \omega (
		\varrho/r ) \quad \text{for $0 < \varrho \leq r$}.
	\end{gather*}

	Next, it will be shown that a similar estimate holds with $T_\varrho$
	replaced by a suitable $R \in \grass{\adim}{\vdim}$. Using the lower
	mass bound, one notes for $0 < \varrho/2 \leq t \leq \varrho \leq r$
	\begin{gather*}
		\begin{aligned}
			| \eqproject{T_\varrho} - \eqproject{T_t} | & \leq
			2^{\vdim+1} \delta^{-1} \unitmeasure{\vdim}^{-1/2}
			\varrho^{-\vdim/2} \big ( \varrho^{\vdim/2} \phi (
			\varrho, T_\varrho ) + t^{\vdim/2} \phi ( t, T_t )
			\big ) \\
			& \leq 2^{\vdim+2} \delta^{-1}
			\unitmeasure{\vdim}^{-1/2} \phi ( \varrho, T_\varrho
			).
		\end{aligned}
	\end{gather*}
	This implies inductively for $0 < t \leq \varrho \leq r$
	\begin{gather*}
		| \eqproject{T_t} - \eqproject{T_\varrho} | \leq 2^{\vdim+2}
		\delta^{-1} \unitmeasure{\vdim}^{-1/2} \tsum{\nu=0}{\infty}
		\phi ( 2^{-\nu} \varrho, T_{2^{-\nu}\varrho} ),
	\end{gather*}
	hence, noting that the tilt estimate yields
	\begin{gather*}
		\tsum{\nu=0}{\infty} \phi ( 2^{-\nu} \varrho, T_{2^{-\nu}
		\varrho} ) \leq \Delta_4 \gamma \tsum{\nu=0}{\infty} ( 2^{-\nu}
		\varrho/r )^{\alpha \tau} = \Delta_4 \Delta_5 \gamma \omega (
		\varrho/r ) \quad \text{if $\alpha \tau < 1$}, \\
		\begin{aligned}
			& \tsum{\nu=0}{\infty} \phi ( 2^{-\nu} \varrho,
			T_{2^{-\nu} \varrho} ) \leq \Delta_4 \gamma
			\tsum{\nu=0}{\infty} ( 2^{-\nu} \varrho/r) ( 1 + \log
			(r/\varrho) + \nu \log 2 ) \\
			& \qquad \leq \Delta_4 \gamma ( \varrho/r) ( 1 + \log
			(r/\varrho) ) \big ( 2 + \log 2 \tsum{\nu=0}{\infty}
			2^{-\nu} \nu \big ) = \Delta_4 \Delta_5 \gamma \omega (
			\varrho/r )
		\end{aligned}
	\end{gather*}
	if $\alpha \tau = 1$, there exists $R \in \grass{\adim}{\vdim}$ with
	\begin{gather*}
		| \project{R} - \eqproject{T_\varrho} | \leq \Delta_6 \gamma
		\omega ( \varrho/r ) \quad \text{whenever $0 < \varrho \leq
		r$}.
	\end{gather*}
	Combining this with the tilt estimate, one obtains, using the upper
	mass bound,
	\begin{gather*}
		\phi ( \varrho, R ) \leq \phi ( \varrho, T_\varrho ) + ( Q +
		1)^{1/2} \unitmeasure{\vdim}^{1/2} \Delta_6 \gamma \omega (
		\varrho/r) \leq \Gamma \gamma \omega ( \varrho/r ) \quad
		\text{for $0 < \varrho \leq r$}.
	\end{gather*}

	Since $0 \leq \density^\vdim ( \| V \|, a ) < \infty$ by
	\ref{lemma:aux_monotonicity}, one now infers from Allard's compactness
	theorem for integral varifolds, see e.g.~\cite[6.4]{MR0307015} or
	\cite[42.8]{MR87a:49001}, in conjunction with, e.g.,
	\ref{app:lemma:planes} that
	\begin{gather*}
		\varrho^{-\vdim} \tint{}{} f ((z-a)/\varrho,S) \ud V (z,S) \to
		Q \tint{R}{} f(z,R) \ud \mathscr{H}^\vdim z \quad \text{as
		$\varrho \to 0+$}
	\end{gather*}
	for $f \in \ccspace{\rel^\adim \times \grass{\adim}{\vdim}}$, hence
	$\density^\vdim ( \| V \|, a ) = Q$ and $R = \Tan^\vdim ( \| V \|, a
	)$.
\end{proof}
\begin{remark} \label{remark:pointwise_decay2}
	If $\alpha \tau < 1$ and $\vdim > 2$, then $\tau$ cannot be replaced
	by any larger number.

	An example is provided as follows. Defining $\eta = \frac{\alpha
	p}{\vdim-p}$, choosing for each $i \in \nat$ an $\vdim$ dimensional
	sphere $M_i$ of radius $\varrho_i = 2^{-i-\eta i-2}$ with $M_i \subset
	\oball{a}{2^{-i}} \without \cball{a}{2^{-i-1}}$, one readily verifies
	that one may take $V \in \IVar_\vdim ( \rel^\adim )$ such that $\| V
	\| = Q \mathscr{H}^\vdim \restrict T + \mathscr{H}^\vdim \restrict M$
	where $M = \bigcup_{i=1}^\infty M_i$ and $r$ sufficiently small.
\end{remark}
\begin{remark} \label{remark:pointwise_decay}
	In case $\alpha \tau = 1$, $\vdim > 1$, it can happen that
	\begin{gather*}
		\liminf_{\varrho \to 0+} \big ( \varrho^{-\vdim}
		\tint{\oball{a}{\varrho} \times \grass{\adim}{\vdim}}{} |
		\project{S} - \project{R} |^2 \ud V (z,S) \big )^{1/2} \omega
		( \varrho/r )^{-1} > 0.
	\end{gather*}

	To construct an example, assume $\codim =1$, with $\complex = \rel^2$
	take $u : \complex \to \rel$ of class $1$ such that
	\begin{gather*}
		u \big ( r e^{\mathbf{i} \theta} \big ) = r^2 ( \log r ) \cos
		( 2 \theta) \quad \text{for $0 < r < \infty$, $\theta \in
		\rel$},
	\end{gather*}
	and verify, using the homogeneity of $u$,
	\begin{gather*}
		\Lap u \big ( r e^{\mathbf{i} \theta} \big ) = 4 \cos ( 2
		\theta ) \quad \text{for $0 < r < \infty$, $\theta \in \rel$},
		\\
		| D^i u (x) | \leq \Gamma | x |^{2-i} ( 1 + \log (1/|x|) )
		\quad \text{for $x \in \oball{0}{1} \without \{0\}$, $i \in
		\{1,2\}$}
	\end{gather*}
	where $\Gamma$ is a positive, finite number, hence computing with $C$
	as in \ref{miniremark:function_C}, noting \cite[5.1.9]{MR41:1976},
	\begin{gather*}
		\left < D^2 u (x) , C ( Du(x) ) \right > = \Lap u (x) + \left
		< D^2 u (x), C ( Du(x) ) - C (0) \right >
	\end{gather*}
	for $x \in \rel^2 \without \{ 0 \}$, one obtains, since $Du(0)= 0$,
	\begin{gather*}
		\left < D^2 u , C \circ Du \right > \in \Lp{\infty} (
		\mathscr{L}^2 \restrict \oball{0}{1} ), \\
		u | \oball{0}{1} \in \Sob{}{2}{q} ( \oball{0}{1} ) \quad
		\text{for $1 \leq q < \infty$}.
	\end{gather*}
	Choosing $g \in \mathbf{O}^\ast ( \vdim, 2 )$ and defining $f = u
	\circ g$, one may now take $V$ associated to $f$ as in
	\ref{miniremark:first_variation} with $Q=1$.
\end{remark}
\begin{remark} \label{remark:classical_examples}
	Considering $V_1 \in \IVar_7 ( \rel^4 \times \rel^4 )$ and $V_2 \in
	\IVar_2 ( \mathbf{C} \times \mathbf{C} )$ characterised by
	\begin{gather*}
		\| V_1 \| = \mathscr{H}^7 \restrict \eqclassification{\rel^4
		\times \rel^4}{(x,y)} { |x|^2 = |y|^2 }, \\
		\| V_2 \| = \mathscr{H}^2 \restrict
		\eqclassification{\mathbf{C} \times \mathbf{C}} {(w,z)}{ w^3 =
		z^2 }
	\end{gather*}
	one may verify the necessity of the hypotheses
	\begin{gather*}
		r^{-\vdim} \tint{}{} | \project{S} - \project{T} |^2 \ud V
		(z,S) \ud V (z,S) \leq \varepsilon, \\
		\| V \| (
		\classification{\cball{a}{\varrho}}{z}{\density^\vdim ( \| V
		\|, z ) \leq Q-1} ) \leq \varepsilon \unitmeasure{\vdim}
		\varrho^\vdim \quad \text{for $0 < \varrho < r$}
	\end{gather*}
	even if $V$ corresponds to an absolutely area minimising current, see
	Bombieri, de~Giorgi and Giusti \cite[Theorem A]{MR0250205},
	\cite[5.4.19]{MR41:1976}, and Allard \cite[4.8\,(4)]{MR0307015}.
\end{remark}
\begin{corollary} \label{cor:some_decay_rates}
	Suppose $\vdim$, $\adim$, $p$, $U$, and $V$ are as in
	\ref{miniremark:situation}, either $\vdim \in \{1,2\}$ and $0 < \tau <
	1$ or $\sup \{ 2, p \} < \vdim$ and $\tau = \frac{\vdim p}{2(
	\vdim-p)} < 1$, and $V \in \IVar_\vdim ( U )$.

	Then
	\begin{gather*}
		\limsup_{r \to 0+} r^{-\tau-\vdim/2} \big ( \tint{\oball{a}{r}
		\times \grass{\adim}{\vdim}}{} | \project{S} - \project{T} |^2
		\ud V (z,S) \big )^{1/2} < \infty
	\end{gather*}
	for $V$ almost all $(a,T)$.
\end{corollary}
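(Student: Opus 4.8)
The plan is to deduce Corollary~\ref{cor:some_decay_rates} from Theorem~\ref{thm:pointwise_decay} by verifying, for $\|V\|$-almost every $a$ together with $T=\Tan^\vdim(\|V\|,a)$, all the hypotheses of the theorem at some sufficiently small scale $r$. The point is that the hypotheses of Theorem~\ref{thm:pointwise_decay} — a lower bound on the upper density, an upper bound on $\|V\|(\oball ar)/\unitmeasure\vdim r^\vdim$, initial smallness of the quadratic tilt-excess, smallness of the measure of the low-density set at all smaller scales, and the decay $\varrho^{1-\vdim/p}\psi(\cball a\varrho)^{1/p}\le\gamma^{1/\tau}(\varrho/r)^\alpha$ — are \emph{a.e.\ properties} of any integral varifold with $\psi$ locally of the stated form, provided one chooses the parameters correctly. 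Concretely I would fix $\alpha$ with $\alpha\tau=1$ (so $\alpha=1/\tau$, which is admissible since $\tau<1$ forces $\alpha>1$ — wait, this needs $\alpha\le1$) — more carefully, one takes any $\alpha$ with $0<\alpha\le1$ and then $\tau$ as in the hypothesis, noting the hypothesis on $\tau$ in the Corollary matches case (2) or (3) of the Theorem; then $\omega(t)=t^{\alpha\tau}$ with $\alpha\tau<1$, which is exactly the form needed, and $\Gamma\gamma\omega(\varrho/r)=\Gamma\gamma(\varrho/r)^{\alpha\tau}$, so $r^{-\tau-\vdim/2}(\int_{\oball ar}|\project S-\project T|^2)^{1/2}$... one has to be slightly careful matching the exponent $\tau$ in the Corollary (which is the exponent appearing in $r^{-\tau-\vdim/2}$) with the product $\alpha\tau$ in the Theorem; choosing $\alpha=1$ gives exponent $\tau$ directly, so \textbf{I take $\alpha=1$} throughout.

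First I would localize: work on a relatively compact open $U'\Subset U$, on which $\psi(U')<\infty$, so $\psi$ is a finite Radon measure there. The decay hypothesis $\varrho^{1-\vdim/p}\psi(\cball a\varrho)^{1/p}\le\gamma^{1/\tau}(\varrho/r)^\alpha=\gamma^{1/\tau}\varrho/r$ (with $\alpha=1$) says $\psi(\cball a\varrho)\le(\gamma^{1/\tau}/r)^p\varrho^{p+\vdim(p-1)}$... let me recompute: it says $\varrho^{p-\vdim}\psi(\cball a\varrho)\le(\gamma^{1/\tau})^p(\varrho/r)^{p\alpha}$, i.e.\ $\psi(\cball a\varrho)\le C\varrho^{\vdim-p+p\alpha}=C\varrho^\vdim\cdot\varrho^{p(\alpha-1)+\vdim-\vdim}$, which for $\alpha=1$ is $\psi(\cball a\varrho)\le C\varrho^\vdim$ — this is just the statement that $\density^{*\vdim}(\psi,a)<\infty$ with a \emph{uniform} bound, which holds for $\|V\|$-a.e.\ $a$ by the differentiation theory of Radon measures (see \cite[2.8.18, 2.9.5]{MR41:1976}), but not with a rate. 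The rate $(\varrho/r)^\alpha$ at a \emph{fixed} reference radius $r$ is the key device: for $\|V\|$-a.e.\ $a$, since $\density^\vdim(\psi,a)$ exists and is finite (indeed for integral varifolds satisfying $(H_p)$ one has $\density^\vdim(\psi,a)=0$ for $\|V\|$-a.e.\ $a$ when $p>1$, and $\|\delta V\|\ll\|V\|$ considerations for $p=1$), one can for any prescribed $\gamma>0$ find $r=r(a,\gamma)>0$ with $\psi(\cball a\varrho)\le\gamma^{p/\tau}\varrho^{p-\vdim}r^{-p\alpha}\varrho^{p\alpha}$... the clean statement is: \emph{for $\|V\|$-a.e.\ $a$ and every $\gamma>0$ there is $r>0$ so that $\varrho^{1-\vdim/p}\psi(\cball a\varrho)^{1/p}\le\gamma^{1/\tau}(\varrho/r)^\alpha$ for $0<\varrho<r$}; this follows because the left side times $(\varrho/r)^{-\alpha}=\varrho^{1-\vdim/p-\alpha}r^\alpha\psi(\cball a\varrho)^{1/p}$ tends to $0$ as $r\to0+$ (uniformly over $\varrho\le r$) precisely when $\varrho^{-\alpha}\varrho^{1-\vdim/p}\psi(\cball a\varrho)^{1/p}$ is bounded near $0$, which for $\alpha\le1$ reduces to boundedness of $\varrho^{1-\vdim/p-\alpha}\psi(\cball a\varrho)^{1/p}$ — and this in turn holds at a.e.\ $a$ by the density estimates together with $\alpha\le1$.

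Then I would check the remaining hypotheses one by one at such a point $a$ with $T=\Tan^\vdim(\|V\|,a)$, which exists for $\|V\|$-a.e.\ $a$ since $V$ is integral (rectifiable): (i) $\density^{*\vdim}(\|V\|,a)=\density^\vdim(\|V\|,a)\ge Q-1+\delta$ holds with $Q$ the positive integer equal to $\density^\vdim(\|V\|,a)$ and $\delta=1/2$, say, for $\|V\|$-a.e.\ $a$; (ii) $\|V\|(\oball ar)\le(Q+1-\delta)\unitmeasure\vdim r^\vdim$ for all small $r$, again by $\density^\vdim(\|V\|,a)=Q$; (iii) $r^{-\vdim}\int_{\oball ar}|\project S-\project T|^2\,dV(z,S)\le\gamma^2$ for small $r$ — this is the a.e.\ vanishing of the tilt-excess, which holds at every point where $\Tan^\vdim(\|V\|,a)$ exists and $V$ is the rectifiable varifold there (a standard consequence of rectifiability, \cite[3.2.19]{MR41:1976} blown up, or Allard's results); (iv) $\|V\|(\classification{\cball a\varrho}{z}{\density^\vdim(\|V\|,z)\le Q-1})\le\varepsilon\unitmeasure\vdim\varrho^\vdim$ for $0<\varrho<r$ — here one uses that $\|V\|$-a.e.\ $a$ is a point of $\density^\vdim=Q$ relative to the set $\classification{U}{z}{\density^\vdim(\|V\|,z)=Q}$, hence a point of density $0$ of its complement; more precisely, $\|V\|$ restricted to $\classification U z{\density^\vdim(\|V\|,z)\ge Q}$ has $a$ as a point of full density and the bad set $\{\density^\vdim\le Q-1\}$ is $\|V\|$-null at density... one must be a bit careful, but the set $\{\density^\vdim(\|V\|,\cdot)\le Q-1\}$ has, at $\|V\|$-a.e.\ $a$ with $\density^\vdim(\|V\|,a)=Q$, upper density $0$ — this is \cite[2.10.19]{MR41:1976} applied to the decomposition of $\|V\|$ by density level. \textbf{The main obstacle} I anticipate is precisely the bookkeeping in (iv) and in the rate statement for $\psi$: one must produce, for a \emph{given} $\gamma\le\varepsilon$ (with $\varepsilon,\Gamma$ the constants furnished by Theorem~\ref{thm:pointwise_decay} applied with the data $\vdim,\adim,p,Q,\delta=1/2,\alpha=1,\tau$), a single radius $r$ at which \emph{all} the side conditions hold simultaneously; since each holds for all sufficiently small $r$, taking the minimum of finitely many thresholds suffices, but the $\psi$-decay condition must hold for all $0<\varrho<r$, not just at $\varrho=r$, which is why the a.e.\ statement must be phrased uniformly in $\varrho\le r$ as above. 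Once all hypotheses are verified, Theorem~\ref{thm:pointwise_decay} gives $\density^\vdim(\|V\|,a)=Q$, $R=\Tan^\vdim(\|V\|,a)=T$ (so the choice of $T$ was consistent), and $(\varrho^{-\vdim}\int_{\oball a\varrho\times\grass\adim\vdim}|\project S-\project R|^2\,dV)^{1/2}\le\Gamma\gamma\omega(\varrho/r)=\Gamma\gamma(\varrho/r)^\tau$ for $0<\varrho\le r$; multiplying by $\varrho^{-\tau}$ gives $\varrho^{-\tau-\vdim/2}(\int_{\oball a\varrho\times\grass\adim\vdim}|\project S-\project T|^2\,dV)^{1/2}\le\Gamma\gamma r^{-\tau}$, a finite bound independent of $\varrho$, so the $\limsup$ as $\varrho\to0+$ is finite at $V$-almost every $(a,T)$ — recall $V$-a.e.\ $(a,T)$ means $\|V\|$-a.e.\ $a$ with $T=\Tan^\vdim(\|V\|,a)$ since $V$ is rectifiable — which is the assertion.
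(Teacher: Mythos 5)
Your proposal is correct and follows essentially the same route as the paper: the paper's proof likewise verifies the hypotheses of Theorem \ref{thm:pointwise_decay} at $\| V \|$ almost every point $a$ (existence of the varifold blow-up to $Q \lIm T \rIm$, vanishing density of the set $\{ \density^\vdim ( \| V \|, \cdot ) \leq Q-1 \}$, and $\density^{\ast \vdim} ( \psi, a ) < \infty$, all from \cite[2.9.13, 2.9.5]{MR41:1976}) and then applies the theorem with $\alpha = 1$. Your observation that the $\psi$-decay hypothesis with $\alpha = 1$ reduces, after shrinking $r$, to finiteness of the upper density of $\psi$ is exactly the mechanism the paper relies on.
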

\begin{proof}
	From \cite[2.9.13,\,5]{MR41:1976} one infers that for $\| V \|$ almost
	all $a \in U$ there exists $Q \in \nat$ and $T \in
	\grass{\adim}{\vdim}$ such that for $f \in \ccspace{\rel^\adim \times
	\grass{\adim}{\vdim}}$
	\begin{gather*}
		\lim_{r \to 0+} r^{-\vdim} \tint{}{} f (r^{-1}(z-a),S) \ud V
		(z,S) = Q \tint{T}{} f (z,T) \ud \mathscr{H}^\vdim z, \\
		\density^\vdim ( \| V \| \restrict \{ z \with \density^\vdim (
		\| V \|, z ) \leq Q-1 \}, a ) = 0, \quad \density^{\ast \vdim}
		( \psi, a ) < \infty,
	\end{gather*}
	hence for such $a$ one may apply \ref{thm:pointwise_decay} with $r$
	sufficiently small and $\alpha = 1$ to infer the conclusion.
\end{proof}
\begin{remark} \label{remark:discussion1}
	The examples in \cite[1.2]{snulmenn.isoperimetric} with $q_1 = q_2 =
	2$ and $\alpha_1 = \alpha_2$ slightly larger than $\frac{\vdim
	p}{\vdim-p}$ show that $\tau$ cannot be replaced by any larger number
	provided $\vdim > 2$. However, using the present result and
	\cite[3.7\,(i)]{snulmenn.isoperimetric}, \cite[3.6]{snulmenn:c2.v3},
	it is shown in \cite[4.2\,(1)]{snulmenn:c2.v3} that ``$< \infty$'' can
	be replaced by ``$=0$''.
\end{remark}
\begin{remark} \label{remark:discussion2}
	It is shown in \cite[4.2\,(2)]{snulmenn:c2.v3} that the conclusion
	holds with $\tau=1$ if $\vdim = 1$ or $\vdim=2$ and $p > 1$ or $\vdim
	> 2$ and $p \geq 2 \vdim/ (\vdim+2)$ by use of \ref{lemma:aux_c2rekt}
	and \cite[3.6]{snulmenn:c2.v3}.
\end{remark}
\medskip
\noindent
{\small
\newline
Max-Planck-Institute for Gravitational Physics (Albert-Einstein-Institute), OT
Golm, Am M\"uh\-len\-berg 1, DE-14476 Potsdam,
Germany \newline
{\tt Ulrich.Menne@aei.mpg.de}}
\addcontentsline{toc}{section}{\numberline{}References}
\bibliography{UlrichMenne3v3}
\bibliographystyle{alpha}
\end{document}